 \def\dated#1{\def\thedate{#1}}
\newbox\Label%
\newdimen\high%
\newdimen\deep%
\newdimen\ul%
\newdimen\wdth
\def\ratchet#1#2{\ifnum#1<#2\global #1=#2\fi}%
\def\ifnextchar#1#2#3{\let\@tempe
#1\def\@tempa{#2}\def\@tempb{#3}\futurelet
    \@tempc\@ifnch}%
\def\@ifnch{\ifx \@tempc \@sptoken \let\@tempd\@xifnch
      \else \ifx \@tempc \@tempe\let\@tempd\@tempa\else\let\@tempd\@tempb\fi
      \fi \@tempd}%
\def\:{\let\@sptoken= } \:  % this makes \@sptoken a space token
\def\:{\@xifnch} \expandafter\def\: {\futurelet\@tempc\@ifnch}%
\let\ifnextchar\@ifnextchar
\def\scalefactor#1{\ul=#1\ul \X@xbase=#1\X@xbase \Y@ybase=#1\Y@ybase}%
\def\fontscale#1{%
\if#1h\relax
\font\xydashfont=xydash10 scaled \magstephalf
\font\xyatipfont=xyatip10 scaled \magstephalf
\font\xybtipfont=xybtip10 scaled \magstephalf
\font\xybsqlfont=xybsql10 scaled \magstephalf
\font\xycircfont=xycirc10 scaled \magstephalf
\else
\font\xydashfont=xydash10 scaled \magstep#1%
\font\xyatipfont=xyatip10 scaled \magstep#1%
\font\xybtipfont=xybtip10 scaled \magstep#1%
\font\xybsqlfont=xybsql10 scaled \magstep#1%
\font\xycircfont=xycirc10 scaled \magstep#1%
\fi}
\let\bfig\xy%
\let\efig\endxy%
\def\car#1#2\nil{#1}%
\def\morphism{\ifnextchar({\morphismp}{\morphismp(0,0)}}%
\def\morphismp(#1){\ifnextchar|{\morphismpp(#1)}{\morphismpp(#1)|a|}}%
\def\morphismpp(#1)|#2|{\ifnextchar/{\morphismppp(#1)|#2|}%
    {\morphismppp(#1)|#2|/>/}}%
\def\morphismppp(#1)|#2|/#3/{%
    \ifnextchar<{\morphismpppp(#1)|#2|/#3/}%
    {\morphismpppp(#1)|#2|/#3/<\default,0>}}%
\def\morphismpppp(#1,#2)|#3|/#4/<#5,#6>[#7`#8;#9]{%
\xend#1\advance \xend by #5%
\yend#2\advance \yend by #6%
\domorphism(#1,#2)|#3|/#4/<#5,#6>[#7`#8;#9]}
\def\domorphism(#1,#2)|#3|/#4/<#5,#6>[#7`#8;#9]{%
\edef\next{#4}%
\ifx\next\empty\relax
 \POS(#1,#2)*+!!<0ex,.75ex>{#7}\ar@{} (\xend,\yend)*+!!<0ex,.75ex>{#8}%
\else \def\next{\car#4\nil}\fi
\if@\next\relax
 \if#3l%
  \ifnum #6>0%
   \POS(#1,#2)*+!!<0ex,.75ex>{#7}\ar#4^-{#9} (\xend,\yend)*+!!<0ex,.75ex>{#8}%
  \else%
   \POS(#1,#2)*+!!<0ex,.75ex>{#7}\ar#4_-{#9} (\xend,\yend)*+!!<0ex,.75ex>{#8}%
  \fi%
 \else \if#3m%
    \setbox0\hbox{$#9$}%
   \ifdim \wd0=0pt
     \POS(#1,#2)*+!!<0ex,.75ex>{#7}\ar#4 (\xend,\yend)*+!!<0ex,.75ex>{#8}%
   \else
     \setbox\Label=\hbox{\kern .5pt $\labelstyle #9$\kern .5pt}%
     \high=\ht\Label \advance\high by 2pt \ht\Label=\high%
     \deep=\dp\Label \advance\deep by 2pt \dp\Label=\deep%
     \POS(#1,#2)*+!!<0ex,.75ex>{#7}\ar#4|-{\box\Label} (\xend,\yend)*+!!<0ex,.75ex>{#8}%
   \fi
 \else \if#3r%
  \ifnum #6<0%
   \POS(#1,#2)*+!!<0ex,.75ex>{#7}\ar#4^-{#9} (\xend,\yend)*+!!<0ex,.75ex>{#8}%
  \else%
   \POS(#1,#2)*+!!<0ex,.75ex>{#7}\ar#4_-{#9} (\xend,\yend)*+!!<0ex,.75ex>{#8}%
  \fi%
 \else \if#3a%
  \ifnum #5>0%
   \POS(#1,#2)*+!!<0ex,.75ex>{#7}\ar#4^-{#9} (\xend,\yend)*+!!<0ex,.75ex>{#8}%
  \else%
   \POS(#1,#2)*+!!<0ex,.75ex>{#7}\ar#4_-{#9} (\xend,\yend)*+!!<0ex,.75ex>{#8}%
  \fi%
 \else \if#3b%
  \ifnum #5<0%
   \POS(#1,#2)*+!!<0ex,.75ex>{#7}\ar#4^-{#9} (\xend,\yend)*+!!<0ex,.75ex>{#8}%
  \else%
   \POS(#1,#2)*+!!<0ex,.75ex>{#7}\ar#4_-{#9} (\xend,\yend)*+!!<0ex,.75ex>{#8}%
  \fi%
 \else
   \POS(#1,#2)*+!!<0ex,.75ex>{#7}\ar#4 (\xend,\yend)*+!!<0ex,.75ex>{#8}%
 \fi\fi\fi\fi\fi%
\else%
\edef\next{#4}%
 \ifx\next\empty\relax
  \POS(#1,#2)*+!!<0ex,.75ex>{#7}; (\xend,\yend)*+!!<0ex,.75ex>{#8}%
 \else \edef\next{#4}%
  \ifx\next\empty\relax
   \POS(#1,#2)*+!!<0ex,.75ex>{#7}\ar (\xend,\yend)*+!!<0ex,.75ex>{#8}%
 \else \if#3l%
  \ifnum #6>0%
   \POS(#1,#2)*+!!<0ex,.75ex>{#7}\ar@{#4}^-{#9} (\xend,\yend)*+!!<0ex,.75ex>{#8}%
  \else%
   \POS(#1,#2)*+!!<0ex,.75ex>{#7}\ar@{#4}_-{#9} (\xend,\yend)*+!!<0ex,.75ex>{#8}%
  \fi%
 \else \if#3m%
    \setbox0\hbox{$#9$}%
   \ifdim \wd0=0pt
     \POS(#1,#2)*+!!<0ex,.75ex>{#7}\ar@{#4} (\xend,\yend)*+!!<0ex,.75ex>{#8}%
   \else
     \setbox\Label=\hbox{\kern .5pt $\labelstyle #9$\kern .5pt}%
     \high=\ht\Label \advance\high by 2pt \ht\Label=\high%
     \deep=\dp\Label \advance\deep by 2pt \dp\Label=\deep%
     \POS(#1,#2)*+!!<0ex,.75ex>{#7}\ar@{#4}|-{\box\Label} (\xend,\yend)*+!!<0ex,.75ex>{#8}%
   \fi
 \else \if#3r%
  \ifnum #6<0%
   \POS(#1,#2)*+!!<0ex,.75ex>{#7}\ar@{#4}^-{#9} (\xend,\yend)*+!!<0ex,.75ex>{#8}%
  \else%
   \POS(#1,#2)*+!!<0ex,.75ex>{#7}\ar@{#4}_-{#9} (\xend,\yend)*+!!<0ex,.75ex>{#8}%
  \fi%
 \else \if#3a%
  \ifnum #5>0%
   \POS(#1,#2)*+!!<0ex,.75ex>{#7}\ar@{#4}^-{#9} (\xend,\yend)*+!!<0ex,.75ex>{#8}%
  \else%
   \POS(#1,#2)*+!!<0ex,.75ex>{#7}\ar@{#4}_-{#9} (\xend,\yend)*+!!<0ex,.75ex>{#8}%
  \fi%
 \else \if#3b%
  \ifnum #5<0%
   \POS(#1,#2)*+!!<0ex,.75ex>{#7}\ar@{#4}^-{#9} (\xend,\yend)*+!!<0ex,.75ex>{#8}%
  \else%
   \POS(#1,#2)*+!!<0ex,.75ex>{#7}\ar@{#4}_-{#9} (\xend,\yend)*+!!<0ex,.75ex>{#8}%
  \fi%
 \else
   \POS(#1,#2)*+!!<0ex,.75ex>{#7}\ar@{#4} (\xend,\yend)*+!!<0ex,.75ex>{#8}%
 \fi\fi\fi\fi\fi\fi\fi%
\fi\ignorespaces}%
\def\squarepppp(#1,#2)|#3|/#4`#5`#6`#7/<#8>[#9]{%
\xpos#1\ypos#2%
\def\next|##1##2##3##4|{%
 \def\xa{##1}\def\xb{##2}\def\xc{##3}\def\xd{##4}\ignorespaces}%
\next|#3|%
\def\next<##1,##2>{\deltax=##1\deltay=##2\ignorespaces}%
\next<#8>%
\def\next[##1`##2`##3`##4;##5`##6`##7`##8]{%
    \def\nodea{##1}\def\nodeb{##2}\def\nodec{##3}\def\noded{##4}%
    \def\labela{##5}\def\labelb{##6}\def\labelc{##7}\def\labeld{##8}\ignorespaces}%
\next[#9]%
\morphism(\xpos,\ypos)|\xd|/{#7}/<\deltax,0>[\nodec`\noded;\labeld]%
\advance \ypos by \deltay%
\morphism(\xpos,\ypos)|\xb|/{#5}/<0,-\deltay>[\nodea`\nodec;\labelb]%
\morphism(\xpos,\ypos)|\xa|/{#4}/<\deltax,0>[\nodea`\nodeb;\labela]%
 \advance \xpos by \deltax%
\morphism(\xpos,\ypos)|\xc|/{#6}/<0,-\deltay>[\nodeb`\noded;\labelc]%
\ignorespaces}%
\def\square{\ifnextchar({\squarep}{\squarep(0,0)}}%
\def\squarep(#1){\ifnextchar|{\squarepp(#1)}{\squarepp(#1)|alrb|}}%
\def\squarepp(#1)|#2|{\ifnextchar/{\squareppp(#1)|#2|}%
    {\squareppp(#1)|#2|/>`>`>`>/}}%
\def\squareppp(#1)|#2|/#3`#4`#5`#6/{%
    \ifnextchar<{\squarepppp(#1)|#2|/#3`#4`#5`#6/}%
    {\squarepppp(#1)|#2|/#3`#4`#5`#6/<\default,\default>}}%
\def\ptrianglepppp(#1,#2)|#3|/#4`#5`#6/<#7>[#8]{%
\xpos#1\ypos#2%
\def\next|##1##2##3|{\def\xa{##1}\def\xb{##2}\def\xc{##3}}%
\next|#3|%
\def\next<##1,##2>{\deltax=##1\deltay=##2\ignorespaces}%
\next<#7>%
\def\next[##1`##2`##3;##4`##5`##6]{%
    \def\nodea{##1}\def\nodeb{##2}\def\nodec{##3}%
    \def\labela{##4}\def\labelb{##5}\def\labelc{##6}}%
\next[#8]%
\advance\ypos by \deltay%
\morphism(\xpos,\ypos)|\xa|/{#4}/<\deltax,0>[\nodea`\nodeb;\labela]%
\morphism(\xpos,\ypos)|\xb|/{#5}/<0,-\deltay>[\nodea`\nodec;\labelb]%
\advance\xpos by \deltax%
\morphism(\xpos,\ypos)|\xc|/{#6}/<-\deltax,-\deltay>[\nodeb`\nodec;\labelc]%
\ignorespaces}%
\def\qtrianglepppp(#1,#2)|#3|/#4`#5`#6/<#7>[#8]{%
\xpos#1\ypos#2%
\def\next|##1##2##3|{\def\xa{##1}\def\xb{##2}\def\xc{##3}}%
\next|#3|%
\def\next<##1,##2>{\deltax=##1\deltay=##2\ignorespaces}%
\next<#7>%
\def\next[##1`##2`##3;##4`##5`##6]{%
    \def\nodea{##1}\def\nodeb{##2}\def\nodec{##3}%
    \def\labela{##4}\def\labelb{##5}\def\labelc{##6}}%
\next[#8]%
\advance\ypos by \deltay%
\morphism(\xpos,\ypos)|\xa|/{#4}/<\deltax,0>[\nodea`\nodeb;\labela]%
\morphism(\xpos,\ypos)|\xb|/{#5}/<\deltax,-\deltay>[\nodea`\nodec;\labelb]%
\advance\xpos by \deltax%
\morphism(\xpos,\ypos)|\xc|/{#6}/<0,-\deltay>[\nodeb`\nodec;\labelc]%
\ignorespaces}%
\def\dtrianglepppp(#1,#2)|#3|/#4`#5`#6/<#7>[#8]{%
\xpos#1\ypos#2%
\def\next|##1##2##3|{\def\xa{##1}\def\xb{##2}\def\xc{##3}}%
\next|#3|%
\def\next<##1,##2>{\deltax=##1\deltay=##2\ignorespaces}%
\next<#7>%
\def\next[##1`##2`##3;##4`##5`##6]{%
    \def\nodea{##1}\def\nodeb{##2}\def\nodec{##3}%
    \def\labela{##4}\def\labelb{##5}\def\labelc{##6}}%
\next[#8]%
\morphism(\xpos,\ypos)|\xc|/{#6}/<\deltax,0>[\nodeb`\nodec;\labelc]%
\advance\ypos by \deltay\advance \xpos by \deltax%
\morphism(\xpos,\ypos)|\xa|/{#4}/<-\deltax,-\deltay>[\nodea`\nodeb;\labela]%
\morphism(\xpos,\ypos)|\xb|/{#5}/<0,-\deltay>[\nodea`\nodec;\labelb]%
\ignorespaces}%
\def\btrianglepppp(#1,#2)|#3|/#4`#5`#6/<#7>[#8]{%
\xpos#1\ypos#2%
\def\next|##1##2##3|{\def\xa{##1}\def\xb{##2}\def\xc{##3}}%
\next|#3|%
\def\next<##1,##2>{\deltax=##1\deltay=##2\ignorespaces}%
\next<#7>%
\def\next[##1`##2`##3;##4`##5`##6]{%
    \def\nodea{##1}\def\nodeb{##2}\def\nodec{##3}%
    \def\labela{##4}\def\labelb{##5}\def\labelc{##6}}%
\next[#8]%
\morphism(\xpos,\ypos)|\xc|/{#6}/<\deltax,0>[\nodeb`\nodec;\labelc]%
\advance\ypos by \deltay%
\morphism(\xpos,\ypos)|\xa|/{#4}/<0,-\deltay>[\nodea`\nodeb;\labela]%
\morphism(\xpos,\ypos)|\xb|/{#5}/<\deltax,-\deltay>[\nodea`\nodec;\labelb]%
\ignorespaces}%
\def\Atrianglepppp(#1,#2)|#3|/#4`#5`#6/<#7>[#8]{%
\xpos#1\ypos#2%
\def\next|##1##2##3|{\def\xa{##1}\def\xb{##2}\def\xc{##3}}%
\next|#3|%
\def\next<##1,##2>{\deltax=##1\deltay=##2\ignorespaces}%
\next<#7>%
\def\next[##1`##2`##3;##4`##5`##6]{%
    \def\nodea{##1}\def\nodeb{##2}\def\nodec{##3}%
    \def\labela{##4}\def\labelb{##5}\def\labelc{##6}}%
\next[#8]%
\multiply\deltax by 2%
\morphism(\xpos,\ypos)|\xc|/{#6}/<\deltax,0>[\nodeb`\nodec;\labelc]%
\divide\deltax by 2
\advance\ypos by \deltay\advance\xpos by \deltax%
\morphism(\xpos,\ypos)|\xa|/{#4}/<-\deltax,-\deltay>[\nodea`\nodeb;\labela]%
\morphism(\xpos,\ypos)|\xb|/{#5}/<\deltax,-\deltay>[\nodea`\nodec;\labelb]%
\ignorespaces}%
\def\Vtrianglepppp(#1,#2)|#3|/#4`#5`#6/<#7>[#8]{%
\xpos#1\ypos#2%
\def\next|##1##2##3|{\def\xa{##1}\def\xb{##2}\def\xc{##3}}%
\next|#3|%
\def\next<##1,##2>{\deltax=##1\deltay=##2\ignorespaces}%
\next<#7>%
\def\next[##1`##2`##3;##4`##5`##6]{%
    \def\nodea{##1}\def\nodeb{##2}\def\nodec{##3}%
    \def\labela{##4}\def\labelb{##5}\def\labelc{##6}}%
\next[#8]%
\advance\ypos by \deltay%
\morphism(\xpos,\ypos)|\xb|/{#5}/<\deltax,-\deltay>[\nodea`\nodec;\labelb]%
\multiply\deltax by 2%
\morphism(\xpos,\ypos)|\xa|/{#4}/<\deltax,0>[\nodea`\nodeb;\labela]%
\advance\xpos by \deltax \divide \deltax by 2
\morphism(\xpos,\ypos)|\xc|/{#6}/<-\deltax,-\deltay>[\nodeb`\nodec;\labelc]%
\ignorespaces}%
\def\Ctrianglepppp(#1,#2)|#3|/#4`#5`#6/<#7>[#8]{%
\xpos#1\ypos#2%
\def\next|##1##2##3|{\def\xa{##1}\def\xb{##2}\def\xc{##3}}%
\next|#3|%
\def\next<##1,##2>{\deltax=##1\deltay=##2\ignorespaces}%
\next<#7>%
\def\next[##1`##2`##3;##4`##5`##6]{%
    \def\nodea{##1}\def\nodeb{##2}\def\nodec{##3}%
    \def\labela{##4}\def\labelb{##5}\def\labelc{##6}}%
\next[#8]%
\advance \ypos by \deltay%
\morphism(\xpos,\ypos)|\xc|/{#6}/<\deltax,-\deltay>[\nodeb`\nodec;\labelc]%
\advance\ypos by \deltay \advance \xpos by \deltax%
\morphism(\xpos,\ypos)|\xa|/{#4}/<-\deltax,-\deltay>[\nodea`\nodeb;\labela]%
\multiply\deltay by 2%
\morphism(\xpos,\ypos)|\xb|/{#5}/<0,-\deltay>[\nodea`\nodec;\labelb]%
\ignorespaces}%
\def\Dtrianglepppp(#1,#2)|#3|/#4`#5`#6/<#7>[#8]{%
\xpos#1\ypos#2%
\def\next|##1##2##3|{\def\xa{##1}\def\xb{##2}\def\xc{##3}}%
\next|#3|%
\def\next<##1,##2>{\deltax=##1\deltay=##2\ignorespaces}%
\next<#7>%
\def\next[##1`##2`##3;##4`##5`##6]{%
    \def\nodea{##1}\def\nodeb{##2}\def\nodec{##3}%
    \def\labela{##4}\def\labelb{##5}\def\labelc{##6}}%
\next[#8]%
\advance\xpos by \deltax \advance\ypos by \deltay%
\morphism(\xpos,\ypos)|\xc|/{#6}/<-\deltax,-\deltay>[\nodeb`\nodec;\labelc]%
\advance\xpos by -\deltax \advance\ypos by \deltay%
\morphism(\xpos,\ypos)|\xb|/{#5}/<\deltax,-\deltay>[\nodea`\nodeb;\labelb]%
\multiply \deltay by 2%
\morphism(\xpos,\ypos)|\xa|/{#4}/<0,-\deltay>[\nodea`\nodec;\labela]%
\ignorespaces}%
\def\ptrianglep(#1){\ifnextchar|{\ptrianglepp(#1)}{\ptrianglepp(#1)|alr|}}%
\def\ptrianglepp(#1)|#2|{\ifnextchar/{\ptriangleppp(#1)|#2|}%
    {\ptriangleppp(#1)|#2|/>`>`>/}}%
\def\ptriangleppp(#1)|#2|/#3`#4`#5/{%
    \ifnextchar<{\ptrianglepppp(#1)|#2|/#3`#4`#5/}%
    {\ptrianglepppp(#1)|#2|/#3`#4`#5/<\default,\default>}}%
\def\qtrianglep(#1){\ifnextchar|{\qtrianglepp(#1)}{\qtrianglepp(#1)|alr|}}%
\def\qtrianglepp(#1)|#2|{\ifnextchar/{\qtriangleppp(#1)|#2|}%
    {\qtriangleppp(#1)|#2|/>`>`>/}}%
\def\qtriangleppp(#1)|#2|/#3`#4`#5/{%
    \ifnextchar<{\qtrianglepppp(#1)|#2|/#3`#4`#5/}%
    {\qtrianglepppp(#1)|#2|/#3`#4`#5/<\default,\default>}}%
\def\dtrianglep(#1){\ifnextchar|{\dtrianglepp(#1)}{\dtrianglepp(#1)|lrb|}}%
\def\dtrianglepp(#1)|#2|{\ifnextchar/{\dtriangleppp(#1)|#2|}%
    {\dtriangleppp(#1)|#2|/>`>`>/}}%
\def\dtriangleppp(#1)|#2|/#3`#4`#5/{%
    \ifnextchar<{\dtrianglepppp(#1)|#2|/#3`#4`#5/}%
    {\dtrianglepppp(#1)|#2|/#3`#4`#5/<\default,\default>}}%
\def\btrianglep(#1){\ifnextchar|{\btrianglepp(#1)}{\btrianglepp(#1)|lrb|}}%
\def\btrianglepp(#1)|#2|{\ifnextchar/{\btriangleppp(#1)|#2|}%
    {\btriangleppp(#1)|#2|/>`>`>/}}%
\def\btriangleppp(#1)|#2|/#3`#4`#5/{%
    \ifnextchar<{\btrianglepppp(#1)|#2|/#3`#4`#5/}%
    {\btrianglepppp(#1)|#2|/#3`#4`#5/<\default,\default>}}%
\def\Atrianglep(#1){\ifnextchar|{\Atrianglepp(#1)}{\Atrianglepp(#1)|lrb|}}%
\def\Atrianglepp(#1)|#2|{\ifnextchar/{\Atriangleppp(#1)|#2|}%
    {\Atriangleppp(#1)|#2|/>`>`>/}}%
\def\Atriangleppp(#1)|#2|/#3`#4`#5/{%
    \ifnextchar<{\Atrianglepppp(#1)|#2|/#3`#4`#5/}%
    {\Atrianglepppp(#1)|#2|/#3`#4`#5/<\default,\default>}}%
\def\Vtrianglep(#1){\ifnextchar|{\Vtrianglepp(#1)}{\Vtrianglepp(#1)|alb|}}%
\def\Vtrianglepp(#1)|#2|{\ifnextchar/{\Vtriangleppp(#1)|#2|}%
    {\Vtriangleppp(#1)|#2|/>`>`>/}}%
\def\Vtriangleppp(#1)|#2|/#3`#4`#5/{%
    \ifnextchar<{\Vtrianglepppp(#1)|#2|/#3`#4`#5/}%
    {\Vtrianglepppp(#1)|#2|/#3`#4`#5/<\default,\default>}}%
\def\Ctrianglep(#1){\ifnextchar|{\Ctrianglepp(#1)}{\Ctrianglepp(#1)|arb|}}%
\def\Ctrianglepp(#1)|#2|{\ifnextchar/{\Ctriangleppp(#1)|#2|}%
    {\Ctriangleppp(#1)|#2|/>`>`>/}}%
\def\Ctriangleppp(#1)|#2|/#3`#4`#5/{%
    \ifnextchar<{\Ctrianglepppp(#1)|#2|/#3`#4`#5/}%
    {\Ctrianglepppp(#1)|#2|/#3`#4`#5/<\default,\default>}}%
\def\Dtrianglep(#1){\ifnextchar|{\Dtrianglepp(#1)}{\Dtrianglepp(#1)|alb|}}%
\def\Dtrianglepp(#1)|#2|{\ifnextchar/{\Dtriangleppp(#1)|#2|}%
    {\Dtriangleppp(#1)|#2|/>`>`>/}}%
\def\Dtriangleppp(#1)|#2|/#3`#4`#5/{%
    \ifnextchar<{\Dtrianglepppp(#1)|#2|/#3`#4`#5/}%
    {\Dtrianglepppp(#1)|#2|/#3`#4`#5/<\default,\default>}}%
\def\Atrianglepairpppp(#1)|#2|/#3`#4`#5`#6`#7/<#8>[#9]{%
\def\next(##1,##2){\xpos##1\ypos##2}%
\next(#1)%
\def\next|##1##2##3##4##5|{\def\xa{##1}\def\xb{##2}%
\def\xc{##3}\def\xd{##4}\def\xe{##5}}%
\next|#2|%
\def\next<##1,##2>{\deltax=##1\deltay=##2\ignorespaces}%
\next<#8>%
\def\next[##1`##2`##3`##4;##5`##6`##7`##8`##9]{%
 \def\nodea{##1}\def\nodeb{##2}\def\nodec{##3}\def\noded{##4}%
 \def\labela{##5}\def\labelb{##6}\def\labelc{##7}\def\labeld{##8}\def\labele{##9}}%
\next[#9]%
\morphism(\xpos,\ypos)|\xd|/{#6}/<\deltax,0>[\nodeb`\nodec;\labeld]%
\advance\xpos by \deltax%
\morphism(\xpos,\ypos)|\xe|/{#7}/<\deltax,0>[\nodec`\noded;\labele]%
\advance\ypos by \deltay%
\morphism(\xpos,\ypos)|\xa|/{#3}/<-\deltax,-\deltay>[\nodea`\nodeb;\labela]%
\morphism(\xpos,\ypos)|\xb|/{#4}/<0,-\deltay>[\nodea`\nodec;\labelb]%
\morphism(\xpos,\ypos)|\xc|/{#5}/<\deltax,-\deltay>[\nodea`\noded;\labelc]%
\ignorespaces}%
\def\Vtrianglepairpppp(#1)|#2|/#3`#4`#5`#6`#7/<#8>[#9]{%
\def\next(##1,##2){\xpos##1\ypos##2}%
\next(#1)%
\def\next|##1##2##3##4##5|{\def\xa{##1}\def\xb{##2}%
\def\xc{##3}\def\xd{##4}\def\xe{##5}}%
\next|#2|%
\def\next<##1,##2>{\deltax=##1\deltay=##2\ignorespaces}%
\next<#8>%
\def\next[##1`##2`##3`##4;##5`##6`##7`##8`##9]{%
 \def\nodea{##1}\def\nodeb{##2}\def\nodec{##3}\def\noded{##4}%
 \def\labela{##5}\def\labelb{##6}\def\labelc{##7}\def\labeld{##8}\def\labele{##9}}%
\next[#9]%
\advance\ypos by \deltay%
\morphism(\xpos,\ypos)|\xa|/{#3}/<\deltax,0>[\nodea`\nodeb;\labela]%
\morphism(\xpos,\ypos)|\xc|/{#5}/<\deltax,-\deltay>[\nodea`\noded;\labelc]%
\advance\xpos by \deltax%
\morphism(\xpos,\ypos)|\xb|/{#4}/<\deltax,0>[\nodeb`\nodec;\labelb]%
\morphism(\xpos,\ypos)|\xd|/{#6}/<0,-\deltay>[\nodeb`\noded;\labeld]%
\advance\xpos by \deltax%
\morphism(\xpos,\ypos)|\xe|/{#7}/<-\deltax,-\deltay>[\nodec`\noded;\labele]%
\ignorespaces}%
\def\Ctrianglepairpppp(#1)|#2|/#3`#4`#5`#6`#7/<#8>[#9]{%
\def\next(##1,##2){\xpos##1\ypos##2}%
\next(#1)%
\def\next|##1##2##3##4##5|{\def\xa{##1}\def\xb{##2}%
\def\xc{##3}\def\xd{##4}\def\xe{##5}}%
\next|#2|%
\def\next<##1,##2>{\deltax=##1\deltay=##2\ignorespaces}%
\next<#8>%
\def\next[##1`##2`##3`##4;##5`##6`##7`##8`##9]{%
 \def\nodea{##1}\def\nodeb{##2}\def\nodec{##3}\def\noded{##4}%
 \def\labela{##5}\def\labelb{##6}\def\labelc{##7}\def\labeld{##8}\def\labele{##9}}%
\next[#9]%
\advance\ypos by \deltay%
\morphism(\xpos,\ypos)|\xe|/{#7}/<0,-\deltay>[\nodec`\noded;\labele]%
\advance\xpos by -\deltax%
\morphism(\xpos,\ypos)|\xc|/{#5}/<\deltax,0>[\nodeb`\nodec;\labelc]%
\morphism(\xpos,\ypos)|\xd|/{#6}/<\deltax,-\deltay>[\nodeb`\noded;\labeld]%
\advance\ypos by \deltay%
\advance\xpos by \deltax%
\morphism(\xpos,\ypos)|\xa|/{#3}/<-\deltax,-\deltay>[\nodea`\nodeb;\labela]%
\morphism(\xpos,\ypos)|\xb|/{#4}/<0,-\deltay>[\nodea`\nodec;\labelb]%
\ignorespaces}%
\def\Dtrianglepairpppp(#1)|#2|/#3`#4`#5`#6`#7/<#8>[#9]{%
\def\next(##1,##2){\xpos##1\ypos##2}%
\next(#1)%
\def\next|##1##2##3##4##5|{\def\xa{##1}\def\xb{##2}%
\def\xc{##3}\def\xd{##4}\def\xe{##5}}%
\next|#2|%
\def\next<##1,##2>{\deltax=##1\deltay=##2\ignorespaces}%
\next<#8>%
\def\next[##1`##2`##3`##4;##5`##6`##7`##8`##9]{%
 \def\nodea{##1}\def\nodeb{##2}\def\nodec{##3}\def\noded{##4}%
 \def\labela{##5}\def\labelb{##6}\def\labelc{##7}\def\labeld{##8}\def\labele{##9}}%
\next[#9]%
\advance\ypos by \deltay%
\morphism(\xpos,\ypos)|\xc|/{#5}/<\deltax,0>[\nodeb`\nodec;\labelc]%
\morphism(\xpos,\ypos)|\xd|/{#6}/<0,-\deltay>[\nodeb`\noded;\labeld]%
\advance\ypos by \deltay%
\morphism(\xpos,\ypos)|\xa|/{#3}/<0,-\deltay>[\nodea`\nodeb;\labela]%
\morphism(\xpos,\ypos)|\xb|/{#4}/<\deltax,-\deltay>[\nodea`\nodec;\labelb]%
\advance\ypos by -\deltay%
\advance\xpos by \deltax%
\morphism(\xpos,\ypos)|\xe|/{#7}/<-\deltax,-\deltay>[\nodec`\noded;\labele]%
\ignorespaces}%
\def\Atrianglepairp(#1){\ifnextchar|{\Atrianglepairpp(#1)}%
{\Atrianglepairpp(#1)|lmrbb|}}%
\def\Atrianglepairpp(#1)|#2|{\ifnextchar/{\Atrianglepairppp(#1)|#2|}%
    {\Atrianglepairppp(#1)|#2|/>`>`>`>`>/}}%
\def\Atrianglepairppp(#1)|#2|/#3`#4`#5`#6`#7/{%
    \ifnextchar<{\Atrianglepairpppp(#1)|#2|/#3`#4`#5`#6`#7/}%
    {\Atrianglepairpppp(#1)|#2|/#3`#4`#5`#6`#7/<\default,\default>}}%
\def\Vtrianglepairp(#1){\ifnextchar|{\Vtrianglepairpp(#1)}%
{\Vtrianglepairpp(#1)|aalmr|}}%
\def\Vtrianglepairpp(#1)|#2|{\ifnextchar/{\Vtrianglepairppp(#1)|#2|}%
    {\Vtrianglepairppp(#1)|#2|/>`>`>`>`>/}}%
\def\Vtrianglepairppp(#1)|#2|/#3`#4`#5`#6`#7/{%
    \ifnextchar<{\Vtrianglepairpppp(#1)|#2|/#3`#4`#5`#6`#7/}%
    {\Vtrianglepairpppp(#1)|#2|/#3`#4`#5`#6`#7/<\default,\default>}}%
\def\Ctrianglepairp(#1){\ifnextchar|{\Ctrianglepairpp(#1)}%
{\Ctrianglepairpp(#1)|lrmlr|}}%
\def\Ctrianglepairpp(#1)|#2|{\ifnextchar/{\Ctrianglepairppp(#1)|#2|}%
    {\Ctrianglepairppp(#1)|#2|/>`>`>`>`>/}}%
\def\Ctrianglepairppp(#1)|#2|/#3`#4`#5`#6`#7/{%
    \ifnextchar<{\Ctrianglepairpppp(#1)|#2|/#3`#4`#5`#6`#7/}%
    {\Ctrianglepairpppp(#1)|#2|/#3`#4`#5`#6`#7/<\default,\default>}}%
\def\Dtrianglepairp(#1){\ifnextchar|{\Dtrianglepairpp(#1)}%
{\Dtrianglepairpp(#1)|lrmlr|}}%
\def\Dtrianglepairpp(#1)|#2|{\ifnextchar/{\Dtrianglepairppp(#1)|#2|}%
    {\Dtrianglepairppp(#1)|#2|/>`>`>`>`>/}}%
\def\Dtrianglepairppp(#1)|#2|/#3`#4`#5`#6`#7/{%
    \ifnextchar<{\Dtrianglepairpppp(#1)|#2|/#3`#4`#5`#6`#7/}%
    {\Dtrianglepairpppp(#1)|#2|/#3`#4`#5`#6`#7/<\default,\default>}}%
\def\place(#1,#2)[#3]{\POS(#1,#2)*+!!<0ex,.75ex>{#3}\ignorespaces}%
\def\pullback#1]#2]{\square#1]\trident#2]\ignorespaces}%
\def\tridentppp|#1#2#3|/#4`#5`#6/<#7,#8>[#9]{%
\def\next[##1;##2`##3`##4]{\def\nodee{##1}\def\labele{##2}%
   \def\labelf{##3}\def\labelg{##4}}%
\next[#9]%
\advance \xpos by -\deltax%
\advance \xpos by -#7\advance \ypos by #8%
\advance\deltax by #7%
\morphism(\xpos,\ypos)|#1|/{#4}/<\deltax,-#8>[\nodee`\nodeb;\labele]%
\advance\deltax by -#7%
\morphism(\xpos,\ypos)|#2|/{#5}/<#7,-#8>[\nodee`\nodea;\labelf]%
\advance\deltay by #8%
\morphism(\xpos,\ypos)|#3|/{#6}/<#7,-\deltay>[\nodee`\nodec;\labelg]%
\ignorespaces}%
\def\trident{\ifnextchar|{\tridentp}{\tridentp|amb|}}%
\def\tridentp|#1|{\ifnextchar/{\tridentpp|#1|}{\tridentpp|#1|/{>}`{>}`{>}/}}%
\def\tridentpp|#1|/#2/{\ifnextchar<{\tridentppp|#1|/#2/}%
  {\tridentppp|#1|/#2/<500,500>}}%
\def\setmorphismwidth#1#2#3#4{%
 \setbox0=\hbox{$#1{\labelstyle#3#3}#2$}#4=\wd0%
 \divide #4 by 2 \divide #4 by \ul%
 \advance #4 by 350 \ratchet{#4}{500}}%
\def\setSquarewidth[#1`#2`#3`#4;#5`#6`#7`#8]{%
 \setmorphismwidth{#1}{#2}{#5}{\topw}%
 \setmorphismwidth{#3}{#4}{#8}{\botw}%
\ratchet{\topw}{\botw}}%
\def\Squarepppp(#1)|#2|/#3/<#4>[#5]{%
 \setSquarewidth[#5]%
 \squarepppp(#1)|#2|/#3/<\topw,#4>[#5]%
\ignorespaces}%
\def\Squarep(#1){\ifnextchar|{\Squarepp(#1)}{\Squarepp(#1)|alrb|}}%
\def\Squarepp(#1)|#2|{\ifnextchar/{\Squareppp(#1)|#2|}%
    {\Squareppp(#1)|#2|/>`>`>`>/}}%
\def\Squareppp(#1)|#2|/#3`#4`#5`#6/{%
    \ifnextchar<{\Squarepppp(#1)|#2|/#3`#4`#5`#6/}%
    {\Squarepppp(#1)|#2|/#3`#4`#5`#6/<\default>}}%
\def\hSquarespppp(#1,#2)|#3|/#4/<#5>[#6;#7]{%
\Xpos=#1\Ypos=#2%
\def\next|##1##2##3##4##5##6##7|{%
 \def\Xa{##1}\def\Xb{##2}\def\Xc{##3}\def\Xd{##4}%
 \def\Xe{##5}\def\Xf{##6}\def\Xg{##7}}%
\next|#3|%
\deltaY=#5%
\def\next[##1`##2`##3`##4`##5`##6]{%
 \def\Nodea{##1}\def\Nodeb{##2}\def\Nodec{##3}%
 \def\Noded{##4}\def\Nodee{##5}\def\Nodef{##6}}%
\next[#6]%
\def\next[##1`##2`##3`##4`##5`##6`##7]{%
 \def\Labela{##1}\def\Labelb{##2}\def\Labelc{##3}\def\Labeld{##4}%
 \def\Labele{##5}\def\Labelf{##6}\def\Labelg{##7}}%
\next[#7]%
\dohSquares/#4/}%
\def\dohSquares/#1`#2`#3`#4`#5`#6`#7/{%
\Squarepppp(\Xpos,\Ypos)|\Xa\Xc\Xd\Xf|/#1`#3`#4`#6/<\deltaY>%
 [\Nodea`\Nodeb`\Noded`\Nodee;\Labela`\Labelc`\Labeld`\Labelf]%
 \advance \Xpos by \topw
\Squarepppp(\Xpos,\Ypos)|\Xb\Xd\Xe\Xg|/#2``#5`#7/<\deltaY>%
[\Nodeb`\Nodec`\Nodee`\Nodef;\Labelb``\Labele`\Labelg]%
\ignorespaces}%
\def\hSquaresp(#1){\ifnextchar|{\hSquarespp(#1)}{\hSquarespp%
(#1)|aalmrbb|}}%
\def\hSquarespp(#1)|#2|{\ifnextchar/{\hSquaresppp(#1)|#2|}%
    {\hSquaresppp(#1)|#2|/>`>`>`>`>`>`>/}}%
\def\hSquaresppp(#1)|#2|/#3/{%
    \ifnextchar<{\hSquarespppp(#1)|#2|/#3/}%
    {\hSquarespppp(#1)|#2|/#3/<\default>}}%
\def\vSquarespppp(#1,#2)|#3|/#4/<#5,#6>[#7;#8]{%
\Xpos=#1\Ypos=#2%
\def\next|##1##2##3##4##5##6##7|{%
 \def\Xa{##1}\def\Xb{##2}\def\Xc{##3}\def\Xd{##4}%
 \def\Xe{##5}\def\Xf{##6}\def\Xg{##7}}%
\next|#3|%
\deltaX=#5%
\deltaY=#6%
\def\next[##1`##2`##3`##4`##5`##6]{%
 \def\Nodea{##1}\def\Nodeb{##2}\def\Nodec{##3}%
 \def\Noded{##4}\def\Nodee{##5}\def\Nodef{##6}}%
\next[#7]%
\def\next[##1`##2`##3`##4`##5`##6`##7]{%
 \def\Labela{##1}\def\Labelb{##2}\def\Labelc{##3}\def\Labeld{##4}%
 \def\Labele{##5}\def\Labelf{##6}\def\Labelg{##7}}%
\next[#8]%
\dovSquares/#4/\ignorespaces}%
\def\dovSquares/#1`#2`#3`#4`#5`#6`#7/{%
\setmorphismwidth{\Nodea}{\Nodeb}{\Labela}{\topw}%
\setmorphismwidth{\Nodec}{\Noded}{\Labeld}{\botw}%
\ratchet{\topw}{\botw}%
\setmorphismwidth{\Nodee}{\Nodef}{\Labelg}{\botw}%
\ratchet{\topw}{\botw}%
\square(\Xpos,\Ypos)|\Xd\Xe\Xf\Xg|/`#5`#6`#7/<\topw,\deltaX>%
 [\Nodec`\Noded`\Nodee`\Nodef;`\Labele`\Labelf`\Labelg]%
\advance \Ypos by \deltaX%
\square(\Xpos,\Ypos)|\Xa\Xb\Xc\Xd|/#1`#2`#3`#4/<\topw,\deltaY>%
 [\Nodea`\Nodeb`\Nodec`\Noded;\Labela`\Labelb`\Labelc`\Labeld]%
}%
\def\vSquaresp(#1){\ifnextchar|{\vSquarespp(#1)}{\vSquarespp%
(#1)|alrmlrb|}}%
\def\vSquarespp(#1)|#2|{\ifnextchar/{\vSquaresppp(#1)|#2|}%
    {\vSquaresppp(#1)|#2|/>`>`>`>`>`>`>/}}%
\def\vSquaresppp(#1)|#2|/#3/{%
    \ifnextchar<{\vSquarespppp(#1)|#2|/#3/}%
    {\vSquarespppp(#1)|#2|/#3/<\default,\default>}}%
\def\osquarepppp(#1)|#2|/#3`#4`#5`#6/<#7>[#8]{\squarepppp%
 (#1)|#2|/#3`#4`#5`#6/<#7>[#8]%
 \let\Nodea\nodea\let\Nodeb\nodeb%
\let\Nodec\nodec\let\Noded\noded\Xpos=\xpos\Ypos=\ypos%
\deltaX=\deltax \deltaY=\deltay \isquare}
\def\osquarep(#1){\ifnextchar|{\osquarepp(#1)}{\osquarepp(#1)|alrb|}}%
\def\osquarepp(#1)|#2|{\ifnextchar/{\osquareppp(#1)|#2|}%
    {\osquareppp(#1)|#2|/>`>`>`>/}}%
\def\osquareppp(#1)|#2|/#3`#4`#5`#6/{%
    \ifnextchar<{\osquarepppp(#1)|#2|/#3`#4`#5`#6/}%
    {\osquarepppp(#1)|#2|/#3`#4`#5`#6/<1500,1500>}}%
\def\isquarepppp(#1)|#2|/#3`#4`#5`#6/<#7>[#8]{%
 \squarepppp(#1)|#2|/#3`#4`#5`#6/<#7>[#8]%
\ifnextchar|{\cubep}{\cubep|mmmm|}}%
\def\cubep|#1|{\ifnextchar/{\cubepp|#1|}{\cubepp|#1|/>`>`>`>/}}%
\def\isquare{\ifnextchar({\isquarep}{\isquarep(\default,\default)}}%
\def\isquarep(#1){\ifnextchar|{\isquarepp(#1)}{\isquarepp(#1)|alrb|}}
\def\isquarepp(#1)|#2|{\ifnextchar/{\isquareppp(#1)|#2|}%
    {\isquareppp(#1)|#2|/>`>`>`>/}}%
\def\isquareppp(#1)|#2|/#3`#4`#5`#6/{%
    \ifnextchar<{\isquarepppp(#1)|#2|/#3`#4`#5`#6/}%
    {\isquarepppp(#1)|#2|/#3`#4`#5`#6/<500,500>}}%
\def\cubepp|#1#2#3#4|/#5`#6`#7`#8/[#9]{%
\def\next[##1`##2`##3`##4]{\gdef\Labela{##1}%
\gdef\Labelb{##2}\gdef\Labelc{##3}\gdef\Labeld{##4}}\next[#9]%
\xend\xpos \yend\ypos
\Xend\xend\advance\Xend by -\Xpos
\Yend\yend\advance\Yend by -\Ypos
\domorphism(\Xpos,\Ypos)|#2|/#6/<\Xend,\Yend>[\Nodeb`\nodeb;\Labelb]%
\advance\Xpos by-\deltaX
\advance\xend by-\deltax
\Xend\xend\advance\Xend by -\Xpos
\domorphism(\Xpos,\Ypos)|#1|/#5/<\Xend,\Yend>[\Nodea`\nodea;\Labela]%
\advance\Ypos by-\deltaY
\advance\yend by-\deltay
\Yend\yend\advance\Yend by -\Ypos
\domorphism(\Xpos,\Ypos)|#3|/#7/<\Xend,\Yend>[\Nodec`\nodec;\Labelc]%
\advance\Xpos by\deltaX
\advance\xend by\deltax
\Xend\xend\advance\Xend by -\Xpos
\domorphism(\Xpos,\Ypos)|#4|/#8/<\Xend,\Yend>[\Noded`\noded;\Labeld]%
\ignorespaces}
\def\setwdth#1#2{\setbox0\hbox{$\labelstyle#1$}\wdth=\wd0
\setbox0\hbox{$\labelstyle#2$}\ifnum\wdth<\wd0 \wdth=\wd0 \fi}
\def\topppp/#1/<#2>^#3_#4{\allowbreak\mathrel{%
\ifnum#2=0
   \setwdth{#3}{#4}\deltax=\wdth \divide \deltax by \ul
   \advance \deltax by \defaultmargin  \ratchet{\deltax}{200}%
\else \deltax #2
\fi
\xy\ar@{#1}^{#3}_{#4}(\deltax,0) \endxy
\ignorespaces}}
\def\toppp/#1/<#2>^#3{\ifnextchar_{\topppp/#1/<#2>^{#3}}{\topppp/#1/<#2>^{#3}_{}}}
\def\topp/#1/<#2>{\ifnextchar^{\toppp/#1/<#2>}{\toppp/#1/<#2>^{}}}
\def\top/#1/{\ifnextchar<{\topp/#1/}{\topp/#1/<0>}}
\def\twopppp/#1`#2/<#3>^#4_#5{\allowbreak\mathrel{%
\ifnum0=#3
  \setwdth{#4}{#5}\deltax=\wdth \divide \deltax by \ul \advance \deltax
  by \defaultmargin \ratchet{\deltax}{200}%
\else \deltax#3 \fi
\xy\ar@{#1}@<2.5pt>^{#4}(\deltax,0)%
\ar@{#2}@<-2.5pt>_{#5}(\deltax,0)\endxy\ignorespaces}}
\def\twoppp/#1`#2/<#3>^#4{\ifnextchar_{\twopppp/#1`#2/<#3>^{#4}}%
  {\twopppp/#1`#2/<#3>^{#4}_{}}}
\def\twopp/#1`#2/<#3>{\ifnextchar^{\twoppp/#1`#2/<#3>}{\twoppp/#1`#2/<#3>^{}}}
\def\twop/#1`#2/{\ifnextchar<{\twopp/#1`#2/}{\twopp/#1`#2/<0>}}
\def\threeppppp/#1`#2`#3/<#4>^#5|#6_#7{\allowbreak\mathrel{%
\ifnum0=#4
\setbox0\hbox{$\labelstyle#5$}\wdth=\wd0
\setbox0\hbox{$\labelstyle#6$}\ifnum\wdth<\wd0 \wdth=\wd0 \fi
\setbox0\hbox{$\labelstyle#7$}\ifnum\wdth<\wd0 \wdth=\wd0 \fi
\deltax=\wdth \divide \deltax by \ul \advance \deltax by
\defaultmargin \ratchet{\deltax}{300}%
\else\deltax#4 \fi
    \xy \ifnum\wd0=0 \ar@{#2}(\deltax,0)
    \else \ar@{#2}|{#6}(\deltax,0)\fi
\ar@{#1}@<4.5pt>^{#5}(\deltax,0)
\ar@{#3}@<-4.5pt>_{#7}(\deltax,0)\endxy\ignorespaces}}
\def\threepppp/#1`#2`#3/<#4>^#5|#6{\ifnextchar_{\threeppppp
  /#1`#2`#3/<#4>^{#5}|{#6}}{\threeppppp/#1`#2`#3/<#4>^{#5}|{#6}_{}}}
\def\threeppp/#1`#2`#3/<#4>^#5{\ifnextchar|{\threepppp
  /#1`#2`#3/<#4>^{#5}}{\threepppp/#1`#2`#3/<#4>^{#5}|{}}}
\def\threepp/#1`#2`#3/<#4>{\ifnextchar^{\threeppp/#1`#2`#3/<#4>}%
  {\threeppp/#1`#2`#3/<#4>^{}}}
\def\threep/#1`#2`#3/{\ifnextchar<{\threepp/#1`#2`#3/}%
  {\threepp/#1`#2`#3/<0>}}
\def\twoar(#1,#2){{%
 \scalefactor{0.1}
 \deltax#1\deltay#2%
 \deltaX=\ifnum\deltax<0-\fi\deltax
 \deltaY=\ifnum\deltay<0-\fi\deltay
 \Xend\deltax \multiply \Xend by \deltax
 \Yend\deltay \multiply \Yend by \deltay
 \advance\Xend by \Yend \multiply \Xend by 3
 \ifnum \deltaX > \deltaY
    \multiply \deltaX by 3 \advance \deltaX by \deltaY
 \else
    \multiply \deltaY by 3 \advance \deltaX by \deltaY
 \fi
 \multiply\deltax by 500
 \multiply\deltay by 500
 \xpos\deltax \multiply \xpos by 3 \divide\xpos by \deltaX
 \Xpos\deltax \multiply \Xpos by \deltaX \divide \Xpos by \Xend
 \advance \xpos by \Xpos
 \ypos\deltay \multiply \ypos by 3 \divide\ypos by \deltaX
 \Ypos\deltay \multiply \Ypos by \deltaX \divide \Ypos by \Xend
 \advance \ypos by \Ypos
 \xy \ar@{=>}(\xpos,\ypos) \endxy
}\ignorespaces}
\def\iiixiiipppppp(#1,#2)|#3|/#4/<#5>#6<#7>[#8;#9]{%
 \xpos#1\ypos#2\relax
 \def\next|##1##2##3##4##5##6##7|{\def\xa{##1}\def\xb{##2}%
 \def\xc{##3}\def\xd{##4}\def\xe{##5}\def\xf{##6}\nextt|##7|}%
 \def\nextt|##1##2##3##4##5##6|{\def\xg{##1}\def\xh{##2}%
 \def\xi{##3}\def\xj{##4}\def\xk{##5}\def\xl{##6}}%
 \next|#3|%
 \def\next<##1,##2>{\deltax##1\deltay##2}%
 \next<#5>%
 \def\next<##1,##2>{\deltaX##1\deltaY##2}%
 \next<#7>%
 \def\next##1{\topw##1\relax
 \ifodd\topw \def\zl{}\else\def\zl{\relax}\fi \divide\topw by 2
 \ifodd\topw \def\zk{}\else\def\zk{\relax}\fi \divide\topw by 2
 \ifodd\topw \def\zj{}\else\def\zj{\relax}\fi \divide\topw by 2
 \ifodd\topw \def\zi{}\else\def\zi{\relax}\fi \divide\topw by 2
 \ifodd\topw \def\zh{}\else\def\zh{\relax}\fi \divide\topw by 2
 \ifodd\topw \def\zg{}\else\def\zg{\relax}\fi \divide\topw by 2
 \ifodd\topw \def\zf{}\else\def\zf{\relax}\fi \divide\topw by 2
 \ifodd\topw \def\ze{}\else\def\ze{\relax}\fi \divide\topw by 2
 \ifodd\topw \def\zd{}\else\def\zd{\relax}\fi \divide\topw by 2
 \ifodd\topw \def\zc{}\else\def\zc{\relax}\fi \divide\topw by 2
 \ifodd\topw \def\zb{}\else\def\zb{\relax}\fi \divide\topw by 2
 \ifodd\topw \def\za{}\else\def\za{\relax}\fi}%
 \next{#6}%
 \def\next[##1`##2`##3`##4`##5`##6`##7`##8`##9]{%
 \def\nodea{##1}\def\nodeb{##2}\def\nodec{##3}%
 \def\noded{##4}\def\nodee{##5}\def\nodef{##6}%
 \def\nodeg{##7}\def\nodeh{##8}\def\nodei{##9}}%
 \next[#8]%
 \def\next[##1`##2`##3`##4`##5`##6`##7]{%
 \def\labela{##1}\def\labelb{##2}\def\labelc{##3}%
 \def\labeld{##4}\def\labele{##5}\def\labelf{##6}\nextt[##7]}%
 \def\nextt[##1`##2`##3`##4`##5`##6]{%
 \def\labelg{##1}\def\labelh{##2}\def\labeli{##3}%
 \def\labelj{##4}\def\labelk{##5}\def\labell{##6}}%
 \next[#9]%
 \def\next/##1`##2`##3`##4`##5`##6`##7/{%
\morphism(\xpos,\ypos)|\xe|/{##5}/<\deltax,0>[\nodeg`\nodeh;\labele]%
 \ifx\zi\empty\relax \morphism(\xpos,\ypos)||/<-/<-\deltaX,0>[\nodeg`0;]\fi
 \ifx\zd\empty\relax \morphism(\xpos,\ypos)||<0,-\deltaY>[\nodeg`0;]\fi
 \advance\xpos by \deltax
 \morphism(\xpos,\ypos)|\xf|/{##6}/<\deltax,0>[\nodeh`\nodei;\labelf]%
 \ifx\ze\empty\relax \morphism(\xpos,\ypos)||<0,-\deltaY>[\nodeh`0;]\fi
 \advance\xpos by \deltax
 \ifx\zf\empty\relax \morphism(\xpos,\ypos)||<0,-\deltaY>[\nodei`0;]\fi
 \ifx\zl\empty\relax \morphism(\xpos,\ypos)||<\deltaX,0>[\nodei`0;]\fi
 \advance\ypos by \deltay
 \ifx\zk\empty\relax \morphism(\xpos,\ypos)||<\deltaX,0>[\nodef`0;]\fi
 \advance\xpos by -\deltax
 \morphism(\xpos,\ypos)|\xd|/{##4}/<\deltax,0>[\nodee`\nodef;\labeld]%
 \advance\xpos by -\deltax
 \morphism(\xpos,\ypos)|\xc|/{##3}/<\deltax,0>[\noded`\nodee;\labelc]%
 \ifx\zh\empty\relax \morphism(\xpos,\ypos)||/<-/<-\deltaX,0>[\noded`0;]\fi
 \advance\ypos by \deltay
 \morphism(\xpos,\ypos)|\xa|/{##1}/<\deltax,0>[\nodea`\nodeb;\labela]%
 \ifx\zg\empty\relax \morphism(\xpos,\ypos)||/<-/<-\deltaX,0>[\nodea`0;]\fi
 \ifx\za\empty\relax \morphism(\xpos,\ypos)||/<-/<0,\deltaY>[\nodea`0;]\fi
 \advance\xpos by \deltax
 \morphism(\xpos,\ypos)|\xb|/{##2}/<\deltax,0>[\nodeb`\nodec;\labelb]%
 \ifx\zb\empty\relax \morphism(\xpos,\ypos)||/<-/<0,\deltaY>[\nodeb`0;]\fi
 \advance\xpos by \deltax
 \ifx\zc\empty\relax \morphism(\xpos,\ypos)||/<-/<0,\deltaY>[\nodec`0;]\fi
 \ifx\zj\empty\relax \morphism(\xpos,\ypos)||<\deltaX,0>[\nodec`0;]\fi
 \nextt/##7/}%
 \def\nextt/##1`##2`##3`##4`##5`##6/{%
 \morphism(\xpos,\ypos)|\xi|/{##3}/<0,-\deltay>[\nodec`\nodef;\labeli]%
 \advance\xpos by -\deltax
 \morphism(\xpos,\ypos)|\xh|/{##2}/<0,-\deltay>[\nodeb`\nodee;\labelh]%
 \advance\xpos by -\deltax
 \morphism(\xpos,\ypos)|\xg|/{##1}/<0,-\deltay>[\nodea`\noded;\labelg]%
 \advance\ypos by -\deltay
 \morphism(\xpos,\ypos)|\xj|/{##4}/<0,-\deltay>[\noded`\nodeg;\labelj]%
 \advance\xpos by \deltax
 \morphism(\xpos,\ypos)|\xk|/{##5}/<0,-\deltay>[\nodee`\nodeh;\labelk]%
 \advance\xpos by \deltax
 \morphism(\xpos,\ypos)|\xl|/{##6}/<0,-\deltay>[\nodef`\nodei;\labell]}%
 \next/#4/\ignorespaces}
\def\iiixiiip(#1){\ifnextchar|{\iiixiiipp(#1)}%
  {\iiixiiipp(#1)|aammbblmrlmr|}}%
\def\iiixiiipp(#1)|#2|{\ifnextchar/{\iiixiiippp(#1)|#2|}%
    {\iiixiiippp(#1)|#2|/>`>`>`>`>`>`>`>`>`>`>`>/}}%
\def\iiixiiippp(#1)|#2|/#3/{%
    \ifnextchar<{\iiixiiipppp(#1)|#2|/#3/}%
    {\iiixiiipppp(#1)|#2|/#3/<\default,\default>}}%
\def\iiixiiipppp(#1)|#2|/#3/<#4>{\ifnextchar[{\iiixiiippppp(#1)|#2|/#3/%
   <#4>0<0,0>}{\iiixiiippppp(#1)|#2|/#3/<#4>}}%
\def\iiixiiippppp(#1)|#2|/#3/<#4>#5{\ifnextchar<%
   {\iiixiiipppppp(#1)|#2|/#3/<#4>{#5}}%
   {\iiixiiipppppp(#1)|#2|/#3/<#4>{#5}<400,400>}}%
\def\iiixiipppppp(#1,#2)|#3|/#4/<#5>#6<#7>[#8;#9]{%
 \xpos#1\ypos#2\relax
 \def\next|##1##2##3##4##5##6##7|{\def\xa{##1}\def\xb{##2}%
 \def\xc{##3}\def\xd{##4}\def\xe{##5}\def\xf{##6}\def\xg{##7}}%
 \next|#3|%
 \def\next<##1,##2>{\deltax##1\deltay##2}%
 \next<#5>%
 \deltaX#7
 \topw#6
 \def\next{%
 \ifodd\topw \def\zd{}\else\def\zd{\relax}\fi \divide\topw by 2
 \ifodd\topw \def\zc{}\else\def\zc{\relax}\fi \divide\topw by 2
 \ifodd\topw \def\zb{}\else\def\zb{\relax}\fi \divide\topw by 2
 \ifodd\topw \def\za{}\else\def\za{\relax}\fi}%
 \next
 \def\next[##1`##2`##3`##4`##5`##6]{%
 \def\nodea{##1}\def\nodeb{##2}\def\nodec{##3}%
 \def\noded{##4}\def\nodee{##5}\def\nodef{##6}}%
 \next[#8]%
 \def\next[##1`##2`##3`##4`##5`##6`##7]{%
 \def\labela{##1}\def\labelb{##2}\def\labelc{##3}%
 \def\labeld{##4}\def\labele{##5}\def\labelf{##6}\def\labelg{##7}}%
 \next[#9]%
 \def\next/##1`##2`##3`##4`##5`##6`##7/{%
 \ifx\zc\empty\relax\morphism(\xpos,\ypos)<\deltaX,0>[0`\noded;]\fi
 \advance\xpos by\deltaX
 \morphism(\xpos,\ypos)|\xc|/##3/<\deltax,0>[\noded`\nodee;\labelc]%
 \advance\xpos by \deltax
 \morphism(\xpos,\ypos)|\xd|/##4/<\deltax,0>[\nodee`\nodef;\labeld]%
 \advance\xpos by \deltax
 \ifx\zd\empty\relax  \morphism(\xpos,\ypos)<\deltaX,0>[\nodef`0;]\fi
 \advance\xpos by -\deltaX  \advance\xpos by -\deltax
 \advance\xpos by -\deltax  \advance\ypos by \deltay
 \ifx\za\empty\relax\morphism(\xpos,\ypos)<\deltaX,0>[0`\nodea;]\fi
 \advance\xpos by\deltaX
 \morphism(\xpos,\ypos)|\xa|/##1/<\deltax,0>[\nodea`\nodeb;\labela]%
 \morphism(\xpos,\ypos)|\xe|/##5/<0,-\deltay>[\nodea`\noded;\labele]%
 \advance\xpos by \deltax
 \morphism(\xpos,\ypos)|\xb|/##2/<\deltax,0>[\nodeb`\nodec;\labelb]%
 \morphism(\xpos,\ypos)|\xf|/##6/<0,-\deltay>[\nodeb`\nodee;\labelf]%
 \advance\xpos by \deltax
 \morphism(\xpos,\ypos)|\xg|/##7/<0,-\deltay>[\nodec`\nodef;\labelg]%
 \ifx\zb\empty\relax \morphism(\xpos,\ypos)<\deltaX,0>[\nodec`0;]\fi}%
 \next/#4/\ignorespaces}
\def\iiixiip(#1){\ifnextchar|{\iiixiipp(#1)}%
  {\iiixiipp(#1)|aabblmr|}}%
\def\iiixiipp(#1)|#2|{\ifnextchar/{\iiixiippp(#1)|#2|}%
    {\iiixiippp(#1)|#2|/>`>`>`>`>`>`>/}}%
\def\iiixiippp(#1)|#2|/#3/{%
    \ifnextchar<{\iiixiipppp(#1)|#2|/#3/}%
    {\iiixiipppp(#1)|#2|/#3/<\default,\default>}}%
\def\iiixiipppp(#1)|#2|/#3/<#4>{\ifnextchar[{\iiixiippppp(#1)|#2|/#3/%
   <#4>{0}<0>}{\iiixiippppp(#1)|#2|/#3/<#4>}}%
\def\iiixiippppp(#1)|#2|/#3/<#4>#5{\ifnextchar<%
   {\iiixiipppppp(#1)|#2|/#3/<#4>{#5}}%
   {\iiixiipppppp(#1)|#2|/#3/<#4>{#5}<0>}}%
\theoremstyle{plain}
\newtheorem{thm}{Theorem}[section]
\newtheorem{pro}[thm]{Proposition}
\newtheorem{lem}[thm]{Lemma}
\newtheorem{applem}[thm]{Approximation Lemma}
\newtheorem{cla}[thm]{Claim}
\newtheorem{cor}[thm]{Corollary}
\newtheorem{con}[thm]{Conjecture}
\newtheorem{hmck}[thm]{Lifting Lemma}
\theoremstyle{definition}
\newtheorem{dfn}[thm]{Definition}
\newtheorem{nt}[thm]{Notation}
\newtheorem{dfnlm}[thm]{Definition-Lemma}
\newtheorem{rem}[thm]{Remark}
\newtheorem{exa}[thm]{Example}
\newtheorem{stp}[thm]{Setup}
\theoremstyle{remark}
\DeclareMathOperator{\Sing}{Sing}
\DeclareMathOperator{\nklt}{nklt}
\DeclareMathOperator{\lct}{lct}
\DeclareMathOperator{\mult}{mult}
\DeclareMathOperator{\res}{res}
\DeclareMathOperator{\im}{Im}
\DeclareMathOperator{\Supp}{Supp}
\DeclareMathOperator{\Bs}{Bs}
\DeclareMathOperator{\Diff}{Diff}
\DeclareMathOperator{\Div}{Div}
\DeclareMathOperator{\WDiv}{WDiv}
\DeclareMathOperator{\ddiv}{div}
\DeclareMathOperator{\codim}{codim}
\DeclareMathOperator{\Exc}{Exc}
\DeclareMathOperator{\bDiv}{\mathbf{Div}}
\DeclareMathOperator{\bdiv}{\mathbf{div}}
\DeclareMathOperator{\Mob}{Mob}
\DeclareMathOperator{\Fix}{Fix}
\DeclareMathOperator{\Proj}{Proj}
\newcommand{\R}{\mathbb{R}}
\newcommand{\Q}{\mathbb{Q}}
\newcommand{\N}{\mathbb{N}}
\newcommand{\Z}{\mathbb{Z}}
\newcommand{\D}{\mathbf{D}}
\newcommand{\C}{\mathbb{C}}
\newcommand{\FF}{\mathbf{F}}
\newcommand{\M}{\mathbf{M}}
\newcommand{\NN}{\mathbf{N}}
\newcommand{\A}{\mathbf{A}}
\newcommand{\B}{\mathbf{B}}
\newcommand{\K}{\mathbf{K}}
\newcommand{\DDelta}{\mathbf{\Delta}}
\newcommand{\NE}{\mathrm{NE}}
\newcommand{\hcf}{\mathrm{hcf}}
\newcommand{\Image}{\mathrm{Image}}
\newcommand{\OO}{\mathcal{O}}
\newcommand{\Pic}{\mathrm{Pic}}
\newcommand{\mbb}{\mathbb}
\newcommand{\mcal}{\mathcal}
\title[Introduction to the MMP and the existence of flips]{Introduction to the Minimal Model Program and the existence of flips}
\author{Alessio Corti}
\address{Department of Mathematics, Imperial College London, Huxley Bu\-il\-ding, 180 Queen's Gate, London SW7 2AZ, United
Kingdom}
\email{A.Corti@imperial.ac.uk}
\author{Anne-Sophie Kaloghiros}
\address{Department of Pure Mathematics and Mathematical Statistics, Uni\-ver\-si\-ty of Cambridge, Wilberforce Road, Cambridge CB3 0WB, Uni\-ted
Kingdom}
\email{A.S.Kaloghiros@dpmms.cam.ac.uk}
\author{Vladimir Lazi\'c}
\address{Department of Pure Mathematics and Mathematical Statistics, Uni\-ver\-si\-ty of Cambridge, Wilberforce Road, Cambridge CB3 0WB, Uni\-ted
Kingdom}
\email{V.Lazic@dpmms.cam.ac.uk}
\begin{document}

\begin{abstract}
The first aim of this note is to give a concise, but complete and self-contained, presentation of the fundamental theorems of Mori
theory---the nonvanishing, base point free, rationality and cone theorems---using modern methods of multiplier ideals, Nadel vanishing,
and the subadjunction theorem of Kawamata. The second aim is to write up a complete, detailed proof of existence of
flips in dimension $n$ assuming the minimal model program with scaling in dimension $n-1$.
\end{abstract}

\maketitle
\bibliographystyle{amsalpha}

\tableofcontents
\section{Introduction}

Our first aim in this note is to give a concise, but complete and
self-contained, presentation of the fundamental theorems of Mori
theory---the nonvanishing, base point free, rationality and cone
theorems---using modern methods of multiplier ideals, Nadel vanishing,
and the subadjunction theorem of Kawamata. We also give the basic
definitions of log terminal and log canonical singularities of pairs,
and spell out the minimal model program. We hope that the text can
be used as a fast introduction to the field for those who wish quickly
to master the foundations and equip themselves to do research. The
approach here is not, at heart, different from the traditional
one, but it is more efficient and it allows one to focus attention on
the basic issues without being distracted by the technicalities. It
also serves as a demonstration of the power of multiplier ideals, which
play a crucial role in the most recent advances in the field, and the
subadjunction theorem, which is a beautiful and increasingly central
result.

We briefly sketch the key point as it arises in the proof of
the base point free theorem. Let $X$ be a nonsingular projective
variety over $\mathbb{C}$, and $L$ a nef divisor on $X$. Assuming that
$L-\varepsilon K$ is ample for all small $\varepsilon >0$, we want to
show that all large multiples $nL$ are base point free. The first step
is to construct an effective $\mathbb{Q}$-divisor $D\sim mL$ (for some
$m$) such that the pair $(X,D)$ is not klt. For large $n$, we write
$nL=K+D+A$, where $A$ is ample. If $c$ is largest such that $(X,cD)$ is
log canonical, then we write
\[
nL=K+cD +A^\prime
\]
where $A^\prime=A+(1-c)D$ is still ample. Now, if $cD$ contains a
\emph{divisor} $S$ with multiplicity $1$, then we may assume by
induction on the dimension---and working with the pair
$(S,cD_{|S})$---that all large multiples $nL_{|S}$ are base point
free; standard vanishing theorems then imply that $H^0(X,nL)\rightarrow
H^0(S,nL_{|S})$ is surjective, and we get the result. In general, the
pair $(X,cD)$ has some non-klt centres but they may all have higher
codimension. The traditional approach is to blow up until one of the
centres becomes a divisor (in fact, one blows up indiscriminately to a log
resolution of everything in sight). Instead, in these notes, we use
multiplier ideals, Nadel Vanishing, and the subadjunction theorem of
Kawamata, to work \emph{directly on $X$} and lift sections from a
minimal non-klt centre $W\subset X$ of the pair $(X,cD)$.

Our second aim is to write up a complete, detailed proof of existence of
flips in dimension $n$ assuming the minimal model program with
scaling in dimension $n-1$. Our proof is closer in spirit to some of
Shokurov's original arguments in \cite{Sho03} than to the
treatment of \cite{HM05} and, especially, \cite{HM08}. Our aim
is to present as \textsl{robust} a proof as possible. In
short, we proceed as follows. The starting point is a pl flipping
contraction $f\colon (X,S+B) \rightarrow Z$:
\begin{itemize}
\item $X$ is $n$-dimensional and the pair $(X,S+B)$ has plt singularities;
\item $f$ is a flipping contraction for $K+S+B$;
\item $S$ is non-empty and $f$-negative.
\end{itemize}
The aim is to construct the flip of $f$ by showing that the canonical
algebra
\[
R=R(X,K+S+B)=\bigoplus_{n=0}^\infty H^0\bigl(X,n(K+S+B)\bigr)
\]
is finitely generated. Shokurov's great insight was to suggest that we
can do this by showing that the \emph{restricted algebra}
\[
R_S = \text{Image} \bigl(R \rightarrow k(S)[T]\bigr)
\]
is finitely generated. ($R$ is an algebra of rational functions and we
get $R_S$ by restricting those rational functions to $S$.) We develop
Shokurov's language of b-divisors to make sense of the statement that
$R_S$ is a b-divisorial algebra. We then
show that $R_S$ enjoys two key properties:
\begin{enumerate}
\item $R_S$ is an \emph{adjoint algebra}---this notion comes from a
  further major insight of Hacon and M\textsuperscript{c}Kernan and
  the proof of this fact uses their important lifting lemma;
\item $R_S$ is \emph{saturated} in the sense of Shokurov.
\end{enumerate}
Using these two properties, and the minimal model program with scaling
in dimension $n-1$, we can then show that $R_S$ is finitely
generated. Our argument is robust in the sense that the two
key properties---adjointness and saturation---are treated as separate
issues and proved independently of each other.

This note is a cleaned-up version of AC's lectures at the Summer
School in Grenoble \textsl{Geometry of complex projective varieties and the
  minimal model program} 18~June--06~July 2007. We thank the organisers
of that very successful event for providing an ideal setting and
a comprehensively positive atmosphere.

It is our pleasure to acknowledge the influence of Robert Lazarsfeld
on the point of view endorsed in these notes. It was he who
re-iterated Kawamata's suggestion that a more transparent proof of the
fundamental theorems of Mori theory is possible based on multiplier
ideals and the subadjunction theorem. We especially thank Alex
K\"uronya for his careful reading of earlier versions of these notes
and his useful comments. We also thank Christopher Hacon and S\'andor
Kov\'acs.

We tried hard to chase down and remove mistakes from the text; please
accept our apologies for those that must inevitably be still around.

\section{Basic definitions and results}
\label{sec:preliminaries}

\begin{nt} All varieties in this paper are proper, irreducible and
normal over $\C$. We mostly work with projective varieties.
We write $\sim$ for linear equivalence of Weil
divisors and $\equiv$ for numerical equivalence of Cartier
divisors. On a variety $X$, $\WDiv(X)$ denotes the group of Weil divisors,
$\Div(X)$ the group of Cartier divisors and $\Pic(X)=\Div(X)/\sim$.
Subscripts denote either the ring in which the coefficients of
divisors are taken or that the equivalence is relative to a specified
morphism. An ample $\Q$-divisor $A$ is {\em general\/} if $kA$ is a general
member of the linear system $|kA|$ for some $k\gg0$.
\end{nt}

\begin{dfn}
\mbox{}
\begin{enumerate}
\item[1.] A {\em log pair\/} $(X,\Delta)$ consists of a variety $X$
and a divisor $\Delta\in\WDiv(X)_\R$ such that $K_X +
\Delta$ is $\R$-Cartier.
\item[2.] A pair $(X,\Delta)$ is {\em log smooth\/} if $X$ is
  nonsingular and $\Supp\Delta$ has simple normal crossings.
\item[3.] A {\em model\/} over $X$ is a proper birational morphism
$f\colon Y\rightarrow X$.
\item[4.] A {\em log resolution\/} of $(X,\Delta)$ is a model $f\colon
  Y\rightarrow X$ such that $(Y,f^{-1}_*\Delta+\Exc f)$ is log smooth.
\item[5.] A {\em boundary } is a divisor $\Delta= \sum d_i D_i  \in \WDiv(X)_{\R}$ such that $0 \leq d_i \leq 1$ for all $i$.
\end{enumerate}
\end{dfn}
Pairs first arose through the study of open varieties (Iitaka program).
If $U$ is quasi-projective, and if $X$ is a compactification of $U$ such that $\Delta = X \setminus U$
  has simple normal crossings, then the ring
\[
R(X,\Delta)=\bigoplus_{n \in \N} H^0(X, n(K_X+\Delta))
\]
depends only on the open variety $U$ \cite[Chapters 10 and 11]{Ii82}.

A valuation $\nu\colon k(X)\rightarrow\Z$ is {\em geometric\/} if
$\nu=\mult_E$, where $E\subset Y$ is a prime divisor in a model
$Y\rightarrow X$. Divisors $E\subset Y\rightarrow X$ and $E'\subset
Y'\rightarrow X$ define the same geometric valuation if and only if
the induced birational map $Y\dashrightarrow Y'$ is an isomorphism at
the generic points of $E$ and $E'$. The {\em centre\/} of
a geometric valuation $\nu$ on $X$ associated to a divisor $E$ on a
model $f\colon Y\rightarrow X$ is denoted by $c_X\nu=f(E)$. A geometric valuation
$\nu$ is {\em exceptional\/} when $\codim_X(c_X\nu)\geq2$. We often identify a geometric valuation $\nu$ and the
corresponding divisor $E$.

\begin{dfn}
An integral {\em b-divisor\/} $\D$ on $X$ is an element of the group
$$\bDiv(X)=\underleftarrow{\lim}\WDiv(Y),$$
where the limit is taken over all models $f\colon Y\rightarrow X$
with the induced homomorphisms $f_*\colon\WDiv(Y)\rightarrow\WDiv(X)$.
Thus $\D$ is a collection of divisors $\D_Y\in\WDiv(Y)$ compatible with
push-forwards. Each $\D_Y$ is the {\em trace\/} of $\D$ on $Y$.
\end{dfn}
For every model $f\colon Y\rightarrow X$ the induced map $f_*\colon
\bDiv(Y)\rightarrow\bDiv(X)$ is an isomorphism,
so b-divisors on $X$ can be identified with b-divisors on any model over $X$.
For every open subset $U\subset X$ we naturally define the restriction
$\D_{|U}$ of a b-divisor $\D$ on $X$.
\begin{dfn} The \emph{b-divisor of a nonzero rational function $\varphi$} is
$$\bdiv_X\varphi=\sum\nu_E(\varphi)E,$$
where $E$ runs through geometric valuations with centre on $X$.
Two b-divisors are {\em linearly equivalent} if they
differ by the b-divisor of a nonzero rational function.
\noindent \par The associated
{\em b-divisorial sheaf} $\OO_X(\D)$ is defined by
$$\Gamma(U,\OO_X(\D))=\{\varphi\in k(X):(\bdiv_X\varphi+\D)_{|U}\geq0\}.$$
\end{dfn}
\begin{dfn}
The {\em proper transform\/} b-divisor $\widehat{D}$ of an
$\R$-divisor $D$ has trace $\widehat{D}_Y=f_*^{-1}D$ on every model
$f\colon Y\rightarrow X$.
\noindent \par
The {\em Cartier closure\/} of an $\R$-Cartier divisor $D$ on $X$ is
the b-divisor $\overline{D}$ with trace $\overline{D}_Y=f^*D$ on every model
$f\colon Y\rightarrow X$.
\noindent \par
A b-divisor $\D$ {\em descends\/} to a model $Y\rightarrow X$ if $\D=\overline{\D_Y}$; we then say that $\D$ is a {\em Cartier b-divisor\/}.

A b-divisor $\D$ on $X$ is {\em good} on a model $Y \rightarrow X$ if $\D \geq \overline{\D_Y}$; in particular the sheaf $\OO(\D)$ is coherent.

\noindent \par
A b-divisor $\M$ on $X$ is {\em mobile\/} if it descends to a model
$Y\rightarrow X$ where $\M_Y$ is basepoint free.
\end{dfn}
Note that if a mobile b-divisor $\M$ descends to a model
$W\rightarrow X$, then $\M_W$ is free and $H^0(X,\M)=H^0(W,\M_W)$.
\begin{lem}\label{mobile}
Let $\NN$ be a good b-divisor on $X$. There is a mobile b-divisor $\M$ such that $\M_Y=\Mob\NN_Y$
for every model $f\colon Y\rightarrow X$ and $H^0(X,\NN)=H^0(Y,\NN_Y)$.
\end{lem}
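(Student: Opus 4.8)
The plan is to build $\M$ by resolving the linear system determined by $\NN$ on a model where $\NN$ is good, and then to identify $\M_Y$ with $\Mob\NN_Y$ by tracking fixed parts under pullback and pushforward. First I would fix a model $g\colon Y_0\to X$ on which $\NN$ is good, and set $N_0:=\NN_{Y_0}$, so that $N_0$ is $\R$-Cartier and $\OO_X(\NN)$ is coherent. Let $\pi\colon W\to Y_0$ be a model, with $W$ nonsingular, resolving the base scheme of $|N_0|$, so that on $W$ the system $|N_0|$ becomes of the form $|M_W|+F$ with $M_W$ a basepoint-free Cartier divisor, $F\geq0$ its fixed divisor, and $H^0(W,M_W)=H^0(Y_0,N_0)$. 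I then set $\M:=\overline{M_W}$, the Cartier closure. Since $\M$ descends to $W$ with $\M_W=M_W$ basepoint free, $\M$ is by definition a mobile b-divisor, and the remark following the definition of mobile b-divisors gives $H^0(X,\M)=H^0(W,M_W)$.

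The two consequences of goodness that drive the rest are as follows. First, for every model $h\colon Y\to Y_0$ the obvious inclusions
\[
H^0(X,\NN)\subseteq H^0(Y,\NN_Y)\subseteq H^0(Y_0,N_0)
\]
are in fact equalities: the right one because $h_*\OO_Y(\NN_Y)\subseteq\OO_{Y_0}(h_*\NN_Y)=\OO_{Y_0}(N_0)$, and the left one because the trace of $\bdiv_X\varphi$ on $Y$ is the $h$-pullback of its trace on $Y_0$, so that $\NN_Y\geq h^*N_0$ propagates $\bdiv_{Y_0}\varphi+N_0\geq0$ to $\bdiv_Z\varphi+\NN_Z\geq0$ on every model $Z$. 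Taking $Y=Y_0$ this is the second assertion of the lemma, and combined with $\pi_*\OO_W=\OO_{Y_0}$ and the fact that every section of $N_0$ vanishes along $F$ it also yields $H^0(X,\M)=H^0(X,\NN)$. Second, since $\NN_Y\geq h^*N_0$ while $h_*\NN_Y=N_0$, one may write $\NN_Y=h^*N_0+E_Y$ with $E_Y\geq0$ effective and $h$-exceptional; pulling back rational functions then shows that every member of $|\NN_Y|$ has the form $h^*D+E_Y$ with $D\in|N_0|$.

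It remains to check $\M_Y=\Mob\NN_Y$ for every model $Y$. For $Y$ dominating $W$, with $\rho\colon Y\to W$, the members of $|\NN_Y|$ are by the second fact exactly the divisors $\rho^*M'+\rho^*F+E_Y$ as $M'$ ranges over the basepoint-free system $|M_W|$; hence $\Fix|\NN_Y|=\rho^*F+E_Y$ and so $\Mob\NN_Y=\rho^*M_W=\M_Y$. For a general model $Y$ over $Y_0$ I would choose $Y'$ dominating both $Y$ and $W$, with $\mu\colon Y'\to Y$, so that $\M_Y=\mu_*\M_{Y'}=\mu_*\Mob\NN_{Y'}$, and the problem reduces to showing $\mu_*\Mob\NN_{Y'}=\Mob\NN_Y$; a model $Y$ not lying over $Y_0$ is brought into this range by the same comparison against a common model with $Y_0$. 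This pushforward-compatibility of $\Mob$ is the one genuine obstacle, since $\Mob$ does not commute with pushforward for arbitrary divisors. It holds here because the first fact pins down the members of $|\NN_{Y'}|$ as the $\mu$-pullbacks of those of $|\NN_Y|$, each corrected by the same $\mu$-exceptional divisor, so that comparing coefficients along the strict transforms of the prime divisors of $Y$ gives $\mu_*\Fix|\NN_{Y'}|=\Fix|\NN_Y|$, and hence $\mu_*\Mob\NN_{Y'}=\Mob\NN_Y$. The heart of the proof is thus the interplay between goodness, which freezes the space of sections across all models over $Y_0$, and the elementary behaviour of fixed parts under birational morphisms.
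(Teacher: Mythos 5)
Your proposal is correct and rests on the same key observation as the paper's (very compressed) proof: goodness lets one write $\NN_Y=f^*\NN_X+E$ with $E\geq0$ exceptional, so that sections of $\NN_Y$ are pullbacks of sections of $\NN_X$, the mobile part is insensitive to the correction $E$, and $\Mob$ then pushes forward compatibly. You unpack this in more detail---constructing $\M$ as the Cartier closure of a free $M_W$ on a resolution of the base scheme and then verifying $\M_Y=\Mob\NN_Y$ model by model---but it is essentially the same argument; the only place your write-up is thinner than it should be is the reduction of a model $Y$ not dominating $Y_0$ to one that does, which however is vacuous under the reading the paper actually uses (goodness on $X$ itself, so every model dominates $X$).
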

\begin{proof}
Since $\NN_Y=f^*\NN_X+E$ for some effective and exceptional divisor $E$, we have $f_*\Mob\NN_Y=f_*\Mob f^*\NN_X=\Mob\NN_X$.
\end{proof}
\vspace{2mm}
\paragraph{\bf Cartier restriction.}
Let $\D$ be a Cartier b-divisor on $X$ and let $S$ be a normal
prime divisor in $X$ such that $S\not\subset\Supp\D_X$.
Let $f\colon Y\rightarrow X$ be a log resolution of $(X,S)$ such that
$\D$ descends to $Y$. Define the {\em restriction\/} of $\D$
to $S$ as
$$\D_{|S}:=\overline{\D_{Y|\widehat S_Y}}.$$
This is a b-divisor on $S$ via $(f_{|\widehat S_Y})_*$ that does not
depend on the choice of log resolution.
By definition, $\D_{|S}$ is a Cartier b-divisor that satisfies
$(\D_1+\D_2)_{|S}={\D_1}_{|S}+{\D_2}_{|S}$, and
${\D_1}_{|S}\geq{\D_2}_{|S}$ if $\D_1\geq\D_2$.
\begin{dfn}
\mbox{}
\begin{enumerate}
\item[1.] The {\em canonical\/} b-divisor $\K_X$ on $X$ has trace
  $(\K_X)_Y=K_Y$ on every model $Y\rightarrow X$.
\item[2.] The {\em discrepancy\/} $\A(X,\Delta)$ of the pair $(X,\Delta)$ is
$$\A(X,\Delta)=\K_X-\overline{K_X+\Delta}.$$
\item[3.] For a geometric valuation $E$ on $X$, the {\em discrepancy of
    $E$\/} with respect to $(X,\Delta)$ is
$$a(E,X,\Delta)=\mult_E\A(X,\Delta).$$
\end{enumerate}
\end{dfn}
\begin{dfn}  \label{dfn:51}
Let $(X,\Delta)$ be a pair and let $D$ be an $\R$-Cartier divisor on $X$.
The \emph{multiplier ideal sheaf} associated to $(X,\Delta)$ and $D$ is
$$\mathcal{J}((X,\Delta); D)=\OO_X(\lceil\A(X,\Delta+D)\rceil).$$
\end{dfn}
\begin{rem}\label{rem:4}
We obviously have $\mathcal{J}((X,\Delta); D)=\mathcal{J}((X,\Delta+D);0)$. When no confusion is likely
we write $\mathcal{J}(\Delta)=\mathcal{J}((X,\Delta);0)$.
\end{rem}
\begin{thm}[Local vanishing]
Let $f \colon Y\rightarrow X$ be a log resolution of a pair $(X,\Delta)$. Then
$$R^i f_*\OO_Y(\lceil\A(X,\Delta)_Y\rceil)=0$$
for all $i>0$.
\end{thm}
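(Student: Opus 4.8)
The plan is to unwind the definitions and reduce the statement to the relative Kawamata--Viehweg vanishing theorem on $Y$. First I would rewrite the sheaf intrinsically: by the definition of discrepancy, $\A(X,\Delta)_Y = (\K_X)_Y - (\overline{K_X+\Delta})_Y = K_Y - f^*(K_X+\Delta)$, where $K_X+\Delta$ is $\R$-Cartier by the standing convention on log pairs, so the pullback makes sense. Since $f$ is a log resolution of $(X,\Delta)$, the variety $Y$ is nonsingular and $f^{-1}_*\Delta+\Exc f$ has simple normal crossings; writing $K_Y = f^*(K_X+\Delta)+\sum_i a_i E_i$ with $E_i$ the prime components of $f^{-1}_*\Delta+\Exc f$, one gets $\A(X,\Delta)_Y = \sum_i a_i E_i$, supported on a simple normal crossings divisor, so $\lceil\A(X,\Delta)_Y\rceil = \sum_i\lceil a_i\rceil E_i$ is an integral divisor, hence Cartier since $Y$ is smooth.

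Next I would put $N:=\lceil\A(X,\Delta)_Y\rceil$ into a form suited to vanishing. Setting $\Delta':=\sum_i(\lceil a_i\rceil - a_i)E_i$, we have $N = K_Y+\Delta'-f^*(K_X+\Delta)$, and $\Delta'$ is effective, with simple normal crossings support, and with all coefficients in $[0,1)$, so $\lfloor\Delta'\rfloor = 0$ and $(Y,\Delta')$ is log smooth (in particular klt). Consequently $N-(K_Y+\Delta') = -f^*(K_X+\Delta)$ is numerically trivial over $X$, hence $f$-nef, and also $f$-big since $f$ is birational (the generic fibre being a point).

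The conclusion then follows by quoting the relative Kawamata--Viehweg vanishing theorem for the smooth variety $Y$, the projective birational morphism $f$, the Cartier divisor $N$ and the boundary $\Delta'$: as $N-(K_Y+\Delta')$ is $f$-nef and $f$-big with $\lfloor\Delta'\rfloor = 0$ and $\Supp\Delta'$ simple normal crossings, $R^if_*\OO_Y(N) = 0$ for every $i>0$, which is exactly the claim.

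The one real ingredient here is the relative vanishing theorem; the rest is bookkeeping with round-ups, so I do not expect a serious obstacle. The point to watch is that $\Delta'$ is a genuine boundary, with no integral part, on a log smooth pair --- this is precisely where the hypothesis that $f$ is a \emph{log resolution} of $(X,\Delta)$, and not merely some model over $X$, is used. A reader wanting a fully self-contained treatment would instead derive the required vanishing from Koll\'ar's injectivity theorem, or from Kodaira vanishing on $Y$ together with a cyclic-covering construction absorbing the fractional twist $\Delta'$; but quoting the relative vanishing theorem is the most economical route.
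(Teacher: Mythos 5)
Your proof is correct, but it takes a genuinely different route from the paper's. You reduce directly to \emph{relative} Kawamata--Viehweg vanishing: you rewrite $\lceil\A(X,\Delta)_Y\rceil = K_Y + \Delta' - f^*(K_X+\Delta)$ with $\Delta' = \lceil\A(X,\Delta)_Y\rceil - \A(X,\Delta)_Y$ a boundary with coefficients in $[0,1)$ and simple normal crossings support, observe that $-f^*(K_X+\Delta)$ is $f$-nef and $f$-big (trivially, since $f$ is birational), and quote the relative vanishing theorem for the log smooth pair $(Y,\Delta')$. The paper instead avoids citing the relative version: assuming $X$ projective, it twists the pushforward sheaves by a sufficiently ample $\OO_X(mA)$ so that they become globally generated and Serre-acyclic, uses the degenerating Leray spectral sequence to identify $H^0(X, R^if_*(\cdots)\otimes\OO_X(mA))$ with $H^i(Y, K_Y - \lfloor f^*\Delta\rfloor + f^*(mA))$, kills the latter by \emph{absolute} Kawamata--Viehweg vanishing (using that $mA-\Delta$ is ample), and concludes that a globally generated sheaf with no sections is zero; the non-projective case is then delegated to Lazarsfeld. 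Your approach is shorter and uniform, and needs no projectivity of $X$ (only projectivity of $f$, automatic for a log resolution); the paper's approach keeps the argument self-contained modulo the absolute vanishing theorem only, at the cost of the ample-twist bookkeeping and a caveat about the non-projective case. One small point worth being explicit about: the identity $\lceil\A(X,\Delta)_Y\rceil = \sum_i\lceil a_i\rceil E_i$ with $E_i$ among the components of $f_*^{-1}\Delta + \Exc f$ also relies on the fact that every other prime divisor on $Y$ has discrepancy zero, so it contributes nothing; you implicitly use this and it is fine, but it deserves a word.
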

\begin{proof}
If $X$ is projective, consider an ample divisor $A$
such that for $m\gg0$, the divisors $mA-\Delta$ are ample, the sheaves $R^i f_*\OO_Y(K_Y-\lfloor f^{\ast}\Delta\rfloor)\otimes\OO_X(mA)$
are globally generated (when non-zero ), and
$$H^j(X,R^i f_*\OO_Y(K_Y-\lfloor f^{\ast}\Delta\rfloor)\otimes\OO_X(mA))=(0)$$
for all $j>0$ and $i\geq0$. The Leray spectral sequence and Kawamata-Viehweg vanishing yield
\begin{multline*}
H^0(X,R^i f_*\OO_Y(K_Y-\lfloor f^{\ast}\Delta\rfloor)\otimes\OO_X(mA))\\
=H^i(Y,\OO_Y(K_Y-\lfloor f^{\ast}\Delta\rfloor+f^*(mA)))=(0).
\end{multline*}
In particular, the sheaves $R^i f_*\OO_Y(K_Y-\lfloor f^{\ast}\Delta\rfloor)\otimes\OO_X(mA)$ are zero because they are globally generated, and thus
$$R^i f_{\ast}\OO_Y(\lceil\A(X,\Delta)_Y\rceil)=R^i f_{\ast}\OO_Y(K_Y-\lfloor f^{\ast} \Delta
\rfloor)\otimes \OO_X(-K_X)=0$$
for $i>0$.

For the non projective case, see
\cite[Theorem 9.4.1]{Laz04}.
\end{proof}
\begin{thm}[Nadel vanishing]
\label{thm:4}
Let $L$ be a Cartier divisor on $X$ such that $L-(K_X+\Delta)$ is nef and
big. Then
$$H^i\big(X, \OO_X(L) \otimes\mathcal{J}(\Delta)\big)=(0)$$
for all $i>0$.
\end{thm}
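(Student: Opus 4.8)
The plan is to reduce, via a log resolution, to an instance of Kawamata--Viehweg vanishing, in the same spirit as the proof of the Local vanishing theorem above. Fix a log resolution $f\colon Y\to X$ of $(X,\Delta)$. Unwinding the definition of the multiplier ideal sheaf, and using the Local vanishing theorem (which guarantees that the resulting sheaf is coherent and independent of the chosen resolution), one has
\[
\mathcal{J}(\Delta)=f_*\OO_Y\bigl(\lceil\A(X,\Delta)_Y\rceil\bigr),\qquad\A(X,\Delta)_Y=K_Y-f^*(K_X+\Delta).
\]
Since $\OO_X(L)$ is locally free, the projection formula gives
\[
\OO_X(L)\otimes R^qf_*\OO_Y\bigl(\lceil\A(X,\Delta)_Y\rceil\bigr)=R^qf_*\Bigl(\OO_Y\bigl(\lceil\A(X,\Delta)_Y\rceil\bigr)\otimes f^*\OO_X(L)\Bigr),
\]
and by Local vanishing the right-hand side is $0$ for $q>0$, while for $q=0$ it is $\OO_X(L)\otimes\mathcal{J}(\Delta)$. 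Hence the Leray spectral sequence for $f$ collapses and produces, for every $i$,
\[
H^i\bigl(X,\OO_X(L)\otimes\mathcal{J}(\Delta)\bigr)=H^i\Bigl(Y,\OO_Y\bigl(\lceil\A(X,\Delta)_Y\rceil+f^*L\bigr)\Bigr).
\]

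It then remains to compute the cohomology on $Y$. Because $K_Y$ and $f^*L$ are integral, setting $N:=f^*\bigl(L-(K_X+\Delta)\bigr)$ one can rewrite
\[
\lceil\A(X,\Delta)_Y\rceil+f^*L=\bigl\lceil K_Y-f^*(K_X+\Delta)+f^*L\bigr\rceil=K_Y+\lceil N\rceil.
\]
The divisor $N$ is nef and big: it is the pullback under the birational morphism $f$ of a nef and big $\R$-divisor, and volume is a birational invariant. Moreover $B:=\lceil N\rceil-N$ is effective with every coefficient in $[0,1)$, and $\Supp B\subseteq\Exc f\cup\Supp f_*^{-1}\Delta$, which has simple normal crossings because $f$ is a log resolution of $(X,\Delta)$. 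Thus Kawamata--Viehweg vanishing (in its $\R$-divisor form) applies on the nonsingular projective variety $Y$ to the divisor $K_Y+\lceil N\rceil=K_Y+N+B$ and gives $H^i\bigl(Y,\OO_Y(K_Y+\lceil N\rceil)\bigr)=0$ for all $i>0$, which is the asserted vanishing.

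The argument is a near-formal consequence of Local vanishing, so there is no single hard step; the only point to watch is the rounding bookkeeping, namely that after rounding up, the error term $B=\lceil N\rceil-N$ is a genuine boundary with \emph{strictly} fractional coefficients and simple normal crossings support---this is exactly what legitimises the application of Kawamata--Viehweg vanishing, and it is ensured precisely by choosing $f$ to resolve the pair $(X,\Delta)$. When $X$ is not projective one replaces global Kawamata--Viehweg vanishing by its relative/analytic version, exactly as is done in the proof of the Local vanishing theorem.
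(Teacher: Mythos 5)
Your proposal is correct and follows essentially the same route as the paper: pass to a log resolution, invoke Local vanishing to collapse the Leray spectral sequence, and finish with Kawamata--Viehweg vanishing on $Y$. You simply make explicit some bookkeeping that the paper leaves implicit (the projection formula step, the identification $\lceil\A(X,\Delta)_Y\rceil+f^*L=K_Y+\lceil N\rceil$, and the verification that the rounding error $B$ is a fractional SNC boundary), which is harmless.
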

\begin{proof}
Let $f \colon Y \rightarrow X$ be a log resolution of $(X,
\Delta)$. The Local vanishing theorem states that
$$R^i f_*\OO_Y(\lceil\A(X,\Delta)_Y\rceil+f^{\ast}L)=0$$
for $i>0$; the Leray spectral sequence and Kawamata-Viehweg vanishing imply
\[
H^i(X,\OO_X(L) \otimes
\mathcal{J}(\Delta)\big))= H^i(Y, \lceil\A(X,\Delta)_Y\rceil+ f^{\ast} L)=(0)
\]
for $i>0$.
\end{proof}
The following result is a characterisation of \emph{big}
divisors.
\begin{dfnlm}[Kodaira's Lemma]
\label{lem:61} Let $D$ be a Cartier divisor on a projective variety $X$
of dimension $n$. The divisor $D$ is \emph{big} if one of the
following equivalent conditions holds:
\begin{enumerate}
\item[1.] There exists $C >0$ such
  that $h^0(X, kD) > Ck^n$ for $k\gg1$.
\item[2.] For any ample divisor $A$ on $X$, there exists $m \in \N$ and
  an effective divisor $E$ on $X$ such that $mD \sim A+E$.
\end{enumerate}
If $D$ is nef, $D$ is big if and only if one of the
following conditions holds:
\begin{enumerate}
\item[1.] $D^n >0$.
\item[2.] There is an effective divisor $E$ and ample $\Q$-divisors
  $A_k$ such that $D \equiv A_k+(1/k)E$ for $k\gg1$.
\end{enumerate}
\end{dfnlm}
We recall some definitions of singularities of pairs.
\begin{dfn}\label{dfn:2}
\mbox{}
\begin{enumerate}
\item[1.] A log pair $(X, \Delta)$ is \emph{Kawamata log terminal (klt)} if
  $\lceil\A(X,\Delta)\rceil\geq0$, or equivalently if
  $\mcal{J}(\Delta)=\OO_X$.
\item[2.] A log pair $(X, \Delta)$ is \emph{purely log terminal (plt)} if
  $a(E,X,\Delta) > -1$ for every exceptional geometric valuation $E$
  on $X$; in particular $\lceil \Delta \rceil$ is reduced and it can be proved that the connected components of $\lceil \Delta \rceil$ are normal.
\item[3.] A log pair $(X, \Delta)$ is \emph{log canonical (lc)} if
  $a(E,X,\Delta) \geq -1$ for every geometric valuation $E$ on $X$.
\item[4.] A log pair $(X, \Delta)$ is \emph{divisorially log terminal (dlt)} if
there is a log resolution $Y\rightarrow X$ such that
$\lceil\A(X,\Delta)_Y+\widehat\Delta_Y\rceil\geq0$.
\end{enumerate}
\end{dfn}
\begin{dfnlm}\label{dfn:5}
Let $(X,\Delta)$ be a lc pair.
  \begin{enumerate}
  \item[1.] The \emph{non-klt locus} of $(X,\Delta)$ is
\[ \nklt(X,\Delta)=\Supp \big(\OO_X/\mcal{J}(\Delta)\big).\]
\item[2.] A \emph{non-klt centre} is the centre $W$ of a geometric
  valuation such that
$W\subset\nklt(X,\Delta)$; therefore
$\mathcal{J}(\Delta)\subset \mathcal{I}_W$.
\item[3.] A non-klt centre $W$ is \emph{isolated} if for any geometric
  valuation $E$ on $X$ such that
  $a(E,X, \Delta)=-1$, $c_XE=W$. A non-klt centre $W$ is
  \emph{exceptional} if it is isolated and if there is a unique geometric
  valuation $E$ on $X$ with $a(E,X, \Delta)=-1$.
\item[4.]If $W$ is an isolated
  non-klt centre, $\mathcal{J}(\Delta)=\mathcal{I}_W$.
\end{enumerate}
\end{dfnlm}
\begin{proof}
Let $W$ be an isolated non-klt centre and let $f\colon Y\rightarrow X$ be a log resolution.
Then $\lceil \A(X,\Delta)_Y \rceil=-E+A$, where $E$ and $A$ are effective, each component of $E$ maps onto $W$ and $A$ is $f$-exceptional.
Thus $f_*\OO_Y(A)=\OO_X$
and since $R^1f_*\OO_Y(-E+A)=0$ by Local vanishing, applying $f_*$ to the sequence
$$0\rightarrow\OO_Y(-E+A)\rightarrow\OO_Y(A)\rightarrow\OO_E(A_{|E})\rightarrow0$$
yields
$$0\rightarrow \mathcal{J}(\Delta)\rightarrow\OO_X\rightarrow f_*\OO_E(A_{|E})\rightarrow0.$$
Therefore $f_*\OO_E(A_{|E})$ is a quotient of $\OO_X$ which is an $\OO_W$-sheaf since $f(E)=W$, so $f_*\OO_E(A_{|E})=\OO_W$ and
$\mathcal{J}(\Delta)=\mcal I_W$.
\end{proof}
\begin{rem}
\label{rem:1}
Let $f \colon Y \rightarrow X$ be a log resolution of $(X, \Delta)$ and for $1 \leq i \leq m$,
 let $E_{i}\subset Y$ be the geometric valuations on
$X$ with $a(E_i, X, \Delta)=-1$. The
non-klt centres of $(X, \Delta)$ are precisely the images $f(\cap_{i \in N}E_i)$ for
$N\subset\{1, \ldots, m\}$. In particular, for any $x \in
\nklt(X, \Delta)$, there is a well defined \emph{minimal non-klt centre}
through $x$.
\end{rem}
\begin{lem}[Tie breaking, {\cite[Proposition 8.7.1]{Kol07}}]
\label{lem:11}
Let $(X, \Delta)$ be a klt pair and $D$ a $\Q$-Cartier divisor on $X$ such
that $(X,\Delta+D)$ is lc. Let $W$ be a
minimal non-klt centre of $(X, \Delta+D)$ and let $A$ be an ample divisor. Then there are
arbitrarily small $\varepsilon, \eta>0$
and a divisor $D' \sim_{\Q} A$ such that $W$ is an exceptional non-klt centre of
$(X,\Delta+(1-\varepsilon)D+ \eta D')$.
\end{lem}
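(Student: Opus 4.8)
First I would pass to a log resolution, use the klt hypothesis on $(X,\Delta)$ to \emph{destroy} the non-klt locus of $(X,\Delta+D)$ by shrinking $D$ slightly, and then \emph{re-create} a single worst singularity concentrated exactly at $W$ by adding a small, suitably singular, $\Q$-divisor $\eta D'$ with $D'\sim_\Q A$. Concretely: fix a log resolution $f\colon Y\to X$ of $(X,\Delta+D)$ with prime divisors $E_1,\dots,E_m$ on $Y$, and put $a_i=a(E_i,X,\Delta+D)$ and $b_i=a(E_i,X,\Delta)+1$. Since $(X,\Delta+D)$ is lc we have $a_i\geq-1$; since $(X,\Delta)$ is klt we have $b_i>0$ for every $i$; and, from $\A(X,\Delta+D)=\A(X,\Delta)-\overline D$, one gets $\mult_{E_i}\overline D=b_i-1-a_i\geq0$, which equals $b_i$ precisely when $a_i=-1$. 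Let $L=\{i:a_i=-1\}$; by Remark~\ref{rem:1} the non-klt centres of $(X,\Delta+D)$ are the finitely many $f\bigl(\bigcap_{i\in I}E_i\bigr)$, $\emptyset\neq I\subseteq L$, with $W$ minimal among them, so $c_XE_i\not\subsetneq W$ for all $i\in L$. For $0<\varepsilon\leq1$ set $\Delta_\varepsilon=\Delta+(1-\varepsilon)D$; then $a(E_i,X,\Delta_\varepsilon)=a_i+\varepsilon\,\mult_{E_i}\overline D$, which is $>-1$ for $i\notin L$ and equals $-1+\varepsilon b_i>-1$ for $i\in L$. Since $Y$ is also a log resolution of $(X,\Delta_\varepsilon)$, the pair $(X,\Delta_\varepsilon)$ is klt for every $\varepsilon\in(0,1)$.

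Next I would build $D'$. As $A$ is ample and $W\subsetneq X$, there is $c_0>0$ such that for all $k\gg0$ and all integers $1\leq j\leq c_0k$ the linear system $|kA\otimes\mcal I_W^{j}|$ is nonempty, a general member $\Gamma$ of it has multiplicity exactly $j$ along $W$ and is nonsingular away from $W$, and on the blow-up $\beta\colon X'\to X$ of (the strict transform of) $W$, with exceptional divisor $E_W$, one has $\beta^*\Gamma=\widehat\Gamma+jE_W$ with $\widehat\Gamma$ a general member of a base point free system on $X'$. Set $D'=\tfrac1k\Gamma\sim_\Q A$. Its relevant features are: $\mult_E\overline{D'}=0$ for every geometric valuation $E$ with $c_XE\not\subseteq W$ (the base locus of $|kA\otimes\mcal I_W^{j}|$ lies in $W$ and $\Gamma$ is general); $\mult_E\overline{D'}\geq j/k>0$ whenever $c_XE=W$ (such $E$ dominates $W$); and $\mult_E\overline{D'}$ is bounded once $k$ is fixed for valuations with centre strictly inside $W$.

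Now I would fix the constants. Among the finitely many $l\in L$ with $c_XE_l=W$ set $\rho_l=\mult_{E_l}\overline{D'}/b_l>0$; choosing $\Gamma$ generically I expect a \emph{unique} index $l_0$ with $\rho_{l_0}=\max_l\rho_l$. Put $\eta=\varepsilon\,b_{l_0}/\mult_{E_{l_0}}\overline{D'}=\varepsilon/\rho_{l_0}$, so $\varepsilon$ and $\eta$ are comparable and both may be taken arbitrarily small, and set
\[
\Delta'=\Delta_\varepsilon+\eta D'=\Delta+(1-\varepsilon)D+\eta D'.
\]
By the choice of $\eta$, $a(E_{l_0},X,\Delta')=-1+\varepsilon b_{l_0}-\eta\,\mult_{E_{l_0}}\overline{D'}=-1$. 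It remains to check that $(X,\Delta')$ is lc and that $E_{l_0}$ is its only valuation of discrepancy $-1$; granting this, $W=c_XE_{l_0}$ is an exceptional non-klt centre of $(X,\Delta')$, as required. For the verification one would inspect all geometric valuations on a common log resolution of $(X,\Delta')$: (a) for $E_i$ on $Y$ with $c_XE_i\not\subseteq W$ one has $\mult_{E_i}\overline{D'}=0$, hence $a(E_i,X,\Delta')=a(E_i,X,\Delta_\varepsilon)>-1$; (b) for $E_l$ on $Y$ with $c_XE_l=W$ one computes $a(E_l,X,\Delta')=-1+\eta\,\mult_{E_l}\overline{D'}\,(\rho_{l_0}/\rho_l-1)$, which is $>-1$ for $l\neq l_0$ by uniqueness of the maximum; (c) for the remaining valuations centred on or inside $W$, and for the valuations introduced when resolving $\Gamma$ away from $W$, one has $a(E,X,\Delta+D)>-1$ while $\mult_E\overline{D'}$ is controlled, so $a(E,X,\Delta')>-1$ once $\varepsilon$ (hence $\eta$) is small enough, using that $(X,\Delta_\varepsilon)$ is klt and that $\Gamma$ is a general member of a base point free system near a general point of its support.

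The hard part is step~(c) for valuations centred on or inside $W$: a priori there may be valuations $E$ over $W$ with $a(E,X,\Delta+D)$ arbitrarily close to $-1$ that still see $\Gamma$ strongly, so the slack $a(E,X,\Delta+D)+1$ is not obviously bounded below — and with it the existence and uniqueness of the maximiser $l_0$ is in question. I would handle this by transferring the analysis of singularities over $W$ to the blow-up $\beta\colon X'\to X$: using $\beta^*\Gamma=\widehat\Gamma+jE_W$ with $\widehat\Gamma$ general base point free and $j/k$ in the right range, the behaviour over $W$ becomes that of an explicit plt-type pair whose unique log canonical place over $W$ is $E_{l_0}$. Establishing cleanly that exactly one valuation over $W$ returns to discrepancy $-1$, and none drops below, is the technical heart; the rest is routine bookkeeping with discrepancy b-divisors.
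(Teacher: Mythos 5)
Your proof follows the same structural outline as the paper's: pass to a log resolution, shrink $D$ to $(1-\varepsilon)D$ using the klt hypothesis to gain slack $\varepsilon b_i$ on every discrepancy, add $\eta D'$ with $D'\sim_\Q A$, and choose $\eta$ by an extremal (min/max) condition so that exactly the places over $W$ return to discrepancy $-1$. Your formula $\eta=\varepsilon/\rho_{l_0}$ is the same as the paper's $\eta=\min\{\varepsilon b_i/d_i : a_i=-1,\ d_i>0\}$, and the bookkeeping in steps (a)--(b) is correct.

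You do diverge in two places, and one of them hides a genuine gap. First, you construct $D'$ from a general member of $|kA\otimes\mcal{I}_W^j|$, controlling multiplicity along $W$. The paper instead takes a Cartier divisor $Z$ with $W$ the unique non-klt centre contained in $\Supp Z$, sets $D'=\frac1p(Z+D_0)$ with $D_0$ general in $|pA-Z|$, and thereby gets the clean dichotomy: among the $E_i$ with $a_i=-1$ on a fixed log resolution of $(X,\Delta+D+D')$, one has $d_i>0$ precisely when $f(E_i)=W$. That dichotomy is all that is needed for ``isolated,'' and the paper stops there, deferring ``exceptional'' to \cite[Proposition 8.7.1]{Kol07}. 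Second, you aim for ``exceptional'' directly by isolating a unique maximizer $l_0$ of $\rho_l=d_l/b_l$; this is more ambitious than the paper's written argument.

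The genuine gap is exactly your step (c), and you are right to flag it. The worry as you phrase it---``valuations over $W$ with $a(E,X,\Delta+D)$ arbitrarily close to $-1$''---is slightly misdirected, since log canonicity and exceptionality can be read off from a \emph{single} log resolution $Y$ of $(X,\Delta+D+D')$, on which the discrepancies are a finite set. The real issue is different: you must check the condition on a log resolution that resolves $D'$ as well, and passing from a log resolution of $(X,\Delta+D)$ to one of $(X,\Delta+D+D')$ introduces new exceptional divisors over intersections of the original lc places; some of these new divisors have $a_i=-1$ and $d_i>0$, and for them the ratios $\rho_l$ are not controlled by your choice of $\Gamma$ (you chose $\Gamma$ before knowing which blowups the resolution would perform). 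Establishing that after this resolution the maximizer is unique---or, equivalently, that a suitable general tie-breaking perturbation can single out one lc place---is the content of the exceptionality argument in Kollár, and your sketch does not supply it. So the proposal as written proves ``isolated'' correctly (the paper's own scope), while the claim of ``exceptional'' is not yet justified.
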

\begin{proof}
We show how to make the non-klt centre $W$ isolated.
 Let $f \colon Y \rightarrow X$ be a log resolution of the pair $(X,
 \Delta+D)$. We write
 \[
   K_Y=f^{\ast}(K_X+\Delta+D)+ \sum a_i E_i  \quad\mbox{and}\quad
 f^{\ast}D = \sum b_i E_i,
 \]
so that
\[
K_Y=f^{\ast}(K_X+\Delta + (1- \varepsilon)D)+ \sum(a_i+\varepsilon b_i)E_i.
\]
Since $(X, \Delta+D)$ is lc and $(X, \Delta)$ is klt, $a_i \geq -1$, $a_i+b_i >-1$ and hence $a_i + \varepsilon b_i>-1$ for all
$\varepsilon>0$.

Let $Z$ be a Cartier divisor such that $W$ is the only non-klt centre contained in $\Supp Z$ and let $D_0$ be a general member
of the linear system $\vert pA-Z\vert$ for a large $p$.
Consider a $\Q$-divisor $D' = \frac1p(Z+D_0)\sim_\Q A$; in particular $W$ is the only non-klt centre contained in $\Supp D'$.
Write $f^{\ast}D'= \sum d_i E_i$, where we may assume that $f$ is a log resolution of $(X, \Delta+D+D')$. By construction,
if $a_i=-1$ then $d_i>0$ precisely when $f(E_i)=W$.

The pair $(X,
\Delta+(1-\varepsilon) D+ \eta D')$ is lc if and only if
\[
a_i+\varepsilon b_i-\eta d_i \geq -1
\]
for all $i$. If $a_i>-1$, this holds for all sufficiently small $\varepsilon, \eta>0$.
If $a_i=-1$ and $f(E_i) \neq W$, then $a_i+\varepsilon
b_i- \eta d_i>-1$ for all $\varepsilon >0$. Fix $\varepsilon$ and define
\[
\eta = \min\{\varepsilon b_i/d_i: a_i=-1, d_i>0 \}.
\]
By construction, the pair $(X,
\Delta+(1-\varepsilon)D+ \eta D')$ is log canonical and $W$ is an isolated
non-klt centre.

The non-klt centre $W$ can further be made exceptional as in
  \cite[Proposition 8.7.1]{Kol07}.
\end{proof}

\begin{thm}[Kawamata's Subadjunction, {\cite[Theorem 8.6.1]{Kol07}}]
 \label{thm:5}
Let $(X,\Delta)$ be an lc pair. Assume that $W$ is an exceptional lc
centre of $(X,\Delta)$ and let $A$ be an ample $\R$-divisor. Then $W$
is normal and for every $\varepsilon >0$,
\[
(K_X+\Delta + \varepsilon A)_{\vert W}\sim_\R K_W+\Delta_W
\]
for some divisor $\Delta_W$ on $W$ such that the pair $(W,\Delta_W)$ is klt.
\end{thm}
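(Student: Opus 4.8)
The plan is to establish the two assertions---normality of $W$, and the subadjunction formula---separately: the first from the structure of the unique log canonical place of $W$, the second from the canonical bundle formula for the fibration that place induces over $W$.

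For normality, I would exploit that, $W$ being an \emph{exceptional} lc centre, there is a unique geometric valuation $E$ with $a(E,X,\Delta)=-1$, with $c_XE=W$, and that $W$ is in particular isolated, so $\J(\Delta)=\I_W$ by Definition-Lemma~\ref{dfn:5}(4). Fix a log resolution $f\colon Y\to X$ of $(X,\Delta)$ on which $E$ is a divisor; as a component of a simple normal crossings divisor on the smooth variety $Y$, the prime divisor $E$ is itself smooth, hence normal. Running the computation in the proof of Definition-Lemma~\ref{dfn:5} with $\lceil\A(X,\Delta)_Y\rceil=-E+G$, where $G\geq0$ is $f$-exceptional, Local vanishing yields the exact sequence
\[
0\longrightarrow\J(\Delta)\longrightarrow\OO_X\longrightarrow f_*\OO_E(G_{|E})\longrightarrow0,
\]
so that $f_*\OO_E(G_{|E})=\OO_X/\J(\Delta)=\OO_X/\I_W=\OO_W$. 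Since $G_{|E}\geq0$, pushing forward $\OO_E\hookrightarrow\OO_E(G_{|E})$ exhibits $f_*\OO_E$ as a subsheaf of $\OO_W$ split by the tautological map $\OO_W\to f_*\OO_E$; hence $f_*\OO_E=\OO_W$. Thus $g:=f_{|E}\colon E\to W$ is proper, surjective, with connected fibres; since $E$ is normal, $g$ factors through the normalisation of $W$, which---because $g_*\OO_E=\OO_W$---must be an isomorphism, so $W$ is normal.

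For the subadjunction formula I would keep $f\colon Y\to X$ and $E$, and write
\[
K_Y+E+\Theta^-=f^*(K_X+\Delta)+\Theta^+,
\]
where, over the prime divisors $E_i\neq E$ with $a_i:=a(E_i,X,\Delta)$, the divisor $\Theta^-=\sum_{a_i<0}(-a_i)E_i$ is a boundary with $\lceil\Theta^-\rceil=0$ and $\Theta^+=\sum_{a_i>0}a_iE_i$ is effective and $f$-exceptional. Restricting to the smooth divisor $E$, which by simple normal crossings meets $\Theta^\pm$ transversally, and using $(K_Y+E)_{|E}=K_E$ and $f^*(K_X+\Delta)_{|E}=g^*\bigl((K_X+\Delta)_{|W}\bigr)$, I obtain
\[
g^*\bigl((K_X+\Delta)_{|W}\bigr)\sim_\R K_E+B,\qquad B:=\Theta^-_{|E}-\Theta^+_{|E},
\]
with $(E,B)$ log smooth and all coefficients of $B$ strictly less than $1$, hence sub-klt; and on the generic fibre of $g$ one has $K_{E_\eta}+B_{|E_\eta}\sim_\R0$, so $g$ is a klt-trivial fibration. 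Now I would invoke the canonical bundle formula of Kawamata and Ambro together with Kawamata's positivity theorem: there are an effective divisor $B_W$ on $W$ (its effectivity using that $(X,\Delta)$ is log canonical) and a \emph{nef} b-divisor $\M$ on $W$ with $K_E+B\sim_\R g^*(K_W+B_W+\M_W)$, whence
\[
(K_X+\Delta+\varepsilon A)_{|W}\sim_\R K_W+B_W+\bigl(\M_W+\varepsilon A_{|W}\bigr).
\]
To conclude, pick a model $\pi\colon W'\to W$ on which $\M$ descends; then $\M_{W'}+\pi^*(\varepsilon A_{|W})$ is nef and big, so by Kodaira's Lemma~\ref{lem:61} it is $\R$-linearly equivalent to an effective $\R$-divisor, which by a routine general-position argument can be chosen so that $\Delta_W:=B_W+\pi_*(\text{that divisor})$ is a boundary with $(W,\Delta_W)$ klt; by construction $(K_X+\Delta+\varepsilon A)_{|W}\sim_\R K_W+\Delta_W$.

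The only non-formal input---and hence the main obstacle---is Kawamata's positivity theorem, the nefness of the moduli b-divisor $\M$, which rests on the Hodge-theoretic semipositivity of direct images of relative dualising sheaves for the family of (log) Calabi--Yau fibres of $g$. A secondary subtlety is that the canonical bundle formula and that positivity are a priori available only on a birational model $W'$ of $W$; it is precisely the \emph{ampleness} of $\varepsilon A_{|W}$---as opposed to mere nefness---that allows one to descend the statement to $W$ itself through Kodaira's Lemma, which is why the formula is asserted only after perturbing by $\varepsilon A$.
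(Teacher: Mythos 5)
The paper gives no proof of this theorem: it is stated with a citation to \cite[Theorem~8.6.1]{Kol07}, and the result is used as a black box in the proofs of Non-Vanishing, Basepoint Freeness and Rationality. Your sketch is the standard argument (Kawamata's original one, reproduced in Koll\'ar's book), so in effect it reproduces the proof that the paper defers to rather than an alternative route.

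Your normality argument is correct: with $\lceil\A(X,\Delta)_Y\rceil=-E+G$, $G\geq0$ and $f$-exceptional (which uses that $E$ is the \emph{unique} divisor of discrepancy $-1$ and $(X,\Delta)$ is lc), Local Vanishing gives $f_*\OO_E(G_{|E})\cong\OO_W$, and the factorisation $\OO_W\to f_*\OO_E\hookrightarrow f_*\OO_E(G_{|E})=\OO_W$ of the identity forces $g_*\OO_E=\OO_W$, hence $W$ normal. Your subadjunction step correctly reduces to the canonical bundle formula for the klt-trivial fibration $g\colon (E,B)\to W$ with sub-boundary $B=\Theta^-_{|E}-\Theta^+_{|E}$, and you rightly isolate the two genuine inputs: Kawamata's positivity of the moduli b-divisor, and the need for the ample perturbation $\varepsilon A$ because that nef b-divisor a priori only descends to a higher model $W'\to W$. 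The only place you are loose is the very last step, where $B_W+\M_W$ lives a priori only on $W'$ and a small argument is needed to choose an effective representative of $\M_{W'}+\pi^*(\varepsilon A_{|W})$ and push down so that $(W,\Delta_W)$ is klt rather than merely sub-lc; but you flag this, and it is indeed handled in the cited reference. So your attempt is a faithful, correct sketch of the standard proof.
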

\begin{rem}
\label{rem:5}
The choice of the divisor $\Delta_W$ in its linear equivalence
class is not canonical.
\end{rem}

\begin{thm}
Let $(X,S+B)$ be a plt pair, where $S$ is irreducible. Then
\[
(K_X+S+B)_{\vert S}\sim_\R K_S+B_S
\]
for some divisor $B_S$ on $S$ such that the pair $(S,B_S)$ is klt.
\end{thm}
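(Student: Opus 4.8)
The plan is to exhibit $S$ as an exceptional non-klt centre of the pair $(X,S+B)$ and then to apply Kawamata's Subadjunction (Theorem~\ref{thm:5}); the only extra work is to absorb the auxiliary ample term that Subadjunction introduces.

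The first step is to check that, because $(X,S+B)$ is \emph{plt}, the divisor $S$ is an exceptional lc centre of $(X,S+B)$. By definition of plt, every exceptional geometric valuation $E$ satisfies $a(E,X,S+B)>-1$. For $S$ itself we compute $\mult_S\A(X,S+B)=-\mult_S(S+B)=-(1+\mult_S B)$; since $(X,S+B)$ is lc this is $\geq-1$, which forces $S\not\subset\Supp B$ and $a(S,X,S+B)=-1$. Hence $S$ is the unique geometric valuation on $X$ of discrepancy $-1$, so by Remark~\ref{rem:1} it is the unique non-klt centre of $(X,S+B)$, and by Definition-Lemma~\ref{dfn:5} this centre is isolated and exceptional. (In particular $S$ is normal, though this will also follow from Subadjunction.)

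Next I would fix a general ample $\Q$-divisor $A$ on $X$ and a rational number $\varepsilon>0$, and apply Theorem~\ref{thm:5} to the lc pair $(X,S+B)$, its exceptional lc centre $S$, and the ample divisor $A$. This produces a divisor $\Delta_S$ on $S$ with $(S,\Delta_S)$ klt and $(K_X+S+B+\varepsilon A)_{\vert S}\sim_\R K_S+\Delta_S$. Since restriction to $S$ is additive on $\R$-Cartier divisors, $(K_X+S+B+\varepsilon A)_{\vert S}=(K_X+S+B)_{\vert S}+\varepsilon A_{\vert S}$; setting $B_S:=\Delta_S-\varepsilon A_{\vert S}$ therefore gives $(K_X+S+B)_{\vert S}\sim_\R K_S+B_S$, and $K_S+B_S$ is $\R$-Cartier because $(K_X+S+B)_{\vert S}$ is.

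It then remains to verify that $(S,B_S)$ is klt, and this is the one place where something more than bookkeeping is used --- it is also where the ample term is disposed of. Since $A$ is a general ample $\Q$-divisor, it is effective with $S\not\subset\Supp A$, so $A_{\vert S}\geq0$ and $B_S\leq\Delta_S$. Hence $\A(S,B_S)=\A(S,\Delta_S)+\varepsilon\,\overline{A_{\vert S}}\geq\A(S,\Delta_S)$, so $\lceil\A(S,B_S)\rceil\geq\lceil\A(S,\Delta_S)\rceil\geq0$, the last inequality because $(S,\Delta_S)$ is klt; thus $(S,B_S)$ is klt. I do not expect a serious obstacle anywhere: the crux is really the plt-dependent observation of the first step, together with the elementary fact that subtracting an effective $\R$-divisor only raises discrepancies. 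The one subtlety worth flagging is that $\Delta_S$, hence $B_S$, need not be effective --- but the statement only asks for a divisor $B_S$ with $(S,B_S)$ klt, so this causes no trouble.
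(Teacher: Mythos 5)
The paper does not give its own proof of this theorem; the Remark immediately after identifies $B_S$ with the \emph{different} $\Diff(B)$ of \cite{KM98}, which is constructed by a direct, elementary adjunction computation: take a log resolution $g\colon Y\to X$ with $T=\widehat S_Y$, write $K_Y+T+B_Y=g^*(K_X+S+B)+E_Y$, restrict to $T$, and push forward along $T\to S$; klt-ness of $(S,B_S)$ then follows from the plt condition via the Negativity Lemma. Your route via Kawamata Subadjunction is formally sound, but only under two implicit strengthenings of the hypotheses: that $\lfloor B\rfloor=0$, so that $S$ really is the \emph{unique} valuation of discrepancy $-1$ (which your Step 1 needs for $S$ to be an \emph{exceptional} centre --- a component of $B$ with coefficient $1$ would break this), and that $X$ is projective so that an ample $A$ exists. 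Granting those, the bookkeeping $\A(S,B_S)=\A(S,\Delta_S)+\varepsilon\,\overline{A_{|S}}\geq\A(S,\Delta_S)$ after subtracting the effective $\varepsilon A_{|S}$ is correct, and you rightly note that the paper's definition of klt does not require the boundary to be effective. The comparison is mostly one of proportion and canonicity: Subadjunction is a deep theorem, and for a codimension-$1$ centre its proof reduces to exactly the different computation it is being invoked to replace, so the argument is circular in spirit even if not formally; moreover, Subadjunction only produces $B_S$ up to $\R$-linear equivalence and makes it depend on the auxiliary $\varepsilon$ and $A$, whereas the different is a local, canonical construction --- which matters later in the paper when $\A(S,B_S)$ is used as a specific b-divisor (Theorem~\ref{sat2}). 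In context, the elementary different computation is the argument to reach for.
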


\begin{rem}
Our notation is not the same as the one used in \cite{KM98}, where $B_S$ is called the {\em different\/} and denoted $\Diff(B)$.
\end{rem}

\begin{dfn}
  \label{dfn:6}
Let $(X,\Delta)$ be a pair and let $x$ be a point in $X$. The \emph{log
  canonical threshold} of $(X,\Delta)$ at $x$ is
\[
\lct(\Delta;x)= \sup\{ c \in \R : (X,c\Delta) \mbox{ is lc
  in the neighbourhood of } x\}.
\]
\end{dfn}
\section{Non-Vanishing, Basepoint Free and Rationality theorems}

We present the Minimal Model Program (MMP) in a relative setting,
that is, given a birational projective morphism $f \colon X
\rightarrow Z$ we consider
the MMP over $Z$. The motivation for this is clear
when, for example, $X$ has a fibration structure over $Z$.
\begin{thm}[Cone Theorem]
\label{thm:1}
Let $(X,\Delta)$ be a dlt pair over $Z$. Then there are $(K_X+\Delta)$-negative rational curves $C_i\subset X$ such that
$$\overline{NE}(X/Z)=\overline{NE}(X/Z)_{K_X+\Delta\geq0}+\sum R_i,$$
where the $(K_X+\Delta)$-negative \emph{extremal rays} $R_i$ are spanned by the classes of $C_i$ and are locally discrete in
$\overline{NE}(X/Z)_{K_X+\Delta<0}$.
\end{thm}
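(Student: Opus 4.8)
The plan is to deduce the Cone Theorem from the Rationality Theorem and the Base Point Free Theorem, in the now-standard way, while keeping track of the relative setting over $Z$ and reducing the dlt case to a klt perturbation. First I would reduce to the case where $(X,\Delta)$ is klt: since $(X,\Delta)$ is dlt, one can perturb $\Delta$ to a klt boundary $\Delta'$ with $K_X+\Delta' \equiv_Z K_X+\Delta + (\text{small negative}) $, or more precisely run the argument with $K_X+\Delta+\varepsilon H$ for suitable ample $H$ and take limits; the extremal rays obtained are independent of these choices. For the klt pair I would first prove the \emph{boundedness of denominators} and the local discreteness statement: if $H$ is an ample $\Q$-Cartier divisor over $Z$ and $K_X+\Delta$ is not nef over $Z$, the Rationality Theorem produces, for each nef-threshold computation, a rational number, and one shows that the faces of $\overline{NE}(X/Z)$ on which $K_X+\Delta$ is zero and $H$ is positive are cut out by supporting functions of bounded denominator; this forces the $(K_X+\Delta)$-negative part of the cone to be a locally finite union of rational extremal rays, locally discrete in $\overline{NE}(X/Z)_{K_X+\Delta<0}$.

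Next I would establish that each such extremal ray $R$ is \emph{contractible}, i.e.\ spanned by the class of a rational curve. Given $R$, choose a supporting $\Q$-Cartier divisor $D$ over $Z$ with $D^\perp \cap \overline{NE}(X/Z) = R$ and $D$ nef over $Z$; then $D - \varepsilon(K_X+\Delta)$ is ample over $Z$ for small $\varepsilon>0$, so by the Base Point Free Theorem a multiple $mD$ is free over $Z$ and defines a contraction $\varphi_R \colon X \to Z'$ over $Z$ contracting exactly the curves in $R$. Running this contraction through a general complete intersection curve and bounding its $(K_X+\Delta)$-degree via the usual bend-and-break argument (or by invoking the existence of rational curves on the fibres of $\varphi_R$, which is where one genuinely uses characteristic zero) yields a rational curve $C_R$ generating $R$ with $0 < -(K_X+\Delta)\cdot C_R \le 2\dim X$. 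Assembling these over all negative extremal rays, and using local discreteness to see the sum is locally finite, gives the decomposition
\[
\overline{NE}(X/Z)=\overline{NE}(X/Z)_{K_X+\Delta\geq0}+\sum R_i.
\]

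The main obstacle I expect is the passage from ``$(K_X+\Delta)$-negative part is a locally finite union of rational rays'' to the \emph{closed} decomposition displayed above: one must show no negative ray is hidden in the closure of $\overline{NE}(X/Z)_{K_X+\Delta\ge0}$ and that the infinite sum $\sum R_i$ together with the nonnegative part is already closed. The standard fix is to work one ample class $H$ at a time — proving the decomposition for $\overline{NE}(X/Z)_{K_X+\Delta+\varepsilon H \ge 0}$, which involves only finitely many rays by the bounded-denominator estimate — and then let $\varepsilon \to 0$, using that the rays are locally discrete away from the hyperplane $(K_X+\Delta)=0$ so that each compact slice meets only finitely many of them. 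A secondary subtlety is the dlt-to-klt reduction: one needs that perturbing $\Delta$ within its dlt range does not create or destroy $(K_X+\Delta)$-negative extremal rays, which follows because the relevant supporting divisors can be chosen with a small ample summand absorbed into the perturbation. Once these two points are handled, the Base Point Free and Rationality Theorems do all the remaining work mechanically.
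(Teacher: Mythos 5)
The paper itself does not prove the Cone Theorem: it states Theorems~\ref{thm:1} and~\ref{thm:2} and immediately remarks that they hold for lc pairs by Ambro, then devotes Section~3 to proving Non\nobreakdash-Vanishing, Base Point Free and Rationality via multiplier ideals. The (formal) deduction of the cone decomposition from Rationality is left to the standard references, so there is no in-paper argument to compare against. Your sketch is the classical Kawamata route (Rationality $\Rightarrow$ bounded denominators and local discreteness of negative rays; BPF $\Rightarrow$ contractions; Mori/Kawamata $\Rightarrow$ rational generators with the length bound $\le 2\dim X$), which is exactly the argument the paper is implicitly relying on, and the two subtleties you flag---passing from a locally finite union of rays to the closed decomposition, and dealing with the dlt hypothesis---are the genuine ones.

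One point in your sketch is genuinely underdeveloped and would need more care to be a complete proof. Your dlt-to-klt reduction as phrased does not work for an arbitrary dlt pair. The scaling trick $\Delta\rightsquigarrow(1-\delta)\Delta$ (used e.g.\ in the paper's Remark~\ref{rem:40}) requires $X$ to be $\Q$-factorial, since otherwise $K_X+(1-\delta)\Delta$ need not be $\R$-Cartier; and writing ``$K_X+\Delta'\equiv_Z K_X+\Delta+$ (small negative)'' presupposes the existence of a klt $\Delta'$ in the same numerical class, which is not automatic. The usual fixes are either to pass to a $\Q$-factorial dlt modification (itself a nontrivial input), or to prove the klt Cone Theorem first and then run a limiting argument in $K_X+\Delta+\varepsilon H$ that does not perturb $\Delta$ at all but instead cuts the cone by $\{(K_X+\Delta+\varepsilon H)<0\}$ and uses that every ray in this region is already extremal and $(K_X+\Delta)$-negative for some klt model supported on the dlt stratification. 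You gesture at the second option (``run the argument with $K_X+\Delta+\varepsilon H$''), but as written it is conflated with a perturbation of $\Delta$, and the step that each such ray is carried by a klt pair is exactly what needs justification. If you restrict to $\Q$-factorial dlt (which is what the rest of the paper's MMP machinery actually uses), your reduction is fine and the rest of the sketch is the standard, correct argument.
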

\begin{thm}[Contraction Theorem]
\label{thm:2}
Let $(X,\Delta)$ be a dlt pair over $Z$. Let $R \subset \NE (X/Z)$ be a $(K_X+\Delta)$-negative extremal ray.
There is a morphism $\varphi\colon X \rightarrow X'$ that is characterised by:
\begin{enumerate}
\item[1.] $\varphi_{\ast}\mathcal{O}_X= \mathcal{O}_{X'}$,
\item[2.] an effective curve is
  contracted by $\varphi$ if and only if its class belongs to $R$.
\end{enumerate}
The morphism $\varphi$ is the contraction of $R$.
\end{thm}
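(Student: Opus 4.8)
The plan is to produce a \emph{supporting function} for $R$: a Cartier divisor $L$ on $X$, nef over $Z$, with $\overline{\NE}(X/Z)\cap L^{\perp}=R$ and with $L-\varepsilon(K_X+\Delta)$ ample over $Z$ for all small $\varepsilon>0$. Granting such an $L$, the Base Point Free theorem gives that $|mL/Z|$ is basepoint free for $m\gg0$, so $mL$ defines a $Z$-morphism $\psi_m\colon X\to\PP^N_Z$; for $m$ large the images $\psi_m(X)$ stabilise, and we let $\varphi\colon X\to X'$ be the common Stein factorization. Then $\varphi_*\OO_X=\OO_{X'}$ by construction, which is (1). Since $\varphi$ is a $Z$-morphism, any curve $C\subset X$ contracted by $\varphi$ maps to a point of $Z$; writing $mL=\varphi^{*}A$ with $A$ ample over $Z$, we see that $C$ is contracted by $\varphi$ if and only if $L\cdot C=0$, i.e.\ if and only if $[C]\in\overline{\NE}(X/Z)\cap L^{\perp}=R$, which is (2). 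Finally, uniqueness of $\varphi$ up to isomorphism of the target follows from the rigidity lemma: any two morphisms satisfying (1) and (2) contract exactly the same curves and hence factor through one another.

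\textbf{The supporting function.} Fix a divisor $H$ on $X$ that is ample over $Z$. Since $R$ is $(K_X+\Delta)$-negative, $(K_X+\Delta)\cdot R<0<H\cdot R$, so the Rationality theorem produces a rational number $r>0$ with $D:=K_X+\Delta+rH$ nef but not ample over $Z$; put $F:=\overline{\NE}(X/Z)\cap D^{\perp}$, a nonzero face. For every nonzero $\gamma\in\overline{\NE}(X/Z)$ with $(K_X+\Delta)\cdot\gamma\ge0$ one has $D\cdot\gamma\ge rH\cdot\gamma>0$ by the relative Kleiman criterion, so $F\setminus\{0\}$ lies in $\overline{\NE}(X/Z)_{K_X+\Delta<0}$. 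By the Cone theorem the negative extremal rays are locally discrete there, whence $F$ is spanned by finitely many of them, one being $R$. A standard perturbation of $H$ inside the ample-over-$Z$ cone (as in the usual proof of the contraction theorem) shrinks this face down to exactly $R$ while keeping $D$ nef over $Z$; clearing denominators, take $L:=m_0D$ Cartier, so that $\overline{\NE}(X/Z)\cap L^{\perp}=R$. For small $\varepsilon>0$, moreover, $D-\varepsilon(K_X+\Delta)=(1-\varepsilon)D+\varepsilon rH$ is the sum of a nef and an ample divisor over $Z$, hence ample over $Z$; the same holds for $L$ after rescaling $\varepsilon$, so $L$ is a supporting function in the sense above.

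\textbf{From $L$ to $\varphi$.} Now the Base Point Free theorem applies to $L$ over $Z$ (in its relative dlt form; alternatively one passes to a log resolution and reduces to the nonsingular statement), giving the basepoint free systems $|mL/Z|$ and the $Z$-morphisms $\psi_m$ above. With $\varphi\colon X\to X'$ the common Stein factorization, one has $mL\sim\varphi^{*}A$ for some $A$ ample over $Z$; since $L$ is $\varphi$-numerically trivial and the section rings along $\varphi$ stabilise, $L$ itself is the pullback of a $\Q$-Cartier divisor on $X'$, so the verification of (1) and (2) sketched in the first paragraph goes through. This completes the plan.

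\textbf{Main obstacle.} The crux is the construction of a supporting function with $\overline{\NE}(X/Z)\cap L^{\perp}$ \emph{equal} to $R$, not merely containing it: this is precisely where the Rationality theorem is needed, to obtain a rational---and hence, after scaling, Cartier---class $D$, and where local discreteness of the negative extremal rays from the Cone theorem is used, to see that the face $D^{\perp}\cap\overline{\NE}(X/Z)$ is finitely generated and can be cut down to a single ray by perturbing $H$. The relative setting over a possibly non-proper $Z$ requires the $Z$-relative forms of the Rationality, Cone and Base Point Free theorems, together with some care about the Stein factorization and about descending $L$ (not just its multiples) along $\varphi$; these points are routine.
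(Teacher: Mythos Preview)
The paper does not actually give a proof of the Contraction Theorem: it is stated alongside the Cone Theorem as one of the fundamental results, with only the remark that both extend to lc pairs, while the paper's detailed arguments are reserved for the Nonvanishing, Basepoint Free and Rationality theorems that feed into it. So there is no ``paper's own proof'' to compare against here.

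That said, your outline is the standard derivation of the Contraction Theorem from those three ingredients and is correct in substance. Two small points worth tightening. First, both the Rationality theorem and the Basepoint Free theorem in this paper are stated for \emph{klt} pairs; you invoke them for a dlt pair. You gesture at this (``in its relative dlt form''), and the paper itself supplies the fixes: Remark~\ref{rem:bpf} says Basepoint Free holds for dlt pairs once $pL-(K_X+\Delta)$ is \emph{ample} (which you have arranged), and Remark~\ref{rem:40} lets you perturb a $\Q$-factorial dlt pair to klt to run Rationality. If $X$ is not $\Q$-factorial you should say explicitly how you get around this (e.g.\ pass to a small $\Q$-factorialisation or a log resolution before running the argument). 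Second, the sentence ``a standard perturbation of $H$ \dots\ shrinks this face down to exactly $R$'' is the genuine combinatorial step; it is routine once you know the face is spanned by finitely many negative extremal rays, but it is the place where a reader would want at least a one-line indication of how the induction on $\dim F$ goes.
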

\begin{rem}
These two theorems hold for lc pairs
\cite{Amb03}.
\end{rem}
\begin{thm}[Basepoint Free Theorem]
\label{thm:3}
Let $(X,\Delta)$ be a klt pair and let $L$ be a nef Cartier divisor on $X$. Assume
that there exists  $p>0$ such that $pL-(K_X+\Delta)$ is nef and big.
Then the linear system $\vert nL \vert$ is basepoint free for all $n\gg0$.
\end{thm}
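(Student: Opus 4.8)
The plan is to argue by induction on $\dim X$, the case $\dim X=0$ being trivial. By standard arguments---passing through the stable base locus---it is enough to produce, for each point $x\in X$, a section of $nL$ not vanishing at $x$ for all sufficiently large $n$; to keep the threshold on $n$ depending only on $x$ one fixes the ``singular part'' of the auxiliary divisor below once and for all, letting only an ample summand vary with $n$. So fix $x$ and a large $n$, and set $M_n:=nL-(K_X+\Delta)$, which for $n>p$ is a sum of a nef divisor and a nef and big one, hence is itself nef and big. We must find $s\in H^0(X,nL)$ with $s(x)\neq0$.

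The next step is the positivity input. Choose a small ample $\Q$-divisor $A_0$ with $M_n-A_0$ still big. Since $\vol(M_n)\to\infty$ with $n$ unless $L$ is numerically trivial (the trivial case being elementary), for $n\gg0$ we may pick an effective $\Q$-divisor $D\sim_\Q M_n-A_0$, general away from $x$, with $\mult_xD$ as large as we wish, so that $(X,\Delta+D)$ is not klt at $x$. Let $c\in(0,1]$ be the log canonical threshold of $D$ at $x$ relative to $(X,\Delta)$; then $(X,\Delta+cD)$ is log canonical with $x\in\nklt(X,\Delta+cD)$, and
\[
nL\ \sim_\Q\ K_X+\Delta+cD+\big((1-c)M_n+cA_0\big),
\]
the last summand being nef plus ample, hence ample. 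Choose a minimal non-klt centre $W\ni x$ of $(X,\Delta+cD)$. By the Tie-breaking Lemma---replacing $cD$ by $(1-\varepsilon)cD+\eta D'$ for a small general ample $\Q$-divisor $D'$ and arbitrarily small $\varepsilon,\eta>0$---we may assume $W$ is an \emph{exceptional} non-klt centre of a pair $(X,\Delta^{\dagger})$ for which $nL\sim_\Q K_X+\Delta^{\dagger}+A''$ with $A''$ still ample (ampleness persists because the perturbation is arbitrarily small and all divisors involved are $\R$-Cartier).

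The core is restriction to $W$ followed by lifting. Since $W$ is an exceptional, hence isolated, non-klt centre, Kawamata's Subadjunction theorem applies: $W$ is normal and, for small $\varepsilon'>0$,
\[
(K_X+\Delta^{\dagger}+\varepsilon'A'')_{|W}\ \sim_\R\ K_W+\Delta_W
\]
with $(W,\Delta_W)$ klt. Combined with $nL\sim_\Q K_X+\Delta^{\dagger}+A''$ this gives $n(L_{|W})-(K_W+\Delta_W)\sim_\R(1-\varepsilon')A''_{|W}$, which is ample; hence, as $\dim W<\dim X$ ($W$ being a proper normal subvariety), the inductive hypothesis applied to $(W,\Delta_W)$ and $L_{|W}$ yields that $|m(L_{|W})|$ is basepoint free for all $m\gg0$, and so for our $n$ there is a section of $nL_{|W}$ not vanishing at $x$ (if $W=\{x\}$ this is vacuous, the section being a nonzero constant). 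On the other hand, $W$ being an isolated non-klt centre forces $\mathcal J(\Delta^{\dagger})=\mathcal I_W$; tensoring $0\to\mathcal I_W\to\OO_X\to\OO_W\to0$ with $\OO_X(nL)$ and applying Nadel vanishing---valid since $nL-(K_X+\Delta^{\dagger})$ is numerically the ample class $A''$---shows that
\[
H^0(X,nL)\ \longrightarrow\ H^0(W,nL_{|W})
\]
is surjective. Lifting the section found on $W$ produces the required $s$; letting $x$ and $n$ vary completes the induction.

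I expect the main obstacle to be the interplay of the first two steps: one must exhibit a non-klt divisor $D$ whose \emph{minimal} non-klt centre can be taken through the prescribed $x$, then arrange via tie-breaking that this centre becomes exceptional---so that $\mathcal J(\Delta^{\dagger})=\mathcal I_W$ and Kawamata subadjunction equips $W$ with a klt structure---all the while preserving ampleness of $nL-(K_X+\Delta^{\dagger})$ so that Nadel vanishing is available. These three demands constrain the construction simultaneously, and it is their reconciliation (rather than any single estimate) that is delicate. By contrast, the positivity estimate making $D$ singular at $x$ is comparatively soft---it is volume growth once $L$ is not numerically trivial---and the reduction from separating a point to basepoint freeness of $|nL|$ for all $n\gg0$ is routine bookkeeping.
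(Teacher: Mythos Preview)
Your strategy differs from the paper's in a basic way: you fix a point $x$ and manufacture a divisor $D$ singular at $x$, whereas the paper takes $D$ to be a \emph{general} member of $|mL|$ (for $m$ with stable base locus $B_m=\Bs|mL|\neq\emptyset$). For such $D$ the minimal non-klt centres of $(X,\Delta+cD)$ of codimension $\geq2$ automatically lie in $\Bs|mL|$, and if all minimal centres are divisorial then $c=1$ and one chooses a component of $D$ meeting $\Bs|mL|$; either way a minimal centre $W$ meets the base locus, and surjectivity plus induction give $\Bs|qL|\cap W=\emptyset$, contradicting stability of $B_m$. This sidesteps entirely the problem of controlling where the non-klt centres lie.

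As written, your argument has a gap precisely at that point. You take $c$ to be the log canonical threshold of $D$ \emph{at $x$} and then assert that $(X,\Delta+cD)$ is log canonical; but local log canonicity at $x$ does not imply it globally, and if the global threshold is strictly smaller and achieved elsewhere, the non-klt locus at that threshold need not contain $x$---yet Tie-breaking and Subadjunction require a global lc pair. Your phrase ``general away from $x$'' is exactly what would force the local and global thresholds to coincide, but since $M_n-A_0$ is only big it carries an immovable effective part; this is repaired by first reducing, via Kodaira's Lemma (absorbing a small multiple of the effective part into $\Delta$), to the case where $pL-(K_X+\Delta)$ is ample---which the paper does explicitly but you omit. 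Separately, the circularity you flag in your opening parenthetical is real in the body: $D$, hence $W$ and $\Delta_W$, are built from the very $n$ whose membership above the inductive threshold on $W$ you later need. Your indicated fix (construct the singular part once for a fixed $n_0$, then for $n\geq n_0$ note $nL-(K_X+\Delta^{\dagger})=A''+(n-n_0)L$ remains ample) is correct and should replace what the text actually does.
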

\begin{rem}
\label{rem:bpf}
The Zariski counterexample \cite{Zar62} shows that the Basepoint Free theorem does not hold for dlt pairs when $pL-(K_X+\Delta)$ is assumed to be nef and big. It does hold however when $pL-(K_X+\Delta)$ is ample.
\end{rem}
\begin{cor}
\label{cor:1}
Let $\varphi\colon X \rightarrow Y$ be the contraction of an extremal ray $R$.
The sequence
\[
0 \longrightarrow \Pic(Y) \stackrel{\varphi^{\ast}}\longrightarrow  \Pic(X) \longrightarrow
  \Z
\]
is exact, where the last map in the sequence is multiplication by a fixed curve $C$ whose class belongs to $R$.
If $L \cdot C=0$, then $\vert nL \vert$ is basepoint free.
\end{cor}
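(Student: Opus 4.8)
The plan is to prove that the sequence is exact and then read off the last assertion. \emph{Injectivity of $\varphi^*$.} Property (1) of the Contraction Theorem gives $\varphi_*\OO_X=\OO_Y$, so by the projection formula $\varphi_*\varphi^*M\cong M$ for every $M\in\Pic(Y)$; hence $\varphi^*M\cong\OO_X$ forces $M\cong\OO_Y$. \emph{The composite $\Pic(Y)\to\Pic(X)\to\Z$ is zero.} For $M\in\Pic(Y)$ one has $\varphi^*M\cdot C=M\cdot\varphi_*C=0$, since the curve $C$, lying on $R$, is contracted by $\varphi$. Thus $\im\varphi^*$ is contained in the kernel of $L\mapsto L\cdot C$, and the substance of the corollary is the reverse inclusion.

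So take a Cartier divisor $L$ on $X$ with $L\cdot C=0$; the goal is to find $M'\in\Pic(Y)$ with $L=\varphi^*M'$. By property (2) of the Contraction Theorem every $\varphi$-contracted curve has class on the ray $R=\R_{\geq0}[C]$, so $L$ is numerically trivial over $Y$; equivalently $L-(K_X+\Delta)$ is, over $Y$, numerically equivalent to $-(K_X+\Delta)$, which is $\varphi$-ample because $R$ is a $(K_X+\Delta)$-negative ray and $\overline{NE}(X/Y)=R$. I would then proceed in two steps. First, the relative form of Nadel vanishing (Theorem \ref{thm:4}) applied over $Y$ gives $R^i\varphi_*\OO_X(L)=0$ for $i>0$ (the multiplier ideal $\mathcal{J}(\Delta)$ is trivial since $(X,\Delta)$ is klt); together with $\varphi_*\OO_X=\OO_Y$ and base change this forces $\varphi_*\OO_X(L)$ to be an invertible sheaf $M'$ on $Y$ with $\varphi^*\varphi_*\OO_X(L)\to\OO_X(L)$ an isomorphism. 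The point one must verify is that $L$ restricts to the trivial line bundle on every fibre of $\varphi$; this follows from $L$ being numerically trivial on the covering family of extremal rational curves spanning $R$ through each fibre. Alternatively, and more in the spirit of this chapter, one can perturb: fix $H$ ample on $Y$, show by the Cone Theorem that $m\varphi^*H+L$ is nef for $m\gg0$ (near $R$ the cone $\overline{NE}(X/Z)$ is spanned by $[C]$ and finitely many $[C_i]$ with $\varphi^*H\cdot C_i>0$, giving a uniform bound $|L\cdot z|\leq c\,\varphi^*H\cdot z$ there, while away from $R$ one uses compactness of a slice of $\overline{NE}(X/Z)$), and then apply the Basepoint Free Theorem to $m\varphi^*H+L$: it becomes semiample, hence defines a morphism contracting exactly the curves in $R$, which by rigidity of contractions coincides with $\varphi$, and one descends again by base change. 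Either way $L=\varphi^*M'$, which yields exactness.

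The final sentence then follows: here $L$ is the nef Cartier divisor of the Basepoint Free Theorem with $pL-(K_X+\Delta)$ nef and big, so that theorem already gives $|nL|$ basepoint free for $n\gg0$, and by the exact sequence $L=\varphi^*L_Y$, so the morphism defined by $|nL|$ factors through $\varphi$. The main obstacle is the descent step itself---showing that a Cartier divisor numerically trivial over $Y$ is literally pulled back from $Y$, not merely up to a multiple or up to $\Q$-Cartier. This is where one must genuinely analyse the fibres of $\varphi$, reduced or not, via relative vanishing and base change, or equivalently via the triviality of $L$ on the covering families of extremal rational curves; the positivity estimate on $\overline{NE}(X/Z)$ near $R$, which rests squarely on the local discreteness of $(K_X+\Delta)$-negative extremal rays furnished by the Cone Theorem, is the other place where care is needed.
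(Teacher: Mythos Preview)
The paper states this corollary without proof, so there is nothing to compare against directly. Your exactness argument is structurally sound---injectivity via $\varphi_*\OO_X=\OO_Y$, vanishing of the composite by the projection formula, and then descent of a Cartier divisor $L$ with $L\cdot C=0$. Of your two descent routes, (b) is the standard one and essentially works, though you stop one step short: the relative Basepoint Free Theorem over $Y$ (applicable because $\pm L$ are both $\varphi$-nef and $-(K_X+\Delta)$ is $\varphi$-ample) shows that $nL$ and $-nL$ are both $\varphi$-free for all $n\gg0$, hence $nL=\varphi^*M_n$ for every large $n$; subtracting for two consecutive values gives $L=\varphi^*(M_{n+1}-M_n)$, which is the integral descent you flag as the main obstacle. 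Route (a) as written is circular: you reduce to ``$L$ restricts to the trivial line bundle on every fibre of $\varphi$'', which is precisely the descent statement, and numerical triviality on the extremal curves in each fibre does not yield it directly (fibres may be non-reduced, non-rational, etc.).

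There is a genuine misreading in your handling of the last sentence. The only hypothesis is $L\cdot C=0$; nothing in the corollary says $L$ is globally nef or that $pL-(K_X+\Delta)$ is nef and big, so you cannot invoke Theorem~\ref{thm:3} in its absolute form. The assertion is to be read relatively: $|nL|$ is $\varphi$-basepoint free for $n\gg0$, and this is the relative Basepoint Free Theorem (again, $L$ is $\varphi$-nef because it is $\varphi$-numerically trivial, and $-(K_X+\Delta)$ is $\varphi$-ample). In fact this final sentence is the engine that drives the exactness of the sequence, not a consequence of it---so the logical order in your write-up is inverted.
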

\begin{dfnlm}
\label{lem:1}
Let $(X, \Delta)$ be a $\Q$-factorial dlt pair, and let $\varphi \colon X \rightarrow Y$ be the
contraction of a $(K_X+\Delta)$-negative extremal ray $R$.
\begin{enumerate}
\item[1.] If $\dim Y < \dim X$, then $\varphi$ is a {\em Mori fibration\/};
\item[2.] If $\dim Y = \dim X$ and $\Exc(\varphi)$ is an
  irreducible divisor, then $\varphi$ is a {\em divisorial contraction\/};
\item[3.] If $\dim Y = \dim X$ and $\codim_X\Exc\varphi\geq2$, i.e.\ $\varphi$ is a {\em small\/} map, then
  $\varphi$ is a {\em flipping contraction\/}.
\end{enumerate}
\end{dfnlm}
\begin{proof}
 The only thing there is to prove is that in the second case, if $E
 \subset \Exc(\varphi)$ is a prime divisor, then $E= \Exc(\varphi)$.
 Let $C$ be any curve that is contracted by $f$. As the class of $C$ belongs to $R$, $E \cdot C<0$ by Lemma~\ref{lem:2},
 and $C \subset E$.
\end{proof}
\begin{rem}\label{rem:3}
If $\varphi \colon X \rightarrow Y$ is a small extremal contraction,
$Y$ is not $\Q$-factorial. In higher
dimensions, a surgery operation in codimension $2$ is necessary to
proceed with the MMP. This operation is the {\em flip\/} of $\varphi$; it
is defined in Section \ref{sec:lt}.
\end{rem}
\begin{lem}[Negativity Lemma, {\cite[Lemma 3.38]{KM98}}]
\label{lem:2}
Let $f \colon Y \rightarrow X$ be a birational morphism and let $E=
\sum a_i E_i$
be an $f$-exceptional divisor on $Y$. Assume that
\[E \equiv _f H+D,\]
where $H$ is $f$-nef and $D$ is an effective divisor that has no
common components with $E$. Then all $a_i \leq 0$.
If $x \in X$ is a point such that $D$ or $H$ is not numerically
trivial on $f^{-1}(x)$, and if $E_i$ is a divisor such that $f(E_i)=x$, then
$a_i<0$.
\end{lem}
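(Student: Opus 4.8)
The plan is to reduce to the case of surfaces, where the assertion becomes the classical statement that the intersection matrix of an exceptional configuration is negative definite. Since the conclusions are local on $X$ and $\equiv_f$ is a fibrewise condition, first I would shrink $X$ to an affine open, so that $Y$ is projective over $X$. Then I would cut $Y$ by $\dim X-2$ general very ample hypersurfaces and replace $X$ by the image of the resulting surface. A Bertini-type argument shows that all the hypotheses persist: both surfaces stay normal, $f$ restricts to a birational morphism, the restriction of $E$ is still $f$-exceptional (a dimension count on the images $f(E_i)$), $H$ restricts to an $f$-nef divisor, $D$ restricts to an effective divisor with no component in common with $E$, the relation $E\equiv_f H+D$ is preserved, and the coefficients $a_i$ are unchanged. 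This reduces everything to the case $\dim X=\dim Y=2$.

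On the normal surface $Y$, the prime divisors $W_i$ contracted by $f$ have negative-definite intersection matrix $M=(W_i\cdot W_j)$ --- classical, via Mumford's intersection pairing on a normal surface --- with all off-diagonal entries $W_i\cdot W_j$ ($i\neq j$) non-negative. Intersecting $E\equiv_f H+D$ with each $W_j$ that is an actual component of $E$, and using that $W_j$ is then not a component of $D$ while $H$ is $f$-nef, gives $\sum_i a_i(W_i\cdot W_j)=H\cdot W_j+D\cdot W_j\geq 0$, i.e.\ $Ma\geq 0$ componentwise, where $a=(a_i)$ runs over the components of $E$ and $M$ is the corresponding principal submatrix (still negative definite). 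The elementary $M$-matrix lemma --- for a negative-definite symmetric matrix with non-positive off-diagonal entries, $Ma\geq 0$ forces $a\leq 0$ --- then gives $a_i\leq 0$ for all $i$; its proof is the standard device of splitting $a=a^+-a^-$ into positive and negative parts with disjoint supports and comparing $(a^+)^{T}(-M)a^+$ with $(a^+)^{T}(-M)a\leq 0$.

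For the strict inequality I would invoke the connectedness of the fibres of $f$ (Zariski's main theorem). Having established $E\leq 0$, set $E_-=-E\geq 0$, so $-(E_-+D)\equiv_f H$ is $f$-nef with $E_-+D$ effective; running the same negative-definiteness computation on a single fibre $f^{-1}(x)$ yields the familiar dichotomy that $f^{-1}(x)$ is either contained in $\Supp(E_-+D)$ or disjoint from it. When $D$ or $H$ is not numerically $f$-trivial on $f^{-1}(x)$, the disjoint alternative is impossible --- near $f^{-1}(x)$ it would force $E_-=D=0$ and hence $H\equiv_f 0$ --- so $f^{-1}(x)\subseteq\Supp(E_-+D)$, and one reads off that each $E_i$ with $f(E_i)=x$ occurs in $E$, necessarily with $a_i<0$ by the first part.

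The step I expect to be the main obstacle is the reduction to surfaces: one must arrange the general hyperplane sections so that both $Y$ and the image surface remain normal and so that the ``no common component'' condition between $E$ and $D$ is not destroyed, and one must carry out the intersection theory directly on the (possibly singular) normal surface rather than pulling back to a resolution, which would manufacture spurious shared exceptional components between the transforms of $E$ and $D$.
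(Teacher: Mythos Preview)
The paper does not supply its own proof of this lemma; it simply records the statement and cites \cite[Lemma~3.38]{KM98}. Your proposal is correct and is precisely the argument given in that reference: reduce to the surface case by cutting with general very ample divisors, then use negative-definiteness of the intersection matrix of the contracted curves together with the elementary linear-algebra lemma to obtain $a_i\le 0$, and finally use Zariski connectedness of the fibres to upgrade to the strict inequality. So there is nothing in the paper to compare against, but your outline matches the cited source.

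One small slip: in your parenthetical statement of the $M$-matrix lemma you wrote ``non-positive off-diagonal entries'', whereas the intersection matrix $M=(W_i\cdot W_j)$ has \emph{non-negative} off-diagonal entries (distinct effective curves meet non-negatively), and it is this sign that makes the $a^+/a^-$ trick work. With non-positive off-diagonals the implication $Ma\ge 0\Rightarrow a\le 0$ is false in general. Your surrounding text and proof sketch have the signs right, so this is only a typo in the summary, but it is worth fixing.
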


The following result is the first step in proving Theorem \ref{thm:3}.

\begin{thm}[Non-Vanishing Theorem]
\label{thm:21}
  Let $(X,\Delta)$ be a klt pair and let $L$ be a nef Cartier divisor
  on $X$.
Assume that there exists $p>0$ such that
$pL-(K_X+\Delta)$ is nef and big. Then the linear system $\vert nL \vert$
is non-empty for all $n\gg0$.
\end{thm}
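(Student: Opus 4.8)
The plan is to run the standard Shokurov-type non-vanishing argument, but using multiplier ideals and Nadel vanishing in place of ad hoc blow-ups, exactly in the spirit announced in the introduction. First I would reduce to showing that $H^0(X, nL)\neq 0$ for a single sufficiently large and divisible $n$; indeed, once one such section exists, adding it to sections pulled back along $X\to Z$ (and using that $L$ is nef, hence eventually a positive power is effective once it is effective once) propagates to all large $n$. The real work is therefore to produce \emph{one} nonzero section of some $\mathcal O_X(nL)$.

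The key construction is to pick up a non-klt centre. Since $pL-(K_X+\Delta)$ is nef and big, Kodaira's Lemma (Definition-Lemma~\ref{lem:61}) lets us write $p L - (K_X+\Delta) \equiv A + E$ with $A$ an ample $\Q$-divisor and $E\geq 0$; choosing $A$ general and perturbing $\Delta$ slightly we may assume $(X,\Delta)$ is still klt and that for a suitable point $x\in X$ we can create, by a dimension count on $H^0(X, k(qL - (K_X+\Delta)))$ for $q$ large (this group grows like $k^{\dim X}$ by bigness), an effective divisor $D\sim_{\Q} q L - (K_X+\Delta)$ with high multiplicity at $x$, so that $(X, \Delta + D)$ is not klt near $x$ while being lc there after rescaling $D$ by the log canonical threshold $c=\operatorname{lct}$. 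Thus we obtain $m L = K_X + \Delta + cD + (\text{ample})$ with $(X,\Delta+cD)$ lc and possessing a non-klt centre $W$ through $x$; by Remark~\ref{rem:1} we may take $W$ to be a \emph{minimal} non-klt centre.

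Next I would invoke Tie Breaking (Lemma~\ref{lem:11}): after a further small perturbation $\Delta+cD \rightsquigarrow \Delta + (1-\varepsilon)cD + \eta D'$ with $D'\sim_{\Q}(\text{small ample})$, the centre $W$ becomes an \emph{exceptional} non-klt centre, the pair is lc, and we still have, for appropriate large $n$,
\[
n L \sim_{\Q} K_X + \Gamma + H,
\]
where $(X,\Gamma)$ is lc with unique non-klt centre $W$, $\mathcal J(\Gamma)=\mathcal I_W$ (Definition-Lemma~\ref{dfn:5}(4)), and $H$ is ample. By Kawamata's Subadjunction (Theorem~\ref{thm:5}), $W$ is normal and $(K_X+\Gamma+\delta H)_{|W}\sim_\R K_W + \Gamma_W$ with $(W,\Gamma_W)$ klt. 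Now I would apply induction on $\dim X$ (the case $\dim W=0$ being trivial, since then a point carries a nonzero section): the restricted system $n L_{|W}$ is non-empty for $n\gg 0$ by the inductive Non-Vanishing applied to $(W,\Gamma_W)$ together with $L_{|W}$, which satisfies the hypothesis because $(\text{multiple of }L - (K_W+\Gamma_W))$ is nef and big on $W$. Finally, the exact sequence
\[
0 \to \mathcal O_X(nL)\otimes \mathcal J(\Gamma) \to \mathcal O_X(nL) \to \mathcal O_W(nL_{|W}) \to 0
\]
together with $H^1(X, \mathcal O_X(nL)\otimes\mathcal J(\Gamma))=0$, which follows from Nadel Vanishing (Theorem~\ref{thm:4}) since $nL-(K_X+\Gamma)=H+(\text{nef})$ is nef and big, shows that $H^0(X,nL)\to H^0(W, nL_{|W})$ is surjective; lifting the section produced on $W$ gives the desired nonzero section of $\mathcal O_X(nL)$.

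The main obstacle I expect is the creation of the non-klt centre with a \emph{controlled} ample remainder: one must simultaneously arrange (i) that the divisor $D$ created from the bigness of $pL-(K_X+\Delta)$ has a non-klt point, (ii) that after taking the log canonical threshold the leftover $m L - (K_X+\Delta+cD)$ is still ample (this is why one writes the remainder as ample plus $(1-c)D$, absorbing the loss into the ample part), and (iii) that after Tie Breaking and passing to $W$ the class $nL_{|W}-(K_W+\Gamma_W)$ remains nef and big so the induction applies — this last point requires care because $L_{|W}$ could a priori be numerically trivial on $W$, in which case one has to argue separately that $nL_{|W}$ is nonetheless effective (indeed trivial is fine: $\mathcal O_W$ has a section), so the genuinely delicate bookkeeping is keeping all the perturbation parameters $\varepsilon,\eta,\delta$ small enough at once. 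Everything else is a routine assembly of Kodaira's Lemma, Tie Breaking, Subadjunction, and Nadel Vanishing.
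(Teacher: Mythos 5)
Your overall strategy---impose high multiplicity to create a non-klt centre, make it exceptional by Tie Breaking, restrict via Kawamata's Subadjunction, and lift sections back with Nadel vanishing---is exactly the paper's proof. The gap is at the very first substantive step, the construction of $D \sim_\Q qL-(K_X+\Delta)$ with $\mult_x D \geq d+1$ (where $d=\dim X$). The parameter count you invoke gives $h^0\bigl(X,k(qL-(K_X+\Delta))\bigr) \sim \tfrac{k^d}{d!}\,(qL-(K_X+\Delta))^d$, while imposing $\mult_x \geq (d+1)k$ costs roughly $\tfrac{k^d}{d!}\,(d+1)^d$ conditions; so the construction only goes through if $(qL-(K_X+\Delta))^d > (d+1)^d$. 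Bigness of $pL-(K_X+\Delta)$ only guarantees this self-intersection is \emph{positive}, not large, and taking $q$ large does not help when $L\equiv 0$, since then $(qL-(K_X+\Delta))^d = (-(K_X+\Delta))^d$ is fixed. The paper handles this by splitting into cases: if $L$ is numerically trivial, Kawamata--Viehweg vanishing gives $h^0(X,nL)=\chi(\OO_X(nL))=\chi(\OO_X)=h^0(\OO_X)\neq 0$ directly; if not, there is a curve $C$ with $L\cdot C>0$, whence $L\cdot(pL-(K_X+\Delta))^{d-1}>0$ and $(qL-(K_X+\Delta))^d\to\infty$ as $q\to\infty$, making the multiplicity count succeed. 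Your plan never addresses the numerically trivial case and silently assumes the volume grows with $q$, which is exactly what fails there.

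A secondary issue: your opening reduction claims that a single section of some $\OO_X(nL)$ suffices and then ``propagates'' by adding pullbacks from $Z$. There is no $Z$ in this absolute statement, and in any case $|n_0L|\neq\emptyset$ for one $n_0$ does not imply $|nL|\neq\emptyset$ for all $n\gg 0$. What actually yields the full statement is that the construction works uniformly for every sufficiently large $q$, so no such reduction is needed (and as stated, the one you propose is not valid).
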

\begin{proof}
The proof is by induction on $d=\dim X$.

We may assume that $pL-(K_X+\Delta)$ is ample and that $\Delta$ is a $\Q$-divisor. Indeed,
Definition-Lemma~\ref{lem:61} shows that there is an effective divisor $E$ such
that for all $0<\varepsilon\ll1$, $pL-(K_X+\Delta)- \varepsilon E$ is
ample, and $(X, \Delta+\varepsilon E)$ is klt since $(X, \Delta)$ is.

If $L$ is numerically trivial, by
Kawamata-Viehweg vanishing
$$h^0(X,nL)=\chi(\mathcal{O}_X(nL))=\chi(\mathcal{O}_X)=h^0(\mathcal{O}_X)\neq 0$$
and the linear series
$\vert nL \vert$ is non-empty for all $n\geq0$.

Assume that $L$ is not numerically trivial. First, we show that for
$q\gg1$, \[qL \sim K_X+\Delta+A, \]
where $A$ is an ample divisor and the pair $(X,\Delta+A)$ is not klt.
Let $C$ be a curve such that $L\cdot C\neq0$.
Since $pL-(K_X+\Delta)$ is ample, there is a positive integer $m$ such that $(m(pL-(K_X+\Delta)))^{d-1}$ is represented
by $C$ plus an effective $1$-cycle.
Thus $L\cdot(pL-(K_X+\Delta))^{d-1}> 0$, and therefore the intersection number
\begin{align*}
 (qL-(K_X+\Delta))^d&= ((q-p)L+pL-(K_X+\Delta))^d \\
& \geq (q-p)L\cdot(pL-(K_X+\Delta))^{d-1}
\end{align*}
tends to infinity with $q$. By Riemann-Roch and Kawamata-Viehweg vanishing, for $q\gg0$,
\begin{equation*}
  h^0(X,n(qL-(K_X+\Delta)))=\frac{n^d}{d!}(qL-(K_X+\Delta))^d+ O(d-1).
\end{equation*}
If $x \in X$ is a nonsingular point, a parameter count shows that there is
a section $D_{q,n} \in \vert n(qL-(K_X+\Delta))\vert$ with $\mult_x D_{q,n} \geq
(d+1)n$ for $q,n$ sufficiently large.

Fix one such pair $q,n$ and denote $A=\frac{1}{n}D_{q,n}$.
By construction, $(X, \Delta +A)$ is not klt at $x$.
Let
\[
c= \min \{t \in \R : (X,\Delta+tA) \mbox{ is not klt}\} \leq 1
\]
and consider a minimal non-klt centre $W$ of $(X,\Delta+cA)$.
By Lemma~\ref{lem:11}, we may assume that $W$ is an exceptional non-klt
centre of $(X, \Delta+ cA)$.

For $0<\varepsilon\ll1$,
\[qL\sim_{\Q} K_X+\Delta+cA+ \varepsilon A +(1-c-\varepsilon)A,\]
and Theorem~\ref{thm:5} shows that
\begin{equation}
  \label{eq:7}
 qL_{\vert W}\sim_\Q K_W+\Delta_W+ (1-c-\varepsilon)A_{\vert W}
\end{equation}
where $(W,\Delta_W)$ is a klt pair.
As $qL_{\vert W}$ is nef and $
qL_{\vert W}-( K_W+\Delta_W)$ is ample, by induction $\vert qL_{\vert
  W}\vert \neq \emptyset$.
The non-klt centre $W$ is exceptional, therefore by Definition-Lemma~\ref{dfn:5},
$\mathcal{J}((X,\Delta);cA)=\mathcal{I}_W$.
By
Nadel vanishing, \[H^i(X,\mathcal{I}_W(qL))=(0)\]
for $i>0$. The
long exact sequence in cohomology associated to
\begin{equation*}
   0\rightarrow \mathcal{I}_W(qL) \rightarrow \OO_X(qL)
   \rightarrow \OO_W(qL_{\vert W}) \rightarrow 0 \end{equation*}
yields a surjection $H^0(X,qL) \rightarrow H^0(W,qL_{\vert W})$, so the
   linear series $\vert qL \vert$ is non-empty.
\end{proof}
\begin{rem}
  \label{rem:28}
When working with klt pairs, one can apply Kodaira's Lemma to
replace nef and big divisors with ample ones. However, this no
longer holds in the context of dlt pairs.
\end{rem}
\begin{proof}[Proof of Theorem \ref{thm:3}]
By Theorem~\ref{thm:21}, the linear system $\vert nL\vert$ is
non-empty for $n\gg0$; we now show that $\vert
nL\vert$ is in fact basepoint free for $n\gg0$. As is explained in
Remark~\ref{rem:28}, we may assume that $pL-(K_X+\Delta)$ is ample.

For a positive integer $n$, since $\Bs \vert n^u L \vert \subset \Bs \vert n^v L
\vert$ when $u>v$, the Noetherian condition shows that the sequence
$\Bs|n^uL|$ stabilises. Denote by $B_n$ its limit.
If $B_p=B_q= \emptyset$ for two relatively prime integers $p$ and $q$,
then
$\Bs \vert p^{u_0}L \vert=\Bs \vert q^{v_0}L \vert= \emptyset $.
Since every $n\gg0$ can be written as a non-negative linear combination of $p^{u_0}$ and $q^{v_0}$,
$\vert nL\vert$ is basepoint free.

Assume that there is an integer $m$ such that $B_m \neq
\emptyset$. By taking a multiple of $L$, we may assume that $B_m= \Bs \vert mL \vert= \Bs \vert
m^eL \vert$ for all $e\geq 1$.
Let $D$ be a general section of $\vert mL \vert$.
	
Define
\[
c = \inf\{t \in \R : (X,\Delta+tD) \mbox{ is not klt}\}>0,
\]
so that the pair $(X, \Delta+cD)$ is strictly lc. Note that $c\leq 1$
because $D$ is an integral divisor.

We choose an appropriate minimal non-klt centre of $(X,
\Delta+cD)$ as follows. If there is a minimal non-klt centre $W$ of
codimension at least $2$, then $W \subset \Bs \vert mL \vert$. If all
minimal non-klt centres are of codimension $1$, note that $c=1$
and that every component of $D$ is a minimal non-klt centre. Let $W$
be a component of $D$ that intersects $\Bs \vert mL \vert$.

By Lemma~\ref{lem:11}, we may assume that $W$ is an
exceptional non-klt centre of the pair $(X, \Delta+cD)$ and therefore $\mathcal{J}(X,\Delta+cD)=\mathcal{I}_W$
by Definition-Lemma \ref{dfn:5}.

Fix $q=m^v\geq p+m$. Since $L$ is nef we have
$$qL \sim_{\Q} K_X+\Delta+D+A,$$
where $A$ is an ample $\Q$-Cartier divisor. Then
\[
qL\sim_{\Q}K_X+\Delta+cD+\varepsilon A +A_{\varepsilon},
\]
where $A_{\varepsilon}= (1-c)D+(1-\varepsilon)A$ is ample for
$\varepsilon$ sufficiently small. Theorem \ref{thm:5} then gives
\begin{equation*}
  \label{eq:22}
  qL_{\vert W}\sim_{\Q} K_W+\Delta_W+ {A_{\varepsilon}}_{\vert W}.
\end{equation*}
As in the proof of Theorem \ref{thm:21} the map
 \[H^0(X, qL)\rightarrow H^0(W,qL_{\vert W})\]
is surjective, and therefore $\Bs|qL|\cap W=\Bs|qL_{|W}|$. But by induction on the dimension
$\Bs|qL_{\vert W}|=\emptyset$ and that is a contradiction. 

\end{proof}

\begin{rem}[Effective basepoint free theorem {\cite{Kol93}}] Under the hypotheses of Theorem~\ref{thm:3}, one can show that there is a positive integer
$m$ that depends only on $\dim X$ and $p$ such that $\vert nL \vert$ is basepoint free for $n\geq m$.
\end{rem}

The following lemma will be used in the proof of the Rationality theorem.

\begin{lem}[\cite{KM98}]
    \label{lem:28}
Let $P(x,y)$ be a non-trivial polynomial of degree at most $n$.
Fix a positive integer $a$ and a positive real number
$\varepsilon$. Assume that $P$ vanishes for
all sufficiently large $x, y \in \N$  such that $0<ay-rx <\varepsilon$ for
some $r \in \R$. Then $r$ is rational and if $r= u/v$ with
$u,v \in \N$ and $\hcf(u,v)=1$, $v \leq a(n+1)/\varepsilon$.
  \end{lem}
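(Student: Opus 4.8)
The plan is to argue by contradiction, and in both cases to do so by exhibiting $n+1$ \emph{distinct} lines in the plane, each containing at least $n+1$ lattice points $(x,y)$ with $x,y$ arbitrarily large and $0<ay-rx<\varepsilon$. Given such lines, $P$ vanishes at $\ge n+1$ points of each; since the restriction of $P$ to a line is a one-variable polynomial of degree $\le n$, it vanishes identically on each of the $n+1$ lines. Hence $P$ is divisible in $\C[x,y]$ by $n+1$ pairwise non-proportional linear forms, so $\deg P\ge n+1$ --- contradicting $0\ne P$ of degree $\le n$. (I take $r>0$, which is forced as soon as there are arbitrarily large $(x,y)\in\N^2$ with $0<ay-rx<\varepsilon$, and is the case in the application.)

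First I would prove $r\in\Q$. Suppose not; then $r/a\notin\Q$, so the fractional parts of $qr/a$ ($q\ge1$) are dense in $[0,1)$ and we may choose coprime positive integers $p,q$ with $0<\delta:=ap-rq<\varepsilon/(n+2)$. For $c\in\Z$ let $L_c=\{(x,y):qy-px=c\}$; its lattice points are those with $y=(c+px)/q$ and $x$ in a fixed residue class mod $q$, and along them $ay-rx=(ac+\delta x)/q$. So the relevant points on $L_c$ are those with $x$ in an interval of length $q\varepsilon/\delta>q(n+2)$, giving at least $n+1$ of them; and for $c$ large and negative these points satisfy $x\approx a|c|/\delta$ and $y\approx r|c|/\delta$, hence are as large as we wish. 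Taking $n+1$ consecutive large negative values of $c$ gives the required $n+1$ distinct lines, hence the contradiction; so $r$ is rational.

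Next the bound on the denominator. Write $r=u/v$ with $u,v\in\N$ and $\hcf(u,v)=1$, and suppose for contradiction $v>a(n+1)/\varepsilon$. Set $g=\gcd(a,u)$, $p=u/g$, $q=av/g$, so that $\gcd(p,q)=1$ and $ap=rq$; now on $L_c=\{qy-px=c\}$ we get $ay-rx=ac/q=gc/v$, a constant, which lies in $(0,\varepsilon)$ exactly when $0<c<v\varepsilon/g$. Since $v\varepsilon/g\ge v\varepsilon/a>n+1$, there are at least $n+1$ such integers $c$, each giving a line entirely contained in the strip and carrying infinitely many lattice points with $x,y\to+\infty$ (its direction $(q,p)$ has positive entries). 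Again we obtain $n+1$ distinct lines with $\ge n+1$ relevant points each, a contradiction; so $v\le a(n+1)/\varepsilon$.

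The one genuine point is the elementary counting that guarantees ``at least $n+1$ relevant lattice points on each of $n+1$ distinct lines.'' In the rationality step this is soft --- no sharp constant is needed, so any sufficiently good rational approximation of $r/a$ works --- whereas the sharp bound $v\le a(n+1)/\varepsilon$ in the second step relies on using the \emph{exact} slope $p/q=u/(av)$, for which the strip $0<ay-rx<\varepsilon$ contains entire lines rather than merely long segments. Beyond that, the only care needed is over the off-by-one estimates in the interval counts and over checking that the chosen lattice points can be taken arbitrarily large.
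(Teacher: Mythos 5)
Your proof is correct, and it rests on the same core mechanism as the paper's: produce $n+1$ distinct lines, each carrying at least $n+1$ admissible lattice points at which $P$ vanishes, so that $P$ is divisible by $n+1$ pairwise non-proportional linear forms and hence has degree at least $n+1$. For the denominator bound the two proofs are essentially identical—you both use the parallel pencil of lines on which $ay-rx$ is constant; you merely work with the primitive form $qy-px$ after dividing out $g=\gcd(a,u)$, which tidies the lattice-point count. The genuine difference is in the irrationality step. The paper takes $n+1$ distinct good rational approximants $(p_i,q_i)$ of $r/a$, and for each one uses the $n+1$ multiples $(kp_i,kq_i)$ on the line through the origin in direction $(p_i,q_i)$, yielding one factor $q_ix-p_iy$ per approximant. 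You instead fix a \emph{single} approximant $(p,q)$ with $0<\delta=ap-rq<\varepsilon/(n+2)$ and translate: the $n+1$ parallel lines $L_c=\{qy-px=c\}$ for consecutive large negative $c$ each meet the strip in a segment long enough (length $q\varepsilon/\delta>q(n+2)$ in $x$) to contain $\ge n+1$ admissible lattice points. Your version economises on Diophantine approximation (one approximant instead of $n+1$) at the mild cost of having to check that the segments are long and that the points can be taken large and in $\N^2$; the paper's version avoids these interval estimates because the multiples $(kp_i,kq_i)$ lie on the lines automatically, but its writeup is terse and slightly inaccurate (it lists $k=0$, and writes ``at most $n$'' where the off-by-one bookkeeping is left to the reader). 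Both routes are sound and give the stated bound $v\le a(n+1)/\varepsilon$.
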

  \begin{proof}
    Assume that $r$ is irrational. There are infinitely many pairs
    $(p,q)\in \N^2$ such that $0<\vert aq-rp \vert <
    \varepsilon/(n+2)$ and in particular, there is a large integral
    zero $(p_1,q_1)\in \N^2$ of $P$ in that range. Since all
    pairs
$(kp_1,kq_1)$ for $0
    \leq k \leq n+1$ are also zeroes of $P$, $q_1x-p_1y$ divides
    $P$. Repeating the process for $n+1$ distinct integral solutions
    $\{(p_i,q_i)\}_{1\leq i\leq n+1}$ of $P$ in the range $0<\vert aq-rp \vert <
    \varepsilon/(n+2)$ yields a contradiction.

Now write $r=u/v$ with $\hcf(u,v)=1$.
Fix $k\in \N$ and let $(p,q)\in \N^2$ be an integral solution of
$ay-rx = ak/v$. Since $a(q+lu)-r(p+alv)=ak/v$ for all
$l\in \N$, if $ak/v<\varepsilon$, $ay-rx-ak/v$ divides
$P$. The degree of $P$ is bounded by $n$, hence there are at most $n$
such values of $k$ and $a(n+1)/v\geq \varepsilon$.
  \end{proof}

\begin{thm}[Rationality theorem]
\label{thm:29}
Let $(X, \Delta)$ be a klt pair such
that $K_X+\Delta$ is not nef and let $a>0$ be an integer such that
$a(K_X+\Delta)$ is Cartier. Let $H$ be a nef and big Cartier divisor
 and define:
\[
r=r(H)= \max\{ t\in \R : H+t(K_X+\Delta) \mbox{ is nef\/}\}.
\]
 Then there are positive integers $u$ and $v$ such that $r=u/v$ and
 \[
0<v \leq a (\dim X+1).
\]
\end{thm}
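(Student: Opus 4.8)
The plan is to argue by contradiction: suppose the conclusion fails, i.e.\ either $r$ is irrational, or $r=u/v$ in lowest terms with $v>a(\dim X+1)$. Set $d=\dim X$. For integers $p,q>0$ consider the Cartier divisor
\[
L(p,q)=pH+q\cdot a(K_X+\Delta),
\]
which is nef precisely when $q/p\le r/a$, i.e.\ when $ar^{-1}\cdot$... more cleanly, when $0\le ap-q\cdot a^2r^{-1}$, or simply when $p/q\ge a/(ar)=1/r$; the key point is that $L(p,q)$ lies just inside the nef cone when $0<ap-(a/r)q$ is small. I would introduce the two-variable Hilbert function $P(p,q)=\chi\bigl(X,\OO_X(L(p,q))\bigr)$, which by asymptotic Riemann--Roch is a polynomial in $p,q$ of total degree at most $d$. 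Using Kawamata--Viehweg vanishing (available since $L(p,q)-(K_X+\Delta)$ is, for the relevant range of $(p,q)$, nef and big once $p\gg0$), one gets $P(p,q)=h^0(X,L(p,q))$ for those $(p,q)$.

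The heart of the matter is to show $P(p,q)=0$ for all sufficiently large $p,q$ with $0<ap-\frac{a}{r}q<\varepsilon$ for a suitable fixed $\varepsilon$, so that Lemma~\ref{lem:28} (applied with the role of $(x,y)=(q,p)$, or after relabelling) forces $r$ rational with the stated denominator bound. The point is that for such $(p,q)$ the divisor $L(p,q)$ is \emph{not} nef (by definition of $r$ it is numerically just outside the nef cone), and one must convert this failure of nefness into vanishing of $h^0$. This is exactly where the Basepoint Free Theorem, Theorem~\ref{thm:3}, enters: if $h^0(X,L(p,q))\ne0$ for infinitely many such $(p,q)$, one produces a divisor that gives enough sections to run a base-point-freeness argument, contradicting the fact that $L(p,q)$ fails to be nef (a basepoint-free linear system defines a morphism and pulls back a nef divisor, so any divisor with $|nL(p,q)|$ eventually basepoint free would be nef). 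More precisely, one fixes $\varepsilon$ small, and if the vanishing failed one would find, using Non-Vanishing (Theorem~\ref{thm:21}) applied to an appropriate perturbation, sections showing $L(p,q)$ or a nearby divisor is on the boundary in a way incompatible with $r$ being the exact nef threshold.

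I expect the main obstacle to be the bookkeeping in the last step: arranging the inequalities so that the divisor to which one applies the Basepoint Free / Non-Vanishing theorems genuinely satisfies ``$pL-(K_X+\Delta)$ nef and big'' while simultaneously $L$ itself is forced to be nef, thereby contradicting $L(p,q)\notin\Nef(X)$. One must be careful that the perturbation $\varepsilon A$ used to upgrade ``nef and big'' to ``ample'' (as in Remark~\ref{rem:28}) does not destroy the numerical position relative to the line $ap=\frac{a}{r}q$, and that the polynomial $P$ is genuinely non-trivial (it is, since $H$ nef and big gives $P(p,0)\sim \tfrac{p^d}{d!}H^d>0$). Once the vanishing $P(p,q)=0$ on the strip is established, the denominator bound $0<v\le a(d+1)$ is immediate from Lemma~\ref{lem:28} with $n=d$ and the appropriate $\varepsilon$; the content is entirely in producing that vanishing from the Basepoint Free Theorem.
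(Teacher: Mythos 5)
Your set-up (the polynomial $P(p,q)=\chi(X,L(p,q))$, Kawamata--Viehweg to identify it with $h^0$ on the strip $0<aq-rp<\varepsilon$, and the reduction to Lemma~\ref{lem:28}) is on the right track, but the central step is stated backwards and would fail. You propose to show that $P(p,q)=0$ for all large $(p,q)$ in the strip, so that Lemma~\ref{lem:28} applies directly. This vanishing is simply not true: a divisor that fails to be nef can perfectly well have many sections (e.g.\ any effective non-nef divisor), so ``$L(p,q)\notin\Nef(X)$'' does not force $h^0(X,L(p,q))=0$. Worse, the mechanism you propose for deducing it --- the Basepoint Free Theorem --- requires its input divisor to be \emph{nef}, which is precisely what $L(p,q)$ is not on that strip; so Theorem~\ref{thm:3} cannot be applied there, and the implication ``$h^0\ne 0$ for the strip $\Rightarrow$ $L(p,q)$ nef'' is unfounded. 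The paper runs the logic the other way round: it uses the \emph{contrapositive} of Lemma~\ref{lem:28}. Assuming $r$ irrational (resp.\ has denominator $v>a(\dim X+1)$), the lemma yields arbitrarily large $(p,q)$ in the strip with $P(p,q)\ne 0$, i.e.\ $|pH+qa(K_X+\Delta)|\ne\emptyset$.

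From there the contradiction is produced by an argument parallel to the proof of the Basepoint Free Theorem, not by vanishing of $h^0$: one shows the stable base locus $L_0$ of $|pH+qa(K_X+\Delta)|$ (for $(p,q)$ in the strip, after the Noetherian stabilisation) is nonempty because the divisors there are not nef, then takes a general section $S$ of such a linear system, forms the lc threshold $c$ of $(X,\Delta+cS)$, picks a minimal non-klt centre $W\subset L_0$, makes it exceptional via Tie Breaking (Lemma~\ref{lem:11}), and applies Nadel vanishing together with Kawamata's subadjunction (Theorem~\ref{thm:5}) and induction on dimension to conclude that $H^0(X,p'H+q'a(K_X+\Delta))\to H^0(W,(p'H+q'a(K_X+\Delta))_{|W})$ is surjective for suitable $(p',q')$ in the strip, forcing $W\not\subset\Bs|p'H+q'a(K_X+\Delta)|=L_0$ --- a contradiction. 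This lifting argument (and Step~1 of the paper's proof, which uses Theorem~\ref{thm:3} only to reduce to $H$ basepoint free) is the genuine content of the theorem; the direct vanishing claim your outline relies on cannot be salvaged.
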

\begin{proof}
We follow closely the proof given in \cite{KM98}.\\[2mm]
\noindent
\emph{Step 1.}
We may assume that the divisor $H$ is basepoint free.
Indeed, Theorem~\ref{thm:3} shows that for $m\gg0$ and
for some $c,d \in \N$, $H'= m(cH+da(K_X+ \Delta))$ is basepoint free.
Moreover, $r(H)$ and $r(H')$ satisfy
\[
r(H)= \frac{r(H')+mda}{mc}.
\]
If the denominator of $r(H')$ is $v'$, the denominator $v$ of $r(H)$
divides $mcv'$. Since $c$ and $m$ can be chosen arbitrarily large, $v$
divides $v'$ and $0<v \leq v' <a(\dim X+1)$.\\[2mm]
\noindent
\emph{Step 2.}
For $(p, q) \in \N^2$, define
\[L(p,q)= \Bs \vert pH+qa(K_X+ \Delta)\vert.\]
Observe that for $p,q$ sufficiently large and such that
$0<aq-rp< \varepsilon$, the sets $L(p,q)$ stabilise.
This follows from the Noetherian condition because if $(p',q')$ is such
that $0<aq'-rp'< \varepsilon$ and $p/q<
p'/q'$, then $L(p',q') \subset L(p,q)$. Denote by $L_0$ their limit and set
\[
I= \{(p,q) \in \Z^2\colon 0<aq-rp<1 \mbox{ and } L(p,q)=L_0\}.
\]
Since $\vert pH +qa(K_X+ \Delta) \vert$ is not nef, and in particular not
basepoint free, the set $L_0$ is not empty.\\[2mm]
\noindent
\emph{Step 3.}
Assume that $r=r(H)$ is not rational.
Let $f \colon Y \rightarrow X$ be a log resolution of $(X, \Delta)$. Consider
the divisors:
$$D_1= f^{\ast}H,\quad
D_2= f^{\ast}(a(K_X+\Delta)),\quad
A= K_Y-f^{\ast}(K_X+\Delta).$$
Since $\lceil A \rceil$ is $f$-exceptional and effective, we have
$$H^0(Y,xD_1+yD_2+\lceil A \rceil) \simeq H^0(Y,xH+ya(K_X+\Delta)).$$
Let $(x,y) \in \N^2$ be such that  $0<ay-rx<1$. Then $xD_1+yD_2+A-K_Y$ is nef
and big, and by Kawamata-Viehweg,
\[P(x,y)=\chi(xD_1+yD_2+\lceil A \rceil)=h^0(xD_1+yD_2+\lceil A \rceil) \]
is a polynomial of degree at most $\dim X$ and is non-trivial by Non-Vanishing.
By Lemma~\ref{lem:28}, there
are arbitrarily large integral points $(p,q)\in \N^2$ such that
$0<aq-rp<1$ and $P(p,q)\neq0$.
Therefore for all $(p,q) \in I$, $\vert pH +qa(K_X+ \Delta)
\vert$ is not empty and hence $L_0 \neq X$.

Let $S$ be a general section of $\vert pH +qa(K_X+ \Delta) \vert$.
The pair $(X,\Delta+S)$ is not klt; define
\[
c = \min\{t \in \R : (X,\Delta+tS) \mbox{ is not klt}\} \leq 1.
\]
 Let $W$ a minimal non-klt
 centre of the pair $(X, \Delta + cS)$. By Lemma~\ref{lem:11}, we may
 assume that $W$ is exceptional.
Nadel vanishing shows that the map
\[H^0(Y,p'H +q'a(K_X+
\Delta))\rightarrow H^0(W, (p'H +q'a(K_X+ \Delta))_{\vert W})\]
is surjective, where the pair $(p',q')$ is chosen so that $aq'-rp'<aq-rp$.
This is a contradiction as in the proof of Theorem \ref{thm:3}.
This proves that $r$ is rational.\\[2mm]
\noindent
\emph{Step 4.}
 Let $r= u/v$, where $u,v \in \N$ and
 $\hcf(u,v)=1$. Assume that $v>a(\dim X+1)$.

 Lemma~\ref{lem:28} with $\varepsilon=1$ shows that there are arbitrarily
 large positive integers $p,q$ with $0<aq-rp<1$ such that $P(p,q) \neq
 0$. The linear system $\vert pH +qa(K_X+ \Delta) \vert$ is not empty
 for all $(p,q) \in I$. We proceed as in Step 3.
\end{proof}
\section{Log terminal and log canonical models}\label{sec:lt}

In this section, we define minimal and canonical models of varieties.
For pairs, we introduce \emph{log
canonical} and \emph{log terminal} models.
\begin{dfn}
\label{dfn:11}
Let $f \colon (X,\Delta) \rightarrow Z$  be a projective morphism.
\begin{enumerate}
\item[1.]
The pair $(X, \Delta)$ is a \emph{log terminal} (lt) model over
$Z$ if it has dlt singularities and if $K_X+\Delta$ is $f$-nef.
\item[2.]
The pair $(X, \Delta)$ is a \emph{log canonical} (lc) model over
$Z$ if it has lc singularities and if $K_X+\Delta$ is $f$-ample.
\end{enumerate}
\end{dfn}
\begin{dfn}
\label{dfn:10}
A \emph{log terminal model} of a dlt pair $f \colon (X, \Delta_X)
\rightarrow Z$ is a commutative diagram
\[\xymatrix{ (X,\Delta_X) \ar@{-->}[rr]^{\varphi} \ar[dr]_f & \quad & (Y,\Delta_Y)
  \ar[dl]^{g}\\
\quad & Z & \quad
}\] where $\varphi$ is birational and:
\begin{enumerate}
\item[$1$.] $g$ is projective,
\item[$2$.] $\varphi^{-1}$ has no exceptional
  divisors, i.e. $\varphi$ is contracting,
\item[$3$.] $\Delta_Y= \varphi_{\ast} \Delta_X$,
\item[$4$.] $K_Y +\Delta_Y$ is $g$-nef,
\item[$5$.] $a(E,X,\Delta_X) < a(E,Y,\Delta_Y)$ for every
  $\varphi$-exceptional divisor $E$.
\end{enumerate}
\end{dfn}
\begin{rem}\mbox{}
  \begin{itemize}
  \item[(a)]  This definition corresponds to that of \emph{weak canonical} models
 in \cite[Definition 3.50]{KM98}. Note however that \cite{KM98}
 only assumes that $g$ is proper.
\item[(b)] Log terminal models can be defined without requiring
  $\varphi$ to be contracting. We add this condition
  for clarity of the exposition.
  \end{itemize}
\end{rem}
\begin{dfn}
A \emph{log canonical model} is a diagram as in
Definition~\ref{dfn:10}, satisfying conditions $(1-3)$ above and:
\begin{enumerate}
\item[$4'$.]$K_Y +\Delta_Y$ is $g$-ample,
\item[$5'$.] $a(E,X,\Delta_X) \leq a(E,Y,\Delta_Y)$ for every
  $\varphi$-exceptional divisor $E$.
\end{enumerate}
A \emph{weak log canonical model} is a diagram as in
Definition~\ref{dfn:10}, satisfying conditions $(1-4)$ and $(5')$.
\end{dfn}
\begin{lem}\label{lem:27}
Assume conditions $(1-4)$ of
Definition~\ref{dfn:10}. If $K_X+\Delta_X$ is $f$-negative, then
condition $(5)$ automatically holds.
\end{lem}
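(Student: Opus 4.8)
The plan is to compare discrepancies on a common resolution and apply the Negativity Lemma (Lemma~\ref{lem:2}) twice. Choose a smooth $W$ with projective birational morphisms $p\colon W\to X$ and $q\colon W\to Y$ with $q=\varphi\circ p$, so that $f\circ p=g\circ q$. Fix $K_W$ and put $K_X=p_*K_W$, $K_Y=q_*K_W$; then condition $(3)$ gives $\varphi_*(K_X+\Delta_X)=K_Y+\Delta_Y$. Since $\varphi$ is contracting by $(2)$, every $p$-exceptional prime divisor on $W$ is also $q$-exceptional (otherwise its $q$-image would be a $\varphi^{-1}$-exceptional divisor on $Y$), so $q_*p^*(K_X+\Delta_X)=\varphi_*(K_X+\Delta_X)$. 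Setting
\[
F:=p^*(K_X+\Delta_X)-q^*(K_Y+\Delta_Y),
\]
we get $q_*F=\varphi_*(K_X+\Delta_X)-(K_Y+\Delta_Y)=0$, so $F$ is $q$-exceptional and each of its components is $q$-exceptional.

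First I would show $F\ge0$. Over $Y$ we have $F\equiv_q p^*(K_X+\Delta_X)$ since $q^*(K_Y+\Delta_Y)\equiv_q0$. If $C\subset W$ is contracted by $q$, then $f(p(C))=g(q(C))$ is a point, so $p_*C$ is either $0$ or a positive multiple of a curve contracted by $f$; in either case $p^*(K_X+\Delta_X)\cdot C=(K_X+\Delta_X)\cdot p_*C\le0$, using that $K_X+\Delta_X$ is $f$-negative. Hence $-F$ is $q$-nef, and Lemma~\ref{lem:2} applied with $H=-F$, $D=0$ shows that every coefficient of $-F$ is $\le0$, i.e.\ $F\ge0$.

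Next I would prove the strict inequality. Let $E$ be a $\varphi$-exceptional prime divisor on $X$ and let $F_E\subset W$ be its strict transform. Since $\varphi$ contracts $E$ by $(2)$, the centre satisfies $\codim_Y q(F_E)\ge2$, so $q|_{F_E}$ has positive-dimensional general fibre; as $p|_{F_E}\colon F_E\to E$ is birational, a general fibre of $q|_{F_E}$ is not contracted by $p$ and hence contains a curve $C$ with $p(C)$ a curve inside $E$. As above $p(C)$ is contracted by $f$, so $p^*(K_X+\Delta_X)\cdot C<0$, i.e.\ $H=-F$ is not numerically trivial on that fibre of $q$. The second part of Lemma~\ref{lem:2} then forces the coefficient of $F_E$ in $-F$ to be strictly negative, i.e.\ $\mult_{F_E}F>0$.

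Finally, unwinding the definition of the discrepancy,
\[
a(E,Y,\Delta_Y)-a(E,X,\Delta_X)=\mult_{F_E}\bigl(p^*(K_X+\Delta_X)-q^*(K_Y+\Delta_Y)\bigr)=\mult_{F_E}F>0
\]
for every $\varphi$-exceptional divisor $E$, which is condition $(5)$. The main obstacle is the strict inequality: the bound $F\ge0$ is a routine application of the Negativity Lemma, but upgrading it to strict positivity along each $\varphi$-exceptional divisor requires producing, for each such $E$, a $q$-contracted curve along which $p^*(K_X+\Delta_X)$ is genuinely negative, which is precisely where one uses that $\varphi$ is contracting together with $f$-negativity of $K_X+\Delta_X$. (Note that condition $(4)$ is not needed for this implication.)
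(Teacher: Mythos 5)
Your proof is correct and follows the same strategy as the paper: pass to a common resolution $p\colon W\to X$, $q\colon W\to Y$, observe that $p^*(K_X+\Delta_X)-q^*(K_Y+\Delta_Y)$ has coefficients $a(E,Y,\Delta_Y)-a(E,X,\Delta_X)$, and invoke the Negativity Lemma~\ref{lem:2}, using the strict part of that lemma to upgrade $\leq$ to $<$ along $\varphi$-exceptional divisors. The one substantive difference is the morphism to which you apply the Negativity Lemma. The paper applies it to $f\circ p\colon W\to Z$, noting that $q^*(K_Y+\Delta_Y)-p^*(K_X+\Delta_X)$ is $(f\circ p)$-nef; that step uses both $f$-negativity of $K_X+\Delta_X$ \emph{and} condition $(4)$ ($g$-nefness of $K_Y+\Delta_Y$), and it tacitly assumes that $f$ is birational so that $f\circ p$ is a birational morphism as required by Lemma~\ref{lem:2}---true in the intended application (Proposition~\ref{prop:15}, a flipping contraction), but not part of the hypotheses as stated. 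You instead apply the lemma to $q\colon W\to Y$, which is always birational, and since $q^*(K_Y+\Delta_Y)$ is $q$-numerically trivial you only need $f$-negativity of $K_X+\Delta_X$ to get $q$-nefness; this is why, as you correctly observe, condition $(4)$ is not actually needed. Your version is both a little more robust (no hidden birationality assumption on $f$) and a little sharper (dispenses with $(4)$), and your explicit construction of a $q$-contracted curve on the strict transform of each $\varphi$-exceptional $E$ that is not $p$-contracted fills in precisely what the paper compresses into the phrase ``non-trivial on $\varphi$-exceptional divisors.''
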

\begin{proof}
Let $W$ be a common resolution of $(X, \Delta_X)$ and $(Y, \Delta_Y)$.
\[\xymatrix{ \quad & W \ar[dr]^q \ar[dl]_p & \quad \\
(X,\Delta_X) \ar@{-->}[rr]^{\varphi} & \quad & (Y,\Delta_{Y}) \\
}\]
Write
\begin{align*}
  K_W &= p^{\ast}(K_X+\Delta_X)+\sum a(E, X, \Delta_X)E\\
 &= q^{\ast}(K_Y+\Delta_Y)+\sum a(E, Y,
\Delta_Y)E,
\end{align*}
where $E$ runs through divisors on $W$. The result follows from the Negativity Lemma since
$q^{\ast}(K_Y+\Delta_Y)-p^*(K_X+\Delta_X)$ is $(f\circ p)$-nef and non-trivial on $\varphi$-exceptional divisors.
\end{proof}
\begin{dfn}
Let $(X,\Delta)$ be an lc pair and let $f\colon X\rightarrow Z$ be a projective morphism. The {\em canonical ring\/} of $(X,\Delta)$ over $Z$ is
$$R(X,K_X+\Delta)=\bigoplus_{n\geq0}f_*\OO_X(\lfloor n(K_X+\Delta)\rfloor).$$
\end{dfn}
\begin{lem}
\label{lem:65}
Let $(X, \Delta_X)$ be a dlt (respectively klt, lc) pair over $Z$.
\begin{enumerate}
\item[1.] A log terminal (respectively log canonical) model $(Y,\Delta_Y)$ of $(X,
  \Delta_X)$ is dlt (respectively klt, lc).
\item[2.] If $(Y,\Delta_Y)$ is a log terminal or log canonical model of $(X, \Delta_X)$,
then $R(Y, K_Y+\Delta_Y)= R(X,K_X+\Delta_X)$.
\item[3.] If $(Y,\Delta_Y)$ is a log canonical model of $(X, \Delta_X)$, then
$$Y \simeq\Proj_Z R(X,K_X+\Delta_X).$$
In particular, there is a unique log canonical model of $(X, \Delta_X)$.
\item[4.] Any two log terminal models of $(X, \Delta_X)$ are isomorphic in codimension $1$.
\end{enumerate}
\end{lem}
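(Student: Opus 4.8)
The plan is to reduce all four statements to one discrepancy comparison. Let $(Y,\Delta_Y)$ be a log terminal (or log canonical) model of $(X,\Delta_X)$, with birational map $\varphi$, choose a common log resolution $p\colon W\to X$, $q\colon W\to Y$, and set $r=f\circ p=g\circ q\colon W\to Z$. As in the proof of Lemma~\ref{lem:27}, comparing the two expressions for $K_W$ gives
\[
\Theta:=p^{\ast}(K_X+\Delta_X)-q^{\ast}(K_Y+\Delta_Y)=\sum_E\bigl(a(E,Y,\Delta_Y)-a(E,X,\Delta_X)\bigr)E,
\]
and the first thing to prove is the fact $(\ast)$: \emph{$\Theta\geq 0$ and $\Theta$ is $q$-exceptional}, equivalently $a(E,X,\Delta_X)\leq a(E,Y,\Delta_Y)$ for every geometric valuation $E$. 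Indeed, since $\varphi$ is contracting, every non-$q$-exceptional prime on $W$ is the strict transform of a non-$\varphi$-exceptional prime on $X$, where $\Theta$ has coefficient $0$ because $\Delta_Y=\varphi_{\ast}\Delta_X$; so $\Theta$ is $q$-exceptional. Writing $\Theta=\Theta'+\Gamma$ with $\Theta'$ the $p$-exceptional part and $\Gamma$ the rest, $\Gamma$ is supported on the strict transforms of the $\varphi$-exceptional primes of $X$, so $\Gamma\geq 0$ by condition $(5)$ (resp.\ $(5')$); moreover $-\Theta'\equiv_p q^{\ast}(K_Y+\Delta_Y)+\Gamma$, where $q^{\ast}(K_Y+\Delta_Y)$ is $r$-nef because $K_Y+\Delta_Y$ is $g$-nef, hence $p$-nef since $r$ factors through $p$. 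The Negativity Lemma (Lemma~\ref{lem:2}) applied to $p$ then gives $\Theta'\geq 0$, so $\Theta\geq 0$.

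Granting $(\ast)$, part~$1$ is immediate for klt and lc pairs, since $a(E,X,\Delta_X)>-1$ (resp.\ $\geq -1$) for all $E$ forces the same inequalities on $Y$. For the dlt case, $(\ast)$ shows $(Y,\Delta_Y)$ is at least lc and that its log canonical places form a subset of those of $(X,\Delta_X)$; one then invokes the description of a dlt pair through an open log smooth subset meeting every non-klt centre and checks that, after shrinking, such a subset descends through $\varphi$. Part~$2$ also follows from $(\ast)$: letting $\Delta_W$ be the minimal boundary on $W$ with $K_W+\Delta_W\geq p^{\ast}(K_X+\Delta_X)$, the divisor $K_W+\Delta_W$ differs from $p^{\ast}(K_X+\Delta_X)$ by an effective $p$-exceptional divisor and, using $(\ast)$ together with the fact that $p$-exceptional primes are $q$-exceptional, from $q^{\ast}(K_Y+\Delta_Y)$ by an effective $q$-exceptional divisor, while $q_{\ast}\Delta_W=\Delta_Y$; since twisting a pushforward by an effective exceptional divisor changes nothing, $R(W,K_W+\Delta_W)$ equals both $R(X,K_X+\Delta_X)$ and $R(Y,K_Y+\Delta_Y)$. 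For part~$3$, a log canonical model has $K_Y+\Delta_Y$ $g$-ample, so $Y\simeq\Proj_Z R(Y,K_Y+\Delta_Y)\simeq\Proj_Z R(X,K_X+\Delta_X)$ by part~$2$; as the right-hand side and its natural polarisation depend only on $(X,\Delta_X)$, and $\Delta_Y$ is recovered from the polarisation, the log canonical model is unique.

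For part~$4$, let $(Y_1,\Delta_1)$, $(Y_2,\Delta_2)$ be two log terminal models and $p_i\colon W\to Y_i$ a common log resolution with $r=g_i\circ p_i$. Repeating the computation of $(\ast)$ on $W$ — now using that $\varphi_1$ is contracting, that $(\ast)$ holds for $Y_2$ as a model of $X$, and that both $p_i^{\ast}(K_{Y_i}+\Delta_i)$ are $r$-nef hence $p_1$-nef — the Negativity Lemma applied to $p_1$ gives $p_1^{\ast}(K_{Y_1}+\Delta_1)-p_2^{\ast}(K_{Y_2}+\Delta_2)\geq 0$; by symmetry the two pullbacks coincide, so $a(E,Y_1,\Delta_1)=a(E,Y_2,\Delta_2)$ for every geometric valuation $E$. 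If some prime of $X$ were contracted by $\varphi_1$ but not by $\varphi_2$, condition $(5)$ and $(\ast)$ would force the corresponding discrepancies to differ; hence $\varphi_1$ and $\varphi_2$ contract exactly the same primes, so $\varphi_2\circ\varphi_1^{-1}$ neither contracts nor extracts a divisor, and $Y_1$, $Y_2$ are isomorphic in codimension~$1$.

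The step I expect to be the real obstacle is the dlt case of part~$1$: being dlt, unlike being klt or lc, is not detected by the discrepancies of all geometric valuations, so $(\ast)$ alone does not suffice, and one must control precisely which non-klt centres of $(Y,\Delta_Y)$ occur and verify that the open log smooth set governing them is inherited from $(X,\Delta_X)$ across $\varphi$.
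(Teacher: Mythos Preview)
Your proposal is correct and follows the same approach as the paper's own proof: both reduce everything to the Negativity Lemma via the discrepancy comparison you call $(\ast)$, and both single out the dlt case of part~1 as the genuine difficulty (the paper defers it to \cite[Corollary 3.44]{KM98}, while you sketch the open--log--smooth--locus argument). You have essentially written out in full what the paper compresses into two sentences.
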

\begin{proof}
 These are consequences of the Negativity Lemma. The main difficulty
 is to show that discrepancies increase in flips or divisorial
 contractions of dlt pairs \cite[Corollary 3.44]{KM98}.
\end{proof}

The following definition and lemma resolve the issue raised in Remark \ref{rem:3}.

\begin{dfn}\label{dfn:4}
Let $\varphi \colon (X, \Delta_X) \rightarrow Z$ be a flipping
contraction, i.e.:
\begin{enumerate}
\item[$1.$] $\varphi$ is a small projective morphism,
\item[$2.$] $-(K_X+\Delta_X)$ is $\varphi$-ample,
\item[$3.$] $\rho(X/Z)=1$.
\end{enumerate}
A \emph{flip of
  $\varphi$} is a commutative diagram\[
\xymatrix{ (X,\Delta_X) \ar@{-->}[rr]\ar[dr]_{\varphi} & \quad & (X^+,\Delta_{X^+}) \ar[dl]^{\varphi^+}\\
 \quad & Z & \quad } \]
where:
\begin{enumerate}
\item[$1^+.$]  $\varphi^+$ is small and projective,
\item[$2^+.$] $K_{X^+}+\Delta_{X^+}$ is $\varphi^+$-ample, with $\Delta_{X^+}=((\varphi^+)^{-1}\circ\varphi)_*\Delta_X$,
\item[$3^+.$] $\rho(X^+/Z)=1$.
\end{enumerate}
\end{dfn}
\begin{lem}
\label{lem:3}
Let $(X, \Delta_X)$ be a $\Q$-factorial dlt pair and let $\varphi \colon X
\dashrightarrow X'$ be a divisorial contraction, a Mori fibration or a flip. Then
$X'$ is $\Q$-factorial.
\end{lem}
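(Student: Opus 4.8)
The statement to prove is Lemma~\ref{lem:3}: if $(X,\Delta_X)$ is $\Q$-factorial dlt and $\varphi\colon X\dashrightarrow X'$ is a divisorial contraction, a Mori fibration, or a flip, then $X'$ is $\Q$-factorial. The plan is to treat the three cases, reducing the Mori fibration and divisorial contraction cases to the basic fact about extremal contractions encapsulated in Corollary~\ref{cor:1}, and reducing the flip case to the divisorial-contraction case via the graph of the flip.

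First I would handle the honest morphism cases. Suppose $\varphi\colon X\to X'$ is the contraction of a $(K_X+\Delta_X)$-negative extremal ray $R$ (so either a Mori fibration or a divisorial contraction). Let $D'$ be any prime Weil divisor on $X'$; I want to show some multiple of $D'$ is Cartier. Take the proper transform $D=\varphi_*^{-1}D'$ on $X$ (in the divisorial case one may also have to deal with the $\varphi$-exceptional divisor $E$, but that is Cartier on $X$ by hypothesis and we argue on a complementary piece of $\Pic$). Since $X$ is $\Q$-factorial, $mD$ is Cartier on $X$ for some $m>0$. Now by Corollary~\ref{cor:1} the sequence $0\to\Pic(X')\xrightarrow{\varphi^*}\Pic(X)\to\Z$ is exact, where the last map is intersection with a curve $C$ spanning $R$. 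If $mD\cdot C=0$ then $mD=\varphi^*(\text{something})$, which pushes forward to show $mD'$ is Cartier. To arrange $mD\cdot C=0$ one adjusts $m D$ by a multiple of the Cartier divisor $E$ in the divisorial case (using $E\cdot C<0$ from Lemma~\ref{lem:2}, so some combination $mD + kE$ has zero intersection with $C$), and in the Mori fibration case one observes $D$ cannot contain a fibre so $D\cdot C=0$ automatically, or again corrects by a pullback from the base. Either way every Weil divisor on $X'$ is $\Q$-Cartier.

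For the flip, let $\varphi\colon(X,\Delta_X)\dashrightarrow(X^+,\Delta_{X^+})$ be the flip of a flipping contraction $f\colon X\to Z$, with $f^+\colon X^+\to Z$ the flipped contraction. The point is that $f^+$, viewed as a morphism, has relative Picard rank one and is small; but it is not a priori an extremal \emph{divisorial} contraction, so I instead use that $X^+$ is constructed as $\Proj_Z R(X,K_X+\Delta_X)$ (Lemma~\ref{lem:65}(3) applied to the lc model over $Z$). Then the graph argument: take a common resolution $p\colon W\to X$, $q\colon W\to X^+$. Given a prime divisor $D^+$ on $X^+$, its proper transform $D$ on $X$ is $\Q$-Cartier; pull back to $W$ and push down by $q$. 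One must check that $q_*p^*(mD)$ is Cartier on $X^+$, which follows because $p^*(mD)$ differs from $q^*(mD^+_{\text{would-be Cartier}})$ by $p$- and $q$-exceptional divisors, and a Negativity Lemma (Lemma~\ref{lem:2}) argument applied to $f^+$ — using that $-(K+\Delta)$ has the right sign on $f$-curves and $K^++\Delta^+$ on $f^+$-curves — forces the correction term to be $f^+$-trivial hence a pullback from $Z$, pinning down a Cartier structure on $mD^+$.

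The main obstacle is the flip case, specifically showing that the naive pushforward of a $\Q$-Cartier divisor through the flipping diagram remains $\Q$-Cartier: unlike the extremal-contraction case there is no clean exact sequence for $\Pic$, and one genuinely needs to exploit that the flip is the relative $\Proj$ of a finitely generated algebra (or equivalently run a Negativity Lemma argument on the common resolution separating $f$-positive and $f^+$-positive directions). The morphism cases are essentially bookkeeping once Corollary~\ref{cor:1} and Lemma~\ref{lem:2} are in hand.
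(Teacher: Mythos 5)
The paper itself does not write out a proof; Lemma~\ref{lem:3} is handled by the citation to \cite[Propositions 3.36--3.37]{KM98}, so your proposal has to be judged against the standard argument there rather than against anything in the text.

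Your treatment of the divisorial case is correct and is exactly the standard one: adjust $mD$ by a rational multiple of the Cartier divisor $E$ (using $E\cdot C<0$ from Lemma~\ref{lem:2}) so that the result is $R$-trivial, then invoke Corollary~\ref{cor:1} and push forward. Two things, however, need repair.

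First, in the Mori fibration case the reason you give for $D\cdot C=0$ --- ``$D$ cannot contain a fibre'' --- is not a valid reason. A divisor need not contain any fibre and can still meet the generic fibre (a section of a $\PP^1$-bundle has intersection $1$ with the fibre class). What actually makes $D\cdot C=0$ is that $D$, being the divisorial part of $\varphi^{-1}(D')$, maps \emph{into} $D'$, hence is \emph{disjoint} from the fibre over any point of $X'\setminus D'$; since curves generating $R$ move in a family sweeping out the general fibres, $D\cdot C=0$ follows (and then one repeats the Cor.~\ref{cor:1} push-forward). Also your fallback ``or again corrects by a pullback from the base'' is circular here, since $\Q$-factoriality of the base is precisely what you are trying to prove.

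Second, and more seriously, the flip case as written has a genuine gap. The common-resolution argument you sketch is circular: you speak of comparing $p^*(mD)$ with $q^*(mD^+_{\text{would-be Cartier}})$, but $mD^+$ being $\Q$-Cartier is exactly the claim; you cannot pull it back to $W$ until you know it. The appeal to Lemma~\ref{lem:2} ``forcing the correction term to be $f^+$-trivial'' is not an argument --- the Negativity Lemma controls signs of discrepancies, it does not by itself produce a Cartier structure on $X^+$. The clean route, which you in fact gesture at when you remark that $\rho(X^+/Z)=1$, uses no resolution at all: since $\rho(X/Z)=1$ and $K_X+\Delta_X$ is $f$-negative, for any $\Q$-Cartier $D$ on $X$ there is $\lambda\in\Q$ with $(D+\lambda(K_X+\Delta_X))\cdot C=0$, hence by Corollary~\ref{cor:1} some multiple $m(D+\lambda(K_X+\Delta_X))$ equals $f^*L$ for a Cartier $L$ on $Z$. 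Because $X\dashrightarrow X^+$ is an isomorphism in codimension one, the identity of Weil divisors $m(D^++\lambda(K_{X^+}+\Delta_{X^+}))=(f^+)^*L$ holds on $X^+$; and $K_{X^+}+\Delta_{X^+}$ is $\Q$-Cartier by definition of the flip (it is $\varphi^+$-ample), so $D^+$ is $\Q$-Cartier. This is precisely the mechanism used in the paper's own Proposition~\ref{prop:15}; you should replace the resolution sketch with it.
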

\begin{proof}
\cite[Propositions 3.36-3.37]{KM98}.
\end{proof}
\begin{con}[Existence of flips]
\label{conj:exist}
Dlt flips exist.
\end{con}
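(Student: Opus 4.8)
The strategy is to reduce the existence of dlt flips to the special case of \emph{pre-limiting} (pl) flips, and then to prove existence of pl flips by the finite generation argument sketched in the introduction. First I would recall the standard reduction: given a $\Q$-factorial dlt flipping contraction $\varphi\colon (X,\Delta)\rightarrow Z$ with $\rho(X/Z)=1$, existence of the flip is equivalent to finite generation of the relative canonical algebra $R(X,K_X+\Delta)$ over $Z$, because if that algebra is finitely generated then $X^+=\Proj_Z R(X,K_X+\Delta)$ is the flip (one checks properties $1^+$--$3^+$ of Definition~\ref{dfn:4} using Lemma~\ref{lem:65} and the Negativity Lemma). So the whole problem becomes: \emph{the relative canonical algebra of a dlt flipping contraction is finitely generated.}

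Next I would carry out the reduction of general dlt flips to pl flips, following Shokurov. By passing to a $\Q$-factorialization and running a suitable $(K_X+\Delta)$-MMP over $Z$, or alternatively by the special termination trick, one reduces the dlt case to the \emph{plt} case where the boundary contains a component $S$ that is $\varphi$-negative; this is exactly the setup of a pl flipping contraction $f\colon (X,S+B)\rightarrow Z$ described in the introduction ($X$ $n$-dimensional, $(X,S+B)$ plt, $f$ a flipping contraction for $K+S+B$, $S$ nonempty and $f$-negative). The key point of this reduction is that it is inductive and uses termination of flips in lower dimension (or special termination), which is why the statement must be read with the hypothesis \textsl{the MMP with scaling in dimension $n-1$}; in a fully rigorous write-up this hypothesis is threaded through the reduction.

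Finally, for the pl flip I would prove finite generation of $R=R(X,K+S+B)$ by Shokurov's restriction strategy. One shows it suffices to prove that the restricted algebra $R_S=\mathrm{Image}\bigl(R\rightarrow k(S)[T]\bigr)$ is finitely generated, since $S$ is $f$-negative and the kernel of the restriction map is controlled. Then, using the b-divisor language developed above, $R_S$ is realized as a b-divisorial algebra, and one establishes its two key properties independently: (i) $R_S$ is an \emph{adjoint algebra}, which follows from the Hacon--M\textsuperscript{c}Kernan lifting lemma applied to the plt pair $(S, B_S)$ with $B_S$ the different given by Kawamata's Subadjunction (Theorem~\ref{thm:5} in the plt form); and (ii) $R_S$ is \emph{saturated} in Shokurov's sense, which is a consequence of Kawamata--Viehweg/Nadel vanishing (Theorem~\ref{thm:4}) together with the Negativity Lemma. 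Once adjointness and saturation are in hand, the MMP with scaling in dimension $n-1$ is invoked to produce a \emph{limiting} model on which the mobile b-divisor defining $R_S$ descends and becomes semiample, yielding finite generation of $R_S$, hence of $R$, hence the flip.

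**The main obstacle.** The hard part is step (i), the adjointness of $R_S$: one must lift sections from $S$ to $X$ with precise control of the asymptotic multiplier ideals, i.e.\ prove the lifting lemma, which requires a delicate interplay between Nadel vanishing, the plt hypothesis, and convexity of the relevant sequence of divisors on $S$. Saturation, by contrast, is a more formal vanishing-theorem argument, and the finite-generation-from-adjoint-and-saturated step is where the lower-dimensional MMP with scaling enters but is otherwise combinatorial in the b-divisor formalism. So the proof's center of gravity—and the place where the Hacon--M\textsuperscript{c}Kernan input is indispensable—is establishing that the restricted algebra is adjoint.
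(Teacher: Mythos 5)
Note first that the statement you were asked to address is a \emph{Conjecture}, not a theorem; the paper itself does not prove ``dlt flips exist'' unconditionally. What the paper actually establishes is the conditional, inductive result (see Theorem~\ref{thm:41} and the final Corollary): assuming existence and finiteness of models in dimension $n-1$, klt (hence, by Remark~\ref{rem:40}, $\Q$-factorial dlt) flips exist in dimension $n$. Your proposal is honest about this, since you explicitly thread the dimension-$(n-1)$ hypothesis through the argument, and your plan matches the paper's strategy closely: reduce to pl flips, prove finite generation of the restricted algebra $R_S$ via adjointness (the Hacon--M\textsuperscript{c}Kernan lifting lemma applied to the plt pair $(S,B_S)$) and Shokurov saturation (Kawamata--Viehweg/Nadel vanishing), then invoke the lower-dimensional MMP with scaling to get a model on which the characteristic b-divisor becomes semiample, giving finite generation. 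Your identification of the lifting lemma as the ``centre of gravity'' is also the paper's emphasis.

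One small inaccuracy worth flagging: the paper's reduction (Theorem~\ref{thm:41}) goes from \emph{klt} flipping contractions to pl flips, not directly from dlt; the passage from klt to $\Q$-factorial dlt is a separate limit argument (Remark~\ref{rem:40}), and the non-$\Q$-factorial dlt case is not addressed in the paper, so the conjecture as literally stated remains open even under the inductive hypothesis. Also, the mechanism in Theorem~\ref{thm:41} is not a ``$\Q$-factorialization'' in the usual sense but rather a log resolution together with a carefully chosen auxiliary divisor $H$ (a general hyperplane section) whose components, together with the exceptional divisors, generate the relative Picard groups of all the models encountered; running the MMP with scaling of $H$ then forces every small contraction that appears to be a pl flipping contraction. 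Your phrasing (``passing to a $\Q$-factorialization and running a suitable MMP, or alternatively by the special termination trick'') gestures at this but slightly misdescribes it.
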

\begin{rem}\label{rem:6}
Let $\varphi\colon(X, \Delta)\rightarrow Z$ a flipping contraction. The flip of $\varphi$ exists if and only if the canonical algebra of $(X,\Delta)$ is
finitely generated \cite[Corollary 6.4]{KM98}. Therefore the problem of existence of flips is local on the base $Z$, and we often assume
that $Z$ is affine.
\end{rem}
\begin{con}[Termination of flips]
 There does not exist an infinite sequence of dlt flips.
\end{con}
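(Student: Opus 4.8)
Since the statement is an open conjecture, what follows is a plan for the standard line of attack, due in essence to Shokurov, together with an indication of the genuine obstruction.

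The plan is to argue by contradiction and exploit monotonicity of discrepancies. Suppose $(X_0,\Delta_0)\dashrightarrow(X_1,\Delta_1)\dashrightarrow\cdots$ is an infinite sequence of dlt flips, each $\varphi_i$ the flip of a flipping contraction for $K_{X_i}+\Delta_i$ with $\Delta_{i+1}$ the strict transform of $\Delta_i$. The basic input is the fact, already used in Lemma~\ref{lem:65} (see \cite[Corollary~3.44]{KM98}), that discrepancies do not decrease in a flip: for every geometric valuation $E$ one has $a(E,X_i,\Delta_i)\le a(E,X_{i+1},\Delta_{i+1})$, with strict inequality for at least one valuation $E$ whose centre on $X_i$ lies in the flipping locus. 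Thus for each fixed $E$ the sequence $i\mapsto a(E,X_i,\Delta_i)$ is non-decreasing; since all the pairs are lc it is bounded below by $-1$, and one would like to conclude that the places where it can still move are exhausted after finitely many steps.

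The mechanism for turning this into a contradiction is a \emph{difficulty function}: a non-negative integer invariant $d(X_i,\Delta_i)$, defined as a (possibly weighted) count of geometric valuations whose discrepancy lies in a fixed bounded window --- Shokurov's prototype for terminal pairs being $d(X,\Delta)=\#\{E:a(E,X,\Delta)<1\}$ --- designed so that (i) $d$ is \emph{finite} for the pairs in play, and (ii) $d$ drops by at least one at every flip, because a valuation whose discrepancy strictly increases across $\varphi_i$ is forced, by the bookkeeping, to leave the window. A non-negative integer cannot strictly decrease infinitely often, so this gives the contradiction. To set this up one first reduces the dlt case to the klt case by \emph{special termination}: invoking termination of flips in dimension $\dim X-1$ --- available by induction, and in any case weaker than the MMP-with-scaling hypothesis under which the rest of this paper operates --- one restricts the sequence to the normal components $S$ of $\lfloor\Delta_i\rfloor$, uses the adjunction formula of the previous section to obtain an induced sequence of flips or isomorphisms on $(S,\mathrm{Diff})$, and concludes that after finitely many steps the flipping loci avoid $\lfloor\Delta_i\rfloor$ and the non-klt locus, so that one is reduced to the case of klt pairs where a difficulty count applies.

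The hard part is precisely clauses (i) and (ii): producing a difficulty function with these properties in arbitrary dimension. This is unconditional for $\dim X\le 3$ (Shokurov, Kawamata) and for $\dim X=4$ (via Alexeev--Hacon--Kawamata), but for $\dim X\ge 5$ it is open, and would follow from standard ACC-type conjectures --- the ascending chain condition for minimal log discrepancies, or the ACC for log canonical thresholds together with boundedness of the relevant valuations. I therefore do not expect to prove the conjecture as stated; what is within reach, and what the remainder of this paper actually uses, is the much weaker termination of flips \emph{with scaling}, where the nef thresholds form a monotone sequence and a contradiction is extracted from the existence of minimal models rather than from a difficulty count --- but that argument is special to scaled sequences and does not settle the conjecture above.
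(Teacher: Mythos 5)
This statement is stated in the paper as an open conjecture with no proof given, and you are right not to offer one: your account of the standard strategy (monotonicity of discrepancies, difficulty functions, special termination) and of its status --- known in low dimensions, open in general, reducible to ACC-type conjectures --- is accurate. Your closing observation also matches what the paper actually does, namely sidestep full termination by using only special termination with scaling (Theorem~\ref{thm:40}) together with existence and finiteness of models in dimension $n-1$.
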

\begin{rem}
  \label{rem:40}
  A $\Q$-factorial dlt pair $(X, \Delta)$ is a limit of klt pairs. For all $0<\delta\ll1$ the pair $(X, (1-\delta)\Delta)$ is klt as in the proof of  \cite[Proposition 2.43]{KM98}.
The existence of flips of klt contractions therefore implies that of flips of
$\Q$-factorial dlt contractions.
\end{rem}
\begin{dfn} Let $(X, \Delta)$ be a pair.
\begin{itemize}
\item[1.] $(X, \Delta)$ is a \emph{minimal model\/} if $K_X+\Delta$ is nef.
\item[2.] If $(X, \Delta)$ is not nef and there is a Mori fibration $(X, \Delta) \rightarrow Y$, then $(X, \Delta)$ is a \emph{Mori fibre space\/}.
\end{itemize}
\end{dfn}

\paragraph{\bf Minimal Model Program (MMP)}
Let $(X,\Delta_X)$ be a dlt
pair over $Z$.
If existence and termination of flips hold, there is a sequence
of birational maps
$$X=X_1\dashrightarrow X_2\dashrightarrow \cdots \dashrightarrow X_n=X'$$
over $Z$, where $(X', \Delta_{X'})$ is either a minimal model or a
Mori fibre space.
The sequence is obtained as in Flowchart \ref{fig:mmp}.
\begin{figure}[htb]
\begin{center}
\includegraphics[width=\textwidth]{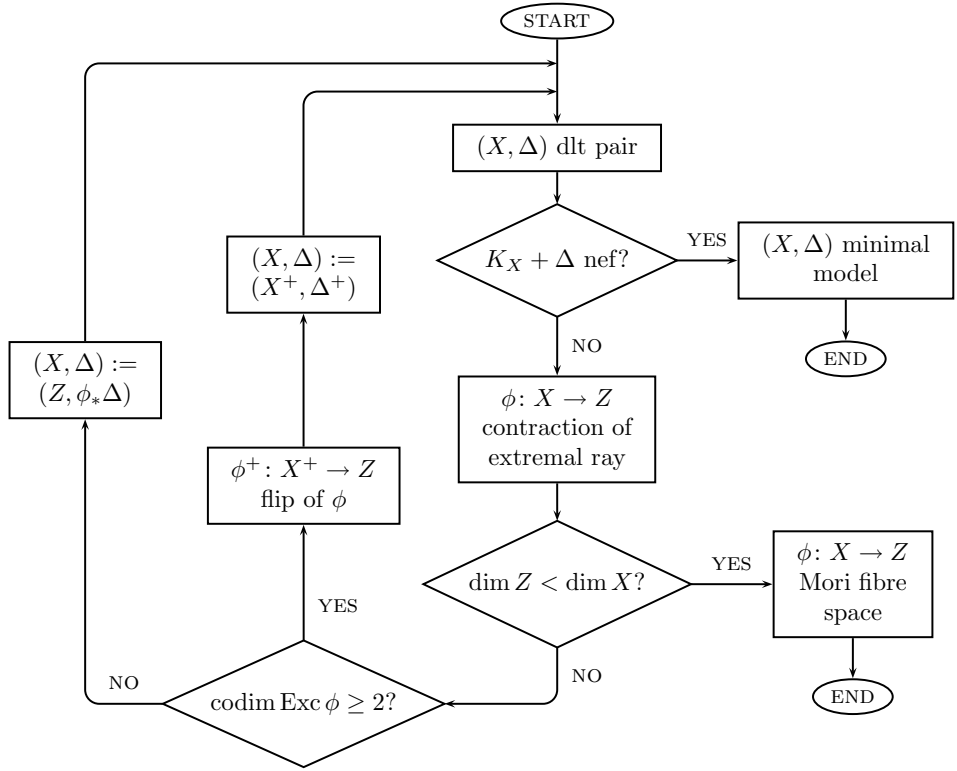}
\end{center}
\caption{Flowchart of the Minimal Model Program}
\label{fig:mmp}
\end{figure}
\begin{rem}
 \label{rem:2}
The MMP can be run for non $\Q$-factorial log
canonical pairs $(X, \Delta_X)$, where $\Delta_X$ is an $\R$-divisor.
The MMP for non
$\Q$-factorial klt pairs is discussed in \cite{Fuj07a}.
\end{rem}
%\begin{rem}
%  \label{rem:31}
%It follows from \cite{BCHM} that termination of klt flips implies
%termination of lc flips. The existence of flips for lc pairs however
%does not follow from the existence of flips for klt pairs.
%\end{rem}
\begin{pro}\label{prop:15}
Let $f\colon (X, \Delta_X) \rightarrow Z$ be a
flipping contraction, where $(X, \Delta_X)$ is a $\Q$-factorial
dlt pair. Assume that $g\colon (Y,\Delta_Y)
\rightarrow Z$ is a log terminal model of $f$. Then:
\begin{enumerate}
\item[1.] $(Y,\Delta_Y)$ is also a log canonical model of $f$.
\item[2.] $(Y,\Delta_Y)$ is the flip of $f$.
\end{enumerate}
\end{pro}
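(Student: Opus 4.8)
The plan is first to show that $\varphi\colon X\dashrightarrow Y$ is an isomorphism in codimension one, and then to identify $(Y,\Delta_Y)$ with the flip by a discrepancy computation on a common resolution. By Remark~\ref{rem:6} we may assume $Z$ is affine.

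The first point is where the hypotheses on $f$ really enter, and together with the verification of $\rho(Y/Z)=1$ below I expect it to be the main obstacle. By the definition of a log terminal model, $\varphi^{-1}$ contracts no divisor, so it suffices to show that $\varphi$ itself contracts no divisor. Suppose a prime divisor $D\subset X$ were contracted by $\varphi$. Since $g$ is a projective \emph{morphism} and $g\circ\varphi=f$, we have $f(D)=g(\overline{\varphi(D)})$, whence $\dim f(D)\le\dim\overline{\varphi(D)}\le\dim Y-2=\dim Z-2$. On the other hand $f$ is small, so $D\not\subset\Exc f$ and $f$ restricts to an isomorphism on a dense open subset of $D$, giving $\dim f(D)=\dim D=\dim Z-1$; a contradiction. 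Thus $\varphi$ is an isomorphism in codimension one, and the symmetric argument (a divisor contracted by $g$ would have strict transform in $X$ contracted by the small map $f$) shows that $g$ is small as well.

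Next, take a common log resolution $p\colon W\to X$, $q\colon W\to Y$ with $h=f\circ p=g\circ q$, and write $K_W=p^*(K_X+\Delta_X)+A_X=q^*(K_Y+\Delta_Y)+A_Y$ as in the proof of Lemma~\ref{lem:27}, so that $q^*(K_Y+\Delta_Y)-p^*(K_X+\Delta_X)=A_X-A_Y$. Since $\varphi$ is an isomorphism in codimension one and $\Delta_Y=\varphi_*\Delta_X$, the components of $A_X-A_Y$ that are strict transforms of prime divisors of $X$ (equivalently of $Y$) have coefficient $0$; hence $A_X-A_Y$ is supported on divisors exceptional over both $X$ and $Y$. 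Modulo numerical triviality over $Y$ it equals $p^*(-(K_X+\Delta_X))$, which is $q$-nef: any curve contracted by $q$ is also contracted by $h$, hence maps under $p$ onto a (possibly zero) effective $1$-cycle contracted by $f$, on which the $f$-ample divisor $-(K_X+\Delta_X)$ is nonnegative. By the Negativity Lemma, $A_X-A_Y\le 0$, i.e.
\[
q^*(K_Y+\Delta_Y)=p^*(K_X+\Delta_X)-F,\qquad F:=A_Y-A_X\ge 0,
\]
with $F$ effective and exceptional over both $X$ and $Y$; in particular $a(E,X,\Delta_X)\le a(E,Y,\Delta_Y)$ for every geometric valuation $E$.

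It remains to see that $K_Y+\Delta_Y$ is $g$-ample. Because $\varphi$ is an isomorphism in codimension one, the prime divisors on $W$ exceptional over $X$ and those exceptional over $Y$ coincide, so comparing relative Picard numbers over $Z$, using that $X$ is $\Q$-factorial, gives $\rho(Y/Z)=\rho(X/Z)=1$. Moreover $K_Y+\Delta_Y$ is not numerically trivial over $Z$: otherwise the displayed identity would force $K_X+\Delta_X$ to be numerically trivial over $Z$, contradicting the $f$-ampleness of $-(K_X+\Delta_X)$. Since the relative Néron–Severi group of $Y$ over $Z$ is one-dimensional, the $g$-nef divisor $K_Y+\Delta_Y$, being non-trivial over $Z$, is therefore $g$-ample. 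As $(Y,\Delta_Y)$ is dlt by Lemma~\ref{lem:65}(1), hence lc, this already proves (1): $(Y,\Delta_Y)$ satisfies conditions $(1$--$3)$, $(4')$ and $(5')$ for a log canonical model of $f$, the last being vacuous since $\varphi$ has no exceptional divisors. Finally, $g$ is small and projective, $K_Y+\Delta_Y$ is $g$-ample, $\rho(Y/Z)=1$, and $\Delta_Y=\varphi_*\Delta_X$ is the strict transform of $\Delta_X$ under the birational contraction $X\dashrightarrow Y$; thus $(Y,\Delta_Y)$ meets all the requirements of Definition~\ref{dfn:4} and is the flip of $f$, proving (2).
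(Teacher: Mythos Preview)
Your proof is correct, but it proceeds quite differently from the paper's. The paper's argument is very short: pick any $g$-ample divisor $A$ on $Y$, take its strict transform $A_X=\varphi_*^{-1}A$ on $X$ (which is $\Q$-Cartier since $X$ is $\Q$-factorial), and use $\rho(X/Z)=1$ to write $\lambda(K_X+\Delta_X)+A_X\sim_{\Q} f^*M$ for some $\lambda\in\Q$ and some $\Q$-Cartier $M$ on $Z$. Pushing this $\Q$-linear equivalence forward along the codimension-one isomorphism $\varphi$ gives $\lambda(K_Y+\Delta_Y)+A\sim_{\Q} g^*M$; since $A$ is $g$-ample one has $\lambda\ne 0$, so $K_Y+\Delta_Y$ is $g$-nef if and only if it is $g$-ample. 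This avoids common resolutions and the Negativity Lemma entirely, and yields $\rho(Y/Z)=1$ as a by-product (every $g$-ample class is a multiple of $K_Y+\Delta_Y$).

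Your route through a common resolution and the Negativity Lemma is longer but perfectly standard; it also makes the discrepancy comparison explicit, even though (as you correctly observe) condition $(5')$ is vacuous here. Two of your steps are compressed and deserve a word of caution. First, the Picard-number comparison: the equality $\rho(W/Z)=\rho(X/Z)+e$ uses $\Q$-factoriality of $X$, but you do not know $Y$ is $\Q$-factorial, so on the $Y$-side you only get the inequality $\rho(Y/Z)+e\le\rho(W/Z)$, coming from the injection $q^*N^1(Y/Z)\oplus\operatorname{span}\{E_i\}\hookrightarrow N^1(W/Z)$; fortunately this suffices for $\rho(Y/Z)\le 1$. Second, the claim that $K_Y+\Delta_Y\equiv_g 0$ would force $K_X+\Delta_X\equiv_f 0$ via your displayed identity is not immediate: one must first observe that $F$ becomes $p$-numerically trivial, hence $F=0$ by another application of the Negativity Lemma, and only then does the conclusion follow. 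With these two points spelled out, your argument is complete.
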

\begin{proof}  The morphism $g$ is small because $\varphi^{-1}$ has no
  exceptional divisors. We prove that $K_Y+\Delta_Y$ is $g$-ample.

Let $A$ be a $g$-ample divisor on $Y$ and denote $A_X=
\varphi_{\ast}^{-1}A$. Since $X$ is
$\Q$-factorial and $f$ is flipping, $A_X$ is $\Q$-Cartier and
\[
\lambda (K_X+\Delta_X)+A_X \sim_\Q f^{\ast}M,
\]
where $M$ is a $\Q$-Cartier divisor on $Z$ and $\lambda \in
\Q$. Therefore
\[
\lambda (K_Y+\Delta_Y)+A\sim_\Q g^{\ast}M,
\]
and $\lambda$ is non-zero because $A$ is $g$-ample.
The divisor $K_Y+\Delta_Y$ is $g$-nef if and only
if it is $g$-ample.
\end{proof}

\paragraph{\bf MMP with scaling}
\begin{lem}[\cite{Bir07, Sho06}]
  \label{lem:29}
Let $(X, \Delta+A)$ be a $\Q$-factorial dlt pair over
$Z$ such that $K_X+\Delta+A$ is nef.
If $K_X+ \Delta$ is not nef, there is a $(K_X+
\Delta)$-negative extremal ray $R$ and a real number $0<\lambda \leq 1$ such that $K_X + \Delta+ \lambda A$ is nef and trivial
on $R$.
\end{lem}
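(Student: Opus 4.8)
The plan is to define $\lambda$ as the smallest scalar for which $K_X+\Delta+\lambda A$ stays nef, and then to read off the required extremal ray from the Cone Theorem applied to $(X,\Delta)$. First I would note that the set $T=\{t\in[0,1]:K_X+\Delta+tA\text{ is nef}\}$ is non-empty since $1\in T$. For a curve $C$ with $(K_X+\Delta)\cdot C\ge 0$ one has, for every $t\in[0,1]$,
$(K_X+\Delta+tA)\cdot C=(1-t)(K_X+\Delta)\cdot C+t(K_X+\Delta+A)\cdot C\ge 0$;
while for a curve with $(K_X+\Delta)\cdot C<0$, nefness of $K_X+\Delta+A$ forces $A\cdot C>0$, so $(K_X+\Delta+tA)\cdot C\ge 0$ is equivalent to $t\ge\mu_C:=-(K_X+\Delta)\cdot C/(A\cdot C)\in(0,1]$. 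Hence $T=[\lambda,1]$ with $\lambda=\sup_C\mu_C$, the supremum taken over $(K_X+\Delta)$-negative curves; this set is non-empty exactly because $K_X+\Delta$ is not nef, so $0<\lambda\le 1$, and since the nef cone is closed, $K_X+\Delta+\lambda A$ is nef. What remains is to find a $(K_X+\Delta)$-negative extremal ray $R$ with $(K_X+\Delta+\lambda A)\cdot R=0$.

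For this I would invoke Theorem~\ref{thm:1}: write $\overline{NE}(X/Z)=\overline{NE}(X/Z)_{K_X+\Delta\ge 0}+\sum R_i$ with $R_i=\R_{\ge 0}[C_i]$ the $(K_X+\Delta)$-negative extremal rays, locally discrete in $\overline{NE}(X/Z)_{K_X+\Delta<0}$. Each $C_i$ satisfies $A\cdot C_i>0$ and $\mu_i:=-(K_X+\Delta)\cdot C_i/(A\cdot C_i)\in(0,1]$, and because testing nefness of $K_X+\Delta+tA$ on the decomposition above reduces (for $t\in[0,1]$) to the convexity inequality on $\overline{NE}(X/Z)_{K_X+\Delta\ge 0}$ together with the conditions $t\ge\mu_i$, one gets $\lambda=\sup_i\mu_i$. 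The claim is that this supremum is attained: if $\mu_{i_0}=\lambda$ then $(K_X+\Delta+\lambda A)\cdot C_{i_0}=(A\cdot C_{i_0})(\lambda-\mu_{i_0})=0$ and $(K_X+\Delta)\cdot C_{i_0}<0$, so $R=R_{i_0}$ works and we are done. To prove attainment, suppose distinct rays $R_{i_k}$ satisfy $\mu_{i_k}\to\lambda$; normalising $C_{i_k}$ by a relatively ample $H$ so that $H\cdot C_{i_k}=1$, compactness of the slice $\overline{NE}(X/Z)\cap\{H=1\}$ produces a non-zero limit class, and local discreteness of the $R_i$ away from the wall $\{K_X+\Delta=0\}$ pushes that limit onto the wall; a closer look at the resulting limiting configuration — using $(K_X+\Delta+A)\cdot C_{i_k}\ge 0$ — is what must exclude it.

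The main obstacle is exactly this last step: ruling out the possibility that $\lambda=\sup_i\mu_i$ is only approached by a sequence of extremal rays degenerating towards the hyperplane $\{K_X+\Delta=0\}$ rather than achieved by a single ray. This is where the local discreteness clause of the Cone Theorem is genuinely needed, and it becomes immediate when $A$ is ample, since then $A\cdot C$ is bounded below on the compact normalised slice and the ratios $\mu_i$ cannot degenerate. An alternative execution of this step, closer to how the lemma is used in practice, is to apply Theorem~\ref{thm:1} to the dlt pairs $(X,\Delta+tA)$ for $t\nearrow\lambda$: each $K_X+\Delta+tA$ is not nef, hence has a negative extremal ray $R_t$, which for $t$ close to $\lambda$ is automatically $(K_X+\Delta)$-negative; one then argues these rays cannot all be distinct (again by local discreteness), so some fixed ray $R$ is $(K_X+\Delta+tA)$-negative for $t$ arbitrarily close to $\lambda$ from below yet $(K_X+\Delta+\lambda A)$-nonnegative, forcing $(K_X+\Delta+\lambda A)\cdot R=0$. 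Since $\lambda\le 1$ by construction, this completes the argument.
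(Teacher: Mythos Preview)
Your approach is the same as the paper's: define $\lambda$ as the nef threshold, rewrite it as a supremum of the ratios $-(K_X+\Delta)\cdot\Sigma/A\cdot\Sigma$ over $(K_X+\Delta)$-negative extremal rays, and reduce the lemma to showing this supremum is attained. The paper itself does not prove attainment---it simply refers to \cite{Bir07,Sho06}---so your write-up is already more detailed than the text. (Incidentally, the paper's displayed formula has $\inf$; your $\sup$ is the correct expression, consistent with Definition~\ref{directedmmp}.)

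Where your sketches for attainment fall short, as you partly acknowledge: in the first approach, a limit class $\gamma$ of normalised extremal curves landing on the wall $\{K_X+\Delta=0\}$ will also satisfy $A\cdot\gamma=0$ (since $\mu_{i_k}\to\lambda>0$ forces both numerator and denominator to vanish), and nothing in the inequality $(K_X+\Delta+A)\cdot C_{i_k}\geq0$ alone excludes this configuration. In the second approach, the rays $R_t$ are extremal for \emph{different} pairs $(X,\Delta+tA)$, so the local discreteness clause of Theorem~\ref{thm:1} for a single pair does not directly bound them. The argument in \cite{Bir07} uses instead the length bound $0<-(K_X+\Delta)\cdot C_i\leq 2\dim X$ on the extremal rational curves together with a rational polytope description of the space of boundaries; this is genuinely extra input beyond the bare Cone Theorem statement as quoted here.
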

\begin{proof}Define
  $$\lambda=\inf\{-(K_X+ \Delta)\cdot\Sigma/A\cdot\Sigma\}$$ as $\Sigma$
  ranges over curves generating extremal
rays of $(X,\Delta)$; it is enough to show that $\lambda$ is
a minimum. See \cite{Bir07,Sho06} for a proof.
\end{proof}

\begin{dfn}
\label{directedmmp}
Let $(X, \Delta+A)$ be a $\Q$-factorial dlt pair over
$Z$ such that $K_X+\Delta+A$ is nef.
A {\em $(K_X+\Delta)$-MMP with scaling of $A$\/} is a sequence:
\[
(X_1,\lambda_1) \stackrel{\varphi_1}\dashrightarrow \cdots
  \stackrel{\varphi_{i-1}}\dashrightarrow (X_i,\lambda_i)
  \stackrel{\varphi_i} \dashrightarrow
  \cdots,
\]
where for each index $i$, $A_i$ and $\Delta_i$ are the strict transforms
of $A$ and $\Delta$ on $X_i$, and
 \[\lambda_i=\min\{t \in \R: K_{X_i}+\Delta_i+t A_i
\mbox{ is nef}\}.\]
Each $\varphi_i$ is a divisorial contraction or a flip
associated to a $(K_{X_i}+\Delta_i)$-negative extremal ray $R_i\subset\overline{\NE}(X_i/Z)$ which is
$(K_{X_i}+\Delta_i+\lambda_iA_i)$-trivial.
\end{dfn}
%\begin{rem}
 %\label{rem:49}
%Observe that at each step $A_i\cdot R_i>0$. The MMP with scaling of $(X, \Delta)$ is
%``directed'' by $A$.
%\end{rem}

The proof of the existence of minimal models for varieties of log general type given in \cite{BCHM} rests on several theorems; we recall two of
them here.

\begin{thm}[Existence of models,  {\cite[Theorem C]{BCHM}}]
\label{eom}
Let $(X,\Delta)$ be a klt pair projective over $Z$, where $\Delta$ is big over $Z$. If there is an effective divisor $D$ such that
$K_X+\Delta \sim_{\R,Z} D$, then $(X, \Delta)$ has a log terminal model over $Z$.
\end{thm}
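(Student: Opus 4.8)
Since this is quoted verbatim as \cite[Theorem C]{BCHM}, I do not reproduce its proof here, but I can indicate the shape of the argument. In \cite{BCHM} the statement is one node of an elaborate simultaneous induction on $\dim X$ whose other nodes are existence of pl flips, finiteness of log terminal models as the boundary varies in a rational polytope, and a non-vanishing statement; since the present theorem already carries the effectivity hypothesis $K_X+\Delta\sim_{\R,Z}D\geq 0$, it does not call on non-vanishing directly. The plan, granting existence of pl flips and termination of flips in dimension $\dim X-1$ together with the lower-dimensional instances of all these nodes, is to construct the log terminal model by running a carefully chosen minimal model program and proving it stops.

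After standard reductions (replacing $X$ by a $\Q$-factorialization; using that $\Delta$ is big over $Z$ to write $\Delta\sim_{\R,Z}A+\Delta'$ with $A$ a general ample $\Q$-divisor and $(X,\Delta')$ still klt; perturbing $D$ so that $\Supp(A)\cup\Supp(\Delta')\subseteq\Supp(D)$ with sufficiently general coefficients), I would fix an ample $\Q$-divisor $H$ with $(X,\Delta+H)$ klt and $K_X+\Delta+H$ nef, and run a $(K_X+\Delta)$-MMP with scaling of $H$ in the sense of Definition~\ref{directedmmp}. By Lemma~\ref{lem:29} every step is a divisorial contraction or a flip; the flips needed are supplied within the same induction. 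If this program terminates at a pair $(Y,\Delta_Y)$ over $Z$, then $K_Y+\Delta_Y$ is nef over $Z$; conditions $(1\text{--}4)$ of Definition~\ref{dfn:10} hold by construction, and condition $(5)$ holds because each divisorial contraction and flip strictly increases discrepancies by the Negativity Lemma (Lemma~\ref{lem:2}), as in the proof of Lemma~\ref{lem:65}. Hence $(Y,\Delta_Y)$ is a log terminal model of $(X,\Delta)$ over $Z$.

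The main obstacle — and the reason \cite{BCHM} is long — is termination of this MMP with scaling. The scaling parameters $\lambda_i$ form a non-increasing sequence in $(0,1]$, and the argument combines two ingredients. Special termination, which propagates termination of flips from dimension $\dim X-1$ to flips meeting the reduced part $\lfloor\Delta\rfloor$ of the boundary, eliminates all but finitely many flips along the non-klt locus. Finiteness of log terminal models over the segment of boundaries $\{\Delta+tH:t\in[0,1]\}$ — where the bigness of $\Delta$ is essential — forces the birational models $X_i$ appearing in the sequence to repeat, and a further argument with the Negativity Lemma then shows the sequence must be finite. Both ingredients are themselves established by the same simultaneous induction on dimension, and proving them in exactly the required generality is where the real work lies.
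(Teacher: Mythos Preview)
Your proposal is correct: the paper gives no proof of this theorem at all, simply citing it as \cite[Theorem C]{BCHM} and using it as a black box, which is exactly what you do. The sketch of the BCHM induction that you add is accurate and goes beyond what the paper provides, but it is extra context rather than a divergence in approach.
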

\begin{thm}[Finiteness of models, {\cite[Theorem E]{BCHM}}]
\label{fom}
Let $(X,\Delta_0)$ be a klt pair projective over $Z$.
Fix a general ample $\Q$-divisor $A$ over $Z$ and a rational finite
dimensional affine subspace $V\subset\WDiv(X)$ that contains $\Delta_0$.
Con\-si\-der the rational polytope
\[ \mcal L_A=\{A+B : B\in V, B \geq 0 \mbox{ and the pair } (X, A+B) \mbox{ is lc\/}\}. \]

Then the set of isomorphism classes of weak log canonical models of $(X, \Delta)$ over $Z$, where $\Delta \in \mcal{L}_{A}$, is finite.
\end{thm}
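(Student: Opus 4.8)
The plan is to follow the inductive argument of \cite{BCHM}, of which I sketch only the structure. First I would reduce to the case that $X$ is $\Q$-factorial, by replacing $X$ with a small $\Q$-factorialisation; this affects neither $\mathcal{L}_{A}$ nor the set of weak log canonical models. Since $A$ is a general ample $\Q$-divisor, every $\Delta\in\mathcal{L}_{A}$ is big over $Z$, and by the non-vanishing theorem of \cite{BCHM} the subset $\mathcal{E}=\{\Delta\in\mathcal{L}_{A}:K_{X}+\Delta\text{ is pseudo-effective over }Z\}$ is precisely the locus on which $K_{X}+\Delta\sim_{\R,Z} D$ for some effective $D$; outside $\mathcal{E}$ there are no weak log canonical models, so it is enough to treat $\Delta\in\mathcal{E}$, for which Theorem~\ref{eom} furnishes a log terminal model.

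The substance of the proof is to show that, as $\Delta$ ranges over $\mathcal{E}$, only finitely many birational contractions $\phi\colon X\dashrightarrow Y$ occur as a weak log canonical model of some $(X,\Delta)$, and moreover that for each such $\phi$ the set $\mathcal{C}(\phi)=\{\Delta\in\mathcal{E}:\phi\text{ is a weak log canonical model of }(X,\Delta)\}$ is a rational sub-polytope of $\mathcal{L}_{A}$; the first assertion is the statement of the theorem, and the second is what makes an induction possible. I would argue by induction on $\dim V$. Each facet of $\mathcal{L}_{A}$ is again of the form $\mathcal{L}_{A'}$ for an affine subspace $V'\subset\WDiv(X)$ of strictly smaller dimension, so the inductive hypothesis yields a finite decomposition of $\partial\mathcal{L}_{A}$, and the problem is to extend it over the interior of $\mathcal{E}$.

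For the interior I would work with ample models rather than with weak log canonical models directly. For $\Delta$ with $K_{X}+\Delta$ big over $Z$, the log terminal model of Theorem~\ref{eom} followed by its ample contraction gives a birational contraction $\psi_{\Delta}\colon X\dashrightarrow Z_{\Delta}$ with $K_{Z_{\Delta}}+(\Delta)_{Z_{\Delta}}$ ample over $Z$. Since ampleness is an open condition, and since the strict discrepancy inequalities characterising a log terminal model persist under small perturbation of $\Delta$, the set of $\Delta'$ for which $\psi_{\Delta}$ is the ample model is relatively open; finitely many such chambers then cover the big locus, and the rationality of the nef thresholds provided by the Rationality theorem (Theorem~\ref{thm:29}) together with the Cone theorem (Theorem~\ref{thm:1}) shows that these chambers, and the associated sets $\mathcal{C}(\phi)$, are rational polytopes. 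The pseudo-effective boundary of $\mathcal{E}$ is absorbed by the inductive hypothesis applied to faces together with a limiting argument. The main obstacle --- and the reason the full strength of \cite{BCHM} is needed --- is exactly this limiting behaviour: nefness, unlike ampleness, is a closed condition, so one cannot conclude by openness and compactness alone near the boundary of the big cone, and forcing the polytopal structure of $\mathcal{E}$ to descend from the finitely many ample models requires the simultaneous induction of \cite{BCHM} --- existence of flips, termination of the minimal model program with scaling, and non-vanishing in dimension $\dim X$.
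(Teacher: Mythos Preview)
The paper does not prove this statement: Theorem~\ref{fom} is stated as \cite[Theorem~E]{BCHM} and used as a black box (together with Theorem~\ref{eom}) in the proofs of Theorems~\ref{thm:40}, \ref{thm:41} and \ref{corltmodels}. There is therefore no ``paper's own proof'' to compare your proposal against.

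What you have written is a reasonable high-level outline of the actual argument in \cite{BCHM}, but note that in that paper finiteness of models is not proved in isolation: it is part of a simultaneous induction with existence of models, non-vanishing, and termination with scaling, and each of these feeds into the others at every step. Your sketch acknowledges this at the end, but earlier passages read as if existence of models (Theorem~\ref{eom}) and non-vanishing are already available in full in dimension $\dim X$ before you begin; in \cite{BCHM} they are not, and the care required to set up the induction so that each ingredient is available precisely when needed is the real content of the proof. If you intend this as a standalone argument rather than a pointer to \cite{BCHM}, that circularity would need to be addressed.
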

%\begin{thm}[Non-vanishing,{\cite[Theorem C]{BCHM}}]
%\label{nv}
%Let $(X, \Delta)$ be a klt pair where $\Delta$ is big over $Z$. If $K_X+\Delta$ is pseudo-effective, there is an effective $\R$-divisor $D$ such that $K_X+\Delta \sim_{\R,Z} D$.
%\end{thm}

Now we are ready to state Special termination with scaling.

\begin{thm}[ {\cite[Lemma 5.1]{BCHM}}]
\label{thm:40}
Assume that existence and finiteness of models hold in dimension $n-1$. Let $(X,
\Delta+C)$ be an $n$-dimensional $\Q$-factorial dlt pair such that $K_X+\Delta+C$ is nef. Assume that $\Delta=A+B$, where $A$ is ample, $B\geq0$
and $\lfloor\Delta\rfloor\subset\Supp B$. Let
\[
(X,\Delta)=(X_1,\Delta_1)
\stackrel{\varphi_1}\dashrightarrow (X_2,\Delta_2)
\stackrel{\varphi_2}\dashrightarrow
\cdots
\]
be a sequence of flips for the $(K_X+\Delta)$-MMP with scaling of $C$. Denote by $R_k$ the exceptional set of the corresponding flipping contraction of
$\varphi_k$. For $k$ sufficiently large, $R_k \cap
\lfloor \Delta_k \rfloor= \emptyset$.
\end{thm}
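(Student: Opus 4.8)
The plan is to argue by descending induction on dimension, reducing the statement on $X$ to a statement about the MMP restricted to the components of $\lfloor\Delta\rfloor$, and to use Finiteness of models in dimension $n-1$ (Theorem \ref{fom}) to rule out infinitely many flips meeting $\lfloor\Delta_k\rfloor$. First I would observe that since only finitely many divisorial contractions and isomorphisms-in-codimension-one can occur before the sequence stabilizes away from $\lfloor\Delta\rfloor$ on a divisorial level, we may assume every $\varphi_k$ is a flip and that no component of $\lfloor\Delta_k\rfloor$ is ever contracted; thus the components of $\lfloor\Delta_k\rfloor$ are in natural bijection for all $k$, and by passing to a tail of the sequence we may assume their number is constant. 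Next, suppose for contradiction that infinitely many $R_k$ meet $\lfloor\Delta_k\rfloor$; then (again passing to a subsequence and relabelling) there is a fixed component $S\subset\lfloor\Delta_k\rfloor$ whose strict transform $S_k$ meets $R_k$ for infinitely many $k$.

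The heart of the argument is to descend to $S$. Since $(X_k,\Delta_k)$ is dlt and $S_k=\lfloor\Delta_k\rfloor$-component, $S_k$ is normal and adjunction gives a dlt pair $(S_k,\Delta_{S_k})$ with $K_{S_k}+\Delta_{S_k}=(K_{X_k}+\Delta_k)|_{S_k}$; the flip $\varphi_k\colon X_k\dashrightarrow X_{k+1}$ induces a birational contraction $\psi_k\colon S_k\dashrightarrow S_{k+1}$ (no divisor of $S_k$ is extracted, since $\varphi_k^{-1}$ extracts nothing on $X_{k+1}$), and by the Negativity Lemma (Lemma \ref{lem:2}) applied on a common resolution the discrepancies of $(S_k,\Delta_{S_k})$ do not decrease along $\psi_k$, and strictly increase whenever $\psi_k$ is not an isomorphism — which happens precisely for those $k$ with $S_k\cap R_k\ne\emptyset$. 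So the sequence $\{(S_k,\Delta_{S_k})\}$ is an infinite sequence of birational contractions of dlt pairs, with strictly increasing discrepancies infinitely often, all projective over $Z$, and with the boundaries $\Delta_{S_k}$ moving in a fixed finite-dimensional affine space (coming from the ambient $\Delta=A+B$ and the scaling divisor $C$, whose restrictions generate such a space, with $A|_{S_k}$ general ample). Now Finiteness of models in dimension $n-1$ forces only finitely many isomorphism classes of weak log canonical models to appear among the $(S_k,\Delta_{S_k})$; combined with strictly increasing discrepancies this yields a contradiction, since a model appearing twice with the intervening discrepancies strictly larger is impossible.

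The main obstacle I expect is the careful bookkeeping needed to set up the induction honestly: one must verify that the restricted boundaries $\Delta_{S_k}$ really do lie in one rational finite-dimensional affine subspace of $\WDiv(S)$ (identifying all the $S_k$ via the fixed birational identifications, and checking that the coefficients of $\Delta_{S_k}$ are controlled by those of $\Delta_k$ plus the different, which involves only finitely many prime divisors), and that each $(S_k,\Delta_{S_k})$ is genuinely a weak log canonical model of a fixed pair $(S_1,\Delta_{S_1})$ over $Z$ in the precise sense of Theorem \ref{fom} — in particular that $K_{S_k}+\Delta_{S_k}$ is semiample/nef over $Z$ in the relevant MMP-with-scaling sense. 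A secondary subtlety is handling the divisorial contractions and the finitely many initial steps uniformly; this is routine but must be stated. Once the setup is in place, the contradiction via Theorem \ref{fom} and the Negativity Lemma is immediate.
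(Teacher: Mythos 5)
The paper itself does not prove Theorem \ref{thm:40}; it states the result with the citation \cite[Lemma 5.1]{BCHM}, so there is no in-paper argument to measure your proposal against. Your sketch follows the right general outline for the BCHM proof (restrict to a boundary component $S$, bound the induced $(n-1)$-dimensional pairs using finiteness of models, and reach a contradiction via discrepancy monotonicity), but it contains a concrete false step that buries where the actual difficulty lies. You assert that the flip $\varphi_k$ induces a \emph{birational contraction} $\psi_k\colon S_k\dashrightarrow S_{k+1}$ ``since $\varphi_k^{-1}$ extracts nothing on $X_{k+1}$.'' This inference fails: $\varphi_k$ being small on the ambient varieties does not prevent $\psi_k^{-1}$ from contracting a divisor of $S_{k+1}$. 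Already in dimension $3$, if the flipping curve $C$ merely meets $S_k$ in finitely many points (so $S_k\cdot R_k>0$), then $S_{k+1}\cdot R_k^+<0$ forces the flipped curve $C^+$ to lie inside $S_{k+1}$, and $\psi_k^{-1}$ contracts $C^+$ onto the finite set $S_k\cap C$; thus $\psi_k$ is not a birational contraction.

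Because of this gap, the next two claims---that discrepancies of $(S_k,\Delta_{S_k})$ are non-decreasing (let alone strictly increasing whenever $S_k\cap R_k\neq\emptyset$), and that each $(S_k,\Delta_{S_k})$ is a weak log canonical model of a \emph{fixed} $(n-1)$-dimensional pair so that Theorem \ref{fom} applies---are not justified. Controlling exactly this phenomenon is the core of special termination: one either tracks a difficulty function to show that after finitely many steps $\psi_k^{-1}$ extracts no divisor of negative discrepancy, or (as in BCHM) passes at the outset to a $\Q$-factorial dlt modification $T$ of the initial restricted pair and shows that the subsequent restricted pairs are genuinely weak log canonical models of the auxiliary pair on $T$. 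Your closing paragraph calls the remaining work ``routine bookkeeping,'' but the birational-contraction issue above is a genuine missing idea, not bookkeeping.
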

\begin{dfn}
  \label{dfn:40}
  Let $(X, \Delta)$ be a $\Q$-factorial dlt pair and $f\colon (X, \Delta)\rightarrow Z$ a flipping contraction.
We say $f$ is a \emph{pre limiting (pl) flipping contraction} if there is an $f$-negative irreducible component
$S\subset\lfloor\Delta\rfloor$.
\end{dfn}
\begin{thm}
\label{thm:41}
Assume that existence and finiteness of models hold in dimension $n-1$.
If flips of pl contractions exist in dimension $n$, then flips of klt contractions exist in dimension $n$.
\end{thm}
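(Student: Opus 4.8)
The statement is the standard reduction of flips of klt contractions to flips of pl contractions, combined with termination (special termination with scaling). I would prove it by setting up a $(K+\Delta)$-MMP with scaling on a suitable higher model and running it, using the hypothesis (existence of pl flips in dimension $n$) to perform each flip and Theorem~\ref{thm:40} together with finiteness of models in dimension $n-1$ to guarantee the process stops; the resulting minimal model gives the flip of the original klt flipping contraction via Proposition~\ref{prop:15}.

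First I would reduce to the situation where $Z$ is affine (Remark~\ref{rem:6}) and the contraction $f\colon (X,\Delta)\to Z$ is a klt flipping contraction with $-(K_X+\Delta)$ $f$-ample and $\rho(X/Z)=1$; by Remark~\ref{rem:40} it suffices to treat $\Q$-factorial dlt contractions, and perturbing $\Delta$ slightly we may as well keep $(X,\Delta)$ klt. The key geometric input is a \emph{dlt blow-up}: there is a $\Q$-factorial model $h\colon (Y,\Delta_Y)\to X$, projective over $Z$, such that $(Y,\Delta_Y)$ is dlt, $K_Y+\Delta_Y=h^*(K_X+\Delta)+E$ with $E\ge 0$ supported on the exceptional locus, and $\lfloor\Delta_Y\rfloor$ contains a component $S$ that is negative over $Z$ — concretely one can extract a single divisor with discrepancy close to $-1$ over the non-$\Q$-factorial locus of the flip, so that $S$ is $h$-exceptional and hence $(h\circ \text{(contraction of a curve in }S))$-negative. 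I would then choose a general ample $\Q$-divisor $A$ on $Y$, large enough that $K_Y+\Delta_Y+A$ is nef, so that a $(K_Y+\Delta_Y)$-MMP with scaling of $A$ over $Z$ is defined in the sense of Definition~\ref{directedmmp}.

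Next I would run this MMP with scaling. Each step is a $(K_{Y_i}+\Delta_{Y_i})$-negative extremal contraction over $Z$; since $K_X+\Delta$ is anti-ample over $Z$ and $h$ is birational, no such step can be a Mori fibration, so each $\varphi_i$ is a divisorial contraction or a flip. For the flipping steps one checks that after finitely many steps every flipping contraction is \emph{pl}: by Theorem~\ref{thm:40} (special termination with scaling, which needs existence and finiteness of models in dimension $n-1$ — provided as a hypothesis) the exceptional loci eventually avoid $\lfloor\Delta_{Y_i}\rfloor$, and one arranges by the construction of $Y$ and an inductive bookkeeping argument that the relevant flipping contractions have an $f$-negative component of $\lfloor\Delta_{Y_i}\rfloor$ in their exceptional locus, hence are pl. Thus every flip required is a pl flip, which exists in dimension $n$ by hypothesis; and divisorial contractions always exist by the Contraction Theorem. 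Special termination then forces the MMP to terminate at a model $(W,\Delta_W)\to Z$ with $K_W+\Delta_W$ nef over $Z$, i.e.\ a log terminal model of $(Y,\Delta_Y)$, hence of $(X,\Delta)$, over $Z$.

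Finally I would conclude: $(W,\Delta_W)$ is a log terminal model of the original flipping contraction $f\colon (X,\Delta)\to Z$, so by Proposition~\ref{prop:15} it is the flip of $f$ (it is small over $Z$ because the MMP contracts exactly the divisors extracted by $h$, and $K_W+\Delta_W$ is nef over $Z$ with $\rho=1$, hence ample over $Z$). \textbf{The main obstacle} is the careful bookkeeping showing that \emph{every} flip that arises in the scaled MMP on $Y$ is pl — i.e.\ that the flipping contractions always have a negative component of the reduced boundary in their exceptional set, so that special termination applies and the hypothesis on pl flips is genuinely sufficient. This is exactly the point where one exploits that $\lfloor\Delta_Y\rfloor$ contains the distinguished $f$-negative divisor $S$ and that the only divisors ever contracted are $h$-exceptional; making this precise (including the case of divisorial contractions that may merge or create such components) is the heart of the argument, while everything else is formal manipulation of the MMP with scaling.
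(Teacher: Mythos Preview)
Your overall architecture is right---pass to a higher dlt model, run an MMP with scaling over $Z$, show every flip is pl, terminate via Theorem~\ref{thm:40}, and conclude with Proposition~\ref{prop:15}---but the step you yourself flag as the ``main obstacle'' is a genuine gap, and your proposed construction of $Y$ does not close it.

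First, the dlt blow-up you describe need not exist: since $(X,\Delta)$ is klt, every discrepancy is strictly larger than $-1$, and there is no reason for any divisor to have discrepancy ``close to $-1$'' (think of $X$ smooth and $\Delta$ small). More seriously, even if you extract such an $S$ and put it into $\lfloor\Delta_Y\rfloor$, running the $(K_Y+\Delta_Y)$-MMP with scaling of a \emph{general} ample $A$ gives you no control over whether the flipping curves meet $S$ negatively; there is no mechanism forcing the extremal ray to be $S$-negative, so you cannot conclude the flips are pl. Your appeal to Theorem~\ref{thm:40} only tells you the flipping loci eventually \emph{avoid} $\lfloor\Delta_Y\rfloor$; it is the \emph{converse} implication---that they must always meet it---that you need in order to invoke the pl-flip hypothesis and to force termination.

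The paper closes this gap by a completely different construction of $Y$ and a specific choice of scaling divisor. One first chooses a reduced Cartier divisor $H$ on $Z$ containing $\Sing Z\cup\Sing f(\Delta)\cup f(\Exc f)$, arranged so that on any $\Q$-factorial model $\rho\colon W'\to Z$ the group $N^1(W'/Z)$ is generated by the components of $H_{W'}$ and the $\rho$-exceptional divisors. One then takes a log resolution $h\colon Y\to X$ of $(X,\Delta+H_X)$ that is an isomorphism over $Z\setminus H$, sets $E=\sum E_i$ to be the \emph{sum of all} $h$-exceptional divisors (each with coefficient $1$), and runs the $(K_Y+\Delta_Y+E)$-MMP with scaling of $H_Y$. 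The point is that any extremal curve $D_i$ is mapped to a point of $Z$, so $h_i^*H\cdot D_i=0$; but scaling forces $H_{\overline Y_i}\cdot D_i>0$, and since the difference between $h_i^*H$ and its strict transform is supported on $E_{\overline Y_i}$, one gets $E_{\overline Y_i}\cdot D_i<0$. Thus \emph{every} small contraction is automatically pl. Special termination then terminates the process, and a Negativity Lemma computation shows that all of $E$ is contracted and the output is the flip. This is precisely the bookkeeping you were missing; the choice of $H$ and the use of $H_Y$ (not a general ample) as the scaling divisor are what make it work.
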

\begin{proof}
The proof follows the ideas in \cite{Sho92, Kol92, Fuj07a}.

Let $f\colon (X,\Delta)\rightarrow Z$ be a klt flipping
contraction.
We assume that $Z$ is affine.
In this proof subscripts denote proper transforms.\\[2mm]
\noindent
\emph{Step 1.}
Let $\pi \colon W \rightarrow Z$ be a resolution of $Z$ and let $F_i$ be
generators of $N^1(W/Z)$. We may assume that no $\pi(F_i)$ contains an
irreducible component of $f(\Delta)$.
Write $Z'=\Sing Z \cup \Sing f(\Delta)\cup f(\Exc f)$. Note that $\codim_Z Z'\geq2$, and let $\mcal I_{Z'}$ be the ideal sheaf
of $Z'$. Let $i\colon Z\rightarrow W$ be a compactification of $Z$ with a very ample line bundle $\mcal L$ on $W$ such that
$\mcal L_{|Z}=\OO_Z$ and $\mcal F=i_*\mcal I_{Z'}\otimes\mcal L$ is globally generated. If $H$ is the restriction of a general global section of
$\mcal F$ to $Z$, then $H$ is reduced and $f^{\ast}H=H_X$ since $f$ is small. Note that $H_X$ and $\Delta$ have no common components.
By construction, if $\rho \colon W' \rightarrow Z$ is any $\Q$-factorial model
of $Z$, the group $N^1(W'/Z)$ is generated by the components
of $H_{W'}$ and by the $\rho$-ex\-cep\-ti\-o\-nal divisors.

Let $h \colon Y \rightarrow X$ be a log resolution of the pair
$(X, \Delta+H_X)$ such that $f \circ h$ is an isomorphism over $Z
\smallsetminus H$. Note that all the
$h$-exceptional divisors are components of $h^{\ast} H_X$.
Denote by $E_i$ the exceptional divisors of $h$ and let $E=\sum E_i$.
The pair $(Y, \Delta_Y + H_Y+ E)$ is $\Q$-factorial and dlt over $Z$. Consider a very ample divisor $A$ on $Y$ which is general
in $|A|$ such that $K_X+\Delta_Y+H_Y+E+A$ is nef.
Therefore replacing $H$ by $H+(f\circ h)_*A$ we may assume that $K_X+\Delta_Y+H_Y+E$ is nef.\\[2mm]
\noindent
\emph{Step 2.}
We now run the $(K_{\overline{Y}}+\Delta_{\overline{Y}}+E_{\overline{Y}})$-MMP
with scaling of $H_{\overline{Y}}$.
We construct a sequence
\[
(\overline Y_1,\Delta_{\overline Y_1} + E_{\overline Y_1})
\stackrel{\chi_1}\dashrightarrow \cdots
\stackrel{\chi_{i-1}}\dashrightarrow (\overline Y_i,\Delta_{\overline
  Y_i} + E_{\overline Y_i})
\stackrel{\chi_i}\dashrightarrow \cdots,
\]
where $\overline Y_1=\overline Y$, and each $\chi_i$ is a
divisorial contraction or a pl flip.
Denote by $h_i \colon Y_i \rightarrow Z$ the induced map.

If $K_{\overline Y_i}+\Delta_{\overline Y_i} + E_{\overline Y_i}$ is
not nef, let $R_i$ be an extremal ray as in Lemma \ref{lem:29}.
Let $D_i$ be any curve whose class is in $R_i$, in particular
$H_{\overline{Y}_i} \cdot D_i>0$.  Since $D_i$ is $h_i$-exceptional,
the definition of
$H$ and $h_i^*H \cdot D_i=0$ imply $D_i \cdot
E_{\overline{Y}_i}<0$. The curve $D_i$ therefore intersects a component of
$\lfloor E_{\overline{Y}_i}+ \Delta_{\overline Y_i}\rfloor= E_{\overline{Y}_i}$
negatively. If the contraction associated to $R_i$ is small, it is pl flipping.
Define $\chi_i\colon \overline Y_i\dashrightarrow \overline Y_{i+1}$
to be the contraction of $R_i$ if it is divisorial, or its flip if it is small.

Theorem \ref{thm:40} shows that after finitely many steps we obtain a pair
$\tilde{g}\colon(\widetilde{Y},
\Delta_{\tilde{Y}}+E_{\tilde{Y}})\rightarrow Z$ such that
$K_{\tilde{Y}}+\Delta_{\tilde{Y}}+E_{\tilde{Y}}$ is $\tilde{g}$-nef.\\[2mm]
\noindent
\emph{Step 3.}
Consider a common resolution
\[\xymatrix{ \quad & W \ar[dr]^q \ar[dl]_p & \quad \\
(X,\Delta) \ar@{-->}[rr] & \quad & (\widetilde
Y,\Delta_{\tilde{Y}}+E_{\tilde{Y}})
\\
}\]
We have
\begin{align*}
K_W&=p^*(K_X+\Delta)-p_*^{-1}\Delta+E_{p,q}+E_p\\
&=q^*(K_{\widetilde Y}+\Delta_{\widetilde Y}+E_{\widetilde
  Y})-q_*^{-1} \Delta_{\widetilde Y}-q_*^{-1}E_{\widetilde
  Y}+\widetilde E_{p,q}+
\widetilde E_q,
\end{align*}
where the divisors $E_{p,q}$ and $\widetilde E_{p,q}$ are exceptional
for $p$ and $q$, $E_p$ is exceptional for $p$ but not for $q$, and
$\widetilde E_q$ is exceptional for $q$ but not for $p$.
Observe that $q_*^{-1}E_{\widetilde Y}$ is exceptional for $p$ but not for $q$.

The divisor $q^*(K_{\widetilde Y}+\Delta_{\widetilde Y}+E_{\widetilde Y})-p^*(K_X+\Delta)$
is $(\tilde g\circ q)$-nef, and $p_*^{-1}\Delta-q_*^{-1} \Delta_{\widetilde Y}$
is effective and exceptional for $q$ but not for $p$. The Negativity
Lemma over $Z$ shows that
\begin{equation}\label{eq:1}
E_{p,q}<\widetilde E_{p,q}\quad\mbox{and}\quad E_p+q_*^{-1}E_{\widetilde Y}\leq0.
\end{equation}
Since $(X,\Delta)$ is klt, we have $\lceil E_p\rceil\geq0$, and as
$q_*^{-1}E_{\widetilde Y}$ is reduced, this implies $E_{\widetilde Y}=0$.
In particular, the map $\widetilde g$ is small and \eqref{eq:1}
implies that $(\widetilde Y,\Delta_{\widetilde Y})$ is klt.
The pair $(\widetilde Y,\Delta_{\widetilde Y})$ is a log terminal
model of $(X, \Delta)$, and
by Proposition~\ref{prop:15} this is the required flip.
\end{proof}
\begin{rem}
\label{Sirred}
 The existence of pl flips can be reduced to the case where the pair $(X, S+B)$ is plt, where $S$ is irreducible as in \cite[Remark 2.2.21]{Cor07}.
\end{rem}
\section{Restricted algebras and adjoint algebras}

The rest of the paper is devoted to the proof of Conjecture
\ref{conj:exist} under certain conditions.
More precisely, we prove that
klt flips exist in dimension $n$ if existence and finiteness of models hold in dimension $n-1$.
As is explained in Remark \ref{rem:6}, the existence of flips is local on the base;
for the rest of the paper we consider varieties that are projective over affine varieties.

We work with graded algebras of rational functions $R=\bigoplus_{n\geq0}R_n\subset k(X)[T]$,
where $R_0$ is a finitely generated $H^0(X,\OO_X)$-algebra.

\begin{dfn}
A {\em truncation\/} of $R$ is an algebra of the form
$R^{(I)}=\bigoplus_{n\geq0}R_{nI}$ for a positive integer $I$.
\end{dfn}

We often use the following simple but important lemma without explicit reference.

\begin{lem}\label{truncation}
A graded algebra $R$ is finitely generated if and only if its
truncation $R^{(I)}$ is finitely generated for any $I$.
\end{lem}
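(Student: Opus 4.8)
The plan is to prove both implications, with the harder direction being that finite generation of a single truncation $R^{(I)}$ forces finite generation of $R$. For the easy direction, suppose $R$ is finitely generated, say by homogeneous elements $f_1,\dots,f_s$ of degrees $d_1,\dots,d_s$. Then every element of $R^{(I)}$ is a polynomial in the $f_j$, and projecting onto the part whose total degree is divisible by $I$, one sees that $R^{(I)}$ is generated by the finitely many monomials $f_1^{a_1}\cdots f_s^{a_s}$ with $\sum a_j d_j$ divisible by $I$ and $\sum a_j d_j \leq I\cdot\max_j d_j$ (any larger such monomial is a product of two smaller ones of degrees divisible by $I$, by an elementary numerical argument on the semigroup of achievable degrees). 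Hence $R^{(I)}$ is finitely generated.

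For the nontrivial direction, assume $R^{(I)}$ is finitely generated and, after rescaling, pick homogeneous generators $g_1,\dots,g_t$ of $R^{(I)}$; their degrees are multiples of $I$. First I would reduce to showing that $R$ is a finite module over the subring $S = R_0[g_1,\dots,g_t] = R^{(I)}$. Indeed, $R = \bigoplus_{j=0}^{I-1} R^{(I,j)}$ where $R^{(I,j)} = \bigoplus_{n\geq 0} R_{nI+j}$ is the $j$-th "coset" graded piece, and each $R^{(I,j)}$ is a module over $R^{(I)}$. So it suffices to prove each $R^{(I,j)}$ is a finitely generated $R^{(I)}$-module. The key observation is that $R^{(I,j)}$, being a graded submodule of $k(X)[T]$ with each graded piece $R_{nI+j} \subseteq H^0(X, \mathcal{O}_X(\lfloor (nI+j)D\rfloor))$ or more precisely a subspace of a fixed space of rational functions, has the property that its graded pieces are subspaces of the ambient pieces; the finite generation as an $R^{(I)}$-module will follow once we know that the relevant sheaves/modules are coherent and that $R^{(I)}$ is Noetherian. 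Since $R^{(I)}$ is a finitely generated algebra over the Noetherian ring $R_0$, it is Noetherian; I would then exhibit finitely many elements of $R^{(I,j)}$ that generate it over $R^{(I)}$ by using that $R^{(I,j)}$ sits inside the finitely generated $R^{(I)}$-module obtained from a suitable twist.

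The main obstacle is making the last step rigorous: one needs that $R^{(I,j)}$ really is a \emph{finitely generated} module over $R^{(I)}$, not merely a submodule of the fraction field. The cleanest way is to invoke that $R$ is contained in a finitely generated $R^{(I)}$-algebra — namely, because $R$ is an algebra of rational functions whose $n$-th piece lies in a space of sections of a divisorial sheaf, one picks a uniformizer-type element or simply a single nonzero element $x \in R_1$ (if $R_1 \neq 0$; otherwise pass to the first nonzero piece and adjust $I$), so that $R[x^{-1}]$ and truncations interleave; alternatively, and more robustly, one uses that $\Proj R = \Proj R^{(I)}$ together with the fact that $R$ and $R^{(I)}$ have the same Proj and the same degree-$0$ part, so $R$ is a finitely generated $R^{(I)}$-module by a standard lemma on graded rings with identical Proj (see, e.g., EGA~II, or Bourbaki \emph{Commutative Algebra}, on the behavior of Veronese subrings). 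I would cite this standard commutative-algebra fact rather than reprove it, since the lemma is stated as routine background.
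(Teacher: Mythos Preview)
The paper's own proof is simply a citation to \cite[Lemma 2.3.3]{Cor07}, with no argument given; your sketch is therefore already more detailed than what the paper provides, and your overall strategy (Veronese subring for the easy direction, decompose $R=\bigoplus_{j=0}^{I-1}R^{(I,j)}$ as $R^{(I)}$-modules for the hard one) is exactly the standard route that the cited reference follows.

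One point worth tightening: the step where you conclude that each $R^{(I,j)}$ is a finitely generated $R^{(I)}$-module is not automatic for arbitrary graded domains, and neither the $\Proj$ argument nor the vague appeal to Veronese lemmas quite closes it. What makes it work in the paper's setting is the standing hypothesis (stated just before the lemma) that $R\subset k(X)[T]$ with $X$ proper over an affine base and $R_0$ finitely generated; in practice each $R_n$ is a finite $R_0$-module. Given this, one argues as follows: pick any nonzero $x\in R_d$ with $d>0$; then $x^I\in R^{(I)}$, so $R[1/x^I]$ is a finitely generated $R_0$-algebra, and hence so is its degree-zero part; now $R^{(I,j)}\cdot x^{-j}$ sits inside a finitely generated module over the Noetherian ring $R^{(I)}$ once you use that the graded pieces are finite $R_0$-modules. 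Alternatively, and this is what the cited lemma does, one simply checks that $R$ is integral over $R^{(I)}$ (every homogeneous $f\in R_n$ satisfies $f^I\in R^{(I)}$) and then uses that a finitely-generated-in-each-degree integral extension of a Noetherian graded ring is module-finite. Either way, you should make explicit the hypothesis that each $R_n$ is a finite $R_0$-module, since without it the implication can fail.
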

\begin{proof}
\cite[Lemma 2.3.3]{Cor07}.
\end{proof}

\begin{dfn}
A sequence of effective $\R$-b-divisors $\M_\bullet$ is {\em
  superadditive\/} if
$\M_{m+n}\geq\M_m+\M_n$ for all $m,n\geq0$.

A sequence of effective $\R$-b-divisors $\DDelta_\bullet$ is {\em concave\/} if
$$\DDelta_{m+n}\geq\frac{m}{n+m}\DDelta_m+\frac{n}{n+m}\DDelta_n$$
for all $n,m\geq0$. Convex sequences are defined similarly.
\end{dfn}

Observe that if $\M_\bullet$ is a superadditive sequence of b-divisors, then there are homomorphisms
$$H^0(X,\M_m)\otimes H^0(X,\M_n)\rightarrow H^0(X,\M_{m+n})$$
for all $m,n\geq0$. This justifies the following definition.

\begin{dfn}
A {\em b-divisorial algebra\/} on $X$ is the algebra of rational functions
$$R(X,\M_\bullet)=\bigoplus_{m\in\N}H^0(X,\M_m),$$
where $\M_\bullet$ is a superadditive sequence of b-divisors on $X$.
\end{dfn}

\begin{lem}\label{limit}
Let $\M_\bullet$ be a superadditive sequence of mobile b-divisors on $X$.
The b-divisorial algebra $R=R(X,\M_\bullet)$ is finitely generated if and only if
there exists an integer $i$ such that $\M_{ik}=k\M_i$ for all $k\geq0$.
\end{lem}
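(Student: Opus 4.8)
The plan is to reduce the statement to the familiar fact that a graded ring is finitely generated if and only if some Veronese subalgebra is generated in a single degree, using the hypothesis that the $\M_m$ are mobile. First, suppose $\M_{ik}=k\M_i$ for all $k\geq0$. Then the truncation $R^{(i)}=\bigoplus_k H^0(X,\M_{ik})=\bigoplus_k H^0(X,k\M_i)$, and since $\M_i$ is mobile it descends to a model $W\to X$ on which $(\M_i)_W$ is basepoint free; thus $H^0(X,k\M_i)=H^0(W,k(\M_i)_W)$ by the remark following the definition of mobile b-divisors. Therefore $R^{(i)}$ is (a quotient image of) the section ring of a basepoint-free line bundle on $W$, hence finitely generated; for instance it is the image of $\Sym^\bullet H^0(W,(\M_i)_W)$. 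By Lemma~\ref{truncation}, $R$ itself is finitely generated.

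Conversely, suppose $R=R(X,\M_\bullet)$ is finitely generated. Choose generators lying in degrees $d_1,\dots,d_r$ and set $i$ to be a common multiple of the $d_j$ (or their lcm); then the truncation $R^{(i)}$ is generated in degree one, i.e.\ the multiplication maps $\Sym^k R_i\to R_{ik}$ are surjective for all $k\geq 1$. I would now translate surjectivity of $H^0(X,\M_i)^{\otimes k}\to H^0(X,\M_{ik})$ into an equality of b-divisors. Superadditivity already gives $\M_{ik}\geq k\M_i$. For the reverse inequality, the key observation is that on the model $W$ where the relevant b-divisors descend and are attained, the image of $\Sym^k H^0(X,\M_i)$ inside $H^0(W,(\M_{ik})_W)$ is exactly $H^0(W,k(\M_i)_W)$, and surjectivity forces $H^0(W,(\M_{ik})_W)=H^0(W,k(\M_i)_W)$; since both $(\M_{ik})_W$ and $k(\M_i)_W$ are (after passing to a high enough model) basepoint free and the first dominates the second, comparing the sections on each component shows the two divisors coincide, hence $\M_{ik}=k\M_i$ as b-divisors. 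Replacing $i$ by its multiples and using that $k\mapsto \M_{ik}$ is itself superadditive with $\M_{ik}=k\M_i$ already checked on a generating set of exponents, one propagates the equality to all $k\geq 0$.

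The main obstacle I anticipate is the bookkeeping in the converse direction: passing from ``the multiplication map on global sections is surjective'' to ``the b-divisors are literally equal.'' One must be careful that $\M_{ik}$ is mobile (it is, by hypothesis) so that its global sections are computed on a single model where $(\M_{ik})_W$ is free, and that this model can be taken to dominate the one realizing $k(\M_i)_W$; then the inclusion of linear systems $|k(\M_i)_W|\subseteq|(\M_{ik})_W|$ combined with equality of $H^0$ and basepoint-freeness of both sides yields equality of the two free divisors, and therefore equality of the b-divisors after pushing forward. I would likely cite Lemma~\ref{mobile} to organize the claim that these section spaces are realized on a common model. Once that identification is in place, both implications follow cleanly from Lemma~\ref{truncation}.
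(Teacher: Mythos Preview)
Your proposal is correct and takes essentially the same route as the paper: reduce via truncation to the case where $R$ is generated in degree one, descend to a common model where both mobile b-divisors are free, and conclude equality of the b-divisors from equality of their section spaces. One small slip: the image of $\Sym^k H^0(X,\M_i)$ is only \emph{contained in} $H^0(W,k(\M_i)_W)$, not equal to it, but containment is all you need (surjectivity onto the larger $H^0(W,(\M_{ik})_W)$ then forces all three spaces to agree), and your final ``propagation'' paragraph is unnecessary since once $R^{(i)}$ is generated in degree one the argument already applies directly to every $k$.
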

\begin{proof}
Assume that for some $i$, $\M_{ik}=k\M_i$ for all $k\geq 0$.
Passing to a truncation we may assume that $i=1$.
Let $Y \rightarrow X$ be a model such that $\M_1$
descends to $Y$. Then $R=\bigoplus H^0(Y,i\M_{1Y})$ and the result
follows from Zariski's theorem.

Conversely, assume that $R$ is finitely
generated. Up to truncation, we may assume that $R$ is generated by
$H^0(X,\M_1)$.
For each $j$, take a resolution $Y_j\rightarrow X$ such that
both $\M_1$ and $\M_j$ descend to $Y_j$.
Since the sequence $\M_\bullet$ is concave
we have $H^0(Y_j,j\M_{1Y_j})\subset H^0(Y_j,\M_{jY_j})$,
and the finite generation gives
$H^0(Y_j,\M_{jY_j})=H^0(Y_j,\M_{1Y_j})^j \subset H^0(Y_j,j\M_{1Y_j})$.
As $j\M_{1Y_j}$ and $\M_{jY_j}$ are free,
$H^0(Y_j,j\M_{1Y_j})=H^0(Y_j,\M_{jY_j})$ implies
$j\M_{1Y_j}=\M_{jY_j}$ and thus $j\M_1=\M_j$.
\end{proof}

%\paragraph{\bf The restricted algebra}

\begin{stp}\label{setup}
Consider a pl flipping contraction
$$\pi\colon(X,\Delta=S+B)\rightarrow Z,$$
where $S$ is a prime divisor and $\lfloor B\rfloor=0$. Recall that $-(K_X+\Delta)$ and $-S$ are $\pi$-ample, that $X$ is $\Q$-factorial and
$\rho(X/Z)=1$.
\end{stp}

The flip of $f$ exists if and only if the canonical algebra
$$R(X,K_X+\Delta)=\bigoplus_{n\geq0}H^0(X,\lfloor n(K_X+\Delta)\rfloor)$$
is finitely generated.

%Fix a positive integer $I$ such that the divisor $I(K_X+S+B)$ is Cartier.

\begin{rem}
For a Cartier divisor $D$ and a prime Cartier divisor $S$, let $\sigma_S\in H^0(X,S)$
be a section such that $\ddiv\sigma_S=S$. From the exact sequence
$$0\rightarrow H^0(X,\OO_X(D-S))\stackrel{\cdot \sigma_S}{\longrightarrow}H^0(X,\OO_X(D))\stackrel{\rho_{D,S}}{\longrightarrow}
H^0(S,\OO_S(D))$$
we denote $\res_S H^0(X,\OO_X(D))=\im(\rho_{D,S})$.
\end{rem}

\begin{dfn}
The {\em restricted algebra} of $R(X,K_X+\Delta)$ is
$$R_S(X,K_X+\Delta)=\bigoplus_{n\geq0}\res_S H^0(X,\lfloor n(K_X+\Delta)\rfloor).$$
\end{dfn}
\begin{lem}
Under assumptions of Setup \ref{setup}, $R(X,K_X+\Delta)$ is finitely generated if and only if $R_S(X,K_X+\Delta)$ is finitely generated.
\end{lem}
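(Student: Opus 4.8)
We may assume $\Delta$ is a $\mathbb{Q}$-divisor. The nontrivial implication is that finite generation of $R_S:=R_S(X,K_X+\Delta)$ forces finite generation of $R:=R(X,K_X+\Delta)$; the converse is formal, since by construction $R_S$ is the image of $R$ under a homomorphism of graded $H^0(X,\OO_X)$-algebras $R\to k(S)[T]$, and a homomorphic image of a finitely generated algebra is finitely generated.

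So suppose $R_S$ is finitely generated. The first step is to pass from $K_X+\Delta$ to $S$ itself. By Setup~\ref{setup} we have $\rho(X/Z)=1$, so $K_X+\Delta$ and $S$ are numerically proportional over $Z$; since $\pi$ contracts the unique $(K_X+\Delta)$-negative extremal ray, Corollary~\ref{cor:1} shows that, for suitable positive integers $p,q$ with $q(K_X+\Delta)$ Cartier, some multiple of $q(K_X+\Delta)-pS$ is pulled back from $Z$. As the flipping problem is local on $Z$ (Remark~\ref{rem:6}), after shrinking $Z$ we may assume $q(K_X+\Delta)\sim pS$, whence $\OO_X(nq(K_X+\Delta))\cong\OO_X(npS)$ for all $n$. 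By Lemma~\ref{truncation}, finite generation of $R$ (resp.\ of $R_S$) is then equivalent to finite generation of $\bS:=\bigoplus_{m\ge0}H^0(X,\OO_X(mS))$ (resp.\ of $\bigoplus_{m\ge0}\res_S H^0(X,\OO_X(mS))$), so it is enough to treat the case of the divisor $S$.

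That case is elementary. Since $S$ is effective, the constant function $1$ gives a distinguished element $u\in\bS_1=H^0(X,\OO_X(S))$, and a section of $\OO_X(mS)$ restricts to zero along $S$ exactly when it lies in $H^0(X,\OO_X((m-1)S))\subset H^0(X,\OO_X(mS))$; for $m\ge1$ this subspace equals $u\cdot\bS_{m-1}$, the point being that $S$ is integral and $(m-1)S+S=mS$. Hence $\bigoplus_m\res_S H^0(X,\OO_X(mS))=\bS/\mathfrak K$ for a homogeneous ideal $\mathfrak K$ with $\mathfrak K_m=u\,\bS_{m-1}$ for every $m\ge1$. If $\bS/\mathfrak K$ is finitely generated, pick homogeneous generators---which may be taken in positive degrees, since $(\bS/\mathfrak K)_0$ is a quotient ring of $\bS_0=H^0(X,\OO_X)$---and lift them to $g_1,\dots,g_s\in\bS$. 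One then checks that $\bS=H^0(X,\OO_X)[g_1,\dots,g_s,u]$ by induction on the degree: given $x\in\bS_m$ with $m\ge1$, write $\bar x$ as a polynomial over $\bS_0$ in the $\bar g_i$, lift that polynomial to $\bS$, and subtract to land in $\mathfrak K_m=u\,\bS_{m-1}$, i.e.\ $x=uy$ with $y\in\bS_{m-1}$, to which the inductive hypothesis applies. Thus $\bS$, hence $R$, is finitely generated.

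The one substantive point is the reduction to $S$ in the second step: once that is available the rest is soft, because for the integral divisor $S$ the kernel of restriction is generated in positive degrees by the single degree-one element $u$. Within that step the delicate issue is upgrading the numerical proportionality of $K_X+\Delta$ and $S$ over $Z$ to honest linear equivalence, for which one invokes Corollary~\ref{cor:1} together with the fact that the problem is local on the base.
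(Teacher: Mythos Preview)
Your proof is correct and follows essentially the same strategy as the paper's: reduce via $\rho(X/Z)=1$ to a divisor $\Q$-proportional to $K_X+\Delta$, then show that the kernel of the restriction map is generated by a single element in degree one, so that finite generation of the quotient forces finite generation of the whole ring. The only cosmetic difference is that the paper replaces $S$ by a linearly equivalent divisor $D=S-\operatorname{div}\varphi$ with $S\not\subset\operatorname{Supp}D$ (so that restriction of sections to $S$ is literally restriction of rational functions and the kernel generator is $\varphi$), whereas you work with $S$ itself and take $u=1$; and you invoke Corollary~\ref{cor:1} where the paper cites \cite[Lemma~3--2--5]{KMM87} for the passage from numerical to linear equivalence over $Z$.
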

\begin{proof}
We will concentrate on sufficiency, since necessity is obvious.

By \cite[Lemma 3-2-5]{KMM87}, numerical and linear equivalence over $Z$ coincide. Since $\rho(X/Z)=1$, and both $S$ and $K_X+\Delta$ are
$\pi$-negative, there exists a positive rational number $r$ such that $S\sim_{\Q,\pi}r(K_X+\Delta)$. By considering open subvarieties of $Z$
we can assume that $S-r(K_X+\Delta)$ is $\Q$-linearly equivalent to a pullback of a principal divisor.

Let $\varphi$ be a rational function with a zero of order one along
$S$ and set $D=S-\ddiv\varphi$; in particular $S\not\subset\Supp D$. Therefore
$$D\sim_{\Q}r(K_X+\Delta),$$
and the rings $R(X,D)$ and $R(X,K_X+\Delta)$ share a common
truncation. As $\varphi\in H^0(X,D)$, it is enough to show that $\varphi$ generates the kernel
$$K=\bigoplus_{n\geq0}\ker\big(H^0(X,nD)\rightarrow H^0(S,nD_{|S})\big).$$

If $\varphi_n\in H^0(X,nD)\cap K$, then
$\ddiv\varphi_n+nD-S\geq0$. Writing $\varphi_n=\varphi\varphi_n'$ for some
$\varphi_n'\in k(X)$, we have
$$\ddiv\varphi_n'+(n-1)D=\ddiv\varphi_n-\ddiv\varphi+(n-1)D\geq0,$$
and therefore $\varphi_n'\in H^0(X,(n-1)D)$.
\end{proof}

\begin{dfn}\label{adjointdefn}
Let $X$ be a variety that is projective over an affine variety $Z$.
Let $\DDelta_m$ be an effective concave sequence of $\Q$-b-divisors on $X$ such that
each $K_X+\DDelta_{mX}$ is $\Q$-Cartier, and assume that there is a
positive integer $I$ such that $mI\DDelta_m$ is integral for every $m$.

An {\em adjoint algebra} is a b-divisorial algebra
$$R(X,\NN_\bullet)=\bigoplus_{m\geq0}H^0(X,\NN_m)$$
where $\NN_m=mI(\K_X+\DDelta_m)$, such that:
\begin{itemize}
\item $\NN_m$ is good on $X$ for every $m\geq0$, that is $\NN_m \geq \overline{\NN_{mX}}$,
\item $\lim\limits_{m\rightarrow\infty}\DDelta_m=\sup\limits_{m\geq0}\DDelta_m=\DDelta$
  exists as a real b-divisor on $X$.
\end{itemize}
\end{dfn}
\begin{rem}
Any truncation of an adjoint algebra is an adjoint algebra.
\end{rem}
\begin{hmck}[Hacon-M\textsuperscript{c}Kernan]
Let $(Y,T+B)$ be a log smooth plt pair that is projective over an affine variety $Z$, where $T$ is irreducible and $B$ is a $\Q$-divisor with
$\lfloor B\rfloor=0$. Assume that
\begin{enumerate}
\item $B\sim_{\Q}A+C$, where $A$ is ample, $C\geq0$ and $T\not\subset\Supp C$,
\item there are a positive integer $p$ and a divisor $N\in|p(K_Y+T+B)|$ such that $N$ meets properly all the intersections of the components
of $T+B$.
\end{enumerate}
Then for every $m$ such that $m(K_Y+T+B)$ is integral, the map
$$H^0(Y,m(K_Y+T+B))\rightarrow H^0(T,m(K_T+B_{|T}))$$
is surjective.
\end{hmck}
\begin{proof}
See \cite{HM07}.
\end{proof}
\begin{rem}
The assumptions of the Lifting Lemma are birational. The first
assumption is related to $B$ being big; in our setting it is not restrictive.
The second assumption, however, is difficult to arrange.
Example \ref{exa:2} shows that the statement cannot be expected
to hold without both assumptions being satisfied.
\end{rem}
\begin{exa}\label{exa:2}
Let $Y$ be a surface, let $T$ be a nonsingular curve on $Y$ and let
$E\subset Y$ be a $(-1)$-curve meeting $T$ at exactly one point
$P$. Denote $B=dE$,
where $0<d<1$. Let $f\colon Y\rightarrow X$ be the contraction of $E$
and let $S=f(T)$. Then
\begin{equation}\label{star}
K_Y+T+B=f^*(K_X+S)+dE,
\end{equation}
and
$$\bfig
 \square/->`>`<-_{)}`>/<1300,500>[H^0(Y,m(K_Y+T+B))`H^0(T,m(K_T+dP))
 `H^0(X,m(K_X+S))`H^0(T,mK_T));r`\simeq`\neq`]
 \efig$$
In general, $r$ is not surjective. By \eqref{star},
$E\subset\Bs|m(K_Y+T+B)|$ and the assumption (2) of the Lifting Lemma is not satisfied.
\end{exa}
\begin{dfnlm}
Let $(X,\Delta)$ be a log pair. There is a b-divisor $\B=\B(X,
\Delta)$ on $X$ such that for all models $f\colon Y\rightarrow X$, we can write uniquely
$$K_Y+B_Y=f^*(K_X+\Delta)+E_Y,$$
where $B_Y$ and $E_Y$ are effective with no common components and $E_Y$ is $f$-exceptional.
We call $\B$ the {\em boundary\/}
b-divisor of the pair $(X,\Delta)$.
\end{dfnlm}
\begin{proof}
Let $g\colon Y'\rightarrow X$
be a model such that there is a proper birational morphism $h\colon
Y'\rightarrow Y$. Pushing forward
$K_{Y'}+B_{Y'}=g^*(K_X+\Delta)+E_{Y'}$ via $h_*$ yields
$$K_Y+h_*B_{Y'}=f^*(K_X+\Delta)+h_*E_{Y'},$$
and since $h_*B_{Y'}$ and $h_*E_{Y'}$ have no common components, $h_*B_{Y'}=B_Y$.
\end{proof}
\begin{lem}\label{disjoint}
Let $(X,\Delta)$ be a log canonical pair. There exists a log
resolution $Y\rightarrow X$ such that the
components of $\{\B_Y\}$ are disjoint.
\end{lem}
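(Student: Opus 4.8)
The plan is to start from an arbitrary log resolution of $(X,\Delta)$ and then keep blowing up intersections of components of the fractional part of $\B_Y$ until those components become pairwise disjoint. First I would fix any log resolution $g\colon Y_0\rightarrow X$, so that $\B_{Y_0}$ is supported on a simple normal crossings divisor and we have $K_{Y_0}+B_{Y_0}=g^*(K_X+\Delta)+E_{Y_0}$ with $B_{Y_0}$ and $E_{Y_0}$ effective, no common components, $E_{Y_0}$ exceptional. Since $(X,\Delta)$ is log canonical, every geometric valuation $E$ satisfies $a(E,X,\Delta)\geq -1$, which is exactly the statement that the coefficients of $B_{Y_0}$ (the ``boundary part'') are all $\leq 1$, i.e. $\B_{Y_0}$ is a boundary b-divisor; in particular $\{\B_{Y_0}\}$ has coefficients strictly less than $1$, and its support is a disjoint issue only along the intersections of components with fractional coefficients.

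Next I would set up an induction on the number of connected pairwise intersections among components of $\{\B_{Y_0}\}$ (or, more precisely, on a suitable measure of how far $\{\B_{Y_0}\}$ is from having disjoint support, such as the number of codimension-$2$ strata of $\operatorname{Supp}\{\B_{Y_0}\}$). If two components $D_1,D_2$ of $\{\B_Y\}$ meet, blow up (a component of) $D_1\cap D_2$, obtaining $\mu\colon Y'\rightarrow Y$ with exceptional divisor $F$. By the uniqueness in the Definition-Lemma preceding this one, $\B_{Y'}$ is again the boundary b-divisor, and one computes the coefficient of $F$ in $B_{Y'}$ from the discrepancy formula: writing $d_i=\operatorname{mult}_{D_i}\{\B_Y\}<1$ for $i=1,2$ and $e$ for the (nonnegative) coefficient along $F$ coming from $E_Y$, the new boundary coefficient along $F$ is $d_1+d_2-1$ up to the exceptional correction, and the key point is that this is $<1$ because $d_1,d_2<1$. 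Thus $F$ enters $\{\B_{Y'}\}$ with coefficient $<1$ as well, and — crucially — the proper transforms of $D_1$ and $D_2$ no longer meet along that stratum. Iterating, each blow-up strictly decreases the chosen complexity measure while keeping us log canonical and keeping all coefficients $<1$, so after finitely many steps we reach a model $Y\rightarrow X$ on which the components of $\{\B_Y\}$ are disjoint.

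The main obstacle is the bookkeeping in the discrepancy/coefficient computation after each blow-up: one must check that blowing up an intersection of two components with coefficients $<1$ really does produce a new exceptional divisor whose coefficient in $\{\B_{Y'}\}$ is again $<1$ (this uses log canonicity in an essential way and the fact that $K_{Y'}=\mu^*K_Y+(\operatorname{codim}-1)F$ together with $\mu^*D_i = D_i' + F$), and that the combinatorial complexity genuinely drops, so that the process terminates. One should also take a little care that, since $\B_Y$ may a priori have components $D_i$ with coefficient exactly $1$ (the reduced, ``$\lfloor\B_Y\rfloor$'' part), these do not contribute to $\{\B_Y\}$ and so need not be separated — it is only the strictly fractional components that matter, and for those the strict inequality $d_i<1$ is what makes the induction go through.
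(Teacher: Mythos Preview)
The paper does not give its own proof here; it simply cites \cite[Proposition~2.36]{KM98} and \cite[Lemma~6.7]{HM05}. Your strategy---start from any log resolution and repeatedly blow up codimension-$2$ strata of $\operatorname{Supp}\{\B_Y\}$---is exactly the strategy in those references, and your coefficient computation is correct: blowing up a component of $D_1\cap D_2$ with $d_i=\mult_{D_i}\{\B_Y\}<1$ gives a new exceptional divisor $F$ whose coefficient in $\B_{Y'}$ is $\max(0,d_1+d_2-1)<1$.

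There is, however, a genuine gap in your termination argument. You propose to induct on the number of codimension-$2$ strata of $\operatorname{Supp}\{\B_Y\}$ and assert that ``each blow-up strictly decreases the chosen complexity measure''. This is false in general: when $d_1+d_2>1$, the new divisor $F$ is a component of $\{\B_{Y'}\}$ and meets both $D_1'$ and $D_2'$, so one stratum has been replaced by two (and in dimension $\geq 3$, $F$ may also meet the strict transforms of further components $D_j$). Thus the naive count can go up, not down.

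The termination argument that actually works is different. One checks by induction on the blow-up process that every coefficient of $\{\B_{Y'}\}$ arising along the way has the form
\[
1-\sum_i n_i(1-d_i),\qquad n_i\in\mathbb{Z}_{\geq 0},\ \sum_i n_i\geq 1,
\]
where the $d_i$ are the finitely many coefficients of $\{\B_{Y_0}\}$ on the initial resolution. Since each $1-d_i>0$, only finitely many tuples $(n_i)$ give a positive value, so only finitely many ``new'' components can ever appear in $\{\B_\bullet\}$ throughout the whole process; this bounds the procedure. (Equivalently, over a fixed initial stratum the blow-ups are governed by a Stern--Brocot type subdivision whose depth is bounded by $1/\min_i(1-d_i)$.) This is the point you flagged as ``the main obstacle'' but did not resolve; the rest of your write-up is sound.
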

\begin{proof}
See \cite[Proposition 2.36]{KM98} or \cite[Lemma 6.7]{HM05}.
\end{proof}
\begin{cor}
If $(X,\Delta)$ is a klt pair, there are only finitely many valuations $E$ with $a(E,K_X+\Delta)<0$.
\end{cor}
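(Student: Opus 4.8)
The plan is to translate the statement into a finiteness property of the boundary b-divisor $\B=\B(X,\Delta)$ and then pin $\B$ down on a well-chosen log resolution via Lemma~\ref{disjoint}. First I would record the elementary dictionary: on any model $f\colon Y\to X$ the Definition-Lemma preceding Lemma~\ref{disjoint} writes $\A(X,\Delta)_Y=E_Y-\B_Y$ with $E_Y,\B_Y\geq0$ effective and sharing no components, so $\mult_E\B=\max\bigl(0,-a(E,K_X+\Delta)\bigr)$ for every geometric valuation $E$; thus $a(E,K_X+\Delta)<0$ exactly when $E$ is a component of $\B$. Since $(X,\Delta)$ is klt we also have $a(E,K_X+\Delta)>-1$ for all $E$, so every coefficient of $\B$ lies in $[0,1)$, and in particular $\{\B_Y\}=\B_Y$ on every model. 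Hence it suffices to show that $\B$ has only finitely many components.

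Next I would invoke Lemma~\ref{disjoint} for the lc pair $(X,\Delta)$: there is a log resolution $f\colon Y\to X$ on which the components of $\B_Y=\sum_{i=1}^{r}b_iD_i$ (with $0\leq b_i<1$) are pairwise disjoint; since $f$ is a log resolution, $Y$ and every $D_i$ are smooth. I claim that $\B=\widehat{\B_Y}$, the proper transform b-divisor of $\B_Y$. Granting this, $\B$ has exactly the $r$ components $D_i$ and we are done, so the whole issue is to show that no geometric valuation $E$ whose centre on $Y$ has codimension $\geq2$ can be a component of $\B$, i.e.\ that $a(E,K_X+\Delta)\geq0$ for every such $E$.

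For the last point, the identity $K_Y+\B_Y=f^*(K_X+\Delta)+E_Y$ gives, after pulling back to any further model $g\colon Y'\to Y$, the relation $\A(X,\Delta)_{Y'}=\A(Y,\B_Y)_{Y'}+g^*E_Y$; since $E_Y\geq0$ this yields $a(E,K_X+\Delta)\geq a(E,Y,\B_Y)$, so it is enough to bound $a(E,Y,\B_Y)$ from below. Let $Z=c_YE$ with $\codim_YZ\geq2$. As the $D_i$ are disjoint, $Z$ lies on at most one of them. If $Z\not\subset\Supp\B_Y$ then $a(E,Y,\B_Y)=a(E,Y,0)\geq\codim_YZ-1\geq1$. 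If $Z\subset D_{i_0}$ for the unique such index, then
\[
a(E,Y,\B_Y)=a(E,Y,0)-b_{i_0}\,\mult_E(g^*D_{i_0})\geq a(E,Y,0)-\mult_E(g^*D_{i_0})=a(E,Y,D_{i_0})\geq0,
\]
the last inequality because the pair $(Y,D_{i_0})$ is canonical, being a smooth variety together with a smooth prime divisor (formally locally $(Y,D_{i_0})\cong(\mathbb{A}^{1},0)\times\mathbb{A}^{\dim X-1}$, whose exceptional discrepancies are nonnegative; see \cite{KM98}). This proves $\B=\widehat{\B_Y}$.

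The genuinely geometric input is entirely contained in Lemma~\ref{disjoint}, and that is where I expect the real weight of the argument to sit: its disjointness conclusion is exactly what confines $Z$ to a single $D_{i_0}$ and makes the displayed estimate go through — over a stratum $D_i\cap D_j$ with $b_i+b_j>1$ the discrepancy would instead become negative, so no naive log resolution would suffice. Everything else is bookkeeping with b-divisors plus the standard fact that a smooth variety with a smooth prime divisor is a canonical pair.
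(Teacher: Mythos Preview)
Your argument is correct and follows essentially the same route as the paper: the paper's one-line proof cites Lemma~\ref{disjoint} together with \cite[Proposition~2.31]{KM98}, the latter being precisely the statement that a smooth pair whose boundary has pairwise disjoint components with coefficients in $[0,1)$ is terminal. You have simply unpacked that citation, proving by hand the (slightly weaker) canonicity of $(Y,\B_Y)$ via the observation that at most one $D_i$ passes through any given centre, reducing to the canonical pair $(Y,D_{i_0})$.
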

\begin{proof}
Follows from Lemma \ref{disjoint} and \cite[Proposition 2.31]{KM98}.
\end{proof}
For any two divisors $P=\sum p_iE_i$ and $Q=\sum q_iE_i$ set
$$P\wedge Q=\sum\min\{p_i,q_i\}E_i.$$
\begin{dfn}\label{dfn:3}
Let $(X,\Delta)$ be a pair such that $I(K_X+\Delta)$ is Cartier and
the linear system $|I(K_X+\Delta)|$ is not empty. For a
log resolution $f\colon Y\rightarrow X$, let
$$mI(K_Y+\B_Y)=M_{mY}+F_{mY}$$
be the decomposition into mobile and fixed parts and set
$$B_{mY}=\B_Y-\B_Y\wedge(F_{mY}/mI).$$
Then
$$\Mob(mI(K_Y+B_{mY}))=M_{mY}=\Mob(mIf^*(K_X+\Delta)).$$
%and $mIB_{mY}$ is the smallest integral divisor $\Psi\leq mI\B_Y$ such that $\Psi\leq M_{mY}$.
\end{dfn}
\begin{thm}\label{sequences}
Let $(X,\Delta)$ be a log canonical pair such that $I(K_X+\Delta)$ is
Cartier and the linear system $|I(K_X+\Delta)|$ is not empty. Fix a log resolution $f\colon Y\rightarrow X$ such that the support of
$$\Fix|I(K_Y+\B_Y)|+\Exc f+\B_Y$$
is simple normal crossings. There
is an effective sequence $\B_{\bullet}$ of b-divisors on
$Y$ such that $\B_m \leq \B$ is the smallest b-divisor such that for
all models $f\colon Z \rightarrow Y$ we have 
\[ H^0(Z, mI(K_Z+\B_{mZ}))= H^0(Z, mI(K_Z+\B_Z)).\]
The b-divisor $\K_Y+\B_m$
is good on any log resolution $Y_m\rightarrow Y$ where $M_{mY_m}$ is free,
and the sequence $\B_\bullet$ is concave after truncation.
\end{thm}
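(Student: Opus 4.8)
The plan is to prescribe the trace of $\B_m$ on each model over $Y$ and then read off the three assertions. First I would apply Lemma~\ref{mobile} to $\NN_m:=mI(\K_X+\B)$, which is good because on $Y$ one has $(\K_X+\B)_Y=K_Y+\B_Y=f^{*}(K_X+\Delta)+E_Y\ge\overline{K_X+\Delta}_Y$, using $E_Y\ge0$ and that $\B_Y$ is a boundary since $(X,\Delta)$ is log canonical. This produces a mobile b-divisor $\M_m$ with $\M_{mZ}=\Mob\bigl(mI(K_Z+\B_Z)\bigr)$ and $H^{0}\bigl(X,mI(\K_X+\B)\bigr)=H^{0}\bigl(Z,mI(K_Z+\B_Z)\bigr)$ for every model $Z\to Y$. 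Writing $F_{mZ}:=mI(K_Z+\B_Z)-\M_{mZ}$ for the trace of the effective b-divisor $\NN_m-\M_m$, I would set, following Definition~\ref{dfn:3},
\[
\B_{mZ}\ :=\ \B_Z-\B_Z\wedge\tfrac{1}{mI}F_{mZ}.
\]
One checks that $mI\,\B_{mZ}$ is integral (the coefficients of $\B_Z$ lie in $\tfrac{1}{I}\Z$ because $I(K_X+\Delta)$ is Cartier, and $F_{mZ}$ is integral), that $0\le\B_{mZ}\le\B_Z$, and that these traces are compatible with push-forward: on a prime not contracted by a map $Z\to Z'$ the traces of $\B$ and of $\M_m$, hence those of $F_m$ and $\B_m$, are unchanged, while contracted primes die. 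So $\B_m$ is a well-defined effective b-divisor with $\B_m\le\B$, equal on $Y$ to the divisor $B_{mY}$ of Definition~\ref{dfn:3}.

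Next I would prove the $H^{0}$-identity and the minimality. Since $\B_{mZ}$ is obtained from $\B_Z$ only by removing part of the fixed divisor $F_{mZ}$, one has $mI(K_Z+\B_{mZ})=mI(K_Z+\B_Z)-(mI\B_Z)\wedge F_{mZ}$ with unchanged mobile part, and the inclusion $H^{0}\bigl(Z,mI(K_Z+\B_{mZ})\bigr)\subseteq H^{0}\bigl(Z,mI(K_Z+\B_Z)\bigr)$ coming from $\B_{mZ}\le\B_Z$ is an equality, because every section of the larger sheaf is $\sigma_{F_{mZ}}$ times a section of $\M_{mZ}$ and $F_{mZ}\ge(mI\B_Z)\wedge F_{mZ}$. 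For minimality, let $\D\le\B$ be any effective b-divisor with the same $H^{0}$ on every model. Fix $Z$ and a prime $E$; since $|I(K_X+\Delta)|\ne\emptyset$ the b-divisor $\M_m$ is nonzero, so a general element of $|\M_{mZ}|$ does not vanish along $E$, and, viewed inside $H^{0}\bigl(Z,mI(K_Z+\B_Z)\bigr)=H^{0}\bigl(Z,mI(K_Z+\D_Z)\bigr)$, it forces $\mult_E mI(K_Z+\D_Z)\ge\mult_E\M_{mZ}$; together with $\D_Z\ge0$ this gives $\mult_E\D_Z\ge\max\{0,\ \tfrac{1}{mI}\mult_E\M_{mZ}-\mult_E K_Z\}=\mult_E\B_{mZ}$, so $\B_m\le\D$. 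Goodness of $\K_Y+\B_m$ on a log resolution $Y_m\to Y$ on which $\M_m$ is free I would deduce from the fact that $\M_m$ then descends to $Y_m$: a direct comparison of $K_Z+\B_{mZ}$ with the pull-back of $K_{Y_m}+B_{mY_m}$, using the coefficientwise identity $mI(K_Z+\B_{mZ})=\max\{\M_{mZ},\,mI\,K_Z\}$, yields $\K_Y+\B_m\ge\overline{(\K_Y+\B_m)_{Y_m}}$.

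The remaining point, concavity after truncation, is the crux. Superadditivity of $\M_\bullet$ — from the maps $H^{0}(Z,\M_m)\otimes H^{0}(Z,\M_n)\to H^{0}(Z,\M_{m+n})$ — gives $\M_{m+n}\ge\M_m+\M_n$, hence, with $\alpha=\tfrac{m}{m+n}$, the coefficientwise bound $\tfrac{1}{(m+n)I}\mult_E\M_{(m+n)Z}\ge\alpha\,\tfrac{1}{mI}\mult_E\M_{mZ}+(1-\alpha)\,\tfrac{1}{nI}\mult_E\M_{nZ}$. Using $\mult_E\B_{kZ}=\max\{0,\ \tfrac{1}{kI}\mult_E\M_{kZ}-\mult_E K_Z\}$, the inequality $\B_{(m+n)Z}\ge\alpha\B_{mZ}+(1-\alpha)\B_{nZ}$ holds automatically at any prime $E$ at which all three quantities $\tfrac{1}{kI}\mult_E\M_{kZ}$ ($k\in\{m,n,m+n\}$) exceed $\mult_E K_Z$, since there it reduces to the superadditivity bound; the only possible failure is at primes along which some but not all of $\B_{mZ},\B_{nZ},\B_{(m+n)Z}$ vanish. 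By superadditivity the coefficientwise limit $\lim_k\tfrac{1}{kI}\mult_E\M_{kZ}=\sup_k\tfrac{1}{kI}\mult_E\M_{kZ}$ exists, and I would choose the truncation index $e$ so large that for every relevant prime $E$ either $\tfrac{1}{eI}\mult_E\M_{eZ}>\mult_E K_Z$ already — whence the same holds for all $\M_{elZ}$, $l\ge1$, because $\M_{elZ}\ge l\,\M_{eZ}$ — or the supremum is $\le\mult_E K_Z$, whence $\mult_E\B_{elZ}=0$ for all $l$; in either case the truncated sequence $(\B_{el})_{l\ge0}$ is concave. The hard part — the main obstacle — is to make this choice of $e$ work \emph{uniformly} over a cofinal family of models; this is where I would use that $Y$ was chosen with $\Fix|I(K_Y+\B_Y)|+\Exc f+\B_Y$ simple normal crossings, which keeps the mobile b-divisors $\M_m$ and the resolutions $Y_m$ under joint control.
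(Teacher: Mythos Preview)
Your definition of $\B_m$ and the verification of the $H^0$-identity and minimality are in line with the paper's Step~1. The coefficientwise identity $mI(K_Z+\B_{mZ})=\max\{\M_{mZ},\,mIK_Z\}$ is correct and a clean reformulation. But the deduction of goodness from it is not the ``direct comparison'' you claim, and this is where the real content of the theorem lies. Unpack what
\[
\max\{\M_{mZ},\,mIK_Z\}\ \ge\ g^{*}\bigl(mI(K_{Y_m}+B_{mY_m})\bigr)
\]
says at a $g$-exceptional prime $E$ whose centre on $Y_m$ lies in $\Supp B_{mY_m}\cap\Supp\Fix|mI(K_{Y_m}+B_{mY_m})|$. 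Writing $mI(K_{Y_m}+B_{mY_m})=M_{mY_m}+F'_m$ with $F'_m$ the fixed part, and using $\M_{mZ}=g^{*}M_{mY_m}$ together with $mIK_Z=g^{*}(mIK_{Y_m})+mIK_{Z/Y_m}$, the inequality at $E$ simplifies to
\[
\mult_E K_{Z/Y_m}\ \ge\ \mult_E g^{*}B_{mY_m},
\qquad\text{i.e.}\qquad a(E,Y_m,B_{mY_m})\ \ge\ 0.
\]
Max does \emph{not} commute with pull-back, so this does not come for free; it is precisely the paper's Step~3. The argument there is a discrepancy computation by induction on a tower of blow-ups realising $E$: because $B_{mY_m}$ and $\Fix|mI(K_{Y_m}+B_{mY_m})|$ have simple normal crossings support and share no components, the centre of $E$ lies in at most $c-1$ components of $B_{mY_m}$ where $c$ is its codimension, and then \cite[Lemma~2.29]{KM98} bounds the discrepancy from below by $0$. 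You have not supplied this, and without it goodness is unproved. This, not concavity, is the crux of the theorem and the place where the SNC hypothesis on $Y$ (propagated to $Y_m$) is actually used.

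On concavity you are aiming at more than is needed. The paper's Step~4 proves concavity only of the traces $\B_{mY}$ on the \emph{fixed} resolution $Y$: there are finitely many components $B_i$ of $\B_Y$, the convex sequence $\mult_{B_i}F_{mY}/mI$ is for each $i$ eventually either always $\ge\mult_{B_i}\B_Y$ or always $\le$, and one truncates past the last crossover. Your uniformity worry over a cofinal family of models does not arise, and the SNC hypothesis plays no role here.
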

\begin{proof}
{\em Step 1.} Fix $m$ and set $\B'=\B(Y,\B_Y)$. Let $g\colon
Z\rightarrow Y$ be a log resolution and denote $h=g\circ f$. Let
$$K_Z+\B'_Z=g^*(K_Y+\B_Y)+E'_Z,$$
and define $M'_{mZ},F'_{mZ}$ and $B'_{mZ}$ as in Definition \ref{dfn:3}.

Firstly, since $K_Z+\B'_Z=h^*(K_X+\Delta)+g^*E_Y+E'_Z$, we have $\B'_Z=\B_Z+E$
where $E$ is effective. Furthermore,
\begin{align*}
M'_{mZ}&=\Mob(mIg^*(K_Y+\B_Y))\\
&=\Mob(mIh^*(K_X+\Delta)+mIg^*E_Y)=M_{mZ},
\end{align*}
so that
$$mIE=mI(K_Z+\B'_Z)-mI(K_Z+\B_Z)=F'_{mZ}-F_{mZ}.$$
This implies $B_{mZ}=B'_{mZ}$. Now we have
$$mI(K_Y+\B_Y)=g_*(mI(K_Z+\B'_Z))=g_*M'_{mZ}+g_*F'_{mZ},$$
and since $g_*M'_{mZ}=g_*M_{mZ}=M_{mY}$, we obtain $g_*F'_{mZ}=F_{mY}$. Therefore
\begin{align*}
g_*B_{mZ}&=g_*B'_{mZ}=g_*\B'_Z-g_*\big(\B'_Z\wedge(F'_{mZ}/mI)\big)\\
&=\B_Y-\B_Y\wedge(F_{mY}/mI)=B_{mY},
\end{align*}
and $\B_m$ is a well-defined b-divisor.\\[2mm]
\noindent
{\em Step 2.} Fix $Y_m$ as in the assumptions of the Theorem, and let $Z_m$ be another
log resolution such that $M_{mZ_m}$ is free. We may assume that there is a
birational morphism $g_m\colon Z_m\rightarrow Y_m$ such that the support of
$\Fix|g_m^*(K_{Y_m}+B_{mY_m})|+g_m^*B_{mY_m}+\Exc g_m$ has simple normal
crossings. Let $\Gamma_{mZ_m}= \B(Y_m, B_{mY_m})_{Z_m}$; then
\begin{equation}\label{nextdagger}
K_{Z_m}+\Gamma_{mZ_m}=g_m^*(K_{Y_m}+B_{mY_m})+E_{mZ_m},
\end{equation}
and $\Gamma_{mZ_m}\leq g_m^*B_{mY_m}$ since $Z_m$ and $Y_m$ are smooth.
Furthermore,
\begin{align*}
\Mob(mI(K_{Z_m}+\Gamma_{mZ_m}))&=\Mob(g_m^*(mI(K_{Y_m}+B_{mY_m})))\\
&=g_m^*M_{mY_m}=M_{mZ_m},
\end{align*}
and thus $\Fix|mI(K_{Z_m}+\Gamma_{mZ_m})|=g_m^*\Fix|mI(K_{Y_m}+B_{mY_m})|+E_{mZ_m}$.\\[2mm]
\noindent
{\em Step 3.}
Let $F$ be a common component of $g_m^*B_{mY_m}$ and
$g_m^*\Fix|mI(K_{Y_m}+B_{mY_m})|$. We claim that $a(F,Y_m,B_{mY_m})\geq0$, so that $F$ is a component
of $E_{mZ_m}$.

To this end, note that the centre of $F$ on $Y_m$ is contained in the intersection of
components of $B_{mY_m}$ and $\Fix|mI(K_{Y_m}+B_{mY_m})|$.
We can compute the discrepancy of $F$ on any model.
By \cite[Lemma 2.45]{KM98}, there is a composite $\rho_n\colon\mcal{W}_n\rightarrow Y_m$
of $n$ blow ups of the centres of $F$ such that $F$ is a divisor on $\mcal{W}_n$.
Let $\sigma\colon\mcal W_n\rightarrow\mcal W_{n-1}$ be the last blowup and let $\rho_{n-1}\colon\mcal W_{n-1}\rightarrow Y_m$ be
the composite of the first $n-1$ blowups; we can assume $c=\codim_{\mcal W_{n-1}}\sigma(F)>1$. Write
$$K_{\mcal W_{n-1}}+B_{\mcal W_{n-1}}^+=\rho_{n-1}^*(K_{Y_m}+B_{mY_m})+B_{\mcal W_{n-1}}^-,$$
where $B_{\mcal W_{n-1}}^+$ and $B_{\mcal W_{n-1}}^-$ are effective divisors without common components. Then by \cite[Lemmas 2.27, 2.30]{KM98},
\begin{align}
a(F,Y_m,B_{mY_m})&=a(F,W_{n-1},B_{W_{n-1}}^+-B_{W_{n-1}}^-)\label{eq:3}\\
&\geq a(F,W_{n-1},B_{W_{n-1}}^+),\notag
\end{align}
and by induction on $n$, $B_{W_{n-1}}^+$ and $\Fix|\rho_{n-1}^*(mI(K_{Y_m}+B_{mY_m}))|$ have no common components. But
$\sigma(F)\subset\Supp\Fix|\rho_{n-1}^*(mI(K_{Y_m}+B_{mY_m}))|$, so by the simple normal crossings assumption there can be at most $c-1$
components of $B_{\mcal W_{n-1}}^+$ that contain $\sigma(F)$. Denote these components by $\Delta_1,\dots,\Delta_p$, where $p\leq c-1$. Then
by \cite[Lemma 2.29]{KM98} and since $(\mcal W_{n-1},B_{\mcal W_{n-1}}^+)$ is log canonical, we have
\begin{equation}\label{eq:4}
a(F,\mcal W_{n-1},B_{\mcal W_{n-1}}^+)=c-1+\sum_{i=1}^pa(\Delta_i,\mcal W_{n-1},B_{\mcal W_{n-1}}^+)\geq0,
\end{equation}
and thus \eqref{eq:3} and \eqref{eq:4} prove the claim.

As a result, $\Gamma_{mZ_m}$ and $g_m^*\Fix|mI(K_{Y_m}+B_{mY_m})|$ have no
common components and thus $\Gamma_{mZ_m}=B_{mZ_m}$. This proves that $\K_Y+\B_m$ is good on $Y_m$.\\[2mm]
\noindent
{\em Step 4.}
To prove that $\B_\bullet$ is concave, observe that the sequence of effective
divisors $F_{mY}/mI$ is convex. For each component $B_i$ of $\B_Y$,
set $k_i=1$ if $\mult_{B_i}\B_Y\leq\mult_{B_i}F_{mY}/mI$ for all
$m\geq1$;  otherwise, since the sequence $F_{mY}/mI$ has a limit, set $k_i$
to be a positive integer such that
$\mult_{B_i}\B_Y\geq\mult_{B_i}F_{mY}/mI$ for all $m\geq k_i$.
Let $k=\max\{k_i\}$; the sequence
$\B_{km,Y}$ is concave.
\end{proof}
\begin{nt}
\label{nt1}
The sequence $\NN_m=mI(\K_Y+\B_m)$, the associated sequence of mobile b-divisors $\M_m$
(see Lemma \ref{mobile}) and the {\em characteristic\/} sequence $\D_m=\M_m/m$ are defined on $Y$.
We denote by $R(X,\NN_\bullet)$, $R(X,\M_\bullet)$ and $R(X,\D_\bullet)$ the same algebra.
\end{nt}
The following two theorems establish that a truncation of the
restricted algebra is an adjoint algebra.
\begin{thm}\label{sequences2}
Let $(X,\Delta)$ be a plt pair such that $I(K_X+\Delta)$ is
Cartier, the linear system $|I(K_X+\Delta)|$ is not empty, $S=\lfloor\Delta\rfloor$ is irreducible and is not a component of $\Fix|I(K_X+\Delta)|$.
Fix a log resolution $f\colon Y\rightarrow X$ such that the components of $\{\B_Y\}$ are disjoint and the support of
$$\Fix|I(K_Y+\B_Y)|+\Exc f+\B_Y$$
is simple normal crossings.
Denote $T=\widehat S_Y$ and $B=\B_Y-T$. 

Then there are b-divisors $\B^0_m$ on $T$ such that $\K_T+\B^0_m$ is
good on $T$, the limit $\B^0=\lim_{m\rightarrow\infty}\B^0_m$ exists as a b-divisor and the sequence
$\B^0_\bullet$ is concave after truncation.
More precisely, let $ Y_m\rightarrow Y$ be any log
resolution such that $M_{mY_m}$ is basepoint free and set $R_m=\widehat
S_{Y_m}$. Then $B^0_{mR_m}=(\B_{mY_m}-R_m)_{|R_m}$.
\end{thm}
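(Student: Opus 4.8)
The plan is to obtain $\B^0_\bullet$ by restricting, through adjunction, the sequence $\B_\bullet$ of Theorem~\ref{sequences} to the proper transforms of $S$, and to deduce its three properties from the corresponding properties of $\B_\bullet$. Since $S$ is not a component of $\Fix|I(K_X+\Delta)|$, it is not a component of $\Fix|mI(K_X+\Delta)|$ either, so $T=\widehat S_Y$ is not a component of $F_{mY}$ for any $m$; hence $T$ occurs in $B_{mY}=\B_Y-\B_Y\wedge(F_{mY}/mI)$ with coefficient $1$, and the same holds for $R_m=\widehat S_{Y_m}$ in $\B_{mY_m}=(\B_m)_{Y_m}$ on any log resolution $Y_m\to Y$ with $M_{mY_m}$ basepoint free. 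Adjunction on $Y_m$ then reads
\[
(K_{Y_m}+\B_{mY_m})|_{R_m}=K_{R_m}+(\B_{mY_m}-R_m)|_{R_m},
\]
so I set $B^0_{mR_m}:=(\B_{mY_m}-R_m)|_{R_m}$. To check that these traces are the traces of one b-divisor $\B^0_m$ on $T$, I would run the comparison of Step~1 of Theorem~\ref{sequences}: for two such resolutions dominated by a common model, push the restricted divisors forward to a common proper transform of $S$ and use that $\B_m$ is already well defined on $Y$. The only non-formal point here is that restriction to $\widehat S$ must commute with push-forward; this is where the hypothesis that the components of $\{\B_Y\}$ be disjoint (available by Lemma~\ref{disjoint}) enters: together with the simple normal crossings assumption it forces $\widehat S$ to meet the fractional part of every $B_{mY'}$ transversally and prevents any exceptional component of $B_{mY'}$ from having centre a divisor lying inside $\widehat S$, so that the naive restriction really is the trace of $\B^0_m$. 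This disjointness also descends to the boundary that $B^0_\bullet$ cuts out on $T$.

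For goodness: Theorem~\ref{sequences} says $\K_Y+\B_m$ is good on $Y_m$, i.e.\ $K_{Y'}+B_{mY'}\geq (g')^{*}(K_{Y_m}+B_{mY_m})$ for every log resolution $g'\colon Y'\to Y_m$. Since $\widehat S_{Y'}$ occurs with coefficient $1$ in $B_{mY'}$ and in $(g')^{*}(K_{Y_m}+B_{mY_m})$, subtracting $\widehat S_{Y'}$ from both sides and restricting to $\widehat S_{Y'}$ via adjunction gives
\[
K_{\widehat S_{Y'}}+(\B^0_m)_{\widehat S_{Y'}}\ \geq\ (g'|_{\widehat S_{Y'}})^{*}(K_{R_m}+B^0_{mR_m}),
\]
which, after pushing forward to arbitrary models over $R_m$, says exactly that $\K_T+\B^0_m\geq\overline{(\K_T+\B^0_m)_{R_m}}$, i.e.\ that $\K_T+\B^0_m$ is good on $R_m$.

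Finally, the asymptotic properties are inherited. For concavity, fix $m,n$ beyond the truncation index of Theorem~\ref{sequences} and pass to a common log resolution of the models for $m$, $n$, $m+n$; the concavity inequality for $\B_\bullet$ holds there, the component over $S$ has coefficient $1$ in all three terms and $1=\tfrac{m}{m+n}+\tfrac{n}{m+n}$, so subtracting it preserves the inequality, and restriction to the proper transform of $S$, being linear and order-preserving, preserves it too; thus $\B^0_\bullet$ is concave after the same truncation. For the limit, $\B_m\leq\B(X,\Delta)$ makes all $\B^0_m$ bounded above by the fixed b-divisor obtained by the same adjunction from $\B(X,\Delta)$, so on each model over $T$ the coefficients of $\B^0_m$ form a sequence that is concave after truncation and bounded above, and the usual superadditivity argument yields the existence of $\B^0=\lim_{m\to\infty}\B^0_m=\sup_{m\geq0}\B^0_m$ as a real b-divisor on $T$. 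The one step that is not purely formal is the well-definedness of $\B^0_m$ as a b-divisor --- the compatibility of restriction to $\widehat S$ with push-forward --- and this is exactly what makes the disjointness hypothesis on $\{\B_Y\}$ necessary; I expect it to be the main obstacle.
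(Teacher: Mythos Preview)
Your overall plan---restrict the sequence $\B_\bullet$ to the proper transforms of $S$ via adjunction---matches the paper's, but your argument for goodness has a real gap. You establish that $\K_T+\B^0_m$ is good on $R_m$, and you say so explicitly; the theorem, however, demands goodness on $T$ itself, and this is what the application to adjoint algebras (Definition~\ref{adjointdefn}) requires. Good on a higher model $R_m$ does not imply good on $T$: the two conditions compare the b-divisor to pullbacks from different models, and the discrepancy is exactly the behaviour of $\K_T+\B^0_m$ on models between $T$ and $R_m$, which your restriction argument never touches.

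What you are missing is the precise mechanism by which the disjointness hypothesis on $\{\B_Y\}$ is used. The paper observes that because the components of $B_{|T}$ are disjoint, the pair $(T,B_{|T})$ is \emph{terminal}; consequently the restricted boundaries $(\B'_{Y_m}-R_m)_{|R_m}$ are simply the strict transforms $(\pi_{m|R_m})_*^{-1}B_{|T}$, the pairs $(R_m,B^0_{mR_m})$ are themselves terminal, and one may \emph{define} $\B^0_m$ as the proper-transform b-divisor $\widehat{B^0_{mT}}$ of the pushforward $B^0_{mT}=(\pi_{m|R_m})_*B^0_{mR_m}$. Terminality of $(T,B^0_{mT})$ then gives goodness on $T$ immediately, since all exceptional discrepancies are non-negative. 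Your description of the role of disjointness (``forces $\widehat S$ to meet the fractional part transversally and prevents any exceptional component \ldots\ from having centre a divisor lying inside $\widehat S$'') is a blurred version of this, but it never crystallises into the terminality statement, and without it neither the well-definedness of $\B^0_m$ as a b-divisor nor the goodness on $T$ is pinned down. The concavity and limit arguments are also handled more concretely in the paper via the explicit formula $B^0_{mT}=B_{|T}-B_{|T}\wedge\bigl((\pi_{m|R_m})_*F_{mY_m|R_m}/mI\bigr)$, though your higher-level approach to those parts is plausible once $\B^0_m$ is correctly defined.
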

\begin{rem}
We have the following diagram:
$$\bfig
 \square/->`>`<-_{)}`>/<1300,500>[H^0(Y,mI(K_Y+\B_{mY}))`H^0(T,mI(K_T+\B^0_{mT}))
 `H^0(X,mI(K_X+\Delta))`\res_S H^0(X,mI(K_X+\Delta));r`\simeq``]
 \efig$$
\end{rem}
\begin{proof}
{\em Step 1.} We follow the notation set in the proof of Theorem \ref{sequences}. Restricting
$$K_{Y_m}+\B'_{Y_m}=\pi_m^*(K_Y+\B_Y)+E'_{Y_m}$$
to $R_m$ gives
$$K_{R_m}+(\B'_{Y_m}-R_m)_{|R_m}=(\pi_{m|R_m})^*(K_T+B_{|T})+E'_{Y_m|R_m}.$$
Since the components of $B_{|T}$ do not intersect, the pair
$(T,B_{|T})$ is terminal by \cite[Proposition 2.31]{KM98},
thus $(\B'_{Y_m}-R_m)_{|R_m}=(\pi_{m|R_m})_*^{-1}B_{|T}$ and $E'_{Y_m|R_m}$
is exceptional. Therefore, the components of $B^0_{mR_m}$ do not
intersect and $(R_m,B^0_{mR_m})$ is terminal.

Let $Q_m=\widehat S_{Z_m}$ and recall from Step 3 of the proof of Theorem \ref{sequences} that
$\Gamma_{mZ_m}=B_{mZ_m}$, so that the restriction of \eqref{nextdagger} to $Q_m$ yields
$$K_{Q_m}+B^0_{mQ_m}=(g_{m|Q_m})^*(K_{R_m}+B^0_{mR_m})+E_{mZ_m|Q_m},$$
and thus $(g_{m|Q_m})_*^{-1}B^0_{mR_m}=B^0_{mQ_m}$ and $E_{mZ_m|Q_m}$ is exceptional.
Set $\B^0_m=\widehat{B^0_{mT}}$, where
$B^0_{mT}=(\pi_{m|R_m})_*B^0_{mR_m}\leq B_{|T}$. The pair $(T,B^0_{mT})$ is
terminal, and $\K_T+\B^0_m$ is good on $T$.\\[2mm]
\noindent
{\em Step 2.}
To prove concavity of $\B^0_\bullet$ observe that the normal crossings assumption on $Y_m$ ensures that
$$B^0_{mT}=B_{|T}-B_{|T}\wedge\big((\pi_{m|R_m})_*F_{mY_m|R_m}/mI\big).$$
Fix positive integers $i$ and $j$, and assume that $W=Y_i=Y_j=Y_{i+j}$.
Denote $V=\widehat S_W$ and $\mu\colon V\rightarrow T$. Then
$F_{iW}+F_{jW}\geq F_{i+j,W}$ implies
$$\mu_*F_{iW|V}+\mu_*F_{jW|V}\geq\mu_*F_{i+j,W|V},$$
and the sequence $(\pi_{m|R_m})_*F_{mY_m|R_m}/mI$ is convex.
The sequence $B^0_{km,T}$ is concave for some $k$ as in
Step 4 of the proof of Theorem \ref{sequences}. Therefore the limit
$\B^0$ exists since $\B^0_{mT}\leq B_{|T}$.
\end{proof}
\begin{nt}
\label{nt2}
On $T$, we consider the sequences $\NN_m^0=mI(\K_T+\B^0_m)$, the
associated sequence of mobile b-divisors $\M_m^0$ as in Lemma
\ref{mobile} and the {\em characteristic\/} sequence $\D_m^0=\M_m^0/m$.
\end{nt}
\begin{thm}\label{restrictedadjoint}
Under assumptions of Setup \ref{setup}, a truncation of the restricted algebra $R_S(X,K_X+\Delta)$ is an adjoint algebra.
More precisely, there is a log resolution $f\colon Y\rightarrow X$
such that, if $T=\widehat S_Y$, there is a natural sequence of b-divisors $\B^0_\bullet$ on $T$ with
$$R_S(X,K_X+\Delta)^{(s)}\simeq\bigoplus_{n\geq0}H^0(T,nsI(K_T+\B^0_{ns,T}))$$
for some positive integers $s$ and $I$.
\end{thm}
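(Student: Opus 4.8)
The plan is to deduce the statement from Theorems~\ref{sequences} and~\ref{sequences2}, which carry the analytic weight. Three things remain: to check that their hypotheses hold in Setup~\ref{setup}; to use the Lifting Lemma to identify the graded pieces of a truncation of $R_S(X,K_X+\Delta)$ with the spaces $H^0(T,\NN^0_\bullet)$; and to verify the axioms of Definition~\ref{adjointdefn}.

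\emph{Step 1 (choice of $I$ and of the resolution).} As $(X,S+B)$ is plt with $B$ a $\Q$-divisor, $K_X+\Delta$ is $\Q$-Cartier, so $I_0(K_X+\Delta)$ is Cartier for some $I_0$. Since $\rho(X/Z)=1$ and both $S$ and $K_X+\Delta$ are $\pi$-negative, numerical and linear equivalence over $Z$ coincide and $S\sim_{\Q,\pi}r(K_X+\Delta)$ for some $r>0$; shrinking $Z$ and choosing a rational function with a simple zero along $S$, exactly as in the proof of the lemma preceding Definition~\ref{adjointdefn}, produces an effective $D\sim_\Q r(K_X+\Delta)$ with $S\not\subset\Supp D$. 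Replacing $I_0$ by a suitable multiple $I$ we may thus assume that $I(K_X+\Delta)$ is Cartier, that $|I(K_X+\Delta)|\neq\emptyset$, and that $S$ is not a component of $\Fix|I(K_X+\Delta)|$. Fix a log resolution $f\colon Y\to X$ with the components of $\{\B_Y\}$ disjoint (Lemma~\ref{disjoint}) and $\Supp\bigl(\Fix|I(K_Y+\B_Y)|+\Exc f+\B_Y\bigr)$ simple normal crossings, and set $T=\widehat S_Y$. Since $(X,\Delta)$ is plt, exceptional discrepancies are $>-1$, so $\lfloor\B_Y\rfloor=T$; as $S$ is not a fixed component, the coefficient of $T$ in every $\B_{mY}$ is $1$. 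Hence $(Y,\B_Y)$ and each $(Y,\B_{mY})$ are log smooth with irreducible round-down $T$, so plt, and the hypotheses of Theorems~\ref{sequences} and~\ref{sequences2} are in force.

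\emph{Step 2 (the adjoint data on $T$).} Theorem~\ref{sequences} produces $\B_\bullet$ on $Y$, and Theorem~\ref{sequences2}, fed this sequence, yields b-divisors $\B^0_m$ on $T$ with $\K_T+\B^0_m$ good on $T$, with $B^0_{mR_m}=(\B_{mY_m}-R_m)_{|R_m}$ on every log resolution $Y_m\to Y$ with $M_{mY_m}$ basepoint free ($R_m=\widehat S_{Y_m}$), with $\DDelta:=\lim_m\B^0_m=\sup_m\B^0_m$ existing as a real b-divisor, and with $\B^0_\bullet$ concave after truncation by some $s$, which we take to be a multiple of $I$ (so that $ns(K_X+\Delta)$ is Cartier for all $n$). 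Put $\DDelta_n=\B^0_{ns}$ and $\NN^0_n=nsI(\K_T+\DDelta_n)$. Then $\DDelta_\bullet$ is concave; $K_T+\DDelta_{nT}$ is $\Q$-Cartier because $T$ is a smooth divisor in the smooth $Y$; $nsI\cdot\DDelta_{nT}$ is integral by the construction of Definition~\ref{dfn:3} (the coefficients of $\B_Y$ lie in $\tfrac1I\Z$ since $I(K_X+\Delta)$ is Cartier and $Y$ is smooth); $\NN^0_n$ is good on $T$ because $\K_T+\DDelta_n$ is; and $\DDelta=\lim_n\DDelta_n$ exists. So $\bigoplus_{n\ge0}H^0(T,\NN^0_n)$ is an adjoint algebra.

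\emph{Step 3 (identification of $R_S(X,K_X+\Delta)^{(s)}$).} Fix $n$, set $m=ns$, and let $f_m\colon Y_m\to X$ be the composite. As $mI(K_{Y_m}+\B_{mY_m})$ and $mIf_m^*(K_X+\Delta)$ have the same mobile part $M_{mY_m}$, both compute $H^0(X,mI(K_X+\Delta))$, giving a graded-algebra identification $H^0(X,mI(K_X+\Delta))=H^0(Y_m,mI(K_{Y_m}+\B_{mY_m}))$; and since the mobile b-divisor of $\K_T+\B^0_m$ is free on $R_m$, Lemma~\ref{mobile} gives $H^0(R_m,mI(K_{R_m}+B^0_{mR_m}))=H^0(T,\NN^0_n)$. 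Writing $\B_{mY_m}=R_m+(\B_{mY_m}-R_m)$ with $(\B_{mY_m}-R_m)_{|R_m}=B^0_{mR_m}$, adjunction shows that, under these identifications, the restriction map $H^0(Y_m,mI(K_{Y_m}+\B_{mY_m}))\to H^0(R_m,mI(K_{R_m}+B^0_{mR_m}))$ has image $\res_S H^0(X,mI(K_X+\Delta))$; in particular $\res_S H^0(X,mI(K_X+\Delta))\subseteq H^0(T,\NN^0_n)$. For the reverse inclusion we apply the Lifting Lemma to the log smooth plt pair $(Y_m,R_m+(\B_{mY_m}-R_m))$: hypothesis~(1) holds because in the pl flipping setting $B$ may be taken big (cf.\ the remark after the Lifting Lemma), and~(2)---that some $N\in|mI(K_{Y_m}+\B_{mY_m})|$ meet all intersections of components of $\B_{mY_m}$ properly---is arranged by choosing $Y_m$ suitably, blowing up further if needed while keeping $M_{mY_m}$ free and using $|I(K_X+\Delta)|\neq\emptyset$. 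The Lifting Lemma makes that restriction surjective, so $\res_S H^0(X,mI(K_X+\Delta))=H^0(T,\NN^0_n)$. Summing over $n\ge0$ gives the graded-algebra isomorphism
\[
R_S(X,K_X+\Delta)^{(s)}\simeq\bigoplus_{n\ge0}H^0(T,\NN^0_n)=\bigoplus_{n\ge0}H^0\bigl(T,nsI(K_T+\B^0_{ns,T})\bigr),
\]
which by Step~2 is an adjoint algebra.

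\emph{Main obstacle.} The crux is the use of the Lifting Lemma in Step~3: it is stated for a single pair, but we must invoke it for the family $(Y_m,\B_{mY_m})$ on resolutions $Y_m$ varying with $m$, and its hypothesis~(2)---that a pluricanonical section meet all strata of $\B_{mY_m}$ properly---is genuinely essential, as Example~\ref{exa:2} shows; arranging it uniformly in $m$ while keeping $M_{mY_m}$ free, and checking that the bigness hypothesis~(1) is indeed harmless here, is where the real work goes. A secondary, bookkeeping point is to make sure all the identifications ($H^0(X,\cdot)=H^0(Y_m,\cdot)$, restriction to $S$ versus to $R_m$, the passage $R_m\to T$, and truncation by $s$) respect multiplication, so that the conclusion is an isomorphism of graded algebras and not just of graded vector spaces.
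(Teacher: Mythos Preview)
Your overall architecture is the same as the paper's---invoke Theorems~\ref{sequences} and~\ref{sequences2} and then use the Lifting Lemma to identify $\res_S H^0$ with $H^0(T,\NN^0_m)$---but the two hypotheses of the Lifting Lemma are not actually verified, and your own ``Main obstacle'' paragraph concedes this. These are not bookkeeping points; they are the content of the proof.

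For hypothesis~(1) you say only that ``$B$ may be taken big''. What is needed is a decomposition $\B_{mY}-T\sim_\Q A'+C'$ on $Y$ with $A'$ ample, $C'\geq0$ and $T\not\subset\Supp C'$, uniformly in $m$. The paper obtains this concretely: since $\pi$ is birational, $B$ is big; write $B\sim_\Q A+C$ on $X$ with $A$ ample and $C\geq0$, and since $\pi$ is small one can take $S\not\subset\Supp C$; after the usual convex combination one has $B=A+C$ with $A$ a very general ample $\Q$-divisor. Then $f^*A=f_*^{-1}A$ is free and sits inside $\B_{mY}-T$ for every $m$, and subtracting a small $f$-exceptional effective divisor makes it ample on $Y$. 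None of this is in your write-up.

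For hypothesis~(2) you write that it ``is arranged by choosing $Y_m$ suitably, blowing up further if needed''. This is the wrong mechanism: further blowing up does not remove strata of the boundary from the base locus. The point---and the entire reason for introducing $\B_m$ in Definition~\ref{dfn:3}---is that by construction $\Fix|\NN_{mY}|$ and $\B_{mY}$ have \emph{no common components}: if $\mult_D\B_{mY}>0$ then $\mult_D\B_Y>\mult_D(F_{mY}/mI)$, so $\mult_D\Fix|\NN_{mY}|=0$. Once one passes to a model where $\Mob\NN_{mY}$ is free, the base locus equals $\Supp\Fix|\NN_{mY}|$, and since both this and $\B_{mY}$ are simple normal crossings with no common component, no stratum of $\B_{mY}$ lies in the base locus, giving the required $N$. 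Your sketch never isolates this disjointness of $\Fix$ and $\B_m$, which is the crux; without it, Example~\ref{exa:2} shows the lifting can genuinely fail.
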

\begin{proof}
Every divisor on $X$ is big because $\pi$ is birational and by Kodaira's lemma
$B\sim_{\Q}A+C$, where $A$ is ample and $C\geq0$.
Since $\pi$ is small, every divisor on $X$ is mobile and we can assume that $S$ is not a component of $C$.
For $0<\varepsilon<1$ we have
$$B\sim_{\Q}(1-\varepsilon)B+\varepsilon C+\varepsilon A,$$
and $(X,(1-\varepsilon)B+\varepsilon C+\varepsilon A)$ is klt for $\varepsilon\ll1$.
Replacing $A$ by $\varepsilon A$ and $C$ by
$(1-\varepsilon)B+\varepsilon C$ we may assume that $B=A+C$ with $A$ ample and $C$ effective, and
such that $S\not\subset\Supp C$.
Furthermore, if $k$ is an integer such that $kA$ is very ample, we may
assume $kA$ is a very general member of the linear system $|kA|$ so that $A$
is transverse to all other relevant divisors.

Let $f\colon Y\rightarrow X$ be a resolution as in Theorem \ref{sequences2}.
Observe that $R^{(I)}=R(Y,\NN_\bullet)$, and we will show that $R_S^{(I)}=R(T,\NN^0_\bullet)$.

Fix a positive integer $m$. It will be enough to apply the Lifting Lemma to conclude that the restriction map
$H^0(Y,\NN_m)\rightarrow H^0(T,\NN_m^0)$ is surjective.

For condition (1) of the Lifting Lemma, since $kA$ is very general in
$|kA|$, $f^*A=f_*^{-1}A$ is free and hence $\B_{mY}-T\geq f^*A$.
We can choose a small $f$-exceptional effective divisor $E$ on $Y$ such that $A'=f^*A-E$ is ample, and
$\B_{mY}-T=A'+C'$, where the support of $C'=\B_{mY}-T-f^*A+E$ does not contain $T$.

We prove that condition (2) of the Lifting Lemma is satisfied.
We can assume that $\Mob\NN_{mY}$ is free.
Since $\Fix|\NN_{mY}|$ and $\B_{mY}$ have simple normal crossings supports and
have no common components, $\Bs|\NN_{mY}|$ does not contain any
intersection of components of $\B_{mY}$. This finishes the proof.
\end{proof}

\begin{cor}\label{restriction}
$\M_i^0={\M_i}_{|T}$ for every $i$.
\end{cor}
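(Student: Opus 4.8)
The plan is to reduce the corollary to the single identity of divisors $\Mob\NN^0_{iR_i}=M_{iY_i|R_i}$ on a suitable model of $T$, and then read off the result. Fix an index $i$ and choose, as in Theorem~\ref{sequences2}, a log resolution $Y_i\to Y$ --- which we may also take to be a log resolution of $(Y,T)$ --- on which $M_{iY_i}$ is basepoint free; put $R_i=\widehat S_{Y_i}$. By Notation~\ref{nt1} and Lemma~\ref{mobile}, $\M_i$ descends to $Y_i$ with $(\M_i)_{Y_i}=M_{iY_i}$, so the definition of the Cartier restriction gives ${\M_i}_{|T}=\overline{M_{iY_i|R_i}}$. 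On the other hand, by Theorem~\ref{sequences2} together with Lemma~\ref{mobile}, $\M^0_i$ descends to $R_i$ with $(\M^0_i)_{R_i}=\Mob\NN^0_{iR_i}$. Thus it suffices to prove $\Mob\NN^0_{iR_i}=M_{iY_i|R_i}$, for then $\M^0_i=\overline{M_{iY_i|R_i}}={\M_i}_{|T}$.

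First I would identify the trace $\NN^0_{iR_i}$ with the restriction $\NN_{iY_i|R_i}$. The divisor $R_i$ appears in $\B_{iY_i}$ with coefficient $1$, since $S=\lfloor\Delta\rfloor$ and $S$ is not a component of $\Fix|I(K_X+\Delta)|$ (hypotheses of Theorem~\ref{sequences2}). As $Y_i$ is smooth, adjunction along $R_i$ gives $(K_{Y_i}+R_i)_{|R_i}=K_{R_i}$, and the formula $B^0_{iR_i}=(\B_{iY_i}-R_i)_{|R_i}$ of Theorem~\ref{sequences2} then yields
\[
\NN^0_{iR_i}=iI(K_{R_i}+B^0_{iR_i})=iI\bigl((K_{Y_i}+\B_{iY_i})_{|R_i}\bigr)=\NN_{iY_i|R_i}.
\]
Write $\NN_{iY_i}=M_{iY_i}+\Phi$ with $\Phi=\Fix|\NN_{iY_i}|$. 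By the proof of Theorem~\ref{restrictedadjoint}, $\Fix|\NN_{iY_i}|$ and $\B_{iY_i}$ have no common component, whence $R_i\not\subset\Supp\Phi$; so the restriction $\NN_{iY_i|R_i}=\NN^0_{iR_i}$ makes sense as a divisor, equals $M_{iY_i|R_i}+\Phi_{|R_i}$, and the canonical section $s_\Phi$ of $\OO_{Y_i}(\Phi)$ restricts to the canonical section $s_{\Phi_{|R_i}}$ of $\OO_{R_i}(\Phi_{|R_i})$.

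The key input is the surjectivity of the restriction $H^0(Y_i,\NN_{iY_i})\to H^0(R_i,\NN^0_{iR_i})$, which is exactly what the Lifting Lemma delivers in the proof of Theorem~\ref{restrictedadjoint}. Multiplication by $s_\Phi$ identifies $H^0(Y_i,\NN_{iY_i})$ with $H^0(Y_i,M_{iY_i})$, and for $s\in H^0(Y_i,M_{iY_i})$ one has $(s\cdot s_\Phi)_{|R_i}=s_{|R_i}\cdot s_{\Phi_{|R_i}}$ with $s_{|R_i}\in H^0(R_i,M_{iY_i|R_i})$, so the image of $H^0(Y_i,\NN_{iY_i})$ in $H^0(R_i,\NN^0_{iR_i})$ lies in $H^0(R_i,M_{iY_i|R_i})\cdot s_{\Phi_{|R_i}}$. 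Surjectivity forces equality, so every member of $|\NN^0_{iR_i}|$ is the sum of a member of $|M_{iY_i|R_i}|$ and the fixed divisor $\Phi_{|R_i}$. As $M_{iY_i}$ is basepoint free, so is $M_{iY_i|R_i}$; hence $\Fix|M_{iY_i|R_i}|=0$ and $\Fix|\NN^0_{iR_i}|=\Phi_{|R_i}$, i.e. $\Mob\NN^0_{iR_i}=M_{iY_i|R_i}$, which is what remained to be shown.

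The step I expect to be the main obstacle is this last one: one must carry the fixed part $\Phi$ and its canonical section faithfully through the restriction to $R_i$, and argue that surjectivity on global sections determines the mobile part on $R_i$ on the nose, not merely up to linear equivalence. Everything else is formal manipulation of b-divisors together with the identities already established in Theorems~\ref{sequences2} and~\ref{restrictedadjoint} and the basic properties of good and mobile b-divisors.
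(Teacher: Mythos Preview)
Your argument is correct and rests on the same idea as the paper's: the surjectivity of restriction coming from the Lifting Lemma (via Theorem~\ref{restrictedadjoint}). The paper's execution is a bit shorter: instead of working on $R_i$ and tracking the fixed part $\Phi$ through restriction, it passes to a higher model $Y'$ (with $W=\widehat T_{Y'}$) where both $\M_i$ and $\M^0_i$ descend a priori, deduces $\M_{iY'|W}\le\M^0_{iW}$ from $\M_{iY'}\le\NN_{iY'}$, and then uses surjectivity plus freeness of both sides to force equality directly. One small presentational point in your version: the claim that $\M^0_i$ descends to $R_i$ is not yet justified when you make it --- Theorem~\ref{sequences2} gives goodness of $\NN^0_i$ on $T$, and Lemma~\ref{mobile} gives $(\M^0_i)_{R_i}=\Mob\NN^0_{iR_i}$, but descent to $R_i$ only follows once you have shown this divisor is free, i.e.\ after your final identity $\Mob\NN^0_{iR_i}=M_{iY_i|R_i}$.
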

\begin{proof}
Fix $i$. Let $Y'\rightarrow Y$ be a resolution such that, if
$W=\widehat T_{Y'}$, $\M_i$ descends to $Y'$ and $\M_i^0$ descends to $W$.
Restricting $\M_{iY'}\leq\NN_{iY'}$ to $W$ and taking mobile parts
shows that $\M_{iY'|W}\leq\M_{iW}^0$,
and hence
$$H^0(W,\M_{iY'|W})\subset H^0(W,\M_{iW}^0).$$
As
$$H^0(W,\M_{iW}^0)=\Image\big(H^0(Y',\M_{iY'})\rightarrow
H^0(W,\M_{iY'|W})\big),$$ $\M_{iY'|W}=\M_{iW}^0$ because both divisors are free.
\end{proof}

\begin{thm}\label{corltmodels}
Assume existence and finiteness of models. Let $(X,\Delta)$ be a $\Q$-factorial klt pair projective over $Z$ with $K_X+\Delta$ big, and let $V\subset\Div(X)_\R$ be a
finite-dimensional vector space that contains $\Delta$. Let $\Delta_\bullet$ be a
concave sequence with $\lim_{m\rightarrow\infty}\Delta_m=\Delta$ and assume there is a
positive integer $I$ such that $I(K_X+\Delta_m)$ is integral for every $m$.
Then there exist a smooth model $f\colon Y\rightarrow X$ and a positive integer $s$ such that
$$\lim_{m\rightarrow\infty}\Mob(msIf^*(K_X+\Delta_m))/m$$
exists and is a semiample divisor.
\end{thm}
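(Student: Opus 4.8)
The plan is to reduce to log terminal models of the pairs $(X,\Delta_m)$: the normalised mobile parts in the statement will turn out to be the pullback of the semiample divisor $K+\Delta$ from one of finitely many fixed models, and this finiteness forces convergence. First the reductions. Concavity of $\Delta_\bullet$ gives $\Delta_m\le\Delta_{2m}\le\cdots$, hence $\Delta_m\le\lim=\Delta$ and $\Delta-\Delta_m\ge0$; since $\A(X,\Delta_m)=\A(X,\Delta)+\overline{\Delta-\Delta_m}\ge\A(X,\Delta)$, each $(X,\Delta_m)$ is again klt. Bigness being open on $N^1(X/Z)$ and $K_X+\Delta_m\to K_X+\Delta$, the divisor $K_X+\Delta_m$ is big over $Z$ for $m\gg0$; discarding finitely many terms, which does not affect the limit, we assume this for all $m$. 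By Theorem \ref{eom} --- applied after the standard Kodaira perturbation, fixing $K_X+\Delta\sim_{\R,Z}A+G$ with $A$ a general ample $\Q$-divisor and $G\ge0$, so that $(X,\Delta_m+t(A+G))$ is klt with big boundary for $0<t\ll1$ --- each $(X,\Delta_m)$, and $(X,\Delta)$ itself, has a log terminal model over $Z$, on which $K+\Delta$ is nef and big, hence semiample over $Z$ by the basepoint free Theorem \ref{thm:3} with $p=2$ (and its standard $\R$-divisor refinement if $\Delta$ is irrational).

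Next, finiteness. All $\Delta_m$ lie in the rational polytope $\mathcal{P}=\{\Gamma\in V:0\le\Gamma\le\Delta\}$, and for $m\gg0$ in the relatively open convex subset $\mathcal{P}_0\subset\mathcal{P}$ where $K_X+\Gamma$ is big over $Z$; note $\Delta\in\mathcal{P}_0$. The crucial claim is that the log terminal models of the pairs $(X,\Gamma)$, $\Gamma\in\mathcal{P}_0$, fall into finitely many isomorphism classes, and I expect proving this to be the main obstacle, since Theorem \ref{fom} is stated for boundaries carrying a fixed general ample summand. The standard device is: enlarge $V$ to contain $A$ and $G$; replace $\mathcal{P}_0$ by a compact rational subpolytope still containing $\Delta$ in its relative interior and all but finitely many $\Delta_m$; find a uniform $t_0>0$ such that, for $\Gamma$ in this subpolytope, a log terminal model of $(X,\Gamma+t(A+G))$ is independent of $t\in(0,t_0]$ and is a weak log canonical model of $(X,\Gamma)$ computing $\Mob$; and write $\Gamma+t_0(A+G)=\varepsilon A+\bigl(\Gamma+t_0G+(t_0-\varepsilon)A\bigr)$ with a fixed $0<\varepsilon<t_0$, putting these boundaries in the format of Theorem \ref{fom}, which then bounds the number of models. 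Granting this, fix a smooth model $f\colon Y\to X$ dominating $X$, the finitely many models $X'$ so produced, with morphisms $g\colon Y\to X'$, and the log terminal model $X_\Delta$ of $(X,\Delta)$, with $g_\Delta\colon Y\to X_\Delta$.

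Now the conclusion. Using that $X$ and each $X'$ are $\Q$-factorial, choose $s$ so large that $msI(K_X+\Delta_m)$ is Cartier on $X$ for every $m$ and $sI(K_{X'}+(\cdot))$ is Cartier and basepoint free on $X'$ for every relevant boundary --- possible because on a $\Q$-factorial variety the Cartier indices of the Weil divisors in a fixed rational polytope are bounded, and by the effective basepoint free theorem. Fix $m$ and let $X'$ be the chosen model of $(X,\Delta_m)$. By the Negativity Lemma (as in the proof of Lemma \ref{lem:65}), $f^*(K_X+\Delta_m)=g^*\bigl(K_{X'}+(\Delta_m)_{X'}\bigr)+E_m$ with $E_m\ge0$ and $g$-exceptional; since $msI\,g^*(K_{X'}+(\Delta_m)_{X'})$ is basepoint free and $msIE_m$ is $g$-exceptional, comparing fixed parts gives
$$\Mob\bigl(msIf^*(K_X+\Delta_m)\bigr)\big/m=sI\,g^*\bigl(K_{X'}+(\Delta_m)_{X'}\bigr).$$
Along any subsequence on which $X'$ (and the morphism $g$) is fixed, the right-hand side is affine-linear in $\Delta_m$, hence tends to $sI\,g^*(K_{X'}+\Delta_{X'})$ as $\Delta_m\to\Delta$; moreover, along this subsequence $X'$ is a weak log canonical model of $(X,\Delta)$ --- condition $(5')$ and the nefness of $K_{X'}+\Delta_{X'}$ pass to the limit --- with $(X',\Delta_{X'})$ klt and $K_{X'}+\Delta_{X'}$ semiample over $Z$, and its pullback $g^*(K_{X'}+\Delta_{X'})=g_\Delta^*(K_{X_\Delta}+\Delta_{X_\Delta})$ is independent of the weak log canonical model chosen, again by the Negativity Lemma. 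Since there are only finitely many such subsequential limits and they all agree, the full limit exists and equals $sI\,g_\Delta^*(K_{X_\Delta}+\Delta_{X_\Delta})$, the pullback of the semiample $\R$-divisor $sI(K_{X_\Delta}+\Delta_{X_\Delta})$, and is therefore semiample.
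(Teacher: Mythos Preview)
Your overall strategy coincides with the paper's: invoke finiteness of models to get finitely many targets $W_i$, take a common smooth resolution $Y$, use the Negativity Lemma to identify $\Mob(msIf^*(K_X+\Delta_m))/m$ with the pullback of $K_{W_{i_m}}+(\varphi_{i_m})_*\Delta_m$, bound Cartier indices and apply Effective Basepoint Freeness to choose $s$, then pass to a subsequence on which $i_m$ is constant. Your closing argument, showing that every subsequential limit is the pullback of $K+\Delta$ from a weak log canonical model of $(X,\Delta)$ and hence that all subsequential limits coincide, is actually more careful than the paper, which simply fixes one infinite subsequence and declares the full limit; the paper also uses rational singularities and \cite{Kaw88} for the Cartier index bound, while you use $\Q$-factoriality directly.

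The one place where your route diverges and runs into trouble is the reduction to Theorem~\ref{fom}. You perturb each $\Gamma=\Delta_m$ to $\Gamma+t(A+G)$ with $A+G\sim_{\R}K_X+\Delta$ and assert that a log terminal model of $(X,\Gamma+t(A+G))$ is, for $t\in(0,t_0]$, a weak log canonical model of $(X,\Gamma)$ computing $\Mob$. But $K_X+\Gamma+t(A+G)\sim_\R (1+t)(K_X+\Gamma)+t(\Delta-\Gamma)$ is not proportional to $K_X+\Gamma$ when $\Gamma\neq\Delta$, so there is no reason $K_{X'}+\Gamma_{X'}$ is nef on the model $X'$ you produce, nor that $\Mob$ on $Y$ equals its pullback. (The same issue undercuts your earlier appeal to Theorem~\ref{eom}: a log terminal model of $(X,\Delta_m+t(A+G))$ need not be one for $(X,\Delta_m)$.) The paper handles this in one stroke: write $K_X+\Delta\sim_\R A+C$ and replace the whole sequence by $\Delta_m'=\Delta_m+\varepsilon A+\varepsilon C$, so that every $\Delta_m'$ already contains the fixed general ample $\varepsilon A$ and lies in $\mathcal L_{\varepsilon A}$; Theorem~\ref{fom} then applies directly, with no limiting in $t$ required. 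This is both simpler and avoids the gap above.
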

\begin{proof}
By Kodaira's Lemma, $K_X+\Delta \sim_{\R,f} A+C$, where
$A$ is ample and $C$ is effective. If $\Delta'= \varepsilon A+\varepsilon C+\Delta$ for $0<\varepsilon\ll1$,
the pair $(X,\Delta')$ is klt and $K_X + \Delta' \sim_{\R,f} (1 + \varepsilon)(K_X + \Delta)$.
Therefore replacing $\Delta$ by $\Delta'$ we may assume that $\Delta$ is big.

Let $\varphi_i\colon X\dashrightarrow W_i$ be finitely many weak log canonical models corresponding to the polytope around $\Delta$.
Let $f\colon Y\rightarrow X$ be
a common log resolution of the pair $(X,\Delta)$ and of the pairs
$(W_i,(\varphi_i)_*\Delta)$ for all $i$, and denote $\pi_i\colon Y\rightarrow W_i$ the
induced maps.
For every $m\gg0$ there is an index $i_m$ such that
$K_{W_{i_m}}+(\varphi_{i_m})_*\Delta_m$ is semiample by Theorem \ref{thm:3}.

Since $W_i$ have rational singularities, by the proof of \cite[Lemma
1.1]{Kaw88}, the groups $\WDiv(W_i)/\Div(W_i)$ are finitely
generated, and hence finite because each $W_i$ is $\Q$-factorial.
Therefore, for $m\gg0$ there are positive integers $s_{i_m}$ such that
$s_{i_m}I(K_{W_{i_m}}+(\varphi_{i_m})_*\Delta_m)$ are Cartier.
By Effective Basepoint Freeness, there is a positive integer $c$ such
that all $cs_{i_m}I(K_{W_{i_m}}+(\varphi_{i_m})_*\Delta_m)$ are free;
set $s=c\prod s_i$.

By the Negativity Lemma, for $m\gg0$ we can write
$$f^*(K_X+\Delta_m)=\pi_{i_m}^*(K_{W_{i_m}}+(\varphi_{i_m})_*\Delta_m)+E_m,$$
where $E_m$ are effective and exceptional.
There is an index $i$ such that $i=i_m$ for infinitely many $m$ and therefore the divisor
\begin{align*}
\lim_{m\rightarrow\infty}\Mob(msIf^*(K_X+\Delta_m))/m&=\lim_{m\rightarrow\infty}sI\pi_i^*(K_{W_i}+(\varphi_i)_*\Delta_m)\\
&=sI\pi_i^*(K_{W_i}+(\varphi_i)_*\Delta)
\end{align*}
is nef and thus semiample by Theorem \ref{thm:3}.
\end{proof}
\begin{dfn}
An adjoint algebra $R(X,\D_\bullet)$ is {\em semiample\/} if
$\D=\lim_{i\rightarrow\infty}\D_i$ exists as a real mobile b-divisor.
\end{dfn}

\begin{thm}\label{semiample}
Let $(X,\Delta)\rightarrow Z$ be a pl flipping contraction as in Setup \ref{setup}, where $\dim
X=n$. Assume existence and finiteness of models in
dimension $n-1$. Then there is a positive integer $s$ such that $R_S(X,K_X+\Delta)^{(s)}$
is a semiample adjoint algebra. Moreover, we can assume that there is a model $T$ to which all $\M_m^0$ descend.
\end{thm}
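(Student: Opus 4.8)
The plan is to combine Theorem~\ref{restrictedadjoint}, which realises a truncation of $R_S(X,K_X+\Delta)$ as an adjoint algebra on the $(n-1)$-dimensional model $T=\widehat S_Y$, with Theorem~\ref{corltmodels} in dimension $n-1$, which on a fixed model makes the relevant mobile parts stabilise to a semiample divisor. First I would assemble the data produced by Theorems~\ref{restrictedadjoint} and~\ref{sequences2}: after replacing $R_S(X,K_X+\Delta)$ by a truncation we have $R_S(X,K_X+\Delta)\simeq R(T,\NN^0_\bullet)$ with $\NN^0_m=mI(\K_T+\B^0_m)$, where each pair $(T,B^0_{mT})$ is terminal with $B^0_{mT}\le B_{|T}$, the b-divisor $\B^0_m$ is the proper transform b-divisor $\widehat{B^0_{mT}}$, the limit $\B^0=\lim_m\B^0_m$ exists, $\B^0_\bullet$ is concave, and $\K_T+\B^0_m$ is good on $T$; by Lemma~\ref{mobile} the associated mobile b-divisors satisfy $(\M^0_m)_W=\Mob((\NN^0_m)_W)$ on every model $W\to T$. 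Writing $B^0_T$ for the trace of $\B^0$ on $T$, all $B^0_{mT}$ lie in the finite-dimensional span of the components of $B_{|T}$, so the pair $(T,B^0_T)$ is klt and $\Q$-factorial (as $T$ is smooth) and projective over $Z$.

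The one input that is not formal is the bigness of $K_T+B^0_T$. For this I would use the exact sequence defining $\res_S$ to compute $h^0(T,mI(K_T+B^0_{mT}))=\dim\bigl(R_S(X,K_X+\Delta)\bigr)_{mI}=h^0(X,mI(K_X+\Delta))-h^0(X,mI(K_X+\Delta)-S)$. Since $-(K_X+\Delta)$ and $-S$ are $\pi$-ample with $\rho(X/Z)=1$ we have $S\sim_{\Q,\pi}r(K_X+\Delta)$ for some $r>0$, so $mI(K_X+\Delta)-S\equiv_\pi(mI-r)(K_X+\Delta)$; as $K_X+\Delta$ is big ($\pi$ being birational, as noted in the proof of Theorem~\ref{restrictedadjoint}) the difference of the two $h^0$'s grows like $m^{n-1}$. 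Because $B^0_{mT}\le B^0_T$, this forces $h^0\bigl(T,\lfloor m(K_T+B^0_T)\rfloor\bigr)$ to grow like $m^{\dim T}$, and Definition-Lemma~\ref{lem:61}(1) gives that $K_T+B^0_T$ is big.

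Finally I would apply Theorem~\ref{corltmodels} to the $\Q$-factorial klt pair $(T,B^0_T)$ with $K_T+B^0_T$ big, the finite-dimensional space spanned by the components of $B_{|T}$, the concave sequence $(B^0_{mT})_m$ and the integer $I$; this is where existence and finiteness of models in dimension $n-1$ are used. It produces a smooth model $g\colon T'\to T$ and a positive integer $s$ such that, inspecting its proof, for $m\gg0$ the divisor $\Mob(msIg^*(K_T+B^0_{mT}))$ equals the pullback $msI\,\pi^*\bigl(K_W+(\varphi)_*B^0_{mT}\bigr)$ for a fixed weak log canonical model $\varphi\colon T\dashrightarrow W$ with induced morphism $\pi\colon T'\to W$, is free on $T'$, and after dividing by $m$ converges to the semiample $\R$-divisor $sI\,\pi^*(K_W+(\varphi)_*B^0_T)$. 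To see that this computes the characteristic sequence of the restricted algebra, observe that (after absorbing $s$ into a further truncation) on $T'$ one has $(\NN^0_m)_{T'}=mI\bigl(K_{T'}+g^{-1}_*B^0_{mT}\bigr)$ because $\B^0_m=\widehat{B^0_{mT}}$ is a proper transform b-divisor; since $(T,B^0_{mT})$ is terminal this equals $mI\,g^*(K_T+B^0_{mT})$ plus an effective $g$-exceptional divisor; and adding an effective exceptional divisor to a $g$-pullback of a Cartier divisor changes neither the space of sections nor, by Lemma~\ref{mobile}, the mobile part, so $(\M^0_m)_{T'}=\Mob\bigl(mI\,g^*(K_T+B^0_{mT})\bigr)$, which for $m\gg0$ is the free pullback above. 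Hence the characteristic divisors $(\D^0_m)_{T'}=(\M^0_m)_{T'}/m$ converge on $T'$ to a semiample $\R$-divisor, so $\D^0=\lim_m\D^0_m$ exists as its Cartier closure, that is, as a real mobile b-divisor, and $R_S(X,K_X+\Delta)^{(s)}$ is a semiample adjoint algebra; moreover $(\M^0_m)_{T'}$ being free for $m\gg0$, one more truncation makes every $\M^0_m$ descend to $T'$, giving the last assertion (with $T'$ renamed $T$).

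I expect the two places requiring care to be: (i) establishing that $K_T+B^0_T$ is big, so that Theorem~\ref{corltmodels} applies at all; and (ii) the identification of $(\M^0_m)_{T'}$ with an honest free pullback on the fixed model $T'$, which rests on $\B^0_m$ being a proper transform rather than a Cartier-closure b-divisor, on terminality of $(T,B^0_{mT})$ to replace $g^{-1}_*$ by $g^*$, and on the invariance of mobile parts under adding effective exceptional divisors---together with the purely bookkeeping task of reconciling the various truncation and scaling integers into the single $s$ of the statement.
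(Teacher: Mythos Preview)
Your approach is essentially the same as the paper's, which is a bare four-line proof that cites Theorem~\ref{sequences2} and Theorem~\ref{corltmodels} and asserts the conclusion; your explanation of why $\M^0_m$ descends to the fixed model (via $\B^0_m=\widehat{B^0_{mT}}$, terminality of $(T,B^0_{mT})$, and invariance of $\Mob$ under adding effective exceptional divisors) correctly fills in details that the paper leaves implicit.

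The one point to correct is your bigness argument. Your dimension count does not make sense as written: $Z$ is affine, so the spaces $H^0\bigl(X,mI(K_X+\Delta)\bigr)$ are infinite-dimensional over $\C$, and Definition-Lemma~\ref{lem:61} (stated for projective varieties) does not apply. Fortunately the hypothesis ``$K_T+B^0_T$ big'' in Theorem~\ref{corltmodels} is automatic here for a much simpler reason: since $\pi$ is small and birational, $S\to\pi(S)$ is birational onto a closed (hence affine) subvariety of $Z$, and $T\to S$ is birational; therefore $T$ is projective and \emph{birational} over its affine base, and in that situation every divisor is relatively big (the generic fibre is a point). The paper's proof simply passes over this hypothesis in silence, so your instinct to verify it is good---only the mechanism is different from what you wrote.
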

\begin{proof}
Let a log resolution $f\colon Y\rightarrow X$ and the associated
sequence $\B^0_\bullet$ be as in Theorem~\ref{sequences2}.
Let $\B^0=\lim\limits_{i\rightarrow\infty}\B^0_i\in\bDiv(T)_\R$. By Corollary \ref{corltmodels} there is a model $W\rightarrow T$
and a positive integer $s$ such that $\M^0_{ms,W}$ is free for every $m$ and $\lim\limits_{m\rightarrow\infty}\M^0_{ms,W}/m$
is semiample. We are done.
\end{proof}
\section{Shokurov's saturation condition}

\begin{dfn}
Let $(X,\Delta)$ be a log pair, let $\A=\A(X,\Delta)$ and let $\FF$ be a b-divisor such that $\lceil \FF\rceil\geq0$.
\begin{itemize}
\item An integral mobile b-divisor $\M$ on $X$ is {\em $\FF$-saturated\/} if there is a model $Y\rightarrow X$ such that
$$\Mob\lceil \M_W+\FF_W\rceil\leq \M_W$$
on all models $W\rightarrow Y$. If $(X,\Delta)$ is klt and $\FF=\A$ then we say that $\M$ is saturated.
\item An adjoint algebra $R=R(X,\NN_\bullet)$ is (asymptotically) $\FF$-sa\-tu\-ra\-ted if there is a model
$Y\rightarrow X$ such that on every log smooth model $(W,\FF_W)$ over $Y$, for all $i\geq j>0$ we have
$$\Mob\lceil j\D_{iW}+\FF_W\rceil\leq j\D_{jW}.$$
If $(X,\Delta)$ is klt and $\FF=\A$ then we say that $R$ is saturated.
\end{itemize}
\end{dfn}

\begin{rem}
Any truncation of a saturated adjoint algebra is saturated.
\end{rem}
\begin{exa}
If $M$ is a divisor on a klt pair $(X, \Delta)$, then $\M=\overline M$ is saturated.
Indeed, for every model $f\colon Y\rightarrow X$ we have
$f_*\OO_Y(f^*M+E)=\OO_X(M)$ for an effective and $f$-exceptional divisor $E$.
\end{exa}

We prove that saturated semiample adjoint algebras on curves re finitely generated; this models the proof in higher dimensions.

\begin{pro}
Let $X=\mbb{A}_{\C}^1$. A saturated adjoint algebra on $X$ is finitely generated.
\end{pro}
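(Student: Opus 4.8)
The plan is to reduce everything to Lemma~\ref{limit} by exploiting that on the affine line all models are trivial. First I would record the simplifications coming from $X=\mathbb{A}^1$ being a smooth curve: the only model over $X$ is $X$ itself, so $\bDiv(X)=\WDiv(X)$, $\K_X=\overline{K_X}=0$, the Cartier closure of any divisor equals that divisor, and every effective divisor on $X$ is basepoint free. Hence $\Mob$ acts as the identity on effective divisors, and the mobile b-divisors $\M_m$ associated (via Lemma~\ref{mobile}) to $\NN_m=mI(\K_X+\DDelta_m)=mI\DDelta_m$ satisfy $\M_m=\NN_m$; moreover $\M_\bullet$ is superadditive because $\DDelta_\bullet$ is concave. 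By Lemma~\ref{limit} it then suffices to produce an integer $i$ with $\NN_{ik}=k\NN_i$ for all $k\ge0$.

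Next I would translate the saturation hypothesis. Put $\DDelta=\sup_m\DDelta_m=\lim_m\DDelta_m$, so $\DDelta_m\le\DDelta$ for all $m$, and let $(X,\Delta)$ be the klt pair for which $R$ is saturated, i.e. with $\FF=\A(X,\Delta)$. On a curve $\A(X,\Delta)=-\overline{\Delta}$, and klt forces every coefficient of $\Delta$ to be $<1$. Since the only model is $X$, the saturation condition reduces to
\[ \lceil\, jI\DDelta_i-\Delta\,\rceil\ \le\ jI\DDelta_j\qquad\text{for all }i\ge j>0. \]
Fixing $j$ and letting $i\to\infty$, using that $\DDelta_i\to\DDelta$ with $\DDelta_i\le\DDelta$ (so that $\lceil jI\DDelta_i-\Delta\rceil=\lceil jI\DDelta-\Delta\rceil$ for $i$ large), and combining with $jI\DDelta_j\le jI\DDelta$, I would obtain
\[ jI\DDelta-\Delta\ \le\ \NN_j=jI\DDelta_j\ \le\ jI\DDelta\qquad(j\ge1). \]
Thus for every $j$ the integral divisor $\NN_j$ lies in a closed interval around $jI\DDelta$ whose coefficients vary by less than $1$.

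The crux is then a short counting argument. At a point $P$ write $\delta=\mult_P\DDelta$ and $\delta'=\mult_P\Delta\in[0,1)$: the interval $[\,jI\delta-\delta',\,jI\delta\,]$ must contain the integer $\mult_P\NN_j$ for every $j\ge1$, which forces $\{jI\delta\}\le\delta'$ for all $j$; by equidistribution this is impossible unless $I\delta\in\Q$. Hence $\DDelta$ is a $\Q$-divisor, and I may choose $i>0$ with $iI\DDelta$ integral. For every $k$ the divisor $ikI\DDelta$ is integral and lies in $[\,ikI\DDelta-\Delta,\ ikI\DDelta\,]$; since that interval has coefficient-range $<1$ and $\Delta$ has coefficients $<1$, it contains no other integral divisor, so comparison with the squeeze above for $j=ik$ gives $\NN_{ik}=ikI\DDelta=k\,(iI\DDelta)=k\NN_i$ for all $k\ge0$. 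By Lemma~\ref{limit} the algebra $R$ is finitely generated. I expect the only delicate points to be the two spots where one uses that an interval of length $<1$ pins an integral divisor down: first to conclude that $\DDelta$ is rational, and then to identify $\NN_{ik}$ with $ikI\DDelta$. Everything else is a routine transcription of the definitions to the one-dimensional case.
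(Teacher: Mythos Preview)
Your proof is correct and follows essentially the same route as the paper's: reduce to ordinary divisors on the affine line, pass to the limit $i\to\infty$ in the saturation inequality to trap the integral divisor $\NN_j$ in an interval of length $<1$ around $jI\DDelta$, deduce rationality of $\DDelta$ from a fractional-part argument, and then identify $\NN_{ik}=ikI\DDelta$. The only cosmetic differences are that the paper works at a single point and gives an explicit denominator bound $q!$ with $q=\lfloor 1/(1-b)\rfloor$ (where you invoke equidistribution), and concludes via $R^{(v)}=R(\mathbb{A}^1,uP)$ where you appeal to Lemma~\ref{limit}.
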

\begin{proof}
On a curve b-divisors are just ordinary divisors.
Consider an adjoint algebra $R=R(X,N_\bullet)$. Since on an affine
curve all linear systems are basepoint free, we have $N_i=M_i=iD_i$ and
since $A=-\Delta$, the saturation condition reads
$$\lceil jD_i-\Delta\rceil\leq jD_j.$$
This is a componentwise condition, so we can assume that all divisors
are supported at a point $P\in\mbb{A}_{\C}^1$. Let $\Delta=bP$ with $0<b<1$
and $D_i=d_iP$, so that
$$R(X,N_\bullet)=\bigoplus_{i\geq0}H^0(\mbb{A}_{\C}^1,\OO_{\mbb{A}_{\C}^1}(id_iP)).$$
The sequence $d_i$ satisfies $d_i\geq0$ and
$d_{i+j}\geq\frac{i}{i+j}d_i+\frac{j}{i+j}d_j$, so it has a limit
$d=\sup d_i$.
The saturation condition when $i \rightarrow \infty$ becomes
$$\lceil jd-b\rceil\leq jd_j$$
for all $j>0$. Denote $q=\lfloor1/(1-b)\rfloor$.
\begin{cla}
$q!d\in\N$ and $d=d_j$ if $jd\in\N$.
\end{cla}
In particular if $d=u/v$ with $u,v\in\N$, then $R^{(v)}=R(\mbb{A}_{\C}^1,uP)$ and $R$ is finitely generated.

To prove the claim, first assume that $d\notin\Q$; then there is
$j\in\N$ such that $\{jd\}>b$ and therefore
$$jd_j\leq jd<\lceil jd-b\rceil\leq jd_j,$$
a contradiction. Assume now that $jd\in\Z$, then
$$jd_j\geq\lceil jd-b\rceil=jd\geq jd_j,$$
and therefore $d=d_j$. Choose the smallest such $j$ and set $l=jd$. Then $l$ and $j$ are coprime, so there is $k<j$ such that $kl\equiv-1\pmod{j}$.
But then if $j>q$, $\{kd\}=\{kl/j\}=(j-1)/j>b$ and this is a contradiction as above.
\end{proof}

\begin{lem}\label{sat}
Let $Y\rightarrow X$ be a log resolution of a pair $(X,\Delta)$.
Assume that $\NN$ is a real b-divisor on $X$
which descends to $Y$ and such that $\Supp\NN_Y$ is simple normal crossings. Then
$\lceil\NN+\A\rceil$ is good on $Y$.
\end{lem}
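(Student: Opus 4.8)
The plan is to reduce the statement to a componentwise verification on $Y$ of the inequality defining "good", namely $\lceil\NN+\A\rceil\geq\overline{(\lceil\NN+\A\rceil)_Y}$, and to read off the needed bound from the log-smoothness of the situation on $Y$. First I would unwind the definitions. Since $\NN$ descends to $Y$, write $\NN=\overline{\NN_Y}$ with $\NN_Y=\sum r_i E_i$ an $\R$-divisor whose support is simple normal crossings on $Y$. The discrepancy b-divisor $\A=\A(X,\Delta)$ has trace on any model $Z\to Y$ given by $\A_Z=K_Z-\pi^*(K_Y+\B_Y)$ where here we use that discrepancies are intrinsic: after passing to $Y$ we may compute relative to the log-smooth pair $(Y,\B_Y)$, and on $Y$ itself $\A_Y=-\B_Y$ by definition, hence on $Y$ we have $(\NN+\A)_Y=\NN_Y-\B_Y$, an $\R$-divisor with simple normal crossings support. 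Set $G=\lceil\NN_Y-\B_Y\rceil$; this is the trace of $\lceil\NN+\A\rceil$ on $Y$, and goodness on $Y$ is exactly the assertion $\lceil\NN+\A\rceil\geq\overline{G}$, i.e. for every model $\rho\colon Z\to Y$ and every prime $F\subset Z$,
\[
\mathrm{mult}_F\lceil\NN_Z+\A_Z\rceil\ \geq\ \mathrm{mult}_F\rho^*G.
\]

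The heart of the argument is the following pointwise estimate, which I would prove by the standard discrepancy-of-a-blowup computation (as in \cite[Lemmas 2.27--2.31, 2.45]{KM98}, exactly the apparatus already used in Step 3 of the proof of Theorem \ref{sequences}). Fix $F\subset Z$ with centre $c_Y F$ on $Y$. Because $(Y,\mathrm{Supp}\,\NN_Y\cup\mathrm{Supp}\,\B_Y)$ is log smooth, the centre $c_Y F$ is contained in a transverse intersection $E_{i_1}\cap\cdots\cap E_{i_c}\cap B_{j_1}\cap\cdots\cap B_{j_t}$ of components, where $c=\mathrm{codim}_Y c_Y F$ accounts for all the components through that point; writing $\nu=\mathrm{ord}_F$ one has $\nu(E_{i_k})\geq 1$ and the inequalities $\mathrm{mult}_F\NN_Z\geq\sum_k \nu(E_{i_k})r_{i_k}$ and $\mathrm{mult}_F K_{Z/Y}\geq\sum_k\nu(E_{i_k})-1$ hold for the total transform on the log-smooth ambient space, with $\mathrm{mult}_F\A_Z = \mathrm{mult}_F K_{Z/Y}-\mathrm{mult}_F\rho^*\B_Y$. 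Combining and using that all the relevant coefficients $b_{j}$ of $\B_Y$ lie in $[0,1]$ (the trace of the boundary b-divisor is a boundary, by the Definition--Lemma preceding Lemma \ref{disjoint}), one checks that
\[
\mathrm{mult}_F(\NN_Z+\A_Z)\ \geq\ \mathrm{mult}_F\rho^*(\NN_Y-\B_Y)
\]
automatically (the "extra" part $\sum_k\nu(E_{i_k})-1\geq 0$ of $K_{Z/Y}$ dominates the rounding error), and then one applies $\lceil\cdot\rceil$. The point is that $\lceil\rho^*(\NN_Y-\B_Y)\rceil\leq\rho^*\lceil\NN_Y-\B_Y\rceil$ can fail in general, but here the positivity coming from the log-smooth relative canonical compensates: one shows $\mathrm{mult}_F\lceil\NN_Z+\A_Z\rceil\geq\lceil\mathrm{mult}_F\rho^*(\NN_Y-\B_Y)\rceil=\mathrm{mult}_F\rho^*G$ by splitting into the case where $c_Y F$ lies on a component of $\B_Y$ (where one gains a full unit from $K_{Z/Y}$ per such component, absorbing the ceiling) and the case where it does not (where $\A_Z$ is effective exceptional and the inequality is immediate since $\NN$ descends to $Y$).

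The step I expect to be the main obstacle is precisely this rounding control: ensuring that the ceiling on $Z$ dominates the pullback of the ceiling on $Y$, which is where one must use in an essential way both the simple-normal-crossings hypothesis (so that through any point only finitely many, mutually transverse, components of $\NN_Y$ and $\B_Y$ pass, and their multiplicities along $F$ add up cleanly) and the fact that $\B_Y$ is a genuine boundary with coefficients at most $1$. Once the pointwise inequality is established for all $F$ over all $Z\to Y$, goodness of $\lceil\NN+\A\rceil$ on $Y$ follows by the definition recalled after Definition \ref{dfn:3}, and in particular the sheaf $\OO(\lceil\NN+\A\rceil)$ is coherent. I would organise the write-up as: (i) reduce to computing on $Y$ and identify $G$; (ii) the blow-up discrepancy estimate giving the pointwise inequality; (iii) conclude.
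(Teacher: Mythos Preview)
Your strategy is sound---you correctly identify that goodness on $Y$ amounts to the inequality $\lceil\NN_Z+\A_Z\rceil\geq\rho^*\lceil\NN_Y+\A_Y\rceil$ for every model $\rho\colon Z\to Y$, and your plan to verify this componentwise via iterated blow-up discrepancy estimates (in the style of Step~3 of Theorem~\ref{sequences}) would work. There is, however, a minor slip in your setup: it is not true in general that $\A_Y=-\B_Y$, since $\A_Y=K_Y-f^*(K_X+\Delta)=E_Y-B_Y$ in the notation of the boundary b-divisor; what you actually need (and what holds unconditionally) is the relation $\A_Z=K_{Z/Y}+\rho^*\A_Y$. Likewise, the emphasis on the coefficients of $\B_Y$ lying in $[0,1]$ is not quite the right hypothesis---what matters is only that $\lceil G\rceil-G$ has coefficients in $[0,1)$ for $G=\NN_Y+\A_Y$, which is automatic.

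The paper's proof is considerably shorter and avoids all of the inductive blow-up bookkeeping. The key observation is this: for \emph{any} simple normal crossings $\R$-divisor $G$ on the smooth variety $Y$, the pair $(Y,\lceil G\rceil-G)$ is klt, because its boundary has SNC support and coefficients in $[0,1)$. Hence for every model $\rho\colon Y'\to Y$ the divisor
\[
E=\big\lceil K_{Y'}-\rho^*\bigl(K_Y+\lceil G\rceil-G\bigr)\big\rceil
\]
is effective and $\rho$-exceptional. Taking $G=\NN_Y+\A_Y$ and using $\A_{Y'}=\rho^*\A_Y+K_{Y'/Y}$ together with $\NN_{Y'}=\rho^*\NN_Y$, one unwinds this to $\lceil\NN_{Y'}+\A_{Y'}\rceil=\rho^*\lceil\NN_Y+\A_Y\rceil+E$, which is exactly the goodness inequality. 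In effect, your blow-up computation is a re-derivation of the klt property of $(Y,\lceil G\rceil-G)$; the paper simply invokes it.
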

\begin{proof}
Let $f\colon Y'\rightarrow Y$ be a model and
let $G$ be any simple normal crossings divisor on $Y$. The pair $(Y,\lceil G\rceil-G)$ is klt and in particular the divisor
$$E=\lceil K_{Y'}-f^*(K_Y+\lceil G\rceil-G)\rceil$$
is effective and exceptional. Thus $f^*\lceil G\rceil+E=\lceil K_{Y'/Y}+f^*G\rceil$. Consider the divisor
$G=\NN_Y+\A_Y$ and observe that $\A_{Y'}=f^*\A_Y+K_{Y'/Y}$; this yields:
$$f^*\lceil\NN_Y+\A_Y\rceil+E=\lceil f^*\NN_Y+\A_{Y'}\rceil=\lceil\NN_{Y'}+\A_{Y'}\rceil.$$
\end{proof}

\begin{lem}\label{blowups}
Let $S$ be a subvariety of a projective variety $X$ and let $h\colon
T\rightarrow S$ be a projective birational map.
Then there is a projective birational map $f\colon Y\rightarrow X$
from a variety $Y$ and a closed immersion $j\colon T\rightarrow Y$
such that $f\circ j=h$.

Furthermore, assume that $h$ is proper but not necessarily projective. Then there
exist a projective birational map $f\colon Y\rightarrow X$
and a subvariety $T'\subset Y$ such that $f_{|T'}$ factors through $h$.
\end{lem}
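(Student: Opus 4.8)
The plan is to construct $Y$ as a suitable blow-up of $X$. For the first statement, I would start from the standard fact that a projective birational morphism of noetherian schemes is a blow-up, so $h$ is the blow-up $\Bl_{\J}S$ of $S$ along some coherent ideal sheaf $\J\subseteq\OO_S$, with $\J\neq0$ because $T$ is non-empty. I would then \emph{extend} $\J$ to an ideal sheaf on $X$ by setting $\I=\ker(\OO_X\twoheadrightarrow\OO_S/\J)$. This $\I$ is coherent, contains the ideal sheaf $\I_S$ of $S$, satisfies $\I\cdot\OO_S=\I/\I_S=\J$, and is nonzero (it contains $\I_S\neq0$ when $S\subsetneq X$, and equals $\J\neq0$ when $S=X$). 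Now set $Y=\Bl_{\I}X$ with structure morphism $f\colon Y\to X$: then $f$ is projective, it is an isomorphism over the dense open set $X\smallsetminus V(\I)\supseteq X\smallsetminus S$ and hence birational, and $Y$ is a variety since $X$ is integral and $\I\neq0$.

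To finish the first statement I would invoke the compatibility of blowing up with restriction to a closed subscheme: the blow-up of $S$ along $\I\cdot\OO_S$, which is its strict transform in $Y=\Bl_{\I}X$, embeds in $Y$ as a closed subscheme, and since $\I\cdot\OO_S=\J$ this strict transform is canonically $\Bl_{\J}S=T$. So we get a closed immersion $j\colon T\hookrightarrow Y$ for which $f\circ j$ is precisely the blow-up morphism $T\to S$, i.e.\ $f\circ j=h$. For the second statement, where $h$ is only proper, I would first reduce to the projective case: since $S$ is projective (it is closed in $X$) and $T$ is proper over $\C$, Chow's lemma gives a projective birational morphism $\mu\colon T''\to T$ with $T''$ a projective variety, and then $h\circ\mu\colon T''\to S$ is projective and birational. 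Applying the first part to $h\circ\mu$ produces a projective birational $f\colon Y\to X$ and a closed immersion $j\colon T''\hookrightarrow Y$ with $f\circ j=h\circ\mu$; taking $T'=j(T'')\subseteq Y$, the restriction $f_{|T'}$ equals $h\circ(\mu\circ j^{-1})$ and hence factors through $h$.

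The only step requiring genuine care is the extension of $\J$: I need $\I\cdot\OO_S=\J$ to hold \emph{exactly}, so that the strict transform of $S$ is literally $T$ and not merely a scheme dominating it, and at the same time I need $\I\neq0$ so that the blow-up $Y$ stays irreducible — the choice $\I=\ker(\OO_X\to\OO_S/\J)$ is tailored to achieve both simultaneously. A minor point is that a blow-up need not be normal, so if one insists on the convention that a variety be normal one may replace $Y$ by its normalisation; this is harmless, since every use of this lemma is followed by passing to a log resolution in any case.
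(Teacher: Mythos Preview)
Your proposal is correct and follows essentially the same route as the paper. Both arguments (i) realise $h$ as the blow-up of $S$ along a coherent ideal $\J$, (ii) extend $\J$ to an ideal $\I$ on $X$ with $\I\cdot\OO_S=\J$, (iii) set $Y=\Bl_\I X$ and identify $T$ with the strict transform of $S$, and (iv) invoke Chow's lemma for the proper case. The paper writes the extension as $i_*\J$ and cites Hartshorne II.7.15 and II.7.17; your $\I=\ker(\OO_X\to\OO_S/\J)$ is the precise formulation of the same ideal, and your explicit verification that $\I\cdot\OO_S=\J$ and $\I\neq0$ makes the argument cleaner than the paper's one-line version.
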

\begin{proof}
Let $i\colon S\rightarrow X$ be the closed immersion. By \cite[Chapter
II, Theorem 7.17]{Har77} there is a coherent ideal sheaf $\mcal J$ on $S$
such that $T$ is the blowup of $S$ with respect to $\mcal J$. Let $Y$
be the blowup of $X$ with respect to the ideal sheaf $i_*\mcal J$.
The conclusion follows from \cite[Chapter II, Corollary
7.15]{Har77} since $i^{-1}(i_*\mcal J)\cdot\OO_S=\mcal J$.

The second statement follows from Chow's lemma.
\end{proof}

\begin{thm}
\label{sat2}
Let $(X,\Delta)\rightarrow Z$ be a pl flipping contraction as in Setup \ref{setup}, where $\dim X=n$. Assume the MMP with scaling
for $\Q$-factorial klt pairs with big boundary in dimension $n-1$. There is a positive integer $s$ such that $R_S(X,K_X+\Delta)^{(s)}$
is saturated.
\end{thm}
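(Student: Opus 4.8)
The plan is to use Theorem~\ref{restrictedadjoint} to reduce to the explicit model of the restricted algebra carried by $T$, and then to prove Shokurov's saturation inequality there by importing the MMP with scaling in dimension $n-1=\dim T$; this parallels the proof of Theorem~\ref{semiample}, with the MMP with scaling playing the role that Theorem~\ref{corltmodels} and finiteness of models played there.

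After replacing $s$ by a suitable multiple of the integer $sI$ of Theorem~\ref{restrictedadjoint}, we identify $R_S(X,K_X+\Delta)^{(s)}$ with the adjoint algebra $R(T,\NN^0_\bullet)$, where $\NN^0_m=mI(\K_T+\B^0_m)$ and the b-divisors $\B^0_m\leq B_{|T}$ of Theorem~\ref{sequences2} form a sequence that is concave after truncation with limit $\B^0$; as shown there, each $(T,B^0_{mT})$ is terminal with pairwise disjoint boundary, so $(T,B^0_T)$ is klt for $B^0_T=\sup_m B^0_{mT}$. Thus the assertion is that the adjoint algebra $R(T,\NN^0_\bullet)$ is $\A$-saturated with $\A=\A(T,B^0_T)$: there is a model $Y'\to T$ such that on every log smooth model $(W,\A_W)$ over $Y'$ and all $i\geq j>0$,
\[
\Mob\lceil j\D^0_{iW}+\A_W\rceil\leq j\D^0_{jW}.
\]
Since saturation is inherited by truncations, we may further enlarge $s$ at the end, and so work throughout in the concave range of $\B^0_\bullet$.

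Fix $i\geq j$. The displayed inequality of divisors is equivalent to the inclusion $H^0(W,\lceil j\D^0_{iW}+\A_W\rceil)\subseteq H^0(W,j\D^0_{jW})$ of subspaces of $k(T)$, so the task is to control sections. The strategy is to pass to \emph{minimal models} in dimension $n-1$ on which the systems in play become basepoint free. Since $T$ is projective over the affine $Z$ we may add a small general ample $\Q$-divisor to $B^0_{mT}$ to obtain, on a resolution of $T$, a $\Q$-factorial klt pair with big boundary from whose log canonical ring $\M^0_m$ is recovered (in the sense of Definition~\ref{dfn:3} and Lemma~\ref{mobile}); a $(K+\,\cdot\,)$-MMP with scaling on it --- which terminates by the hypothesis in dimension $n-1$ --- reaches a model on which $K+(\text{the adjoint boundary})$ is nef, hence semiample by the Basepoint Free Theorem~\ref{thm:3}. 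Thus for each $m$ the mobile b-divisor $\M^0_m$ descends to such a model and $j\D^0_m$ becomes the pullback of a nef $\Q$-divisor. Passing to a common log resolution $W\to T$ that dominates the models so obtained for $i$ and for $j$, both $j\D^0_{iW}$ and $j\D^0_{jW}$ are pullbacks of nef divisors, and a Negativity Lemma~\ref{lem:2} comparison over $T$ --- together with the fact that $\lceil\A_W\rceil$ is effective with exceptional positive part, hence invisible to $\Mob$, and that $\A_W$ clips $j\D^0_{iW}$ downward precisely along $\Supp B^0_T$, where the surplus $j(\D^0_i-\D^0_j)$ is concentrated --- should yield $\Mob\lceil j\D^0_{iW}+\A_W\rceil\leq j\D^0_{jW}$.

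The main obstacle is exactly this last step. First, one must make the various $(n-1)$-dimensional MMPs with scaling compatible and the estimates uniform in $i$ and $j$: the $\B^0_i$ lie in a fixed finite-dimensional family, so only finitely many birational models of $T$ actually occur and the comparison diagram can be set up once and for all. Second, and more delicately, one must locate the surplus $j(\D^0_i-\D^0_j)$ precisely relative to the boundary discrepancy $\A$, so that it is absorbed by the clipping $\lceil\,\cdot\,+\A_W\rceil$ --- it is here that the genuine structure of the restricted algebra, encoded in how $\B^0_\bullet\leq B_{|T}$ was built in Theorems~\ref{sequences}--\ref{sequences2} (and not merely the formal properties of adjoint algebras), has to be used. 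Once the geometry on $W$ has been pinned down, what is left is the routine $\lceil\,\cdot\,\rceil/\Mob$ manipulation.
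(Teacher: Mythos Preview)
Your approach tries to prove saturation \emph{intrinsically on $T$} by running MMPs in dimension $n-1$, but this is not how the paper proceeds and, more importantly, your sketch does not close. The MMP hypothesis in the statement is used only to invoke Theorem~\ref{semiample}, i.e.\ to arrange a single model $T$ (sitting on some $Y\to X$ by Lemma~\ref{blowups}) on which all the $\M^0_m$ descend; after that the MMP plays no further role. Saturation itself is proved \emph{extrinsically}, by going \emph{up} to the ambient $n$-fold and then restricting down. Concretely, with $\A^*=\widehat S+\A(X,\Delta)$ and $\A^0=\A(S,B_S)$, on a high resolution $Y'\to Y$ one has the trivial ambient saturation
\[
\Mob\lceil j\D_{iY'}+\A^*_{Y'}\rceil\leq\Mob\big(g^*(jI(K_X+\Delta))+\lceil\A^*_{Y'}\rceil\big)=j\D_{jY'},
\]
because $j\D_{iY'}\leq(j/i)g^*(iI(K_X+\Delta))$ and $\lceil\A^*_{Y'}\rceil$ is effective and $g$-exceptional. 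Adjunction gives $\A^*_{Y'|W}=\A^0_W$, Kawamata--Viehweg vanishing kills $H^1(Y',\lceil j\D_{iY'}+\A_{Y'}\rceil)$, and hence the restriction of the ambient mobile part equals $\Mob\lceil j\D^0_{iW}+\A^0_W\rceil$; combining with Corollary~\ref{restriction} one gets $\Mob\lceil j\D^0_{iW}+\A^0_W\rceil\leq j\D^0_{jW}$ and then pushes down to $T$ by Lemma~\ref{sat}.

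The gap in your plan is exactly the step you flag as the ``main obstacle''. Running $(n-1)$-dimensional MMPs for each $\B^0_m$ gives you models on which $\M^0_m$ is free and $\D^0_m$ is a pullback of a nef divisor, but it gives you no mechanism to compare $j\D^0_i$ with $j\D^0_j$ beyond the tautology $\D^0_i\geq\D^0_j$ coming from concavity. Your hope that ``$\A_W$ clips $j\D^0_{iW}$ downward precisely along $\Supp B^0_T$, where the surplus $j(\D^0_i-\D^0_j)$ is concentrated'' is unfounded: the surplus is the difference of two \emph{mobile} b-divisors and has no reason to be supported on the boundary, nor does the Negativity Lemma produce any such inequality---both $j\D^0_{iW}$ and $j\D^0_{jW}$ are nef on their respective models, and there is no exceptional divisor witnessing the comparison you need. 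The point you are missing is that the only reason the restricted algebra is saturated is that it is the restriction of a \emph{canonical} algebra on $X$, which is saturated for free; the proof must use the ambient variety, vanishing, and adjunction, not an intrinsic MMP on $T$. (A minor additional issue: the correct $\FF$ is $\A^0=\A(S,B_S)$, not $\A(T,B^0_T)$, since it is the former that arises from adjunction $\A^*_{|W}$.)
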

\begin{proof}
Let $s$ and $T$ be a positive integer and a model as in Theorem
\ref{semiample}; we assume that $T$ sits on a model $f\colon Y\rightarrow X$
by Lemma \ref{blowups}. Up to truncation we may assume that $s=1$ and
it is enough to show that the saturation condition holds on $T$.
Set $\A^*=\widehat S+\A$ and $\A^0=\A(S,B_S)$.

Fix integers $i\geq j>0$.
Let $\varphi\colon Y'\rightarrow Y$ be a log resolution such that $\M_i$ and $\M_j$ descend to $Y'$
and $\M_i^0$ and $\M_j^0$ descend to $W=\widehat T_{Y'}$. Furthermore, by Lemma \ref{sat} and Lemma \ref{mobile} we can assume
that $\Mob\lceil j\D_{iY'}+\A_{Y'}^*\rceil$ is free.
Let $g=f\circ\varphi$ and observe that $\lceil\A_{Y'}^*\rceil$ is effective and $g$-exceptional.
Denoting $M=I(K_X+S+B)$, since $\D_{iY'}=(1/i)\Mob g^*(iM)$ we have
\begin{align*}
\Mob\lceil j\D_{iY'}+\A_{Y'}^*\rceil&\leq\Mob\lceil(j/i)g^*(iM)+\A_{Y'}^*\rceil\\
&=\Mob g^*(jM)=j\D_{jY'},
\end{align*}
and therefore
$$(\Mob\lceil j\D_{iY'}+\A_{Y'}^*\rceil)_{|W}\leq j\D_{jW}^0,$$
since $\D_{iW}^0=\D_{iY'|W}$ the proof of Corollary \ref{restriction}.

By adjunction $\A^*_{Y'|W}=\A_W^0$. Taking global sections in the sequence
\begin{multline*}
0\rightarrow\OO_{Y'}(\lceil j\D_{iY'}+\A_{Y'}\rceil)\\
\rightarrow\OO_{Y'}(\lceil j\D_{iY'}+\A_{Y'}^*\rceil)
\rightarrow\OO_W(\lceil j\D_{iW}^0+\A_W^0\rceil)\rightarrow0
\end{multline*}
we obtain that the restriction map
$$H^0(Y',\lceil j\D_{iY'}+\A_{Y'}^*\rceil)\rightarrow H^0(W,\lceil j\D_{iW}^0+\A_W^0\rceil)$$
is surjective since $H^1(Y',\lceil j\D_{iY'}+\A_{Y'}\rceil)=(0)$ by Kawamata-Viehweg vanishing.
Then $(\Mob\lceil j\D_{iY'}+\A_{Y'}^*\rceil)_{|W}=\Mob\lceil j\D_{iW}^0+\A_W^0\rceil$ as in the proof of
Corollary \ref{restriction}, and therefore
$$\Mob\lceil j\D_{iW}^0+\A_W^0\rceil\leq j\D_{jW}^0.$$
Finally by Lemma \ref{sat} and Lemma \ref{mobile},
\begin{align*}
\Mob\lceil j\D_{iT}^0+\A_T^0\rceil&=(g_{|W})_*\Mob\lceil j\D_{iW}^0+\A_W^0\rceil\\
&\leq(g_{|W})_*j\D_{jW}^0=j\D_{jT}^0.
\end{align*}
This concludes the proof.
\end{proof}
\section{Existence of flips}

\begin{applem}
Let $Y$ be a nonsingular variety and let $P_1,\dots,P_l$  be free integral divisors on $Y$.
Let $D\in\sum\R_+P_k$ and assume that $D$ is not a $\Q$-divisor. Let $\sum G_i$ be a reduced divisor containing
$\Supp P_k$ for every $k$.
Then for every $0<\varepsilon\ll1$ there exists a free integral divisor $M$ and a positive integer $j$ such that:
\begin{enumerate}
\item $\|jD-M\|_\infty<\varepsilon$, where the sup-norm is taken with respect to the basis $\{G_i\}$ of the vector space
$\bigoplus\R G_i\subset\Div(Y)$,
\item $jD-M$ is not effective.
\end{enumerate}
\end{applem}
\begin{proof}
Set $L_\Z=\bigoplus\Z P_k\simeq\Z^l$ and $L_\R=\bigoplus\R P_k\simeq\R^l$, and assume that $D$ is represented in that basis by
$\mathbf{d}=(d_1,\dots,d_l)\in\R_+^l$, where not all $d_i$ are rational.

Let $\pi\colon L_\R\rightarrow L_\R/L_\Z$ be the projection map and let $A=\N\pi(\mathbf{d})\subset L_\R/L_\Z$.
The closure $\overline A$ is a positive dimensional Lie subgroup; denote by $\overline A_0$  the positive dimensional real sub-torus
that is the connected component of $0$. Then $T=\pi^{-1}(\overline A_0)\subset L_\R$ is a positive dimensional vector space and $T$ is not a subset of the cone
$L_\R\cap\sum\R_+G_i$. Therefore for every $\varepsilon>0$ there is a positive integer $j$ and an element $\mathbf{m}=(m_1,\dots,m_l)\in\Z^l$
such that $\|j\mathbf{d}-\mathbf{m}\|_\infty<\varepsilon$ and $j\mathbf{d}-\mathbf{m}\notin L_\R\cap\sum\R_+G_i$. If $\varepsilon\ll1$, then
$\mathbf{m}\in\N^l$ and thus $\sum m_iP_i$ is free.
\end{proof}

Finally we can prove:

\begin{thm}
\label{fg6}
Let $R=R(Y,\NN_\bullet)$ be an adjoint algebra. If $R$ is saturated
and semiample then it is finitely generated.
\end{thm}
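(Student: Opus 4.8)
The plan is to reduce finite generation to Lemma~\ref{limit}, which gives it as soon as $\M_{ik}=k\M_i$ for some $i$ and all $k$. Since $R$ is semiample, $\D=\lim_m\D_m=\sup_m\D_m$ exists as a real mobile b-divisor; let $(Y,\Delta)$ be the klt pair with $\A=\A(Y,\Delta)$ with respect to which $R$ is saturated, and let $Y_0\rightarrow Y$ be the model from the saturation condition. First I would fix a log smooth model $f\colon Y'\rightarrow Y$ dominating $Y_0$ to which $\D$ descends, so that $\D_{Y'}$ is semiample, and write $\D_{Y'}\in\sum_k\R_{>0}P_k$ with the $P_k$ free integral divisors and $\Supp\D_{Y'}=\bigcup_k\Supp P_k$ contained in an snc divisor. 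By the corollary to Lemma~\ref{disjoint}, $(Y,\Delta)$ has only finitely many geometric valuations of negative discrepancy, so $\delta_0:=\min\{1+a(E,Y,\Delta)\colon a(E,Y,\Delta)<0\}>0$. Finally, since $\D_{mY'}\leq\D_{Y'}$, $\D_{mY'}\to\D_{Y'}$ componentwise, and $\lceil\cdot\rceil$ is constant on each interval $(n-1,n]$, the saturation inequalities $\Mob\lceil j\D_{mY'}+\A_{Y'}\rceil\leq j\D_{jY'}$ ($m\geq j$, applied with $W=Y'$) stabilise in $m$ to give, for every $j$,
\[
\Mob\lceil j\D_{Y'}+\A_{Y'}\rceil\;\leq\; j\D_{jY'}\;\leq\; j\D_{Y'}.
\]

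Next I would show $\D$ is a $\Q$-b-divisor. Suppose not; then $\D_{Y'}$ is not a $\Q$-divisor, and the Approximation Lemma, applied to $\D_{Y'}$ with $\varepsilon$ small relative to $\delta_0$ and the $P_k$, produces a free integral divisor $M$ and an integer $j>0$ with $\|j\D_{Y'}-M\|_\infty<\varepsilon$ and $j\D_{Y'}-M$ not effective. A componentwise comparison gives $M\leq\lceil j\D_{Y'}+\A_{Y'}\rceil$: off $\Supp\D_{Y'}$ both $M$ and $j\D_{Y'}$ vanish while $\lceil\A_{Y'}\rceil\geq0$; on a component of $\Supp\D_{Y'}$ along which $\A_{Y'}\geq0$ one uses that $M$ is integral within distance $\varepsilon<1$ of $j\D_{Y'}$; and on the finitely many components where $-1<\mult_G\A_{Y'}<0$ one uses in addition $\varepsilon<\delta_0$. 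Since $M$ is free, $M\leq\lceil j\D_{Y'}+\A_{Y'}\rceil$ forces $M\leq\Mob\lceil j\D_{Y'}+\A_{Y'}\rceil$, which by the displayed inequality is $\leq j\D_{Y'}$; thus $j\D_{Y'}-M\geq0$, a contradiction. Hence $\D$ is rational.

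Finally, replacing $R$ by a truncation (harmless by Lemma~\ref{truncation}, preserving both hypotheses, and legitimate now that $\D$ is rational) I may assume $\D$ descends to $Y'$ with $\D_{Y'}$ a free integral divisor. For any $j\geq1$ and any log smooth model $W\rightarrow Y'$ the trace $\D_W$ is the pullback of $\D_{Y'}$, hence free and integral, and since every coefficient of $\A_W$ exceeds $-1$ we get $\lceil j\D_W+\A_W\rceil\geq j\D_W$ and therefore $\Mob\lceil j\D_W+\A_W\rceil\geq j\D_W$. Comparing with the limit of the saturation inequalities on $W$ yields $j\D_W\leq j\D_{jW}$, while $\D_{jW}\leq\D_W$ because $\D=\sup_m\D_m$; so $\D_{jW}=\D_W$ for all such $W$, i.e.\ $\D_j=\D$ for all $j\geq1$, whence $\M_{jk}=jk\D=k\M_j$ and Lemma~\ref{limit} gives finite generation of this truncation, hence of $R$. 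The main obstacle is precisely the estimate $M\leq\lceil j\D_{Y'}+\A_{Y'}\rceil$ in the middle step, the bridge between the Approximation Lemma and the saturation condition: without the finiteness of the set of valuations of negative discrepancy it would fail by one along boundary components of large coefficient, and the single good choice of $\varepsilon$ is what makes the argument go through; everything after rationality is formal.
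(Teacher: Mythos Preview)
Your proof is correct and follows the same route as the paper's: pass to a model where $\D$ descends and is semiample, take $i\to\infty$ in the saturation inequality, combine with the Approximation Lemma to contradict irrationality of $\D$, and then use the integral case of the limiting saturation to force $\D_j=\D$ and conclude via Lemma~\ref{limit}. The only difference is cosmetic: instead of your componentwise case split and the global bound $\delta_0$ coming from the corollary on finitely many negative-discrepancy valuations, the paper simply picks $\varepsilon$ so that $\lceil\A_{Y'}-\varepsilon G\rceil\geq0$ (a finite condition on the components of $G$, using only that $(Y,\Delta)$ is klt), and then $j\D_{Y'}-M\geq-\varepsilon G$ immediately gives $\lceil j\D_{Y'}+\A_{Y'}\rceil\geq M+\lceil\A_{Y'}-\varepsilon G\rceil\geq M$.
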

\begin{proof}
Firstly we prove that $\D=\lim\limits_{i\rightarrow\infty}\D_i$ is a $\Q$-b-divisor. Assume otherwise. Passing to a resolution we may assume that
$\D_Y$ is semiample. Letting $i\rightarrow\infty$ in the saturation condition, we have
$$\Mob\lceil j\D_Y+\A_Y\rceil\leq j\D_{jY}$$
for all $j>0$. Let $P_1,\dots,P_l$  be free integral divisors on $Y$ such that
$\D_Y\in\sum\R_+P_k$, and let $G=\sum G_i$ be a reduced divisor containing $\Supp P_k$ for every $k$.
Fix $\varepsilon>0$ small enough so that $\lceil\A_Y-\varepsilon G\rceil\geq0$. By the Approximation Lemma
there is a free integral divisor $M$ and a positive integer $j$ such that $\|j\D_Y-M\|_\infty<\varepsilon$ and $j\D_Y-M$ is not effective.
Then $j\D_Y-M\geq-\varepsilon G$ and
$$\lceil j\D_Y+\A_Y\rceil=\lceil M+\A_Y+(j\D_Y-M)\rceil\geq M+\lceil\A_Y-\varepsilon G\rceil\geq M,$$
so that
$$j\D_Y\geq j\D_{jY}\geq\Mob\lceil j\D_Y+\A_Y\rceil\geq M.$$
This is a contradiction.

Let now $j$ be a positive integer such that $j\D_Y$ is an integral free divisor. Then
$$j\D_{jY}\leq j\D_Y\leq\Mob(j\D_Y+\lceil\A_Y\rceil)\leq j\D_{jY},$$
and thus $\D_Y=\D_{jY}$ and $\D=\D_j$.
\end{proof}

\begin{cor}
Assume existence and finiteness of models in dimension $n-1$. Then klt flips exist in di\-men\-si\-on $n$.
\end{cor}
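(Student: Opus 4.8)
The plan is to chain together the results of Sections~4--6. By Theorem~\ref{thm:41}, under the standing hypothesis it is enough to show that flips of pl flipping contractions exist in dimension $n$, and by Remark~\ref{Sirred} we may assume we are in the situation of Setup~\ref{setup}: a pl flipping contraction $\pi\colon(X,\Delta=S+B)\rightarrow Z$ with $(X,S+B)$ plt, $S$ an irreducible prime divisor, $\lfloor B\rfloor=0$, $\rho(X/Z)=1$, and $-S$ and $-(K_X+\Delta)$ both $\pi$-ample. By Remark~\ref{rem:6} the flip of $\pi$ exists precisely when the canonical algebra $R(X,K_X+\Delta)$ is finitely generated, and by the Lemma following the definition of the restricted algebra (finite generation of $R(X,K_X+\Delta)$ versus $R_S(X,K_X+\Delta)$ under the hypotheses of Setup~\ref{setup}) this is equivalent to finite generation of the restricted algebra $R_S(X,K_X+\Delta)$. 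So the whole problem reduces to showing that $R_S(X,K_X+\Delta)$ is finitely generated.

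Next I would pass to a suitable truncation. By Theorem~\ref{restrictedadjoint}, for an appropriate log resolution and an appropriate $s$ the truncation $R_S(X,K_X+\Delta)^{(s)}$ is an adjoint algebra, of the form $\bigoplus_{n\geq0}H^0(T,nsI(K_T+\B^0_{ns,T}))$. By Theorem~\ref{semiample} one may enlarge $s$ so that this adjoint algebra is moreover semiample, and by Theorem~\ref{sat2} one may again enlarge $s$ so that it is saturated; here we use that existence and finiteness of models in dimension $n-1$ imply the MMP with scaling for $\Q$-factorial klt pairs with big boundary in dimension $n-1$, which is the precise hypothesis of Theorem~\ref{sat2} (see \cite{BCHM}, or note that this is what is established on the way to Special termination with scaling, Theorem~\ref{thm:40}). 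Taking $s$ to be a common multiple of the integers furnished by these statements, and using that a truncation of an adjoint algebra is an adjoint algebra, that a truncation of a saturated adjoint algebra is saturated, and that the semiampleness condition---existence of $\D=\lim_i\D_i$ as a real mobile b-divisor---is inherited by truncations, we obtain a single algebra $R_S(X,K_X+\Delta)^{(s)}$ that is at once an adjoint algebra, semiample, and saturated.

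Now Theorem~\ref{fg6} applies verbatim: a saturated, semiample adjoint algebra is finitely generated, so $R_S(X,K_X+\Delta)^{(s)}$ is finitely generated. By Lemma~\ref{truncation} the algebra $R_S(X,K_X+\Delta)$ is then finitely generated, hence so is $R(X,K_X+\Delta)$, hence the flip of $\pi$ exists. Therefore flips of pl flipping contractions exist in dimension $n$, and a final appeal to Theorem~\ref{thm:41} gives that flips of klt flipping contractions exist in dimension $n$, which is the assertion.

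Since the substantive content has already been isolated in Theorems~\ref{restrictedadjoint}, \ref{semiample}, \ref{sat2} and~\ref{fg6}, there is no genuine obstacle in the proof of the corollary itself; it is an exercise in assembling the pieces. The only two points that deserve a word of care are the compatibility of the various truncation levels---handled by passing to a common multiple and invoking the stability of ``adjoint'', ``saturated'' and ``semiample'' under truncation---and the implication, used to invoke Theorem~\ref{sat2}, from ``existence and finiteness of models in dimension $n-1$'' to ``the MMP with scaling in dimension $n-1$'', which should either be cited from \cite{BCHM} or deduced exactly as in the argument leading to Theorem~\ref{thm:40}.
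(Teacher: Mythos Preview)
Your proposal is correct and follows essentially the same route as the paper's proof: reduce to pl flips via Theorem~\ref{thm:41} and Remark~\ref{Sirred}, reduce to finite generation of $R_S(X,K_X+\Delta)$, and then invoke Theorems~\ref{restrictedadjoint}, \ref{semiample}, \ref{sat2} and~\ref{fg6}. You are in fact more careful than the paper on two points it leaves implicit---the compatibility of truncation levels, and the passage from ``existence and finiteness of models in dimension $n-1$'' to the ``MMP with scaling'' hypothesis actually stated in Theorem~\ref{sat2}.
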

\begin{proof}
By Theorem~\ref{thm:41} and Remark~\ref{Sirred}, it is enough to prove that the flip of a pl flipping contraction
$(X,\Delta)\rightarrow Z$ exists, where $S=\lfloor\Delta\rfloor$ is irreducible.
Since the problem is local on the base we assume that $Z$ is affine, and therefore it is enough to prove that
the canonical algebra $R(X,K_X+\Delta)$ is finitely generated. By Theorems~\ref{restrictedadjoint}, \ref{semiample} and \ref{sat2},
a truncation of the restricted algebra $R_S(X,K_X+\Delta)$ is a saturated semiample adjoint algebra, and thus is finitely generated by Theorem~\ref{fg6}.
\end{proof}

%\newpage
\bibliography{biblio1}
\pagestyle{plain}
\end{document}